\newcommand\ul{\underline}
\newcommand\ti{\tilde}
\def\rd{{\rm d}}
\def\rT{{\rm T}}
\def\ep{\varepsilon}
\def\e{\eta}
\def\g{\gamma}
\def\r{\rho}
\def\s{\sigma}
\def\l{\lambda}
\def\G{\Gamma}
\def\cC{{\mathcal C}}
\def\cD{{\mathcal D}}
\def\cM{{\mathcal M}}
\def\cN{{\mathcal N}}
\def\bM{{\overline{\mathcal M}}}
\def\cP{{\mathcal P}}
\def\cQ{{\mathcal Q}}
\def\cU{{\mathcal U}}
\def\cV{{\mathcal V}}
\def\R{{\mathbb R}}
\def\N{{\mathbb N}}
\def\half{{\textstyle{\frac 12}}}
\def\im{{\rm im}\,}
\def\dom{{\rm dom}\,}
\def\Gr{{\rm Gr}}
\def\ev{{\rm ev}}
\def\st{\: \big| \:}
\def\ev{{\rm ev}}
\newcommand{\leftexp}[2]{{\vphantom{#2}}^{#1}{#2}}
\newcommand{\leftsub}[2]{{\vphantom{#2}}_{#1}{#2}}
\newcommand{\tri}[6]{
\setlength{\unitlength}{1mm}
\begin{picture}(120,40)(7,3)
\put(0,40){\makebox(0,0)[b]\tiny {#1}}
\put(29,5){\makebox(0,0)[b]\tiny {#2}}
\put(100,23){\makebox(0,0)[b]\tiny {#3}}
\put(55,23){\makebox(0,0)[b]\tiny {#4}}
\put(86,31){\makebox(0,0)[b]\tiny {#5}}
\put(86,14){\makebox(0,0)[b]\tiny {#6}}
\path(70,34)(70,10)
\path(69,33)(70,34)(71,33)
\path(72,34)(98,26)
\path(72.5,32.5)(72,34)(73.5,34.8)
\path(72,10)(98,22)
\path(72.5,11.5)(72,10)(73.5,9.2)
\end{picture}
}
\newcommand{\trii}[6]{
\setlength{\unitlength}{1mm}
\begin{picture}(120,40)(7,3)
\put(0,40){\makebox(0,0)[b]\tiny {#1}}
\put(18,5){\makebox(0,0)[b]\tiny {#2}}
\put(90,23){\makebox(0,0)[b]\tiny {#3}}
\put(49,23){\makebox(0,0)[b]\tiny {#4}}
\put(80,31){\makebox(0,0)[b]\tiny {#5}}
\put(80,14){\makebox(0,0)[b]\tiny {#6}}
\path(64,34)(64,10)
\path(63,11)(64,10)(65,11)
\path(67,34)(88,26)
\path(87.5,27.5)(88,26)(87,25.2)
\path(67,10)(88,22)
\path(87,22.8)(88,22)(87.5,20.5)
\end{picture}
}
\newtheorem{dfn}{Definition}[section]
\newtheorem{lem}[dfn]{Lemma}
\newtheorem{prp}[dfn]{Proposition}
\newtheorem{thm}[dfn]{Theorem}
\newtheorem{rmk}[dfn]{Remark}
\newtheorem{cor}[dfn]{Corollary}
\begin{document}

\author{Katrin Wehrheim}

\title{Smooth structures on Morse trajectory spaces, featuring finite ends and associative gluing
}





\maketitle

\begin{abstract}
We give elementary constructions of manifold with corner structures and associative gluing maps on compactifications of spaces of infinite, half infinite, and finite Morse flow lines.
In the case of Euclidean metric in Morse coordinates near each critical point, these are naturally given by evaluations at end points and regular level sets. For finite ends this requires a blowup construction near trajectories ending at critical points.
\end{abstract}

\section{Introduction}

We begin with a summary of Morse theory in order to fix notation. For more background see e.g.\ \cite{morse, bott, milnor, witten, ab, schwarz, hutchings}.
Let $X$ be a compact manifold (without boundary).
A {\em Morse function} $f:X\to\R$ is a smooth function with nondegenerate critical points. That is, at each point of ${\rm Crit}(f)=\{p\in X \st \rd f(p)=0\}$ the Hessian ${\rm D}^2 f (p): \rT_pX\times \rT_pX\to \R$ is a nondegenerate (symmetric) bilinear form. The dimension of the negative eigenspaces of ${\rm D}^2 f (p)$ is called the Morse index $|p|\in\N_0$ of a critical point $p$.
By the Morse Lemma (e.g.\ \cite[Lemma 2.2]{milnor}) there exist coordinates $\R^n\supset B_\delta \overset{\phi}{\hookrightarrow}  X$ for a neighbourhood of each critical point $p$ that bring $f$ into the normal form
$$
(\phi^* f)(x_1,\ldots,x_n) = f(p) - \half( x_1^2 + \ldots + x_{|p|}^2 ) + \half ( x_{|p|+1}^2 + \ldots + x_n^2 ).
$$
This normal form shows that the sublevel sets of $f$ provide a decomposition of $X$ in terms of handle attachments, and hence capture the full (smooth) topology of $X$.
In order to read off the homology of $X$ from a Morse function, however, it is more useful to choose an auxiliary Riemannian metric $g$ on $X$ and study the flow lines of the gradient vector field $\nabla f\in\Gamma(\rT X)$.
More precisely, let
$$
\Psi: \R\times X \to X ,\quad (s,x)\mapsto\Psi_s(x)
$$
denote the negative gradient flow given by $\Psi_0(x)=x$ and $\frac\rd{\rd s}\Psi_s(x) = -\nabla f (\Psi_s(x))$.
Then we can consider the unstable and stable manifold for each critical point $p\in{\rm Crit}(f)$,
$$
W^-_p = \{ x \in X \st  \lim_{s\to-\infty} \Psi_s(x) = p \},
\qquad
W^+_p = \{ x \in X \st  \lim_{s\to\infty} \Psi_s(x) = p \}.
$$
These are smooth manifolds of dimension $|p|$ and $n-|p|$, respectively; see e.g.\ \cite[ch.5]{shub}.
The pair $(f,g)$ is called {\em Morse-Smale} if the unstable and stable manifolds intersect transversely.
The Morse complex of a Morse-Smale pair then reproduces the homology of $X$.
It is generated by the critical points $p\in{\rm Crit}(f)$, and the differential $\partial$ is defined by the intersection numbers of unstable and stable manifolds, i.e.\ the number of flow lines between critical points of index difference $1$. The fact that $\partial^2=0$ is proven by showing that the space of flow lines between critical points of index difference $2$ is a $1$-dimensional manifold, whose ends exactly correspond to the broken flow lines counted by~$\partial^2$.

More generally, the spaces of Morse flow lines have a natural compactification by broken flow lines, and the compactified Morse trajectory spaces (consisting of broken and unbroken flow lines)\footnote{
Throughout, all {\em unbroken flow lines} that we refer to will be unparametrized (i.e.\ parametrized negative gradient curves modulo time shift), {\em broken flow lines} are finite sequences of unbroken flow lines with matching limit critical points, and we will summarize unbroken and broken flow lines by the term {\em generalized trajectory}.
}
can be given the structure of a smooth manifold with corners, whose lower strata are given by products of Morse trajectory spaces, see Section~\ref{moduli} for more details. This is a folk theorem, possibly first stated in \cite{ab}, and with various partial proofs in the literature. A complete proof in the case of index difference $2$ is given in \cite{schwarz}, but all general treatments run into technical difficulties with the gluing of broken flow lines to nearby unbroken flow lines, which provides the charts near the boundary and corners.
These can likely be solved by subtle global estimates, but at this point we also expect a complete proof to arise from casting the Morse trajectory spaces in the general abstract framework of polyfolds, developed in \cite{hwz2} for dealing with moduli spaces of elliptic PDEs with geometric singularity formation.

The gluing difficulty is overcome by more elementary means in \cite{bh} by restricting to Morse-Smale pairs of a special normal form near each critical point for which the generalized trajectory spaces cut out smooth submanifolds with corners on the critical level sets of $f$. In that case the gluing analysis (working with implicit function theorems on Banach manifolds) can be replaced by finite dimensional intersection theory.
We will construct charts geometrically using the same normal form, and extend the results to general Morse-Smale metrics by topological conjugacy.
To make this precise we denote open balls by $B_\delta^k:=\{\ul{x}\in\R^k \,|\,|\ul{x}|<\delta\}$.

\begin{dfn} \label{ems}
A {\em Euclidean Morse-Smale pair} on a closed manifold $X$ is a pair $(f,g)$ consisting of a smooth function $f\in\cC^\infty(X,\R)$ and a Riemannian metric $g$ on $X$ satisfying a normal form and transversality condition as follows.
\begin{enumerate}
\item
For each critical point $p\in{\rm Crit}(f)$ there is a local chart $\phi_p: B_{\delta}^{n-|p|}\times B_{\delta}^{|p|}\overset{\sim}{\to} \ti U(p)$ to a neighbourhood $\ti U(p)\subset X$ of $p=\phi_p(0)$ such that
\begin{align}\label{normal}
(\phi_p^* f)(x_1,\ldots,x_n) &= f(p)  + \half ( x_{1}^2 + \ldots + x_{n-|p|}^2 ) - \half( x_{n-|p|+1}^2 + \ldots + x_{n}^2 ), \nonumber \\
(\phi_p^* g) &= \rd x_1 \otimes \rd x_1 + \ldots + \rd x_n \otimes \rd x_n .
\end{align}
\item
For every pair of critical points $p,q\in{\rm Crit}(f)$ the intersection of unstable and stable manifolds is transverse, $W^-_p \pitchfork W^+_q $.
\end{enumerate}
\end{dfn}

\begin{rmk} \label{rmk intro}
\begin{enumerate}
\item
Given any Morse function and metric, there exist $L^2$-small perturbations of the metric on annuli around the critical points that yield Morse-Smale pairs, by \cite[Prp.2]{bh}.
In particular, given a metric of normal form \eqref{normal} near the critical points, such a perturbation yields a Euclidean Morse-Smale pair.
\item
The flow $\Psi_s$ of any Morse-Smale pair is topologically conjugate to the flow $\Psi^0_s$ of a Euclidean Morse-Smale pair. That is, there exists a homeomorphism $h:X\to X$ such that $h\circ\Psi_s = \Psi^0_s\circ h$. We review the proof of this classical result in Remark~\ref{franks}.
\end{enumerate}
\end{rmk}

Assuming this normal form, the first of three goals of this paper is to give a geometrically explicit construction of a natural smooth structure on the Morse trajectory spaces in Theorem~\ref{thm corner}.
Here the specific normal form induces a natural smooth structure on a space of flow lines near each critical point. Deviating from the approach in \cite{bh} we separate this local smoothness issue from the generally smooth Morse flow on the complement of the critical points.
This setup, made precise in Section~\ref{sec:connections}, should also provide a useful framework for constructing smooth structures in infinite dimensional Floer theoretic settings. In fact, a similar setup was used in \cite{km-sw} to construct gluing maps for Seiberg-Witten Floer theory.
For general Morse-Smale pairs one does not expect a natural smooth structure since the evaluation at regular level sets has a singular image. However, any choice of topological conjugation to a Euclidean Morse-Smale flow induces a smooth structure.

Homotopy theoretic applications such as \cite{cjs} require moreover ``associative gluing maps"  near the boundary strata, introduced in detail in Section~\ref{asso}.
While it is a general fact \cite{qin} that manifolds with corners and a certain face structure of the boundary strata can be equipped with associative gluing maps, our second goal is to construct such gluing maps geometrically explicit in order to identify the gluing parameters as transition times through fixed neighbourhoods of critical points.\footnote{
While the pregluing maps that provide basic polyfold charts are evidently associative, it is unclear whether the polyfold setup can induce associative gluing maps on the Morse trajectory spaces. This is since the latter are merely cut out by a transverse section from the polyfold.
}
A precise definition and construction is given in Corollary~\ref{cor:asso} by inverting ``global charts" for the Morse trajectory spaces that are constructed in Theorem~\ref{thm:global}.

The final goal of this paper and main source of technical complications is to extend the above results to compactifications of spaces of half infinite and finite Morse flow lines. This is a natural step in the construction of associative gluing maps. More crucially, these spaces enter into the construction of several holomorphic curve moduli spaces such as trees of holomorphic disks with Morse edges (which yield finitely generated $A_\infty$-algebras associated to Lagrangian submanifolds \cite{jiayong}) and a polyfold theoretic proof of the Arnold conjecture \cite{arnold} based on moduli spaces of punctured spheres with half infinite Morse flow lines as in \cite{PSS}.
In both cases, a polyfold setup can be obtained as fiber product of SFT polyfolds (the main part of which is constructed in \cite{HWZgw}) with the compactified Morse trajectory spaces, if the latter are a priori given a manifold with corner structure with respect to which evaluations at finite ends are smooth maps.
In the first application it is important to isolate the boundary component given by zero length trajectories from all other boundary components given by broken trajectories. However, there are broken trajectories with endpoints near a critical point arbitrarily close in the Hausdorff topology to the zero length trajectory at the critical point.
To separate those boundary components we use the natural blowup construction of including the length of a trajectory in the Morse trajectory space, thus introducing a constant trajectory at the critical point for every length $L\in[0,\infty)$, converging to a broken trajectory with domains $[0,\infty), (-\infty,0]$ as $L\to\infty$.
More generally, we obtain a smooth structure for trajectories starting at or near a critical point (and potentially breaking there) by a similar blowup construction given by a natural variation in the definition of transition times near the critical point.

The following Section~\ref{results} lays out the main results of this paper, in particular the construction of ``global charts" in Theorem~\ref{thm:global with ends} for the Morse trajectory spaces with finite ends of a Euclidean Morse-Smale pair.
Section~\ref{top} establishes basic topological results for the Morse trajectory spaces and evaluation maps and deduces Theorem~\ref{thm corner} from Theorem~\ref{thm:global with ends}.
Section~\ref{near crit} prepares the proof by equipping the Morse trajectory spaces near critical points with a smooth structure and constructing various restriction maps to local trajectory spaces.
Finally, Section~\ref{global} constructs the ``global charts'' of Theorem~\ref{thm:global with ends}.

\medskip

{\it I would like to thank Alberto Abbondandolo and Jiayong Li for helpful discussions, the IAS for inspiring writing atmosphere, and the NSF for financial support.}

\section{Morse trajectory spaces, global charts, and associative gluing}
\label{results}

\subsection{Compactified Morse trajectory spaces} \label{moduli}

This section introduces the infinite, half infinite, and finite length versions of Morse trajectory spaces for a general Morse-Smale pair $(f,g)$.
For distinct critical points $p_-\neq p_+\in{\rm Crit}(f)$ the space of unbroken Morse flow lines is the space of parametrized gradient flow lines $\gamma:\R\to X$ modulo shift in the $\R$-variable,
\begin{align*}
\cM(p_-,p_+) &:= \bigl\{ \gamma:\R\to X \,\big|\, \dot{\gamma}=-\nabla f(\gamma) ,
\lim_{s\to\pm\infty}\gamma(s)=p_\pm  \bigr\} / \R \\
&\simeq \bigl(W^-_{p_-}\cap W^+_{p_+}\bigr) / \R
\;\simeq\; W^-_{p_-}\cap W^+_{p_+} \cap f^{-1}(c) .
\end{align*}
It is canonically identified with the intersection of unstable and stable manifold modulo the $\R$-action given by the flow $\Psi_s$, or their intersection with a level set for any regular value $c\in (f(p_+),f(p_-))$. In either formulation, they carry canonical smooth structures, see e.g.\ \cite[Section~2.4.1]{schwarz}.
We will consider the constant trajectories at a critical point as part of a larger trajectory space below, hence here declare $\cM(p,p):=\emptyset$.
For open subsets $U_-,U_+\subset X$ and critical points $p_-,p_+\in{\rm Crit}(f)$ the spaces of half infinite flow lines
\begin{align*}
\cM(U_-,p_+) &:= \bigl\{ \gamma: [0,\infty) \to X \,\big|\, \dot{\gamma}=-\nabla f(\gamma), \gamma(0)\in U_-, \lim_{s\to\infty}\gamma(s)=p_+  \bigr\} \simeq W^+_{p_+}\cap U_-, \\
\cM(p_-,U_+) &:= \bigl\{ \gamma: (-\infty,0] \to X \,\big|\, \dot{\gamma}=-\nabla f(\gamma), \lim_{s\to-\infty}\gamma(s)=p_-, \gamma(0)\in U_+  \bigr\} \simeq W^-_{p_-}\cap U_+
\end{align*}
inherit smooth structures directly from the unstable and stable manifold.
Finally, the space of finite unbroken flow lines
\begin{align*}
\cM(U_-,U_+) &:= \bigl\{ \gamma: [0,L] \to X \,\big|\, L\in [0,\infty), \dot{\gamma}=-\nabla f(\gamma), \gamma(0)\in U_-, \gamma(L)\in U_+  \bigr\} \\
&\simeq\; {\textstyle \bigcup_{L\in [0,\infty)}} U_- \cap \Psi_L^{-1}(U_+)
\;=\; \bigl( [0,\infty)\times  U_- \bigr) \cap \Psi^{-1}(U_+)
\end{align*}
can be identified with an open subset of $\cM(X,X)\simeq [0,\infty)\times X$ since the flow map $\Psi$ is continuous. Hence it naturally is a smooth manifold with boundary given by constant flow lines. These three types of spaces can contain constant trajectories at a critical point. Note in particular that we do not construct $\cM(X,X)$ by the images of finite Morse flow lines, $\{(x,x')\in X\times X \st x'\in \Psi_{[0,\infty)(x)} \}$ but replace the diagonal critical points $(x,x)$ with $x\in{\rm Crit}(f)$ in this image space by a half infinite interval $[0,\infty]\times\{x\}$ parametrizing the length (in time) of the trajectory.

From the smooth spaces of unbroken flow lines we obtain topological spaces of broken flow lines as follows:
To unify notation we denote by $\cU_\pm\subset X$ a set that is either open $\cU_\pm=U_\pm$ or a set consisting of a single critical point $\cU_\pm=p_\pm$.
For two such subsets $\cU_\pm\subset X$ (of same or different type) we define the set of $k$-fold broken flow lines (also called the {\em $k$-stratum}) by
$$
\bM(\cU_-,\cU_+)_k :=
\bigcup_{(p_1\ldots p_k)\in{\rm Critseq}(f,\cU_-,\cU_+)}
\cM(\cU_-,p_1)\times \cM(p_1,p_2) \ldots \times \cM(p_k,\cU_+),
$$
Here and throughout we use the notation of {\em critical point sequences between $\cU_\pm$}
\[
{\rm Critseq}(f,\cU_-,\cU_+) := \left\{ (p_1,\ldots, p_k) \left|
\begin{array}{l}
k\in\N_0, \;p_1,\ldots, p_k \in {\rm Crit}(f) , \\
 \cM(\cU_-,p_1), \cM(p_1,p_2) \ldots , \cM(p_k,\cU_+)\neq\emptyset
 \end{array}
 \right.\right\} .
\]
To simplify notation we identify $\ul{p}\in{\rm Critseq}(f,\cU_-,\cU_+)$ with the tuple $\ul{p}=(\cU_-,p_1,\ldots, p_k,\cU_+)$, and denote $p_0:=\cU_-$, $p_{k+1}:=\cU_+$.
Critical point sequences form a finite set since they have to decrease in function value.
For $k=0$ we only have the empty critical point sequence and hence
$\bM(\cU_-,\cU_+)_0 = \cM(\cU_-,\cU_+)$.
Now the  {\it Morse trajectory space} is the space of all {\it generalized trajectories},
\begin{align*}
\bM(\cU_-,\cU_+) &:= {\textstyle \bigcup_{k\in\N_0}}
\bM(\cU_-,\cU_+)_k .
\end{align*}
In the following we denote broken flow lines by $\ul{\gamma}=(\gamma_0,\gamma_1\ldots, \gamma_k)\in \bM(\cU_-,\cU_+)_k$ and also write $\ul{\gamma}=\gamma_0\in\bM(\cU_-,\cU_+)_0$ for the unbroken flow lines.
Note here that, by slight abuse of notation, we write $\gamma_i$ instead of $[\gamma_i]$ for the unparametrized flow lines in $\cM(p_i,p_{i+1})$. If $\cU_-$ resp.\ $\cU_+$ is a critical point, then $\gamma_0$ resp.\ $\gamma_k$ is an unparametrized flow line as well, otherwise it is defined on a half interval and hence parametrized.
With this notation we can define the evaluation maps at endpoints
\begin{equation}\label{eval intro}
\ev_-:\bM(X,p_+) \to X, \quad
\ev_+:\bM(p_-,X) \to X, \quad
\ev_-\times \ev_+:\bM(X,X) \to X\times X
\end{equation}
by $\ev_-(\gamma_0,\ldots,\gamma_k)=\gamma_0(0)$ for any $k\in\N_0$,
by $\ev_+(\gamma_0,\ldots,\gamma_k)=\gamma_k(0)$ for $k\geq 1$, and by
$\ev_+(\gamma_0:[0,L]\to X)=\gamma_0(L)$ for a single trajectory $k=0$.

Next, we define a metric on the Morse trajectory spaces
$$
d_\bM(\ul{\gamma},\ul{\gamma}'):= d_\text{Hausdorff}(\overline{\im\ul{\gamma}},\overline{\im\ul{\gamma}'})
+  \big|\ell(\ul{\gamma})- \ell(\ul{\gamma}') \big|
\qquad\text{for}\; \ul{\gamma},\ul{\gamma}'\in\bM(\cU_-,\cU_+) ,
$$
by the Hausdorff distance and the {\it renormalized length}
\begin{equation}\label{length}
\ell : \bM(\cU_-,\cU_+) \to [0,1], \qquad
\ul{\g} \mapsto
\begin{cases}
\tfrac{L}{1+L}  &; \ul{\g} = \bigl(\gamma:[0,L]\to X\bigr) ,\\
1 &; \text{otherwise}.
\end{cases}
\end{equation}
Here the image of a generalized trajectory $\ul{\gamma}=(\gamma_0,\ldots,\gamma_k)$ is the union of the images in $X$ of all constituting flow lines (which is independent of the parametrization),
$$
\im\ul{\gamma}:= \im\gamma_0 \cup \ldots \cup \im\gamma_k \subset X .
$$
The closure $\overline{\im\ul{\gamma}}$ contains in addition the critical points $\lim_{s\to\infty}\gamma_{j-1}=\lim_{s\to-\infty}\gamma_{j}$ for $j=1\ldots k$ as well as $\lim_{s\to-\infty}\gamma_0$ resp.\ $\lim_{s\to\infty}\gamma_{k}$ in case $\cU_-$ resp.\ $\cU_+$ is a single critical point, and hence $\overline{\im\ul{\gamma}}$ is a compact subset of $X$.
We use closures since the Hausdorff distance
$$
d_{\text{Hausdorff}}(V,W) = \max\bigl\{ \sup_{v\in V} \inf_{w\in W} d(v,w) , \sup_{w\in W} \inf_{v\in V}d_X(w,v)  \bigr\},
$$
is a metric on the set of non-empty compact subsets of $X$.

\begin{rmk} \label{definite}
\begin{enumerate}
\item
The length term in $d_\bM$ vanishes on $\bM(\cU_-,\cU_+)$ if at least one of the sets $\cU_\pm$ is a critical point (and hence all lengths are $1$).
\item
The length term is crucial in the case of open sets $\cU_\pm=U_\pm$ containing a critical point $p\in{\rm Crit}(f)\cap U_+\cap U_-$ in their intersection. In that case it provides the topological blowup construction at the trajectories whose image is a critical point.
More precisely, it separates trajectories in $\cM(U_-,U_+)\cup\cM(U_-,p)\times\cM(p,U_+)$ that are constant $\gamma\equiv p$ resp.\ $(\gamma_0\equiv p, \gamma_1\equiv p)$ but of different lengths.
\item
The Hausdorff distance is definite on $\bM(\cU_-,\cU_+)$ except for pairs of trajectories as in (ii) whose image is a critical point.
This is since the critical points in $\overline{\im\ul{\gamma}}$ are uniquely determined by the flow lines, and flow lines are in one-to-one correspondence with their images except for constant trajectories (where the length cannot be read off from the image).
Together with (ii) this shows that $d_\bM$ defines a metric.
\item
The identifications of the spaces of unbroken flow lines as above ($\cM(p_-,p_+)\simeq W^-_{p_-}\cap W^+_{p_+} \cap f^{-1}(c)$, $\cM(X,p_+)\simeq W^+_{p_+}$, $\cM(p_-,X) \simeq W^-_{p_-}$, and $\cM(X,X)\simeq [0,\infty)\times X$) are homeomorphisms with respect to the metric $d_{\bM}$.
This follows from the continuity of the evaluations maps as in Lemma~\ref{lem:cont eval} in one direction, and for the inverse from the continuity of the Morse flow together with the limit conditions.
\end{enumerate}
\end{rmk}

The renormalized length \eqref{length} is continous by definition, and we will establish continuity of the evaluation maps in Lemma~\ref{lem:cont eval}. With that, the Morse trajectory spaces for open sets $U_\pm\subset X$ are open subsets $\bM(U_-,p_+)= \ev_-^{-1}(U_-)$, $\bM(p_-,U_+)= \ev_+^{-1}(U_+)$, $\bM(U_-,U_+)= \ev_-^{-1}(U_-)\cap \ev_+^{-1}(U_+)$ of the Morse trajectory spaces for $U_\pm=X$.
So from now on we can restrict our discussion to the Morse trajectory spaces $\bM(\cU_-,\cU_+)$ for $\cU_\pm= X$ or $\cU_\pm=p_\pm\in{\rm Crit}(f)$.
In each case we will prove the following folk theorem. For reference, we recall the definition of a manifold with corners and its strata.

\begin{dfn}
A {\em smooth manifold with corners of dimension $n\in\N_0$} is a second countable Hausdorff space $M$ together with a maximal atlas of charts $\phi_\iota:M\supset U_\iota \to V_\iota\subset [0,\infty)^n$ (i.e.\ homeomorphisms between open sets such that $\cup_\iota U_\iota = M$) whose transition maps are smooth.
For $k=0,\ldots,n$ the $k$-th stratum $M_k$ is the set of all $x\in M$ such that for some (and hence every) chart the point $\phi_\iota(x)\in [0,\infty)^n$ has $k$ components equal to $0$.
\end{dfn}

\begin{thm} \label{thm corner}
Let $(f,g)$ be a Morse-Smale pair and let $\cU_-,\cU_+$ denote $X$ or a critical point ${\rm Crit}(f)$.
Then $(\bM(\cU_-,\cU_+),d_\bM)$ is a compact, separable metric space and can be equipped with the structure of a smooth manifold with corners. Its $k$-stratum is $\bM(\cU_-,\cU_+)_k$, with one additional $1$-stratum $\{0\}\times X$ given by the length $0$ trajectories in case  $\cU_-=\cU_+=X$.
\end{thm}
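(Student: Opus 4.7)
The plan is to deduce Theorem~\ref{thm corner} from the global chart construction of Theorem~\ref{thm:global with ends} (valid for Euclidean Morse-Smale pairs) together with the topological conjugation of Remark~\ref{rmk intro}(ii), and to verify the topological assertions on $(\bM(\cU_-,\cU_+),d_\bM)$ by a direct argument independent of the normal form.

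First I would reduce to the Euclidean case. By Remark~\ref{rmk intro}(ii) there is a homeomorphism $h\co X\to X$ with $h\circ\Psi_s = \Psi^0_s\circ h$ for some Euclidean Morse-Smale flow $\Psi^0$. Postcomposition with $h$ induces a bijection $h_*\co \bM(\cU_-,\cU_+)\to\bM^0(h(\cU_-),h(\cU_+))$ that preserves the number of breakings (since $h$ restricts to a bijection on critical points by the conjugacy), preserves $\ell$ (since $h$ intertwines the time parametrizations, so parametrized segments of length $L$ are sent to parametrized segments of the same length), and is a Hausdorff-isometry on compact subsets (since $h$ is a homeomorphism of the compact space $X$). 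Hence $h_*$ is a homeomorphism for $d_\bM$, and pulling back the smooth manifold-with-corners structure from Theorem~\ref{thm:global with ends} through $h_*$ equips $\bM(\cU_-,\cU_+)$ with the desired (non-canonical) smooth structure whose strata match $\bM(\cU_-,\cU_+)_k$.

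Second I would verify the topological properties. That $d_\bM$ is a metric is Remark~\ref{definite}, and separability of any compact metric space is automatic. For compactness, given a sequence $\ul\gamma^{(n)}\in\bM(\cU_-,\cU_+)$, I would pass to a subsequence so that $\ell(\ul\gamma^{(n)})\to\ell_\infty\in[0,1]$ and $\overline{\im\ul\gamma^{(n)}}$ converges in the Hausdorff metric (the hyperspace of nonempty compact subsets of $X$ is itself a compact metric space). I would then identify the Hausdorff limit $K$ as $\overline{\im\ul\gamma^{(\infty)}}$ for some $\ul\gamma^{(\infty)}\in\bM(\cU_-,\cU_+)$ by a standard iterative construction that traces the flow from each endpoint, inserting a break whenever the limit reaches a critical point. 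The process terminates after at most $|{\rm Crit}(f)|$ steps because $f$ strictly decreases along each unbroken flow line, so only finitely many critical point sequences arise. Continuity of $\ev_\pm$ (Lemma~\ref{lem:cont eval}) then ensures $\ev_\pm(\ul\gamma^{(\infty)})\in\cU_\pm$, and joint Hausdorff-plus-$\ell$ convergence yields $d_\bM$-convergence. The stratification assertion itself follows from Theorem~\ref{thm:global with ends}: its charts around a $k$-broken trajectory take values in $[0,\infty)^k\times N$ where the first $k$ coordinates are the transition times through fixed small Euclidean neighborhoods of the breaking critical points, so a point lies in the $j$-stratum iff exactly $j$ of these transition times vanish, matching $\bM(\cU_-,\cU_+)_j$. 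When $\cU_-=\cU_+=X$, the identification $\cM(X,X)\simeq[0,\infty)\times X$ of Section~\ref{moduli} contributes the additional $1$-stratum $\{0\}\times X$ of length-zero trajectories, which stays disjoint from the broken strata thanks to the role of $\ell$ in $d_\bM$ noted in Remark~\ref{definite}(ii).

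The main obstacle is the limit extraction in the compactness step, especially for sequences whose image sets collapse onto a single critical point while $\ell(\ul\gamma^{(n)})$ converges to a value in $[0,1]$: absent the length term, such sequences would admit several candidate limits (a length-zero constant trajectory, a positive-length constant trajectory, or a broken trajectory through the critical point), and it is precisely the inclusion of $\ell$ in $d_\bM$ that makes the limit unambiguous and simultaneously separates the $\{0\}\times X$ stratum from the broken-trajectory strata. A secondary subtlety is that the smooth structure obtained by pullback through $h$ is not canonical for non-Euclidean pairs, consistent with the remark preceding Section~\ref{moduli}.
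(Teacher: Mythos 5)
Your proposal follows the paper's overall architecture exactly in the parts that matter for the smooth structure: you reduce to the Euclidean case via topological conjugacy, equip the Euclidean trajectory space with the manifold-with-corners atlas coming from the global charts of Theorem~\ref{thm:global with ends} (with the $k$-stratum read off from the $k$ transition-time coordinates, and the extra $\{0\}\times X$ stratum from $\cM(X,X)\simeq[0,\infty)\times X$ when both ends are free), and pull the structure back by the induced homeomorphism on trajectory spaces. This is precisely what the paper does, including the observation that the resulting structure is non-canonical in the non-Euclidean case.

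Where you diverge is the compactness argument. You propose to pass to a Hausdorff-convergent subsequence of $\overline{\im\ul\gamma^{(n)}}$ in the (compact) hyperspace of nonempty compact subsets of $X$, and then identify the limit set as the closure of the image of a generalized trajectory by ``a standard iterative construction.'' The paper instead invokes the reparametrization Lemma~\ref{lem:rep}: each $\ul\gamma$ is replaced by its $f$-linear parametrization $\Gamma_{\ul\gamma}:[0,1]\to X$, the derivative bound $|\tfrac{\rd}{\rd s}\Gamma^n|\le C_\ep$ away from critical points gives equicontinuity, Arzel\`a--Ascoli yields a $\cC^0$-convergent subsequence, and Lemma~\ref{lem:rep} converts $\cC^0$-convergence back to Hausdorff convergence (citing \cite[Prp.3]{bh} for the infinite-end case). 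Your hyperspace route can be made to work, but the step you label ``standard iterative construction'' is in fact the crux of gradient-flow compactness: one must show that the Hausdorff limit $K$ is connected by a single chain of flow-line segments meeting only at critical points (and not, say, that $K$ acquires spurious components, or fails to satisfy the endpoint conditions). The paper's parametrization trick is exactly the device that makes this identification routine by replacing set-limits with function-limits, so if you pursue your route you should expect to reprove something equivalent to Lemma~\ref{lem:rep}. Two small corrections: $h_*$ is a Hausdorff-\emph{homeomorphism}, not an isometry --- a homeomorphism of a compact metric space is only uniformly bi-continuous on the hyperspace --- and your appeal to ``compact metric spaces are separable'' is fine but logically out of order since you state it before establishing compactness; the paper instead proves separability directly by exhibiting $\bM(\cU_-,\cU_+)$ as a finite union of products of finite-dimensional submanifolds.
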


\begin{rmk}
In the case of a Euclidean Morse-Smale pair the smooth structure on each $\cM(\cU_-,\cU_+)$ will be naturally given by the flow time and evaluation maps at ends \eqref{eval intro} and regular level sets, as detailed in Section~\ref{sec:connections}.
As a consequence, the evaluation maps \eqref{eval intro} and evaluations at regular level sets are smooth maps $\bM(\cU_-,\cU_+)\to X$, as will be shown in Remarks~\ref{rmk:smooth eval}, \ref{rmk:smooth eval 2}.
\end{rmk}

This theorem will be deduced from much stronger constructions of global charts for Euclidean Morse-Smale pairs in the following section. The proof is given at the end of Section~\ref{top}, based on Theorem~\ref{thm:global with ends} and topological conjugacy for general Morse-Smale pairs.

\subsection{Global charts}

Assuming $(f,g)$ to be a Euclidean Morse-Smale pair from now on, we will go beyond Theorem~\ref{thm corner} to construct ``global charts" on ``large open subsets'' of the Morse trajectory spaces.
To state these results we fix a Euclidean normal neighbourhood $\ti U(p)\subset X$ as in Definition~\ref{ems} for each critical point $p\in{\rm Crit}(f)$, a family of neighbourhoods $\ti U_t(p)\subset U(p)$ for $t\in(0,1]$, and a further precompact neighbourhood $U(p)\subset\ti U(p)$.

\begin{rmk}
The highly specific choices
$$
\ti U_t(p)=\phi_p\left\{\left.
(\ul x, \ul y) \in  {\textstyle B^{|p|}_{\frac 12(1+t)\delta}\times B^{n-|p|}_{\frac 12(1+t)\delta}} \,\right|\, |\ul x| |\ul y| <t\delta \right\}, \qquad U(p)=\phi_p( B^{|p|}_{\delta/2}\times B^{n-|p|}_{\delta/2})
$$
are quite important and will be refined in Section~\ref{near crit} such that the neighbourhoods are disjoint for different critical points. Note moreover that we have precompact nesting $\ti U_t(p)\sqsubset \ti U_{t'}(p)$ for $t<t'$, where we write $\sqsubset$ for an inclusion whose closure is compact.
For $t\to 0$ the sets $\ti U_t(p)$ converge in the Hausdorff distance to the union of unstable and stable manifold in $U(p)$.
Moreover, Morse trajectories which intersects $\ti U_1(p)$ traverse the critical level set $ f^{-1}(f(p))$ within $U(p)$ or have an end within $U(p)$.
\end{rmk}

Now for any $t\in(0,1]$ and critical point sequence $\ul{q}=(q_1,\ldots,q_k)\in{\rm Critseq}(f,\cU_-,\cU_+)$ we define the {\em large open subset}
$$
\cV_t(\ul{q}) = \cV_t(\cU_-,q_1,\ldots,q_k,\cU_+) \;\subset\; \bM(\cU_-,\cU_+)
$$
as the subset of those generalized Morse trajectories that intersect the neighbourhoods $\ti U_t(q_i)\subset X$ of each of the critical points $q_1,\ldots, q_k$ and do not intersect any other critical points (other than $\cU_\pm=p_\pm$ in case this denotes a critical point).
A more formal definition of the large open sets $\cV_t(\ul{q})$ will be given in Section~\ref{global}, where we will also choose the $\ti U_t(p)$ sufficiently small to guarantee that $\cV_t(\ul{q})\neq\emptyset$ iff $\ul{q}\in{\rm Critseq}(f;\cU_-,\cU_+)$.
Next, we denote the intersection of the large open subsets with the strata of $\bM(\cU_-,\cU_+)$ by
$$
\cV_t(\ul{q})_m := \cV_t(\ul{q})\cap\bM(\cU_-,\cU_+)_m .
$$
The large open subset associated to the empty critical point sequence is the space of unbroken trajectories $\cV_t(\cU_-,\cU_+)= \cM(\cU_-,\cU_+)$. For general $\ul{q}$ we know that
$\cV_t(\ul{q})_0 = \cV_t(\ul{q})\cap\cM(\cU_-,\cU_+)$
is the intersection with the space of unbroken trajectories (hence carries a natural smooth structure).
Moreover, $\cV_t(\ul{q})_k = \cM(\cU_-,q_1)\times \cM(q_1,q_2) \ldots \times \cM(q_k,\cU_+)$ is the subset of maximally broken trajectories since we do not allow the trajectory to hit critical points other than $p_\pm$ and $q_1,\ldots,q_k$, and hence $\cV_t(\ul{q})_m = \emptyset$ is empty for $m>k$.

The following theorem provides global charts in the case of infinite Morse trajectories $\cU_\pm=p_\pm$, that is homeomorphisms between the large subset $\cV_t(\ul{q})$ and spaces with a fixed smooth structure (as manifold with boundary and corners).
The charts are moreover compatible in three ways: Firstly, the charts are compatible with the given smooth structure on the space of unbroken trajectories $\cM(p_-,p_+)$.
Secondly, they are given by the canonical maps on the maximally broken trajectories in $\cV_t(\ul{q})_k$.
Finally, the charts are compatible with each other in the sense that their transition maps are given by further chart maps on smaller domains. In particular, the transition maps are smooth, hence this induces an atlas for $\bM(p_-,p_+)$ as manifold with boundary and corners.
Moreover, it induces an identification of the boundary strata with products of smaller Morse trajectory spaces and the construction of associative gluing maps in Corollary~\ref{cor:asso}.

\begin{thm} \label{thm:global}
There is a uniform constant $t>0$ such that for every pair $p_\pm\in{\rm Crit}(f)$ there exist homeomorphisms (called {\em global charts})
$$
\phi(\ul{q}): \;
\cV_t(\ul{q}) \;\longrightarrow\;  \cM(p_-,q_1) \times [0,t) \times \cM(q_1,q_2)  \ldots \times [0,t) \times \cM(q_k,p_+)
$$
for every critical point sequence $(q_1,\ldots,q_k)\in {\rm Critseq}(f,p_-,p_+)$ satisfying the following:
\begin{enumerate}
\item
The restriction $\phi(\ul{q})|_{\cV_t(\ul{q})_0}$ is a diffeomorphism
$$
\cV_t(\ul{q})_0  \;\longrightarrow\;
\cM(p_-,q_1) \times (0,t)  \times \cM(q_1,q_2) \ldots \times (0,t) \times \cM(q_k,p_+).
$$
\item
The restriction $\phi(\ul{q})|_{\cV_t(\ul{q})_k}$ is the canonical bijection
\[
\begin{array}{ccc}
\cV_t(\ul{q})_k
 & \longrightarrow &
\cM(p_-,q_1) \times \{0\} \times \cM(q_1,q_2)  \ldots \times \{0\} \times \cM(q_k,p_+) \\
(\gamma_0,\gamma_1,\ldots,\gamma_k) &\longmapsto &
(\gamma_0, 0, \gamma_1, \ldots, 0 , \gamma_k) .
\end{array}
\]
In particular, the global chart for $\ul{q}=(p_-,p_+)$ (with $k=0$) is the identity $\phi(p_-,p_+)={\rm Id}$ on $\cV_t(p_-,p_+)_0=\cV_t(p_-,p_+)=\cM(p_-,p_+)$.
\item
The global charts are compatible as follows:
Let $\ul{q},\ul{Q}\in{\rm Critseq}(f,p_-,p_+)$ be such that
$\ul{Q}=(\ldots, q_i,q'_1,\ldots,q'_\ell, q_{i+1}, \ldots)$ is obtained from $\ul{q}$ by inserting another critical point sequence  $\ul{q}'=(q_i=q'_0, q'_1,\ldots,q'_\ell, q'_{\ell+1}=q_{i+1})\in{\rm Critseq}(f,q_i,q_{i+1})$.
Then we have
$$
\phi(\ul{q})( \cV_t(\ul{q}) \cap \cV_t(\ul{Q}) ) \subset
 \ldots \cM(q_{i-1},q_i)\times [0,t) \times
\cV_t(\ul{q}')_0 \times [0,t)\times \cM(q_{i+1},q_{i+2}) \ldots
$$
and
$$
\phi(\ul{Q})|_{ \cV_t(\ul{q}) \cap \cV_t(\ul{Q}) }
= \bigl({\rm Id}\times \phi(\ul{q}') \times {\rm Id} \bigr) \circ \phi(\ul{q})|_{ \cV_t(\ul{q}) \cap \cV_t(\ul{Q}) } .
$$
That is, the following triangle commutes.
$$
\tri{$ \ldots \cM(q_{i-1},q_i)\times [0,t) \times \underbrace{ \cM(q_i,q'_1)\times (0,t)\times \ldots (0,t) \times \cM(q'_\ell,q_{i+1})} \times [0,t)\times \cM(q_{i+1},q_{i+2}) \ldots  $}
{$ \ldots \cM(q_{i-1},q_i)\times [0,t) \times \cV_t(\ul{q}')_0 \times [0,t)\times \cM(q_{i+1},q_{i+2}) \ldots$}
{$ \cV_t(\ul{q}) \cap \cV_t(\ul{Q})$}
{$ {\rm Id} \times\phi(\ul{q}')\times {\rm Id}$}
{$\phi(\ul{Q})$}
{$\phi(\ul{q})$}
$$
\item
The corner parameters are given explicitly by $e^{-T_i}\in[0,t)$ associated to each $q_i\in{\rm Crit}(f)$ encoding the time $T_i$ for which the trajectory is contained in $U(q_i)$. In particular, $e^{-T_i}=0$ corresponds to the trajectory breaking at $q_i$.
\end{enumerate}
\end{thm}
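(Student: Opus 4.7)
The plan is to construct the global chart $\phi(\ul{q})$ by first constructing its inverse, the \emph{gluing map} $\Phi := \phi(\ul{q})^{-1}$, using the Euclidean normal form in each neighbourhood $U(q_i)$. In $U(q_i)$ the negative gradient flow takes the explicit form $(\ul{x},\ul{y})\mapsto(\ul{x}e^{-s},\ul{y}e^s)$, so any trajectory segment passing through $U(q_i)$ has the form $(\ul{x}_0 e^{-s},\ul{y}_0 e^s)$ and is determined up to reparametrization by three pieces of data: the stable direction $\hat{\ul{x}}_i^- := \ul{x}_0/|\ul{x}_0|\in S^{n-|q_i|-1}$, the unstable direction $\hat{\ul{y}}_i^+ := \ul{y}_0/|\ul{y}_0|\in S^{|q_i|-1}$, and the parameter $s_i := e^{-T_i} = 4|\ul{x}_0||\ul{y}_0|/\delta^2$ encoding the transit time $T_i$ through $U(q_i)$. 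The broken limit $s_i\to 0$ corresponds to the concatenated half-trajectories approaching $q_i$ along $W^+_{q_i}$ in direction $\hat{\ul{x}}_i^-$ and departing along $W^-_{q_i}$ in direction $\hat{\ul{y}}_i^+$; its entry and exit points in $U(q_i)$ are $((\delta/2)\hat{\ul{x}}_i^-,(\delta/2)s_i\hat{\ul{y}}_i^+)$ and $((\delta/2)s_i\hat{\ul{x}}_i^-,(\delta/2)\hat{\ul{y}}_i^+)$ respectively.

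Given data $(\gamma'_0,s_1,\gamma'_1,\ldots,s_k,\gamma'_k)$ in the target space, I extract $\hat{\ul{x}}_i^-$ from $\gamma'_{i-1}$'s approach to $q_i$ along $W^+_{q_i}$ and $\hat{\ul{y}}_i^+$ from $\gamma'_i$'s departure from $q_i$ along $W^-_{q_i}$, both canonically defined from the Euclidean chart. I then form a \emph{preglued trajectory} by inserting in each $U(q_i)$ with $s_i>0$ the explicit Euclidean segment with data $(\hat{\ul{x}}_i^-,\hat{\ul{y}}_i^+,s_i)$, keeping the broken concatenation where $s_i=0$, and following the arcs of $\gamma'_0,\ldots,\gamma'_k$ in the regions between consecutive critical neighbourhoods. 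For $s_i>0$ this preglued configuration is not yet a true flow line: the Euclidean entry point $((\delta/2)\hat{\ul{x}}_i^-,(\delta/2)s_i\hat{\ul{y}}_i^+)$ differs from the entry point $((\delta/2)\hat{\ul{x}}_i^-,0)$ of $\gamma'_{i-1}$ by a correction of order $s_i$ in the unstable direction, and similarly at the exit face. A finite-dimensional implicit function theorem, with invertibility furnished by the Morse--Smale transversality of stable and unstable manifolds along the sequence $(p_-,q_1,\ldots,q_k,p_+)$, produces a unique smooth correction yielding a true trajectory $\Phi(\gamma'_0,s_1,\ldots,\gamma'_k)$. This correction is the \emph{main technical obstacle}, but the Euclidean structure makes the matching problem explicit and finite-dimensional at the entry/exit faces of the $U(q_i)$, avoiding the delicate global analytic estimates needed in the non-Euclidean case.

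Verification of the four properties is then direct. Property (iv) holds by definition, since $s_i=e^{-T_i}$ encodes the transit time. Property (ii) is immediate, because when all $s_i=0$ the preglued trajectory already coincides with the broken trajectory and the correction is trivial. Property (i) follows from the smoothness of the Euclidean gluing formulas and the implicit function correction, combined with the inverse function theorem and the matching dimensions $|p_-|-|p_+|-1$ of source and target. Property (iii) holds because the extraction of $(\hat{\ul{x}}_i^-,\hat{\ul{y}}_i^+,s_i)$ at $q_i$ depends only on the trajectory's behaviour near $U(q_i)$, not on the chosen critical sequence; hence when $\ul{Q}$ refines $\ul{q}$ by inserting $\ul{q}'$, the $\phi(\ul{q})$-coordinate in $\cM(q_i,q_{i+1})$ is further decomposed by $\phi(\ul{q}')$, yielding the commutative triangle. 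Continuity of $\Phi$ in the Hausdorff metric as any $s_i\to 0$ follows from Hausdorff convergence of the Euclidean segments to the broken half-trajectories, making $\phi(\ul{q})=\Phi^{-1}$ a homeomorphism.
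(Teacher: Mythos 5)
Your approach—build the inverse gluing map $\Phi=\phi(\ul{q})^{-1}$ by inserting explicit Euclidean flow segments near each $q_i$ and correcting the mismatch at the entry/exit faces of $U(q_i)$ by a finite-dimensional implicit function theorem—is a genuinely different route from the paper's, which constructs the \emph{forward} chart $\phi(\ul{q})$ directly as the composition of transition-time/evaluation maps (Proposition~\ref{prp:Ev}) with a tubular-neighbourhood projection $\pi(\ul{q})$ onto the maximally broken stratum. Your route is a finite-dimensional pregluing scheme and would plausibly produce \emph{a} family of charts satisfying (i), (ii), (iv). The paper's route avoids the Newton-type correction entirely: the trajectory is recovered as the unique transverse intersection of a fiber $\pi(\ul{q})^{-1}(\ul{\gamma})$ with the slice $\im\iota_{\ul\tau}$ of fixed transit times, so the content that in your scheme is hidden inside ``a unique smooth correction'' is made explicit as the transversality \eqref{req2} and proved by a compactness argument (Lemma~\ref{lem:trans}).

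The genuine gap in your proposal is property (iii), the compatibility/associativity of the charts. Your justification — ``the extraction of $(\hat{\ul{x}}_i^-,\hat{\ul{y}}_i^+,s_i)$ at $q_i$ depends only on the trajectory's behaviour near $U(q_i)$, not on the chosen critical sequence'' — is true of the raw transition data, but it is \emph{not} the statement that needs proving. What (iii) requires is that the coordinate $\gamma'_i\in\cM(q_i,q_{i+1})$ produced by inverting your IFT correction for $\ul{q}$, when further decomposed by $\phi(\ul{q}')$, match the coordinates produced by inverting the (different) IFT correction for $\ul{Q}$. A generic pregluing+Newton scheme does not satisfy this: the correction is determined by a choice of right inverse (or a metric/transverse slice), and different choices for $\ul{q}$ and $\ul{Q}$ give gluing maps that are close but not exactly compatible. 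This is precisely why the paper does not invoke an IFT correction freely, but instead constructs the tubular-neighbourhood projections $\leftexp{(\pm)}\pi^{\cP_-}_{\cP_+}$ \emph{iteratively over the breaking number $b$}, forcing their restriction to neighbourhoods of deeper strata to be given by already-constructed charts (Lemma~\ref{lem:pi}~(iii) and Lemma~\ref{lem:pihat}) and then extending to a global tubular neighbourhood by Lemma~\ref{lem:ext}. Associativity is thereby built in by construction, not observed after the fact; your proposal omits this mechanism entirely, and without it, (iii) does not follow. A secondary but real issue is your last sentence: continuity of $\Phi$ in the Hausdorff metric as $s_i\to 0$ requires the IFT correction to degenerate \emph{uniformly}, which is a nontrivial estimate that you assert rather than supply — whereas the paper's construction gets this for free since no correction term is introduced.
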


For Morse trajectories with one or both ends finite we will obtain very similar charts, except that the natural construction of a global chart for $\cV_t(X,q_1,\ldots)\subset\bM(X,\cU_+)$ using the entry and exit points in $\partial U(q_1)$ does not match smoothly with the natural chart for trajectories with initial point in $U(q_1)$. The latter arises from the normal form \eqref{normal} and reflects the blowup construction at trajectories ending at $q_1$.
The analogous issue arises on $\cV_t(\ldots,q_k,X)\subset\bM(\cU_-,X)$ for trajectories ending in $U(q_k)$.
We could give a less natural smooth construction but would loose the geometric interpretation of the corner parameters. Instead, we have chosen to cover $\cV_t(X,q_1,\ldots)$ as well as $\cV_t(\ldots,q_k,X)$ by separate charts with the following domains.
Given a nonempty critical point sequence $\ul{q}=(q_1,\ldots,q_k)\in{\rm Critseq}(f,\cU_-,\cU_+)$ we cover $\cV_t(\ul{q})$ with one, two, or four open sets of the form
\begin{equation}\label{V with ends}
\cV_t(\cQ_0, q_1,\ldots,q_k, \cQ_{k+1} ) := ( \ev_-^{-1}\times \ev_+^{-1} ) (\cQ_0\times\cQ_{k+1}) \subset \cV_t(\cU_-,q_1,\ldots,q_k,\cU_+) .
\end{equation}
For infinite ends at critical points $\cU_-=p_-$ or $\cU_+=p_+$ we keep $\cQ_0:=p_-$ resp.\ $\cQ_{k+1}:=p_+$. For finite ends $\cU_-=X$ resp.\ $\cQ_{k+1}\subset X$ we introduce a choice of open subsets $\cQ_0\subset X$ resp.\ $\cU_+=X$, in each case allowing two open subsets that cover $X$
\begin{equation}\label{choice}
\cQ_0=X\setminus \overline{U(q_1)}  \;\;\text{or}\;\;  \cQ_0=\tilde U(q_1), \quad\text{resp.}\quad
\cQ_{k+1}=X\setminus \overline{U(q_k)}  \;\;\text{or}\;\; \cQ_{k+1}=\tilde U(q_k) .
\end{equation}
To simplify notation we will also write $\cV_t(\ul{q})$ for $\cV_t(\cQ_0, q_1,\ldots,q_k, \cQ_{k+1})$, viewing the choice of $\cQ_0$ and $\cQ_{k+1}$ as part of the critical point sequence $\ul{q}$.
The above observations on the strata $\cV_t(\ul{q})_m := \cV_t(\ul{q})\cap\bM(\cU_-,\cU_+)_m$ then generalize directly. In particular, $\cV_t(\ul{q})_k = \cM(\cQ_0,q_1)\times \cM(q_1,q_2) \ldots \times \cM(q_k,\cQ_{k+1})$ is the subset of maximally broken trajectories between $\cQ_0$ and $\cQ_{k+1}$.
With this notation we may state the generalization of Theorem~\ref{thm:global} to any combination of finite and infinite ends. We include some more technical details in order to be able to use this exact statement in the iterative proof.
For that purpose we use the normal coordinates to identify $\ti U(q)\simeq \ti B^+_q\times \ti B^-_q$ as product of balls in the stable and unstable manifold $\ti B^\pm_q:=W^\pm_q\cap\ti U(q)$. Then we pull back the Euclidean metric to $\ti U(q)$.

\begin{thm} \label{thm:global with ends}
There is a uniform constant $0<t\leq 1$ such that for every combination of $\cU_\pm=X$ and $\cU_\pm\in{\rm Crit}(f)$ there exist global charts for the open sets $\cV_t(\cQ_0,q_1,\ldots,q_k,\cQ_{k+1})\subset\bM(\cU_-,\cU_+)$ for every critical point sequence $(q_1,\ldots,q_k)\in {\rm Critseq}(f,\cU_-,\cU_+)$ and choice of the open subsets $\cQ_0\subset \cU_-$, $\cQ_{k+1}\subset\cU_+$ from \eqref{choice}. Each global chart is a homeomorphism $\phi(\ul{q})=\phi(\cQ_0,q_1,\ldots,q_k,\cQ_{k+1})$ of the form
$$
\cV_t(\cQ_0,q_1,\ldots q_k,\cQ_{k+1}) \;\overset{\sim}\longrightarrow\;  \cM(\cQ_0,q_1) \times [0,t) \times \cM(q_1,q_2)\times[0,t)  \ldots \times [0,t) \times \cM(q_k,\cQ_{k+1}) ,
$$
with the following blowup construction for trajectories starting near $q_1$ or ending near $q_k$.
\begin{itemize}
\item
In case $\cQ_0=\ti U(q_1)$ the factors $\cM(\cQ_0,q_1)\times [0,t)$ are replaced by
$$
\bigl\{ (\g, E) \in \cM(\ti U(q_1),q_1) \times [0,1+t) \st E|\ev_-(\g)|<t \Delta \bigr\} .
$$
\item
In case $\cQ_{k+1}=\ti U(q_k)$ the factors $[0,t)\times \cM(q_k,\cQ_{k+1})$ are replaced by
$$
\bigl\{ (E,\g) \in [0,1+t)\times \cM(q_k,\ti U(q_k)) \st E|\ev_+(\g)|<t \Delta \bigr\} .
$$
\item
In case $k=1$ and $\cQ_0=\ti U(q_1)=\cQ_2$ the image of the chart $\phi(\ti U(q), q, \ti U(q))$ is
\[
\left\{ (\g_0,E,\g_1) \in \cM(\tilde U(q), q) \times  [0,1]\times \cM(q, \tilde U(q))  \,\left|\,
\begin{aligned}
E |\ev_-(\g_0)| |\ev_+(\g_1)| <t\Delta^2 ,\\
E |\ev_-(\g_0)|, E |\ev_+(\g_1)| <(1+t)\Delta
\end{aligned}
\right.\right\} .
\]
\end{itemize}
Moreover, the global charts satisfy the following:
\begin{enumerate}
\item
The restriction $\phi(\ul{q})|_{\cV_t(\ul{q})_0}$ is a diffeomorphism
$$
\cV_t(\ul{q})_0  \;\longrightarrow\;
\cM(\cQ_0,q_1) \times (0,t)  \times \cM(q_1,q_2) \ldots \times (0,t) \times \cM(q_k,\cQ_{k+1}) .
$$
In case $\cQ_0=\ti U(q_1)$ resp.\ $\cQ_{k+1}=\ti U(q_k)$ this holds with the domains $\{ E>0\}$.
\item
The restriction $\phi(\ul{q})|_{\cV_t(\ul{q})_k}$ is the canonical bijection
\[
\begin{array}{ccc}
\cV_t(\ul{q})_k
 & \longrightarrow &
\cM(\cQ_0,q_1) \times \{0\} \times \cM(q_1,q_2)  \ldots \times \{0\} \times \cM(q_k,\cQ_{k+1}) \\
(\gamma_0,\gamma_1,\ldots,\gamma_k) &\longmapsto &
(\gamma_0, 0, \gamma_1, \ldots, 0 , \gamma_k) .
\end{array}
\]
In particular, the global chart for $\ul{q}=(\cQ_0=\cU_-,\cQ_1=\cU_+)$ (with $k=0$) is the identity $\phi(\cU_-,\cU_+)={\rm Id}$ on $\cV_t(\cU_-,\cU_+)_0=\cM(\cU_-,\cU_+)$.
\item
The global charts are compatible as follows:
\begin{itemize}
\item
Let $\ul{Q}=(\cQ_0,\ldots, q_i,q'_1,\ldots,q'_\ell, q_{i+1}, \ldots,\cQ_{k+1})$ for $0<i<k$, $\ell\geq 1$ be obtained from $\ul{q}=(\cQ_0,\ldots, q_i,q_{i+1}, \ldots,\cQ_{k+1})$ by inserting a critical point sequence $\ul{q}'=(q_i,q'_1,\ldots,q'_\ell,q_{i+1})$. Then we have
\begin{align*}
\qquad\qquad
\phi(\ul{q})( \cV_t(\ul{q}) \cap \cV_t(\ul{Q}) ) &\subset
 \ldots \cM(q_{i-1},q_i)\times [0,2) \times
\cV_t(\ul{q}')_0 \times [0,2)\times \cM(q_{i+1},q_{i+2}) \ldots  ,\\
\phi(\ul{Q})|_{ \cV_t(\ul{q}) \cap \cV_t(\ul{Q}) }
&= \bigl({\rm Id}\times \phi(\ul{q}') \times {\rm Id} \bigr) \circ \phi(\ul{q})|_{ \cV_t(\ul{q}) \cap \cV_t(\ul{Q}) }  .
\end{align*}
\item
Let $\ul{Q}=(\cQ_0',q'_1,\ldots,q'_\ell, q_1, \ldots )$ be obtained from $\ul{q}=(\cQ_0,q_1,\ldots)$ by inserting $\ell\geq 1$ critical points.\footnote{
We allow any choice of end point conditions $\cQ_0',\cQ_0$ depending on $q_1',q_1$.  Note however that the charts have nontrivial intersection only for $(\cQ_0',\cQ_0)= (q_-,q_-)$,
$(X\setminus\overline{U(q'_1)},X\setminus\overline{U(q_1)})$, or $(\ti U(q_1'),X\setminus\overline{U(q_1)})$.
}
Then with $\ul{q}'=(\cQ_0',q'_1,\ldots,q'_\ell,q_1)$ we have
\begin{align*}
\phi(\ul{q})( \cV_t(\ul{q}) \cap \cV_t(\ul{Q}) ) &\subset \cV_t(\ul{q}')_0 \times [0,1)\times \cM(q_1,q_2) \ldots, \\
\phi(\ul{Q})|_{ \cV_t(\ul{q}) \cap \cV_t(\ul{Q}) } &= \bigl(\phi(\ul{q}') \times {\rm Id} \bigr) \circ \phi(\ul{q})|_{ \cV_t(\ul{q}) \cap \cV_t(\ul{Q}) }.
\end{align*}
\item
Let $\ul{Q}=(\ldots, q_k,  q'_1,\ldots, q'_\ell, \cQ_{k+1}')$ be obtained from
$\ul{q}=(\ldots, q_k, \cQ_{k+1})$ by inserting $\ell\geq 1$ critical points.\footnote{
The intersection is nontrivial for $(\cQ_{k+1}',\cQ_{k+1})= (q_+,q_+)$,
$(X\setminus\overline{U(q'_\ell)},X\setminus\overline{U(q_k)})$, or $(\ti U(q_\ell'),X\setminus\overline{U(q_k)})$.
}
Then with $\ul{q}'=(q_k,q'_1,\ldots,q'_\ell,\cQ_{k+1}')$ we have
\begin{align*}
\phi(\ul{q})( \cV_t(\ul{q}) \cap \cV_t(\ul{Q}) ) &\subset \ldots \cM(q_{k-1},q_k)\times [0,1) \times \cV_t(\ul{q}')_0 , \\
\phi(\ul{Q})|_{ \cV_t(\ul{q}) \cap \cV_t(\ul{Q}) } &= \bigl({\rm Id}\times \phi(\ul{q}') \bigr) \circ \phi(\ul{q})|_{ \cV_t(\ul{q}) \cap \cV_t(\ul{Q}) }.
\end{align*}
\end{itemize}
\item
The corner structure, compatibility between charts with different $\cQ_0$ or $\cQ_{k+1}$, and explicit form for trajectories ending near critical points is given explicitly as follows.
\begin{itemize}
\item
For $1\leq i\leq k$ such that $\cQ_0\neq\ti U(q_1)$ in case $i=1$ and $\cQ_{k+1}\neq\ti U(q_k)$ in case $i=k$, the parameter $e^{-T_i}\in[0,t)$ associated to $q_i\in{\rm Crit}(f)$ encodes the time $T_i>-\ln t$ for which the trajectory is contained in $U(q_i)$. In the limit $T_i\to\infty$, the parameter $e^{-T_i}=0$ corresponds to the trajectory breaking at $q_i$.
\item
For $\cQ_0=\ti U(q_1)$ and ($k>1$ or $\cQ_2\neq\ti U(q_1)$), a parameter $e^{-T_1}\in[0,1)$ encodes the length of time $T_1>0$ for which the trajectory is defined and contained in $\Psi_{\R_-}(U(q_1))$, with $e^{-T_1}=0$ corresponding to the trajectory breaking at $q_1$. A parameter $e^{-T_1}\in[1,2)$ with nonpositive time $T_1\leq 0$ encodes the fact that the trajectory intersects $\Psi_{(-\infty,T]}(U(q_1))$ iff $T>-T_1$.\footnote{
This definition of transition time is the crucial part of the blowup construction near trajectories with initial point $q_1$.
The extension to negative transition times is technically useful for the proof.
}
Moreover we have
$$
\qquad\qquad\qquad  \phi(\ul{q})(\ul{\g})=\bigl({\rm pr}_{W^+_{q_1}}(\ev_-(\ul\g)), \ldots \bigr)
\quad\text{and}\quad
\phi(\ul{q})^{-1}(\g, \ldots ) \subset \ev_-^{-1}(\ev_-(\g) \times \ti B^-_{q_1}).
$$
\item
Analogously, for $\cQ_{k+1}=\ti U(q_k)$ and ($k>1$ or $\cQ_0\neq \ti U(q_1)$), we encode the time $T_k>0$ for which the trajectory is defined and contained in $\Psi_{\R_+}(U(q_k))$, with $e^{-T_k}=0$ corresponding to breaking at $q_k$, resp.\ the time $T_k\leq 0$ for which it intersects the closure of $\Psi_{[T_k,\infty)}(U(q_k))$.
Moreover we have
$$
\qquad\qquad\qquad  \phi(\ul{q})(\ul{\g})=\bigl(\ldots, {\rm pr}_{W^-_{q_k}}(\ev_+(\ul\g))\bigr)
\quad\text{and}\quad
\phi(\ul{q})^{-1}( \ldots ,\g) \subset \ev_+^{-1}(\ti B^+_{q_k}\times \ev_+(\g) ).
$$
\item
In case $k=1$ and $\cQ_0=\ti U(q_1)=\cQ_2$ we have
$$
\phi(\ul{q})(\ul{\g})=\bigl({\rm pr}_{W^+_{q_1}}(\ev_-(\ul\g)), e^{-T_1}, {\rm pr}_{W^-_{q_1}}(\ev_+(\ul\g)) \bigr),
$$
where the parameter $e^{-T_1}$ encodes the length of the time interval on which the trajectory is defined. In particular, $e^{-T_1}=0$ corresponds to the trajectory breaking at $q_1$, and $e^{-T_1}=1$ corresponds to the trajectory having length $T_1=0$.
\item
For any nontrivial critical point sequence $(q_1,\ldots,q_k)\in{\rm Critseq}(f,\cU_-,\cU_+)$ and fixed $\cQ_{k+1}\subset\cU_+$ the transition map $\phi(X\setminus\overline{U(q_1)},q_1,\ldots) \circ \phi(\ti U(q_1),q_1,\ldots )^{-1}$ is a diffeomorphism between open subsets of $\cM(\ti U(q_1)\setminus\overline{U(q_1)},q_1) \times [0,2) \times \ldots\cM(q_k,\cQ_{k+1})$ given by the identity on all but the second factor, and the family of linear reparametrizations $E\mapsto E\tfrac{|\ev_-(\g)|}{\Delta}$ for $\g\in \cM(\tilde U(q_1)\setminus\overline{U(q_1)}, q_1)$ in the normal coordinates \eqref{normal}.

For fixed $\cQ_0\subset\cU_-$ the transition map
$\phi(\ldots,q_k,X\setminus\overline{U(q_k)}) \circ \phi(\ldots,q_k,\ti U(q_k) )^{-1}$ is analogously given by $E\mapsto E\tfrac{|\ev_+(\g)|}{\Delta}$ for $\g\in \cM(q_1,\tilde U(q_1)\setminus\overline{U(q_1)})$.
\end{itemize}
\end{enumerate}
\end{thm}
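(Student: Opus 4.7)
The plan is to prove Theorem~\ref{thm:global with ends} by induction on the length $k$ of the critical point sequence $\ul{q}=(q_1,\ldots,q_k)$. The base case $k=0$ is property (ii) applied to $\ul{q}=(\cU_-,\cU_+)$: the chart is the identity on $\cM(\cU_-,\cU_+)$, and property (i) becomes smoothness of this identification. For the inductive step, the compatibility axiom (iii) forces $\phi(\ul{q})$ to factor through the $k=1$ charts at each $q_i$, joined by the induced identifications of the intermediate pieces $\g_i^0\in\cM(q_i,q_{i+1})$. Taking this factorization as the \emph{definition} of $\phi(\ul{q})$ for $k\geq 2$, and verifying well-definedness across the intersections $\cV_t(\ul{q})\cap\cV_t(\ul{Q})$, reduces the entire theorem to a careful analysis of the single-passage case $k=1$.

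The core construction is thus the $k=1$ chart, built from the Euclidean normal form \eqref{normal} and the near-critical machinery of Section~\ref{near crit}. A trajectory $\ul\g\in\cV_t(\cQ_0,q_1,\cQ_2)_0$ enters the neighbourhood $\ti U(q_1)$ at entry coordinates $(\ul x_{\rm in},\ul y_{\rm in})$ and exits at $(\ul x_{\rm out},\ul y_{\rm out})$; the hyperbolic flow $(\ul x,\ul y)\mapsto(e^{-s}\ul x,e^{s}\ul y)$ of \eqref{normal} yields $\ul x_{\rm out}=e^{-T_1}\ul x_{\rm in}$ and $\ul y_{\rm out}=e^{T_1}\ul y_{\rm in}$, so the defining relation of $\ti U_t(q_1)$ gives an explicit expression for $e^{-T_1}$ as a smooth function of the entry/exit data. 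The chart sends $\ul\g$ to $(\g_0^0,e^{-T_1},\g_1^0)$, where $\g_0^0\in\cM(\cQ_0,q_1)$ and $\g_1^0\in\cM(q_1,\cQ_2)$ are obtained by projecting the entry/exit points onto $W^+_{q_1}$ and $W^-_{q_1}$ and flowing out. The inverse glues data $(\g_0,E,\g_1)$ with $E\in[0,t)$ by perturbing $\g_0$ off $W^+_{q_1}$ in the unstable direction by an amount scaled by $E$ and then following the forward flow to meet the ascending piece from $\g_1$. In the blowup cases $\cQ_0=\ti U(q_1)$ or $\cQ_2=\ti U(q_1)$ the initial (resp.\ terminal) point already lies inside $U(q_1)$, so $|\ev_-(\g)|$ may vanish and $T_1$ becomes undefined; the correct replacement is the combined parameter $E=e^{-T_1}\cdot|\ev_-(\g)|/\Delta$ (resp.\ its analogue at the other end), which extends smoothly across trajectories starting or ending at $q_1$ and thereby realizes the topological blowup described in Remark~\ref{definite}.

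The main obstacle is verifying that these charts are homeomorphisms with smooth interior and smooth transition maps, particularly at the corner strata. Continuity of the inverse at $e^{-T_i}=0$ amounts to showing that as the transit time through $U(q_i)$ diverges, the re-glued unbroken trajectories converge in $d_\bM$ to the broken limit: the trajectory image inside $U(q_i)$ collapses onto the critical point in Hausdorff distance while the segments outside $U(q_i)$ converge uniformly to those of the broken trajectory, and the length term $\ell$ is controlled by the renormalization. Smoothness on $\cV_t(\ul{q})_0$ and of the transition maps follows by direct computation from the explicit Euclidean formulas; the essential point is that the exponential encoding $e^{-T_i}$ (and its blowup variant $E\cdot|\ev_\pm(\g)|$) exactly tames the logarithmic singularities that $T_i$ itself would exhibit as the broken stratum is approached. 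In particular, the linear rescalings $E\mapsto E|\ev_\pm(\g)|/\Delta$ stated in (iv) fall out directly as the transition maps between the $\ti U(q_1)$-blowup chart and the $X\setminus\overline{U(q_1)}$ chart. Once the single-passage case is established with all these properties, the compatibility axiom (iii) for arbitrary $k$ propagates by associativity of the block-by-block composition, and the consistency across the different permissible choices of $\cQ_0,\cQ_{k+1}$ is dictated by the transition-map formulas of (iv).
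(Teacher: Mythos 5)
Your inductive strategy on the length $k$ of the critical point sequence has a structural flaw that the paper's construction is specifically designed to avoid: the $k=1$ case is not actually a base case. To build the chart $\phi(q_-,q_1,q_+)$ (with both ends at critical points), one needs to map the entry point of the trajectory on $\ti S^+_{q_1}$ into $\cM(q_-,q_1)$. But the entry point, after projection to $S^+_{q_1}$, lies in $S^+_{q_1}\cap W^-_{q_-}$ only for the maximally broken trajectory; for nearby unbroken trajectories it need not lie on the unstable manifold of $q_-$ at all. Making this a well-defined projection to $\cM(q_-,q_1)$ requires a tubular neighbourhood $\pi^{q_-}_{q_1}:\Gr^{q_-}_{q_1}(t)\to\cM(q_-,q_1)$ in the graph of the connecting flow. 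Your proposal of ``projecting onto $W^+_{q_1}$ and flowing out'' conflates two different projections: the projection ${\rm pr}_{\ti B^+_{q_1}}$ onto the local stable manifold (which is elementary and which the paper does use in Lemma~\ref{lem:local traj}), and the tubular neighbourhood projection to the image of $\cM(q_-,q_1)$ (which is highly nontrivial and is the main content of Section~\ref{global}).

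The deeper problem is circularity. For the associativity (iii) to hold, the tubular neighbourhood $\pi^{q_-}_{q_1}$ is not free to choose: on a punctured neighbourhood of each broken stratum of $\cM(q_-,q_1)$, its value is forced by the charts $\phi(q_-,q'_1,\ldots,q'_\ell,q_1)$ for all critical point sequences $\ul{q}'\in{\rm Critseq}(f,q_-,q_1)$, and those $\ul{q}'$ can have any length $\ell$. So the $k=1$ chart for $(q_-,q_1,q_+)$ depends on charts of arbitrary length $\ell$ for the smaller trajectory spaces $\bM(q_-,q_1)$ and $\bM(q_1,q_+)$. Your induction on $k$ does not terminate. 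The paper escapes this by iterating instead on the breaking number $b(\ul{q})=\max\{b(q_i,q_{i+1})\}$, which does strictly decrease when a nontrivial critical point sequence is inserted between consecutive $q_i,q_{i+1}$: see Lemma~\ref{lem:pi} and Lemma~\ref{lem:pihat}. Within each iteration step the construction proceeds via a fibered product of local trajectory spaces (Lemmas~\ref{lem:local traj},~\ref{lem:local traj +-}) and connecting graph spaces, and the charts for all $k$ with $b(\ul{q})\leq b$ are defined simultaneously. You also omit the transversality-with-corners argument (Lemma~\ref{lem:trans}) that justifies uniqueness of the fiberwise intersection $\im\iota_{\ul\tau}\pitchfork\pi(\ul q)^{-1}(\ul\g)$ when the domains are noncompact, and the extension Lemma~\ref{lem:ext} that produces a globally defined tubular neighbourhood from the prescribed behaviour near the boundary strata; these are where the real analytic work happens, and the explicit Euclidean formulas alone do not substitute for them.
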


\begin{rmk} \label{rmk:gen comp}
A direct consequence of concatenating the commuting triangles in Theorem~\ref{thm:global with ends}~(iii) is the following more general compatibility.
Let $\ul{q}=(\cQ_0,q_1,\ldots q_k,\cQ_{k+1})$ and $\ul{Q}=(\cQ_0',\ldots,\cQ_{k+1}')$ be two tuples of critical point sequences and end conditions such that $\ul{Q}$ is obtained from $\ul{q}$ by changing the end conditions and inserting critical point sequences
$\ul{q}^0= (\cQ_0',q^0_1,\ldots,q^0_{\ell^0},q_1)$, $\ul{q}^1= (q_1,q^1_1,\ldots,q^1_{\ell^1},q_2), \ldots,$ $\ul{q}^k= (q_k,q^k_1,\ldots,q^k_{\ell^k},\cQ_{k+1}')$ with $\ell^0+\ell^1+\ldots+\ell^k\geq 1$.
Then we have
\begin{align*}
\phi(\ul{Q})|_{ \cV_t(\ul{q}) \cap \cV_t(\ul{Q}) }
&= \bigl(\phi(\ul{q}^0)\;\times\;\;{\rm Id}\;\;\times \;\phi(\ul{q}^1) \;\; \ldots \times\;\;{\rm Id}\;\;\times \phi(\ul{q}^k) \bigr) \circ \phi(\ul{q})|_{ \cV_t(\ul{q}) \cap \cV_t(\ul{Q}) } \\
\text{on}\quad
\phi(\ul{q})( \cV_t(\ul{q})\cap \cV_t(\ul{Q}) ) &\subset
\cV_t(\ul{q}^0)_0 \times [0,t) \times \cV_t(\ul{q}^1)_0 \ldots \times [0,t) \times \cV_t(\ul{q}^k)_0 .
\end{align*}
\end{rmk}

The proof of Theorems~\ref{thm:global} and \ref{thm:global with ends} is the main content of this paper in Section~\ref{global}.

\subsection{Associative gluing maps} \label{asso}
Inversion of the compatible global charts gives rise to associative gluing maps.
Here we restrict ourselves to the case of a Euclidean Morse-Smale pair $(f,g)$ and the standard Morse trajectories relevant to \cite{cjs}.
We note the generalization to Morse trajectories with finite ends and general Morse-Smale pairs in Remarks~\ref{rmk:gen glue}, ~\ref{rmk:c0asso}.

\begin{cor} \label{cor:asso}
There is a uniform constant $t>0$ such that for every $p_-,p_+\in{\rm Crit}(f)$ and $\ul{q}=(q_1,\ldots,q_k)\in{\rm Critseq}(f,p_-,p_+)$ there exists a homeomorphism onto its image (called {\em gluing map})
$$
\rho(\ul{q}) : \; \bM(p_-,q_1) \times [0,t) \times \bM(q_1,q_2)  \ldots \times [0,t) \times \bM(q_k,p_+)
\;\longrightarrow\; \bM(p_-,p_+).
$$
that satisfy the following:
\begin{enumerate}
\item
Each $\rho(\ul{q})$ restricts to a smooth map
$$
\cM(p_-,q_1) \times (0,t) \times \cM(q_1,q_2)  \ldots \times (0,t) \times \cM(q_k,p_+) \longrightarrow \cM(p_-,p_+).
$$
\item
Each $\rho(\ul{q})$ restricts to the canonical map
\[
\begin{array}{ccc}
\cM(p_-,q_1) \times \{0\} \times \cM(q_1,q_2)  \ldots \times \{0\} \times \cM(q_k,p_+)
 & \longrightarrow & \bM(p_-,p_+) \\
(\gamma_0, 0, \gamma_1, \ldots, 0 , \gamma_k) &\longmapsto & (\gamma_0,\gamma_1,\ldots,\gamma_k) .
\end{array}
\]
\item
The gluing maps are associative in the following sense:
Let $\ul{q},\ul{Q}\in{\rm Critseq}(f,p_-,p_+)$ be such that
$\ul{Q}=(\ldots, q_j,q'_1,\ldots,q'_\ell, q_{j+1}, \ldots)$ is obtained from $\ul{q}$ by inserting another critical point sequence  $\ul{q}'=(q_j=q'_0, q'_1,\ldots,q'_\ell,q'_{\ell+1}=q_{j+1})\in{\rm Critseq}(f,q_j,q_{j+1})$.
Then we have
$$
\rho(\ul{Q})
=\rho(\ul{q}) \circ \bigl( {\rm Id} \times \rho(\ul{q}') \times {\rm Id} \bigr),
$$
that is the following triangle commutes.
$$
\trii{$\quad\, \bM(p_-,q_1) \times \ldots [0,t) \times \underbrace{\bM(q_j,q'_{1}) \times \ldots \times\bM(q'_{\ell},q_{j+1})} \times [0,t)   \ldots \times \bM(q_k,p_+) $}
{$\,\bM(p_-,q_1) \times \ldots [0,t) \times \bM(q_j,q_{j+1}) \times [0,t)  \ldots \times \bM(q_k,p_+) $}
{$ \bM(p_-,p_+)$}
{${\rm Id} \times\rho(\ul{q}')\times {\rm Id}$}
{$\rho(\ul{Q})$}
{$\rho(\ul{q})$}
$$
\end{enumerate}
\end{cor}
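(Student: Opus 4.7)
The plan is to define the gluing map $\rho(\ul{q})$ for $\ul{q}=(q_1,\ldots,q_k)\in{\rm Critseq}(f,p_-,p_+)$ piecewise as an inverse of an appropriate global chart from Theorem~\ref{thm:global}, and then read off properties (i)--(iii) from the corresponding Theorem~\ref{thm:global}(i)--(iii). I take the uniform constant $t$ given by Theorem~\ref{thm:global}. A point in the domain of $\rho(\ul{q})$ has the form $(\ul\gamma_0,E_1,\ul\gamma_1,\ldots,E_k,\ul\gamma_k)$, where each $\ul\gamma_i\in\bM(q_i,q_{i+1})$ (setting $q_0:=p_-$, $q_{k+1}:=p_+$) carries a unique breaking sequence $\ul{q}^i=(q^i_1,\ldots,q^i_{\ell_i})\in{\rm Critseq}(f,q_i,q_{i+1})$ and decomposes as $\ul\gamma_i=(\gamma^i_0,\ldots,\gamma^i_{\ell_i})$. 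I form the concatenated sequence $\ul{Q}:=(\ul{q}^0,q_1,\ul{q}^1,q_2,\ldots,q_k,\ul{q}^k)\in{\rm Critseq}(f,p_-,p_+)$ and set
$$\rho(\ul{q})(\ul\gamma_0,E_1,\ldots,\ul\gamma_k) \;:=\; \phi(\ul{Q})^{-1}\bigl(\gamma^0_0,\,0,\,\ldots,\,0,\,\gamma^0_{\ell_0},\,E_1,\,\gamma^1_0,\,\ldots,\,E_k,\,\gamma^k_0,\,0,\,\ldots,\,0,\,\gamma^k_{\ell_k}\bigr),$$
where the inserted zeros occupy exactly the $[0,t)$-slots associated to the critical points $q^i_j\in\ul{Q}\setminus\ul{q}$, and the $E_i$ remain in the slots associated to the original $q_i$.

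Since the breaking sequence of a generalized trajectory is uniquely determined, the combinatorial type $(\ul{q}^0,\ldots,\ul{q}^k)$ is a function of the point, so $\rho(\ul{q})$ is unambiguously defined on each stratum. Continuity across changes of combinatorial type (some $E_i$ hitting $0$, or some $\ul\gamma_i$ acquiring new breaks in a limit) follows directly from the compatibility Theorem~\ref{thm:global}(iii): on the overlap of the two relevant $\cV_t$-sets the two candidate inverses are related by $\phi(\ul{Q}_\infty)^{-1}=\phi(\ul{Q})^{-1}\circ(\mathrm{Id}\times\phi(\ul{q}')\times\mathrm{Id})$ where $\ul{q}'$ collects the newly inserted critical points, and pulling the additional $0$-parameters through this identity shows that the two stratum-wise definitions glue continuously. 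That $\rho(\ul{q})$ is a homeomorphism onto its image then follows because the image is contained in $\bigcup_{\ul{Q}\supseteq\ul{q}}\cV_t(\ul{Q})$ and the inverse reads off the unique $\ul{Q}$ together with the parameters $E_i\in[0,t)$ from $\phi(\ul{Q})$. Property~(i) restates Theorem~\ref{thm:global}(i), since on the open stratum where all $\ul\gamma_i$ are unbroken and all $E_i>0$ the definition reduces to $\phi(\ul{q})^{-1}$; and property~(ii) is precisely the canonical bijection from Theorem~\ref{thm:global}(ii).

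The main obstacle is the associativity statement~(iii), which I would prove by verifying that both $\rho(\ul{Q})$ and $\rho(\ul{q})\circ(\mathrm{Id}\times\rho(\ul{q}')\times\mathrm{Id})$, applied to a common point, produce the \emph{same} maximally expanded critical point sequence $\tilde{Q}$ (obtained by inserting all $\ul{q}^i$-breaks together with $\ul{q}'$ and all breaks of the trajectories in the $\ul{q}'$-factor into $\ul{q}$), place the same $E$-parameters in the matching $[0,t)$-slots, and therefore both equal $\phi(\tilde{Q})^{-1}$ applied to the same fully expanded tuple. On the open stratum (all trajectories unbroken, all parameters positive) this reduces to inverting the direct chart-level identity $\phi(\ul{Q})=(\mathrm{Id}\times\phi(\ul{q}')\times\mathrm{Id})\circ\phi(\ul{q})$ of Theorem~\ref{thm:global}(iii). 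Since both sides of the associativity relation are continuous by the previous paragraph and agree on this open stratum, which is dense in the common domain, they agree everywhere by continuity. The hardest technical step in the whole argument is tracking continuity at points where multiple strata meet, i.e.\ iterating several nested compatibility diagrams simultaneously; but each such meeting is governed by a concatenation of the single diagram from Theorem~\ref{thm:global}(iii), which I would handle by induction on the total number of inserted critical points.
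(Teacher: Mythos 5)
Your proposal is correct and takes essentially the same approach as the paper: define $\rho(\ul{q})$ by inverting the global charts from Theorem~\ref{thm:global}, extending stratum-by-stratum via the compatibility (iii) of those charts, and then obtain associativity on the open dense top stratum followed by continuous extension. The paper organizes the continuity argument slightly differently — it writes $\phi(\ul{q})^{-1}=\phi(\ul{Q})^{-1}\circ(\phi(\ul{q}^0)\times{\rm Id}\times\cdots\times\phi(\ul{q}^k))$ on interiors and extends the (manifestly continuous) right-hand side rather than assigning a formula to each stratum and checking limits — and it records explicitly why the inverse is continuous by noting the images of the disjoint strata do not overlap; these are the same observations you gesture at, just arranged more crisply.
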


\begin{rmk} \label{rmk:gen asso}
A direct consequence of concatenating the commuting triangles in Corollary~\ref{cor:asso}~(iv) is the following general associativity:
For any critical point sequences $\ul{q}=(q_1,\ldots q_k)$ and $\ul{Q}=\ul{q}\cup \bigcup_{j=0}^k \ul{q}^j$ as in Remark~\ref{rmk:gen comp} we have
\begin{equation} \label{asso eq}
\rho(\ul{Q}) =\rho(\ul{q}) \circ \bigl(\rho(\ul{q}^0)\times{\rm Id}\times \rho(\ul{q}^1) \ldots \times{\rm Id}\times \rho(\ul{q}^k) \bigr) .
\end{equation}
Moreover, the canonical form in Corollary~\ref{cor:asso}~(iii) generalizes as follows:
Let $L=\{\ell_1,\ldots,\ell_m\}\subset\{1,\ldots,k\}$ be an ordered subset.
Then the restriction of $\rho(\ul{p})$ to the set where $L$ indexes the vanishing gluing parameters,
$$
\bigl\{ (\ul{\gamma}_0,\tau_1,\ul{\gamma}_1,\ldots, \tau_k, \ul{\gamma}_k ) \in \dom\rho(\ul{p}) \st
\tau_{\ell} = 0 \Leftrightarrow \ell\in L \bigr\},
$$
takes values in the subset of trajectories breaking at exactly $p_{\ell_1},\ldots, p_{\ell_m}$,
$$
\rho(\ul{p})\bigl( \bigr\{  \tau_{\ell} = 0 \Leftrightarrow \ell\in L \bigr\}\bigr)
\subset
\cM(p_-,p_{\ell_1}) \times
\cM(p_{\ell_1},p_{\ell_2}) \times \ldots \times \cM(p_{\ell_m},p_+) .
$$
This follows from the canonical form (iii) for $\rho(p_{\ell_1},\ldots, p_{\ell_m})$, expressing $\rho(\ul{p})$ in the form of \eqref{asso eq} with $\ul{q}=(p_{\ell_1},\ldots, p_{\ell_m})$, and property (ii) for the factors $\rho(\ul{q}^j)$.
\end{rmk}

\begin{rmk} \label{rmk:gen glue}
The constructions for Corollary~\ref{cor:asso} also provide further gluing maps for the compactified moduli spaces of types $\bM(X,p_+)$, $\bM(p_-,X)$, and $\bM(X,X)$, which together with the gluing maps for the spaces of type $\bM(p_-,p_+)$ satisfy the general associativity relations.
However, there are different gluing maps for the same critical points but different end conditions. These are related by a reparametrization in the first or last real valued parameter.
For $\bM(X,p_+)$ the elementary gluing maps are
\begin{align*}
&\bM(X\setminus\overline{U(q_1)},q_1) \times [0,t) \times \ldots \times [0,t) \times \bM(q_k,p_+) \;\longrightarrow\; \bM(X\setminus\overline{U(q_1)},p_+) , \\
&\bigl\{ (x, E) \in \cM(\ti U(q_1),q_1) \times [0,1+t) \st E|x|<t \Delta \bigr\} \times \ldots \times [0,t) \times \bM(q_k,p_+) \;\longrightarrow\; \bM(\ti U(q_1),p_+) .
\end{align*}
For $\bM(p_-,X)$ the elementary gluing maps are
\begin{align*}
&\bM(p_-,q_1) \times [0,t) \times \ldots \times [0,t) \times \bM(q_k,X\setminus\overline{U(q_k)}) \;\longrightarrow\; \bM(p_-,X\setminus\overline{U(q_k)}) , \\
& \bM(p_-,q_1) \times [0,t) \times \ldots \times \bigl\{ (E,y) \in [0,1+t)\times \cM(q_k,\ti U(q_k)) \st E|y|<t \Delta \bigr\} \;\longrightarrow\; \bM(p_-,\ti U(q_k)) .
\end{align*}
For $\bM(X,X)$ the elementary gluing maps are
{\fontsize{2.8mm}{3.8mm}\selectfont
\begin{align*}
& \bM(X\setminus\overline{U(q_1)},q_1) \times [0,t) \times  \ldots \times [0,t) \times \bM(q_k,X\setminus\overline{U(q_k)}) \;\longrightarrow\; \bM(X\setminus\overline{U(q_1)},X\setminus\overline{U(q_k)}) ,\\
& \bigl\{ (x,E) \st E|x|<t \Delta \bigr\} \times \bM(q_2,q_3) \times [0,t)  \ldots \times\bM(q_{k-1},q_k)\times \bigl\{ (E,y) \st E|y|<t \Delta \bigr\} \;\longrightarrow\; \bM(\ti U(q_1),\ti U(q_k)) , \\
& \cM(\tilde U(q_1), q_1) \times  [0,1]\times \cM(q_1, \tilde U(q_1))  \supset \bigl\{ E |x| |y| <t\Delta^2 , E|x|, E|y| < (1+t)\Delta \bigr\}
\;\longrightarrow\; \bM(\ti U(q_1),\ti U(q_1)).
\end{align*}}
\hspace{-2mm} Here we use the evaluations $\ev_-:\cM(\ti U(q),q)\overset{\sim}{\to}\ti U(q)$ and
$\ev_+:\cM(q,\ti U(q))\overset{\sim}{\to}\ti U(q)$ to pull back the Euclidean metric on $\ti U(q)$ to the stable and unstable manifold near $q$.
\end{rmk}

\begin{rmk} \label{rmk:c0asso}
The previous constructions can be pulled back by topological conjugacy to the Morse trajectory space for any general Morse-Smale pair.
More precisely, the homeomorphism $h:X\to X$ intertwining a Morse-Smale flow $\Psi_s$ with the flow $\Psi_s^0$ of a Euclidean Morse-Smale pair induces homeomorphisms $h^* : \bM_{\Psi}(\cU_-,\cU_+) \to \bM_{\Psi^0}(\cU_-,\cU_+)$ between the corresponding Morse trajectory spaces, see \eqref{eq:h}.
Conjugation of the gluing maps $\rho$ for $\Psi^0$ with $h^*$ in each component then yields gluing maps for the trajectory spaces $\bM_{\Psi}(\cU_-,\cU_+)$ which satisfy the standard form (ii) and associativity (iii), but may not be smooth in the interior. So the associated global charts for general Morse-Smale pairs may not be compatible with the smooth structure on the unbroken Morse trajectory spaces induced by evaluations.
\end{rmk}

\begin{proof}[Proof of Corollary~\ref{cor:asso}]
Recall that each global chart $\phi(\ul{q})$ has image
$$
\im\phi(\ul{q})
= \cM(p_-,q_1) \times [0,t) \times \cM(q_1,q_2)  \ldots \times [0,t) \times \cM(q_k,p_+) ,
$$
which covers the interior (and some boundary strata) of the domain
$$
\dom\rho(\ul{q}) =
\bM(p_-,q_1) \times [0,t) \times \bM(q_1,q_2)  \ldots \times [0,t) \times \bM(q_k,p_+) .
$$
So we can define $\rho(\ul{q})|_{\im\phi(\ul{q})} := \phi(\ul{q})^{-1}$ for all critical point sequences and pairs $p_-,p_+$ of critical points, and deduce (i) and (ii) from Theorem~\ref{thm:global}~(i) and (ii).
On the further boundary strata of their domains, the gluing maps will be determined by the associativity (iii) and canonical form~(ii).
For example, the trivial critical point sequence $\ul{q}=\emptyset\in{\rm Critseq}(f,p_-,p_+)$ yields $\im\phi(p_-,p_+)=\cM(p_-,p_+)$ and we defined $\rho(p_-,p_+)|_{\cM(p_-,p_+)} :={\rm Id}_{\cM(p_-,p_+)}$. This evidently satsifies (i) and (ii) and has a unique continuous extension to the homeomorphism $\rho(p_-,p_+) :={\rm Id}_{\bM(p_-,p_+)}$.
For general $\ul{q}\in{\rm Critseq}(f,p_-,p_+)$ we also wish to define $\rho(\ul{q})$ as continuous extension of $\phi(\ul{q})^{-1}$. For that purpose we express the domain as disjoint union
$$
\dom\rho(\ul{q}) = {\textstyle \bigsqcup_{\ul{q}^0,\ldots,\ul{q}^k} }
\cV_t(\ul{q}^0)_{\ell_0} \times [0,t) \times \cV_t(\ul{q}^1)_{\ell_1}  \ldots \times [0,t) \times \cV_t(\ul{q}^k)_{\ell_k}
$$
over all $k+1$-tuples of critical point sequences $\ul{q}^j=(q^j_1,\ldots, q^j_{\ell_j})\in{\rm Critseq}(f,q_j,q_{j+1})$; allowing trivial sequences and using the notation $q_0=p_-$, $q_{k+1}=p_+$.
Here each factor
$$
\cV_t(\ul{q}^j)_{\ell_j} =
\cM(q_j,q^j_1)\times \cM(q^j_1,q^j_2) \ldots \times \cM(q^j_{\ell_j},q_{j+1})
\;\subset\; \bM(q_j,q_{j+1})
$$
is the maximally broken stratum of $\cV_t(\ul{q}^j)\subset \bM(q_j,q_{j+1})$. For trivial tuples $\ul{q}^0=\ldots=\ul{q}^k=\emptyset$ we have $\cV_t(\ul{q}^j)= \cM(q_j,q_{j+1})$ and already defined
$
\rho(\ul{q})|_{\cV_t(\ul{q}^0)_{\ell_0} \times [0,t)   \ldots \times \cV_t(\ul{q}^k)_{\ell_k}
} =  \phi(\ul{q})^{-1}$.
If some of the $\ul{q}^j$ are nontrivial then the corresponding component of the domain $\dom\rho(\ul{q})$ has a neighbourhood $\cV_t(\ul{q}^0) \times [0,t)   \ldots \times \cV_t(\ul{q}^k)$. On its interior $\cV_t(\ul{q}^0)_0 \times [0,t)   \ldots \times \cV_t(\ul{q}^k)_0$ the compatibility of global charts for $\ul{Q}:=\ul{q}\cup\bigcup_{j=0}^k \ul{q}^j$ gives
$$
\phi(\ul{q})^{-1}
= \phi(\ul{Q})^{-1}
\circ
\bigl(\phi(\ul{q}^0)\times{\rm Id}\times \phi(\ul{q}^1) \ldots \times{\rm Id}\times \phi(\ul{q}^k) \bigr) .
$$
Here the right hand side extends continuously to the maximally broken stratum $\cV_t(\ul{q}^0)_{\ell_0} \times [0,t)   \ldots \times \cV_t(\ul{q}^k)_{\ell_k}$, so shows that $\rho(\ul{q})=\phi(\ul{q})^{-1}$ continuously extends to this component.
Taking the inverse of this identity also shows that $\rho(\ul{q})^{-1}=\phi(\ul{q})$ extends continuously.
This defines $\rho(\ul{q})$ as continuous map on its entire domain. Its inverse is continuous since we checked continuous extension of $\rho(\ul{q})^{-1}$ to each of the disjoint components, and their images do not overlap since the image of the component corresponding to tuples $\ul{q}^0,\ldots,\ul{q}^k$ consists exactly of those trajectories that break at all of these critical points and a subset of $\ul{q}$.

Finally, with this definition of the gluing maps, the above identity becomes
$$
\rho(\ul{q})|_{ \cV_t(\ul{q}^0) \times [0,t)   \ldots \times \cV_t(\ul{q}^k) } = \rho(\ul{Q})
\circ
\bigl(\rho(\ul{q}^0)\times{\rm Id}\times \rho(\ul{q}^1) \ldots \times{\rm Id}\times\rho(\ul{q}^k) \bigr)^{-1} .
$$
The domain of this identity is dense in the image of $\rho(\ul{q}^0)\times{\rm Id}\times \rho(\ul{q}^1) \ldots \times{\rm Id}\times\rho(\ul{q}^k)$, hence precomposition and continuous extension prove the general associativity \eqref{asso eq}.
\end{proof}

\section{Geometry and topology of Morse trajectory spaces}
\label{top}

This section reviews various geometric and topological constructions on Morse trajectory spaces. In particular, we introduce evaluations and reparametrizations of Morse trajectories, compare different metrics on the Morse trajectory spaces, prove the topological content of Theorem~\ref{thm corner}, and show how the manifold with corner structure is induced by the global charts in Theorem~\ref{thm:global with ends} and topological conjugacy. We fix a Morse-Smale pair $(f,g)$ and begin by introducing some restricted spaces of Morse trajectories.

\begin{dfn} \label{def intersecting}
Let $\cU_\pm\subset X$ be open sets or single critical points, and let $V_1,\ldots, V_k, W\subset X$ be open subsets. We denote the set of trajectories intersecting all $V_i$ by
$$
\bM(\cU_-,\cU_+;V_1,\ldots,V_k) := \bigl\{ \ul{\gamma}\in \bM(\cU_-,\cU_+) \st \im\ul{\gamma}\cap V_i \neq\emptyset \;\forall i=1,\ldots,k \bigr\},
$$
and we denote the set of trajectories additionally contained in $W$ by
$$
\bM(\cU_-,\cU_+;V_1,\ldots,V_k;W) := \bigl\{ \ul{\gamma}\in \bM(\cU_-,\cU_+; V_1,\ldots,V_k) \st \im\ul{\gamma}\subset W \bigr\}.
$$
\end{dfn}

The openness of these subsets follows from the definition of Hausdorff distance.

\begin{lem} \label{lem intersecting}
The subsets $\bM(\cU_-,\cU_+;V_1,\ldots,V_k;W) \subset \bM(\cU_-,\cU_+)$  are open.
\end{lem}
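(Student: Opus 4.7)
I will prove openness by showing, for each $\ul{\gamma}$ in the subset, that a sufficiently small $d_\bM$-ball about $\ul{\gamma}$ is again contained in the subset. Since the subset is cut out by finitely many open conditions on the image, I treat the intersection conditions $\im\ul{\gamma}\cap V_i\neq\emptyset$ and the containment $\im\ul{\gamma}\subset W$ separately and take the minimum of the resulting radii. Throughout, I rely on the Hausdorff component of $d_\bM$, and use that $\overline{\im\ul{\gamma}}\setminus\im\ul{\gamma}$ is always a finite subset of ${\rm Crit}(f)$ --- the limit endpoints of the flow-line constituents.

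For each intersection condition I would fix a witness $x_i\in\im\ul{\gamma}\cap V_i$ and $r_i>0$ with $B(x_i,r_i)\subset V_i$. Any $\ul{\gamma}'$ with $d_{\text{Hausdorff}}(\overline{\im\ul{\gamma}},\overline{\im\ul{\gamma}'})<r_i$ admits a point $x'_i\in\overline{\im\ul{\gamma}'}$ of distance less than $r_i$ from $x_i$, hence $x'_i\in V_i$. If $x'_i\in\im\ul{\gamma}'$ we are done; otherwise $x'_i$ is one of the limit critical points in $\overline{\im\ul{\gamma}'}\setminus\im\ul{\gamma}'$, and openness of $V_i$ then produces flow-line points of $\ul{\gamma}'$ arbitrarily close to $x'_i$ in $\im\ul{\gamma}'\cap V_i$.

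For the containment condition I would use compactness of $\overline{\im\ul{\gamma}}$ together with openness of $W$. In the generic situation that every limit critical point in $\overline{\im\ul{\gamma}}\setminus\im\ul{\gamma}$ already lies in $W$, the whole closure $\overline{\im\ul{\gamma}}\subset W$ is compact inside the open set $W$, so some $\epsilon$-neighbourhood $N_\epsilon(\overline{\im\ul{\gamma}})$ is contained in $W$; any $\ul{\gamma}'$ with $d_{\text{Hausdorff}}<\epsilon$ then satisfies $\im\ul{\gamma}'\subset\overline{\im\ul{\gamma}'}\subset N_\epsilon(\overline{\im\ul{\gamma}})\subset W$ as required.

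The main obstacle is the remaining case in which some limit critical point $p\in\overline{\im\ul{\gamma}}\setminus\im\ul{\gamma}$ lies outside $W$, so $\overline{\im\ul{\gamma}}$ escapes $W$ even though $\im\ul{\gamma}$ does not. I would argue by contradiction: a sequence $\ul{\gamma}^n\to\ul{\gamma}$ with $y^n\in\im\ul{\gamma}^n\setminus W$ produces, by compactness of $X$ and Hausdorff convergence, a subsequential limit $y\in\overline{\im\ul{\gamma}}\setminus W$ which by the above analysis must equal such an excluded $p$. The hardest part of the argument is to rule this out: I would exploit the hyperbolic local structure of the Morse flow near $p$ (via topological conjugacy to the Euclidean normal form as recalled in Remark~\ref{franks}) together with the fact that the flow-line constituents of $\ul{\gamma}$ approaching $p$ lie in the open set $W$ and hence admit a tubular neighbourhood in $W$; this neighbourhood should be wide enough, thanks to the local product structure at $p$, to absorb the images of the approximating $\ul{\gamma}^n$ in a Hausdorff neighbourhood of $p$, contradicting the assumption $y^n\notin W$.
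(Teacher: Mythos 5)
Your treatment of the intersection conditions $\im\ul{\gamma}\cap V_i\neq\emptyset$ is correct, and your ``generic'' treatment of the containment $\im\ul{\gamma}\subset W$ via compactness of $\overline{\im\ul{\gamma}}$ inside the open set $W$ is exactly right. The gap is the remaining case you flag, and the dynamical argument you sketch for it cannot be made to work --- because in that case the subset is in fact \emph{not} open with the literal reading $\im\ul{\gamma}\subset W$.

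Concretely: take Euclidean normal coordinates near the offending critical point $p$ in which the flow is $\Psi_t(x,y)=(e^{-t}x,e^{t}y)$, and let $W=X\setminus C$ where $C$ is (inside the chart) the closed arc $\{(s,s^2):0\le s\le\epsilon\}$ through $p=(0,0)$; note $C$ meets $W^+_p\cup W^-_p$ only at $p$. If $\ul{\gamma}$ breaks at $p$, coming in along the positive $x$-axis and leaving along the positive $y$-axis, then $\im\ul{\gamma}\subset W$. But any unbroken flow line $\gamma'$ Hausdorff-close to $\ul{\gamma}$ traces out, near $p$, a branch of the hyperbola $\{xy=c\}$ with $c>0$ small, and this hyperbola meets $C$ at $(c^{1/3},c^{2/3})$, so $\im\gamma'\not\subset W$. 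No ``wide enough'' tubular neighbourhood of $(W^+_p\cup W^-_p)\setminus\{p\}$ inside $W$ exists: any such neighbourhood pinches to zero width at $p$, while the hyperbolas do not, so the local hyperbolic structure cannot rescue the argument.

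The resolution is that the paper's one-line justification (``follows from the definition of Hausdorff distance'') only makes sense if the containment is read on the closure, $\overline{\im\ul{\gamma}}\subset W$. Under that reading your generic case is the only case and the lemma is immediate: $\overline{\im\ul{\gamma}}$ compact in the open set $W$ has an $\epsilon$-neighbourhood inside $W$, and then $d_\text{Hausdorff}(\overline{\im\ul{\gamma}},\overline{\im\ul{\gamma}'})<\epsilon$ forces $\overline{\im\ul{\gamma}'}\subset W$. This is also the reading the paper needs in Remark~\ref{rmk:Vt}, where the condition with $W=X^*$ is supposed to exclude breaking at unlisted critical points --- which the literal condition $\im\ul{\gamma}\subset X^*$ would not do, since breaking points lie in $\overline{\im\ul{\gamma}}$ but not in $\im\ul{\gamma}$. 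So the obstacle you identified is genuine, but it should be dissolved by correcting the condition to refer to the closure, not overcome by dynamics.
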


Next, we define the evaluation at regular level sets and other local slices to the flow. Let $H\subset X$ be a submanifold of codimension $1$ whose closure is transverse to $\nabla f$ (i.e.\ $\nabla f$ is nowhere tangent to $H$), and such that $\Psi_{\R_+}(H) \cap H = \emptyset$, where $\R_+=(0,\infty)$.
Then $\Psi_{\R_-}(H)\subset X$ and $\Psi_{\R_+}(H)\subset X$ are open sets
and we can define the evaluation map
\begin{equation}\label{eval H}
\ev_H: \bM(\cU_-,\cU_+; \Psi_{\R_-}(H) , \Psi_{\R_+}(H) ) \to H ,
\quad \ul{\gamma} \mapsto \im\ul{\gamma}\cap H .
\end{equation}
for all trajectories that intersect $H$ but don't end there.
Furthermore, we recall the evaluation maps at endpoints from \eqref{eval intro},
\begin{equation}\label{eval}
\ev_-:\bM(X,p_+) \to X, \quad
\ev_+:\bM(p_-,X) \to X, \quad
\ev_-\times \ev_+:\bM(X,X) \to X\times X
\end{equation}
given by $\ev_-(\gamma_0,\ldots,\gamma_k)=\gamma_0(0)$ for any $k\in\N_0$,
by $\ev_+(\gamma_0,\ldots,\gamma_k)=\gamma_k(0)$ for $k\geq 1$, and by
$\ev_+(\gamma_0:[0,L]\to X)=\gamma_0(L)$ for $k=0$.
We will show below that these are continuous, and hence the Morse trajectory spaces
$\bM(\cU_-,p_+)= \ev_-^{-1}(\cU_-)$,
$\bM(p_-,\cU_+)= \ev_+^{-1}(\cU_+)$,
and
$\bM(\cU_-,\cU_+)= \ev_-^{-1}(\cU_-)\cap \ev_+^{-1}(\cU_+)$
for open sets $\cU_\pm\subset X$ are open subsets of the Morse trajectory spaces for $\cU_\pm=X$.

\begin{lem} \label{lem:cont eval}
The evaluation maps \eqref{eval} and \eqref{eval H} are continuous with respect to the Hausdorff distance.
When restricted to the subsets of unbroken trajectories
$\cM(p_-,p_+)$, $\cM(X,p_+)$, $\cM(p_-,X)$, resp.\ $\cM(X,X)$,
the evaluation maps are smooth.
In fact, $\ev_H: \cM(p_-,p_+)\supset {\rm dom}(\ev_H) \to H$, $\ev_-: \cM(X,p_+) \to X$,  $\ev_+:\cM(p_-,X)\to X$, and $\ev_-\times \ev_+ :\cM(X,X)^* \to X \times X$ are embeddings, where $\cM(X,X)^*$ denotes the nonconstant trajectories.
\end{lem}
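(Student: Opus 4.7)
\textbf{Continuity via argmax/argmin.} The plan for the endpoint evaluations $\ev_\pm$ on $\bM(X,p_+)$, $\bM(p_-,X)$, and $\bM(X,X)$ is to characterize $\ev_-(\ul\gamma)$ as the unique maximizer and $\ev_+(\ul\gamma)$ as the unique minimizer of $f$ on the compact set $\overline{\im\ul\gamma}$. Uniqueness comes from $f$ strictly decreasing along every nonconstant segment of a flow line, giving the strict chain $f(\gamma_0(0))>f(q_1)>\dots>f(q_k)>f(\ev_+(\ul\gamma))$ along any (possibly broken) nonconstant generalized trajectory. If $\ul\gamma_n\to\ul\gamma$ in $d_\bM$, then $\overline{\im\ul\gamma_n}\to\overline{\im\ul\gamma}$ in Hausdorff distance, so continuity of $f$ gives $\max_{\overline{\im\ul\gamma_n}}f\to\max_{\overline{\im\ul\gamma}}f$, and any subsequential limit of the maximizers $\ev_-(\ul\gamma_n)$ sits in $\overline{\im\ul\gamma}$ and attains the maximum; uniqueness then forces it to equal $\ev_-(\ul\gamma)$. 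Degenerate cases (constant trajectories at critical points, and zero length trajectories in $\bM(X,X)$) are handled directly since $\overline{\im\ul\gamma_n}$ collapses to a single point on which $\ev_\pm$ coincide. For $\ev_H$, the transversality $\overline H\pitchfork\nabla f$ forces $\overline H\cap{\rm Crit}(f)=\emptyset$, and combined with $\Psi_{(0,\infty)}(H)\cap H=\emptyset$ this makes $\overline H\cap\overline{\im\ul\gamma}=\{\ev_H(\ul\gamma)\}$ a single transverse crossing; the same subsequential argument then yields $\ev_H(\ul\gamma_n)\to\ev_H(\ul\gamma)$.

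\textbf{Smoothness and embedding on unbroken strata.} I will unwind the canonical identifications from Remark~\ref{definite}(iv): $\cM(p_-,p_+)\simeq W^-_{p_-}\cap W^+_{p_+}\cap f^{-1}(c)$, $\cM(X,p_+)\simeq W^+_{p_+}$, $\cM(p_-,X)\simeq W^-_{p_-}$, and $\cM(X,X)\simeq[0,\infty)\times X$. In each case the smooth structure on $\cM$ is defined exactly so that the distinguished evaluation is the identification, which is a diffeomorphism onto a smooth submanifold of $X$ (or $X\times X$) and so a fortiori a smooth embedding. For a general transverse slice $H$, I would write $\ev_H=\Psi_{T(\cdot)}\circ\ev_{f^{-1}(c)}$, where the unique crossing time $T$ depends smoothly on the trajectory by the implicit function theorem applied to the transverse intersection with $H$; this exhibits $\ev_H$ as a diffeomorphism composed with a smooth injective immersion onto $W^-_{p_-}\cap W^+_{p_+}\cap H$. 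For $\ev_-\times\ev_+$ on $\cM(X,X)^*\simeq(0,\infty)\times(X\setminus{\rm Crit}(f))$ the identification becomes $(L,x)\mapsto(x,\Psi_L(x))$, which is manifestly smooth and injective on nonconstant trajectories, and whose Jacobian has full rank $n+1$ precisely when $\nabla f(x)\neq 0$; combining with continuity of the inverse (which follows from the argmax/argmin step and the absence of recurrences in gradient flow) yields the embedding property.

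\textbf{Main obstacle.} The main technical subtlety will be verifying uniqueness of the $f$-maximizer together with stability of the argmax under Hausdorff convergence in the boundary cases: broken critical points merging toward an endpoint in the limit, constant trajectories supported at critical points, and the interplay of the length term in $d_\bM$ with the Hausdorff term on constant trajectories at critical points (the blowup situation of Remark~\ref{definite}(ii)). Once these degeneracies are dispatched case-by-case and the argmax characterization is in place, the smoothness and embedding statements reduce formally to the identifications already recorded in the text.
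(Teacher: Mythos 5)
Your argmax/argmin characterization of the endpoint evaluations is a genuinely different — and arguably cleaner — route to continuity than the paper's. The paper shows $f(e_i)\to f(e)$ via level-set estimates and then deduces $g_i\to e$ from strict monotonicity of $f$ along the flow; you instead invoke the standard fact that if compact sets $K_n\to K$ in Hausdorff distance and a continuous $g$ has a unique maximizer on $K$, then the maximizers on $K_n$ converge. Both work, and the uniqueness of the $f$-extremizer on $\overline{\im\ul\gamma}$ holds in all the degenerate cases (constant and zero-length trajectories collapse to a point, and breakings at critical points still produce a strictly decreasing chain after the initial constant piece, if any). The smoothness and embedding statements you reduce to the same canonical identifications the paper uses, which is fine; your remark about the full-rank Jacobian of $(L,x)\mapsto(x,\Psi_L(x))$ on the nonconstant stratum is correct.

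There is, however, a real gap in your treatment of $\ev_H$. You claim that transversality of $\overline H$ to $\nabla f$ together with $\Psi_{(0,\infty)}(H)\cap H=\emptyset$ forces $\overline H\cap\overline{\im\ul\gamma}$ to be the single point $\ev_H(\ul\gamma)$; but the hypothesis $\Psi_{\R_+}(H)\cap H=\emptyset$ only controls $H$, not its closure, so the frontier $\partial H=\overline H\setminus H$ can in principle meet $\im\ul\gamma$ at a second point. In that case a subsequential limit of $\ev_H(\ul\gamma_n)$ lands in $\overline H\cap\overline{\im\ul\gamma}$, but your argument cannot distinguish the intended point $\ev_H(\ul\gamma)\in H$ from a grazing point on $\partial H$. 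The paper avoids this by localizing: it uses the flow box diffeomorphism $(-\delta,\delta)\times H\simeq\cN_\delta$ around $x_0=\ev_H(\ul\gamma)$, observes that any nearby trajectory must enter the ball $B_\Delta(x_0)\subset\cN_\delta$, and reads off the unique intersection with $H$ inside $\cN_\delta$ together with a quantitative estimate. Your global subsequential argument needs to be replaced by (or supplemented with) this local product-coordinate argument to close the case. Separately, a minor imprecision: the continuity of the inverse of $\ev_-\times\ev_+$ on $\cM(X,X)^*$ does not follow "from the argmax/argmin step" (that step concerns the forward map); what you actually need is that $L_n\to L$ whenever $x_n\to x$ non-critical and $\Psi_{L_n}(x_n)\to\Psi_L(x)$, which follows from the absence of recurrences and strict $f$-monotonicity, as you mention, but on its own rather than as a corollary of the argmax step.
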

\begin{proof}
We show continuity in \eqref{eval} representatively for $\ev_+:\bM(X,X) \to X$ at a fixed $\ul{\gamma}\in\bM(X,X)$ with $\ev_+(\ul{\gamma})=:e$.
Note that we drop the length term from the metric $d_\bM$ and work with the weaker Hausdorff pseudometric $d_H\leq d_\bM$.
Consider $\ul{\gamma}_i\in\bM(X,X)$ with $d_H(\ul{\gamma}_i,\ul{\gamma})\to 0$ and
$\ev_+(\ul{\gamma}_i)=:e_i\in X$.
By assumption we have $d_X(e_i,\im\ul{\gamma})\to 0$, so there exist $g_i\in\im\ul{\gamma}$ such that $d_X(e_i,g_i)\to 0$. By uniform continuity of $f$  (on the compact $X$) that also implies $|f(e_i)-f(g_i)|\to 0$.
On the other hand, we claim that $F_i:=f(e_i)\to F:=f(e)$. Indeed, for those $i\in\N$ with $F_i<F$ we have
$$
d_X( f^{-1}(F_i) , f^{-1}[F,\infty) ) \leq d_X(e_i, \im\ul{\gamma})  \to 0
$$
since $f(\im\ul{\gamma})\subset [F,\infty)$; and similarly for those $i\in\N$ with $F_i>F$ we have
$$
d_X( f^{-1}[F_i,\infty) , f^{-1}(F) ) \leq d_X(e, \im\ul{\gamma}_i)  \to 0 .
$$
Since the level sets and superlevel sets of $f$ are compact, this implies $F_i\to F$.
Putting things together we have $g_i\in\im\ul{\gamma}$ with $f(g_i)\to f(e)$, which implies $g_i\to e$ since $f$ decreases monotone along the concatenation of flow lines in $\ul{\gamma}$. The previously established $d_X(e_i,g_i)\to 0$ now implies $e_i\to e$, which proves continuity.

The spaces of unbroken trajectories $\cM(X,p_+)$ resp.\ $\cM(p_-,X)$ inherit their smooth structure from the evaluation maps $\ev_-$ resp.\ $\ev_+$, making the restrictions $\ev_-|_{\cM(X,p_+)}$, $\ev_+|_{\cM(p_-,X)}$ embeddings by definition.
The space of unbroken trajectories $\cM(X,X)$ inherits its smooth structure from the evaluation map $\ev_-$ together with the length,
\begin{align*}
\cM(X,X) = \bigl\{ \gamma: [0,L] \to X \,\big|\, L\in [0,\infty), \dot{\gamma}=-\nabla f(\gamma) \bigr\}
\to [0,\infty)\times  X  , \quad
\gamma \mapsto (L, \gamma(0) ) .
\end{align*}
That is, this map is an embedding by definition. In particular, $\ev_-|_{\cM(X,X)}$ is smooth. The second evaluation $\ev_+|_{\cM(X,X)}$ is smooth since in the above global chart of $\cM(X,X)$ it corresponds to the smooth Morse flow $[0,\infty)\times  X \to X$, $(L,x_0)\mapsto \Psi_L(x_0)$.
The product $(\ev_-\times\ev_+)|_{\cM(X,X)}$ is the composition of the above embedding with the map
$[0,\infty)\times  X \to X\times X$, $(L,x_0)\mapsto (x_0,\Psi_L(x_0))$, which is an embedding on the complement of $[0,\infty)\times{\rm Crit}(f)$, corresponding to the constant trajectories in $\cM(X,X)$.

The proof of continuity in \eqref{eval H} is somewhat more technical.
We fix a generalized trajectory $\ul{\gamma}=(\gamma_0,\ldots,\gamma_k)$ and note that due to the transversality of $H$ and $\nabla f$, the intersection point $\im\ul{\gamma}\cap H = \ev_H(\ul{\gamma}) =:x_0$ cannot be a critical point of $f$.
Moreover, the gradient flow provides a diffeomorphism
$$
(-\delta,\delta)\times H \; \overset{\sim}{\longrightarrow} \Psi_{(-\delta,\delta)}(H) =:\cN_\delta \subset X , \qquad (s,x) \longmapsto \Psi_s(x)
$$
such that any generalized trajectory $\ul{\gamma}'\in\bM(\cU_-,\cU_+; \Psi_{\R_-}(H) , \Psi_{\R_+}(H))$ has the intersection $\im\ul{\gamma}'\cap \cN_\delta \simeq I\times\{y\}$ for $y=\ev_H(\ul{\gamma}')\in H$ and an interval $I\subset(-\delta,\delta)$ containing~$0$.
Moreover, $\cN_\delta$ will contain a neighbourhood $B_\Delta(x_0)\subset X$ of radius $\Delta>0$

Now if $\ul{\gamma}'$ has Hausdorff distance $d_H(\ul{\gamma}',\ul{\gamma})\leq\ep$, then it has to pass by $x_0$ within  distance $d_X(\im\ul{\gamma}',x_0)\leq\ep$.
Since $\im\ul{\gamma}'\setminus\cN_\delta$ is contained in the complement of the ball $B_\Delta(x_0)$ we can ensure by choosing $\ep<\Delta$ that
$d_X(\im\ul{\gamma}'\cap\cN_\delta,x_0)\leq d_{\rm Hausdorff}(\ul{\gamma}',\ul{\gamma})$.
In the following we will use the product metric $d_{\R\times H}$ on $\cN_\delta$, which on $B_\Delta(x_0)$ is equivalent to $d_X$ with a constant $C$.
Then we obtain continuity
$$
d_X(y,x_0) \leq d_{\R\times H}( \im\ul{\gamma}'\cap\cN_\delta, x_0)
\leq C d_X(\im\ul{\gamma}'\cap\cN_\delta,x_0) \leq C d_{\rm Hausdorff}(\ul{\gamma}',\ul{\gamma}) .
$$

Finally, we need to check the smoothness of the evaluation map $\ev_H$ on unbroken trajectories.
When one or both of $\cU_\pm\subset X$ are open sets, then this domain
$\cM(\cU_-,\cU_+; \Psi_{\R_-}(H) , \Psi_{\R_+}(H) )_0$
is simply an unbroken trajectory space.
Let us denote the restricted open subsets in these cases by $\cU_\pm^H:=\cU_\pm \cap \Psi_{\R_\pm}(H)$,  then the evaluation map is given as follows:
$$
\ev_H : \cM(\cU_-^H , p_+ ) \simeq \cU_-^H \cap W^+_{p_+} \to H, \quad
x \mapsto \Psi_T(x) ;
$$
$$
\ev_H : \cM(p_-, \cU_+^H ) \simeq \cU_+^H \cap W^-_{p_-} \to H, \quad
x \mapsto \Psi_T(x) ;
$$
$$
\ev_H : \cM( \cU_-^H , \cU_+^H ) \simeq \bigl( [0,\infty)\times \cU_-^H \bigr) \cap \Psi^{-1}(\cU_+^H)
\to H, \quad
(L,x) \mapsto   \Psi_T(x) ;
$$
where in each case $T\in\R$ is the solution of $\Psi(T,x)\in H$.
This is a transverse equation since $\nabla f$ is transverse to $H$, hence $T\in\R$ depends smoothly on the parameter $x$, and this proves smoothness of the evaluation map $\ev_H(L,x)=\Psi(T,x)$ in these cases (dropping $L$ in the first two cases).
In case $\cU_\pm=p_\pm$ the domain of unbroken trajectories inherits its smooth structure\footnote{
The independence of the smooth structure from the choice of $c$ is one case of this smoothness statement with $H=f^{-1}(c')$ for another choice of regular value $c'$.
}
from the identification
$$
\cM(p_-,p_+; \Psi_{\R_-}(H) , \Psi_{\R_+}(H) )_0
\simeq W^-_{p_-}\cap W^+_{p_+} \cap f^{-1}(c) \cap \Psi_\R(H)
$$
for any regular value $c\in (f(p_+),f(p_-))$.
Now the evaluation map $\ev_H(x)=\Psi(T,x)$ is smooth since it is again given by solving $\Psi(T,x)\in H$ for $T\in\R$, depending on the parameter $x$ in an open subset of
$W^-_{p_-}\cap W^+_{p_+} \cap f^{-1}(c)$. The same argument proves smoothness of the inverse and hence the embedding property.
\end{proof}

With the notion of evaluation maps in place, we can compare the Hausdorff distance to other natural distance functions on the Morse trajectory spaces.

\begin{rmk} \label{rmk:metric}
\begin{enumerate}
\item
On $\cM(p_-,p_+)$, $\cM(X,p_+)$, and $\cM(p_-,X)$ the Hausdorff distance $d_\bM$ is not equivalent to the distance on $W^-_{p_-}\cap W^+_{p_+} \cap f^{-1}(c) $ resp.\ $W^+_{p_+}$ resp.\ $W^-_{p_-}$.
(A counterexample for $\cM(S^1,p_+)$ is a Morse function with one maximum and one minimum at $p_+$. Then consider Morse trajectories starting near the maximum. These initial points can be arbitrarily close, but if they lie on different sides of the maximum then the associated Morse trajectories have large Hausdorff distance.)
However, it still induces the same topology.
(This follows from the continuity of the flow in one direction and from the continuity of the evaluation maps in the other.)
\item
On $\cM(X,X)$ the distance $d_\bM(\gamma:[0,L]\to X, \gamma':[0,L']\to X)$ is not equivalent to the distance $d_X(\gamma(0),\gamma'(0))+|L-L'|$ on $[0,\infty)\times X$, but they still generate the same topology. (This follows from the continuity of the flow and evaluation maps as well as the length conversion $L\mapsto\frac{L}{1+L}$.)
\end{enumerate}
\end{rmk}

\begin{lem} \label{lem:rep}
For $(f,g)$ Euclidean Morse-Smale, a continuous reparametrization map
$$
\bM(\cU_-,\cU_+) \to \cC^0([0,1],X) , \quad \ul{\g} \mapsto \Gamma_{\ul{\g}}
$$
is defined by parametrizing the image $\im\ul{\g}$ with a continuous map $\Gamma_{\ul{\g}}:[0,1]\to \im\ul{\g} \subset X$ given by requiring linear growth of the function value
$$
f(\Gamma_{\ul{\g}}(s)) = (1-s) \cdot f(\ev_-(\ul{\g})) + s \cdot f(\ev_+(\ul{\g})).
$$
On the complement of the trajectories of zero length, this is in fact a homeomorphism to its image since $d_{\rm Hausdorff}(\im\ul{\g},\im\ul{\g}') \leq d_{\cC^0}(\G_{\ul{\g}},\G_{\ul{\g}'})$.\footnote{The Hausdorff and $\cC^0$ metric are in fact equivalent, as can be seen from adding linear estimates in the following proof.}
\end{lem}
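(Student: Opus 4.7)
The plan is to establish well-definedness, continuity, the Hausdorff estimate, and finally the homeomorphism property.

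\emph{Well-definedness.} For each $\ul{\g}\in\bM(\cU_-,\cU_+)$ the restriction $f|_{\overline{\im\ul{\g}}}$ is a continuous bijection onto $[f^+,f^-]$, where $f^\pm:=f(\ev_\pm(\ul{\g}))$ (read as $f(p_\pm)$ when the corresponding end is at the critical point $p_\pm$). Indeed, on each unbroken piece $\g_i$ the function $f$ is strictly decreasing and covers the full subinterval $(f(q_{i+1}),f(q_i))$, and consecutive pieces of a broken trajectory join at a common critical point whose $f$-value is the shared endpoint of the two subintervals. Since $\overline{\im\ul{\g}}$ is compact, $f|_{\overline{\im\ul{\g}}}$ is a homeomorphism and
$$
\Gamma_{\ul{\g}}(s) := \bigl(f|_{\overline{\im\ul{\g}}}\bigr)^{-1}\bigl((1-s)f^- + s f^+\bigr)
$$
is well-defined and continuous in $s$ (and degenerates to the constant map at a critical point when $f^+=f^-$).

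\emph{Continuity of $\ul{\g}\mapsto\Gamma_{\ul{\g}}$.} Suppose $\ul{\g}_n\to\ul{\g}$ in $d_\bM$ but $\|\Gamma_{\ul{\g}_n}-\Gamma_{\ul{\g}}\|_{\cC^0}\not\to 0$. Pick $s_n\in[0,1]$ and $\ep>0$ with $d_X(\Gamma_{\ul{\g}_n}(s_n),\Gamma_{\ul{\g}}(s_n))\geq\ep$; by compactness of $[0,1]\times X$ pass to a subsequence with $s_n\to s^*$ and $\Gamma_{\ul{\g}_n}(s_n)\to x^*$, so $d_X(x^*,\Gamma_{\ul{\g}}(s^*))\geq\ep$. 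Since $\Gamma_{\ul{\g}_n}(s_n)\in\overline{\im\ul{\g}_n}$ and $\overline{\im\ul{\g}_n}\to\overline{\im\ul{\g}}$ in Hausdorff distance, the limit satisfies $x^*\in\overline{\im\ul{\g}}$. By continuity of the evaluation maps (Lemma~\ref{lem:cont eval}) we have $f^\pm_n\to f^\pm$, hence
$$
f(x^*) = \lim_n f(\Gamma_{\ul{\g}_n}(s_n)) = (1-s^*)f^- + s^* f^+ = f(\Gamma_{\ul{\g}}(s^*)).
$$
Injectivity of $f|_{\overline{\im\ul{\g}}}$ forces $x^*=\Gamma_{\ul{\g}}(s^*)$, a contradiction.

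\emph{Hausdorff estimate and homeomorphism.} Because $\Gamma_{\ul{\g}}([0,1])=\overline{\im\ul{\g}}$ and $d_{\rm Hausdorff}(A,B)=d_{\rm Hausdorff}(\overline A,\overline B)$ for bounded sets, the pointwise bound $d_X(\Gamma_{\ul{\g}}(s),\Gamma_{\ul{\g}'}(s))\leq d_{\cC^0}(\Gamma_{\ul{\g}},\Gamma_{\ul{\g}'})$ immediately yields $d_{\rm Hausdorff}(\im\ul{\g},\im\ul{\g}')\leq d_{\cC^0}(\Gamma_{\ul{\g}},\Gamma_{\ul{\g}'})$. For the homeomorphism claim, on the complement of the zero-length locus the image $\overline{\im\ul{\g}}$ determines $\ul{\g}$ uniquely: $\ev_\pm(\ul{\g})$ appear as the $f$-extrema of the image, the intermediate critical points are the ``corners'' where the image fails to be smooth, and on each unbroken segment the time parametrization is then fixed by $\dot{\g}=-\nabla f$; in particular the renormalized length $\ell$ is recovered from the image as a continuous function. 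Thus $d_\bM$ and the pullback of $d_{\rm Hausdorff}$ induce the same topology there, and combining this with the Hausdorff estimate proves that the inverse $\Gamma\mapsto\ul{\g}$ is continuous, giving the claimed homeomorphism.

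The delicate points are the injectivity-at-the-limit argument in the continuity step and the length recovery in the last step: both must treat sequences of unbroken trajectories with $L_n\to\infty$ converging in Hausdorff distance to a broken trajectory, where the renormalized lengths $L_n/(1+L_n)$ must be checked to converge to the value $1$ assigned to the broken limit.
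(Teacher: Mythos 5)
Your proposal is correct and takes a genuinely different route from the paper's, most visibly in the forward-continuity step. The paper establishes pointwise convergence $\G_{\ul{\g}'}(s_0)\to\G_{\ul{\g}}(s_0)$ for each fixed $s_0$ by explicit work in coordinate charts — a flow-box product near a non-critical point and the Euclidean normal form $f(x,y)=f(p)-\tfrac12|x|^2+\tfrac12|y|^2$ near a critical point, where $\im\ul\g$ lies on the coordinate axes — and then asserts that pointwise convergence, together with continuity of $\G_{\ul\g}$ and compactness of $[0,1]$, yields $\cC^0$-convergence. Your argument instead observes that $f|_{\overline{\im\ul{\g}}}$ is a homeomorphism onto the closed interval $[f(\ev_+(\ul{\g})),f(\ev_-(\ul{\g}))]$, so that if a subsequence $\Gamma_{\ul{\g}_n}(s_n)\to x^*$ stayed $\ep$-far from $\Gamma_{\ul\g}(s^*)$, Hausdorff convergence of images plus continuity of $f$ and of $\ev_\pm$ would force $x^*\in\overline{\im\ul\g}$ with the same $f$-value as $\Gamma_{\ul\g}(s^*)$, contradicting injectivity of $f$ on $\overline{\im\ul\g}$. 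This is a cleaner compactness-contradiction argument that directly yields \emph{uniform} convergence (sidestepping the paper's pointwise-to-uniform leap), and it does not use the Euclidean normal form at all, so it in fact proves the lemma for any Morse-Smale pair. Two small caveats, both inherited from the paper's own wording: \textbf{(1)} Your claim that the image determines $\ul\g$ uniquely on the complement of the zero-length locus — like the paper's ``homeomorphism on the complement of the trajectories of zero length'' — is not quite right: a positive-length constant trajectory $\g\equiv p$ at a critical point $p$, for varying $L$, and the broken pair $(\g_0\equiv p,\g_1\equiv p)$, all have the same reparametrization $\Gamma\equiv p$. The correct exclusion is of all generalized trajectories whose image is a single critical point, which is exactly the locus where the Hausdorff metric is indefinite (Remark~\ref{definite}). \textbf{(2)} Your assertion that $\ell$ ``is recovered from the image as a continuous function'' is left unjustified; the paper is equally terse, and in the compactness application (proof of Theorem~\ref{thm corner}) the rescaled-length convergence is handled by a separate subsequence extraction, so nothing downstream depends on this, but if you want the full homeomorphism statement you should supply that argument (it requires more than the Hausdorff estimate, which only controls the image).
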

\begin{proof}
The reparametrization map is well defined since the image of any generalized Morse trajectory is a connected finite union of critical points and embedded submanifolds along which $f$ strictly decreases.
Continuity of the inverse of this map follows from the inequality
$d_{\rm Hausdorff}(\im\G,\im\G') \leq d_{\cC^0}(\G,\G')$ for any pair of maps $\G,\G':[0,1]\to X$.
Conversely, we claim that $\G_{\ul{\g}'}(s_0)\to \G_{\ul{\g}}(s_0)$ for any fixed $s_0\in[0,1]$ as $\ul{\g}'\to\ul{\g}$ in the Hausdorff metric. (This suffices to prove convergence of the $\cC^0$-distance due to the continuity of the paths $\G$ and the compactness of their domain.)

If $\G_{\ul{\g}}(s_0)\not\in{\rm Crit}(f)$ then we can pick a coordinate chart diffeomorphic to a product $B_1 \times (-\delta,\delta)$ near $\G_{\ul{\g}}(s_0)\simeq (0,0)$ on which the Morse function and flow are linear $f: (z,\tau) \mapsto f(\G_{\ul{\g}}(s_0)) + \tau$, $\Psi_t:(z,\tau)\mapsto(z,\tau+t)$.
The metric on $X$ is equivalent with a constant $C$ to the product metric on $B_1 \times (-\delta,\delta)$, so that for $d_{\rm Hausdorff}(\im\ul{\g},\im\ul{\g}')$ sufficiently small the trajectory $\ul\g'$ has to take image in $\{z'\}\times(-\delta,\delta)$ with $z'\to 0$ as $d_{\rm Hausdorff}(\im\ul{\g},\im\ul{\g}')\to 0$.
Due to the explicit form of the flow, $\ul{\g}'$ now has to pass any function values near $ f(\G_{\ul{\g}}(s_0))$ within this coordinate chart. In particular, since evaluation is continuous with respect to the Hausdorff distance, we can ensure that $f(\Gamma_{\ul{\g}'}(s_0)) = (1-s_0) f(\ev_-(\ul{\g}')) + s_0  f(\ev_+(\ul{\g}'))$ is sufficiently close to $f(\Gamma_{\ul{\g}}(s_0))$ to guarantee that $\Gamma_{\ul{\g}'}(s_0)\simeq (z',\tau_0')$ lies in the coordinate chart. With that we have $f(\Gamma_{\ul{\g}'}(s_0))=f(\Gamma_{\ul{\g}}(s_0))+\tau_0'$ and can deduce $\Gamma_{\ul{\g}'}(s_0)\simeq (z',\tau_0') \to (0,0)\simeq\Gamma_{\ul{\g}}(s_0)$ from the continuity of the evaluation maps in
\begin{align*}
|\tau_0'| &= \bigl| f(\Gamma_{\ul{\g}'}(s_0)) - f(\Gamma_{\ul{\g}}(s_0)) \bigr| \\
&\leq (1-s_0) \bigl| f(\ev_-(\ul{\g'})) - f(\ev_-(\ul{\g})) \bigr| + s_0 \bigl| f(\ev_+(\ul{\g'})) - f(\ev_+(\ul{\g})) \bigr| \;\to\; 0 .
\end{align*}
If $\G_{\ul{\g}}(s_0)$ is a critical point then we can work in a Euclidean coordinate chart $B_\Delta\times B_\Delta$ for $f$ and the metric in which $\G_{\ul{\g}}(s_0)\simeq (0,0)$ and $f(x,y)=f(\G_{\ul{\g}}(s_0)) - \half |x|^2 + \half |y|^2$.
As before, Hausdorff convergence $\ul{\g}'\to\ul{\g}$ implies convergence of the function value $f(\Gamma_{\ul{\g}'}(s_0))\to f(\Gamma_{\ul{\g}}(s_0))$. This implies that $\Gamma_{\ul{\g}'}(s_0)\simeq (x',y')$ lies in the coordinate chart for $\ul{\g}'$ sufficiently close to $\ul{\g}$. Indeed, a trajectory passing the function value $f(\Gamma_{\ul{\g}}(s_0))$ outside of the coordinate chart will never intersect the chart in backward or forward time, so cannot be closer to $\ul{\g}$ than $\Delta$ in the Hausdorff distance.
With that we have $f(\Gamma_{\ul{\g}'}(s_0))=f(\Gamma_{\ul{\g}}(s_0))-\half |x'|^2+ \half |y'|^2$ and can deduce $|x'|-|y'|\to 0$.
Moreover, $\ul{\g}$ is part of a trajectory that breaks or ends at $(0,0)$, so $\im\ul{\g}\subset B_\Delta\times \{0\} \cup \{0\} \times B_\Delta$ and hence the distance between $(x',y')$ and $\im\ul{\g}$ is bounded below by $\min\{|x'|,|y'|\}$. On the other hand, this distance is bounded above by the Hausdorff distance. So its convergence to zero implies that $\Gamma_{\ul{\g}'}(s_0)\simeq(x',y') \to (0,0)\simeq \Gamma_{\ul{\g}}(s_0)$.
\end{proof}

Finally, we prove the topological content of Theorem~\ref{thm corner} and deduce the smooth structure from Theorem~\ref{thm:global with ends} and the following topological conjugacy.

\begin{rmk} \label{franks}
Let $\Psi_s$ be the negative gradient flow of a Morse-Smale pair. Then there exists a homeomorphism $h:X\to X$ such that $h\circ\Psi_s = \Psi^0_s\circ h$, where $\Psi^0_s$ is the flow of a Euclidean Morse-Smale pair. Let us give a few more details on the proof outlined in \cite{franks}.

Near each critical point we can choose coordinates $X \supset U \simeq B_\Delta\subset \R^n$ in which the Hessian ${\rm D}\nabla f(p) \simeq {\rm diag}(\l_1,\ldots,\l_n)$ is diagonalized and $p\simeq 0$. Let $Y_{\rm lin}(\ul x):= \sum \l_i x_i \partial_{x_i}$ denote the linearized vector field, and let $\phi\in \cC^\infty([0,\Delta), [0,1])$ be a compactly supported cutoff function with $\phi|_{[\frac \Delta 2,\Delta)}\equiv 1$. Then $Y_r(\ul{x}):= (1-\phi(r^{-1}|\ul{x}|))\nabla f(\ul{x}) + \phi(r^{-1}|\ul{x}|) Y_{\rm lin}(\ul{x})$ defines vector fields on $X$ that $\cC^1$-converge to $\nabla f$ with $r\to 0$.
So by structural stability \cite{palis,palis-smale} for some $r>0$ the flows of $\nabla f$ and $Y_r$ are topologically conjugate, with $Y_r$ still satisfying the Smale condition (transversality of stable and unstable manifolds).

Next we construct a further homeomorphism $h:X\to X$ supported in the balls $B_{\frac \Delta 2 r}$ near each critical point. In the local coordinates we have $\l_i\neq 0$ by non degeneracy, so $ x \mapsto  (\frac \Delta 2 r)^{1-|\l|^{-1}} {\rm sign}(x) |x|^{|\l|^{-1}}$ defines a homeomorphism of $[-\frac \Delta 2 r,\frac \Delta 2 r]$, which we can extend smoothly to $[-\Delta,\Delta]$ such that $h_i(x)=x$ near $|x|=\Delta$.
Then $h: (x_i) \mapsto (h_i(x_i))$ extends to a homeomorphism of $X$ that is smooth on the complement of the critical points and pulls back $Y_r$ to a vector field $h^*Y_r$ that has the standard form $\sum {\rm sign}(\l_i) x_i \partial_{x_i}$ on a neighbourhood of each critical point, and hence smoothly extends by $h^*Y_r|_{{\rm Crit}(f)}:=0$.
Moreover, this homeomorphism is the identity on the complement of neighbourhoods of the critical points, and within these neighbourhoods leaves the unstable and stable manifolds of the critical point invariant. Thus the stable and unstable manifolds of $h^*Y_r$ agree with those of $Y_r$ on the complement of the neighbourhoods of critical points, which suffices to guarantee the Smale condition.
(Transversality between given unstable and stable manifolds can be checked at a single regular level set, since it is preserved by the flow.)
Moreover, in the coordinates near each critical point, $h^*Y_r$ is the negative gradient of a standard Morse function $ \frac 12 \sum {\rm sign}(\l_i)x_i^2$ with respect to the Euclidean metric. Now by the classification of gradient dynamical systems \cite{smale}, there is a Morse function $f^0:X\to\R$ which coincides with the given functions near critical points up to a constant, and for which $h^*Y_r$ is negative gradient-like, i.e.\ ${\rd f^0(h^*Y_r)<0}$ at noncritical points. Finally, one finds a metric such that $h^*Y_r=-\nabla f^0$ and that equals the Euclidean metric near each critical point.
Indeed, starting with any metric $\ti g$ equal to the Euclidean near critical points, we have $h^*Y_r=-\ti\nabla f^0$ near the critical points and $\ti g ( h^*Y_r ,-\ti\nabla f^0 ) <0$ elsewhere. Then it remains to smoothly adjust $\ti g$ on ${\rm span}( h^*Y_r ,-\ti\nabla f^0)$, which is an exercise in linear algebra.
\end{rmk}

\begin{proof}[Proof of Theorem~\ref{thm corner}]
The metric axioms are easily checked; in particular we discussed definiteness in Remark~\ref{definite}.
It follows that the space $(\bM(\cU_-,\cU_+),d_\bM)$ is Hausdorff. To check separability just note that the space is a finite union of the sets $\bM(\cU_-,\cU_+)_k$, which themselves are unions of products of finite dimensional submanifolds of $X$.
Note here that due to $f$ being Morse on a compact manifold, there are only finitely many critical point sequences, i.e.\  tuples $q_1\ldots q_k\in{\rm Crit}(f)$ such that $f(q_1)>f(q_2)\ldots>f(q_k)$.
Since we are dealing with a metric space, separability also implies second countability.

Sequential compactness for $\bM(p_-,p_+)$ is proven by \cite[Prp.3]{bh} together with Lemma~\ref{lem:rep}.
For sequences $(\ul\g^n)_{n\in\N}$ in $\bM(X,p_+)$, $\bM(p_+,X)$, or $\bM(X,X)$ we use analogous arguments as follows. Lemma~\ref{lem:rep} provides continuous parametrizations $\G^n:[0,1]\to X$ of $\im\ul\g^n$ with bounded derivative $|\frac{\rd}{\rd s}\G^n(s)|\leq C_\ep$ on the complement of neighbourhoods of the critical points, $\G^n(s)\in X \setminus \{ x\in X \,|\, |\nabla f(x)| < \ep \}$. As in \cite{bh} this proves equicontinuity of the $\G^n$, hence the Arzel\`a-Ascoli theorem provides a $\cC^0$-convergent subsequence of $(\G^n)$. By Lemma~\ref{lem:rep} this implies Hausdorff-convergence of the corresponding subsequence of $(\ul\g^n)$. On $\bM(X,X)$ convergence of the rescaled length in $[0,1]$ follows by taking another subsequence.

For Euclidean Morse-Smale pairs, the manifold with corner structure is provided by the global charts in Theorem~\ref{thm:global with ends} and the canonical manifold structure for each space of unbroken flow lines, given in Section~\ref{moduli}. The open subsets $\cV_t(\ul{q})$ cover $\bM(\cU_-,\cU_+)$ since any generalized trajectory either does not break (hence lies at least in the subset for $\ul{q}=\emptyset$) or breaks at a finite number of critical points $q_1,\ldots,q_k$ and hence lies in $\cV_t(\ul{q})$ for some choice of end conditions $\cQ_0,\cQ_{k+1}$ as in \eqref{choice}.
The smooth structure on this atlas is given by the natural smooth structure on, firstly, the unbroken trajectories $\cM(\cU_-,\cU_+)\subset\bM(\cU_-,\cU_+)$. Secondly, the images of the global charts are open subsets of
$$
\cM(X,q_1) \times [0,2) \times \cM(q_1,q_2)  \ldots \times [0,2) \times \cM(q_k,X) ,
$$
or in the special case $k=1$ and $\cQ_0=\cQ_2=\ti U(q_1)$ of
$$
 \cM(\ti U(q_1),q_1) \times [0,1] \times \cM(q_1,\ti U(q_1)) ,
$$
all of which have the natural structure of a manifold with boundary and corners.
Using these charts, the $k$-stratum $\bM(\cU_-,\cU_+)_k$ naturally is the subset of $(k-1)$-fold broken trajectories, except that $\bM(X,X)_1$ has as additional boundary stratum the trajectories of length $0$. The latter appear in the chart $\cV_t(X,X)=\cM(X,X) \simeq [0,\infty)\times X$, where $\ev_-: \partial\cV_t(X,X)\simeq\{0\}\times X \to X$ identifies the boundary component, and in the chart $\cV(\ti U(q),\ti U(q))$, where $\ev_-$ identifies the boundary component $\phi(q)^{-1}\bigl(\cM(\ti U(q),q) \times\{1\} \times \cM(q,\ti U(q))\bigr)$ with $\ti U(q)\subset X$.

The transition maps between different charts with different critical point sequences can be read off from Remark~\ref{rmk:gen comp}.
If $\ul{q}$ and $\ul{Q}$ are related by inserting critical points into $\ul{q}$ and potentially changing the end conditions, then the transition map for $\cV_t(\ul{q}) \cap \cV_t(\ul{Q})$ is
$$
\phi(\ul{Q})|_{ \cV_t(\ul{q}) \cap \cV_t(\ul{Q}) }
\circ \phi(\ul{q})^{-1}
=
\phi(\ul{q}^0)\times{\rm Id}\times \phi(\ul{q}^1) \ldots \times{\rm Id}\times\phi(\ul{q}^k) ,
$$
a product of chart maps on $\cV_t(\ul{q}^0)_0 \times [0,t) \times \cV_t(\ul{q}^1)_0
\ldots \times [0,t) \times \cV_t(\ul{q}^k)_0$,
where they are diffeomorphisms by .
Generally, if $\ul{q}'$ and $\ul{q}''$ contain different critical points and $\cV_t(\ul{q}') \cap \cV_t(\ul{q}'')\neq\emptyset$, then $\ul{Q}:=\ul{q}'\cup \ul{q}''$ (with the induced end conditions) also is a critical point sequence (since $\cV_t(\ul{Q})$ contains this nonempty intersection).
More precisely, $\cV_t(\ul{q}') \cap \cV_t(\ul{q}'') \subset \cV_t(\ul{Q})$ is a subset of those trajectories that break at most at the critical points $\ul{q}'\cap\ul{q}''$,
hence is contained in both  $\cV_t(\ul{q}') \cap \cV_t(\ul{Q})$ and  $\cV_t(\ul{q}'') \cap \cV_t(\ul{Q})$. Now the transition map is a composition of the corresponding two transition maps of the previous type, and hence is smooth.

The compatibility above also applies to the case of $\ul{q}=(\cQ_0,\cQ_1)$ being the trivial critical point sequence with any end conditions, when $\phi(\ul{q})={\rm Id}_{\cM(\cQ_0,\cQ_1)}$. It remains to consider the transition map on an overlap of domains
$\cV_t(\cQ_0,q_1,\ldots,q_{k}, \cQ_{k+1}) \cap \cV_t(\cQ'_0,q_1,\ldots,q_{k}, \cQ'_{k+1})$ for the same critical points but different end conditions.
It is smooth since by Theorem~\ref{thm:global with ends}~(iv) it is the reparametrization in the last or first real valued parameter.

The proof of smoothness for the evaluation maps for Euclidean Morse-Smale pairs is given in Remark~\ref{rmk:smooth eval} as part of the proof of Theorem~\ref{thm:global with ends}.
For a general Morse-Smale pair, the topological conjugation of Remark~\ref{franks} induces homeomorphisms between the Morse trajectory spaces
\begin{equation} \label{eq:h}
h^* : \bM_{\Psi}(\cU_-,\cU_+) \to \bM_{\Psi^0}(\cU_-,\cU_+) , \qquad
([\g_i])_{i=0,\ldots,k} \mapsto ([h\circ\g_i])_{i=0,\ldots,k} .
\end{equation}
Indeed, this is a well defined map under reparametrizations; it preserves the length (in time) of trajectories in $\cM(X,X)$, and transforms the images by a homeomorphism $\overline{\im(h\circ\g_i)} = h(\overline{\im\g_i})$. Hence both $h^*$ and its inverse, given by composition with $h^{-1}$, are continuous in the Hausdorff metric.
Now the smooth structure on $\bM_{\Psi^0}(\cU_-,\cU_+)$ constructed above can be pulled back with $h^*$ to equip $\bM_{\Psi}(\cU_-,\cU_+)$ with a smooth structure whose corner strata are given by broken trajectories as claimed, since $h^*$ preserves the breaking points.
\end{proof}

\section{Restrictions to local and connecting trajectory spaces} \label{near crit}

This section constructs natural charts with boundary for the local trajectory spaces near the critical points of a Euclidean Morse-Smale pair. These charts, together with the smooth flow map, will induce the smooth structure on the general Morse trajectory spaces. For that purpose we construct restriction maps from general Morse trajectory spaces to the local trajectory spaces as well as to connecting trajectory spaces of unbroken flow lines between the boundaries of neighbourhoods of different critical points.

\subsection{Trajectories near critical points}

Let $(f,g)$ be a Euclidean Morse-Smale pair as in Definition~\ref{ems}. Then for some $\Delta>0$ and any $p\in{\rm Crit}(f)$ we have normal coordinates
$$
\R^{n-|p|}\times \R^{|p|}\supset B^{n-|p|}_{2\Delta}\times B^{|p|}_{2\Delta}
\; \overset{\phi_p}{\longrightarrow} \; \ti U(p)\subset X
$$
on the product of open balls such that $\phi_p(0,0)=p$ and
\begin{align}\label{normal 2}
(\phi_p^* f)(x,y) = f(p) _+ \half  {\textstyle \sum_i} x_i^2  - \half {\textstyle \sum_j} y_j^2 , \qquad
(\phi_p^* g) &={\textstyle \sum_i} \rd x_i \otimes \rd x_i +  {\textstyle \sum_j} \rd y_j \otimes \rd y_j .
\end{align}
Here we write $x=(x_i)_{i=1,\ldots,n-|p|}$ and will abbreviate $|x|^2=\sum_i x_i^2$ and similarly for $y=(y_j)_{j=1,\ldots,|p|}$.
These coordinates are unique up to orthogonal diffeomorphisms ${O(n-|p|)} \times {O(|p|)}$ and the choice of $\Delta>0$. We choose $\Delta>0$ so small that the closure of the neighbourhoods $\ti U(p)$ for different critical points $p$ are disjoint.

\begin{rmk}\label{rmk:U small}
For future purposes we note that by sufficiently small choice of $\Delta>0$ we can guarantee that there exists a finite flow line from $\ti U(p^-)$ to $\ti U(p^+)$ iff there exists an unbroken Morse trajectory between $p^-$ and $p^+$. That is, we may assume
$$
(\ev_-\times \ev_+)(\cM(X,X)) \;\cap\;  \bigl( \ti U(p^-)\times \ti U(p^+) \bigr) \neq \emptyset
\qquad\Longleftrightarrow\qquad
\cM(p^-,p^+)\neq \emptyset .
$$
This is possible since, on the one hand, given $\Delta>0$, every infinite flow line in $\cM(p^-,p^+)$ contains a finite part that intersects $\ti U(p^-)$ and $\ti U(p^+)$.
On the other hand, suppose that we cannot choose $\Delta>0$ sufficiently small for the opposite implication to hold. Then we find $T^\pm_i\in \R_+$ and $x_i\in X$ in the complement of separating neighbourhoods of $p^-\neq p^+$ such that $\Psi(\pm T^\pm_i,x_i)\to p^\pm$. By continuity of the flow we deduce $T^\pm_i\to\infty$, and by compactness of $X$ may choose a subsequence of the $x_i$ converging to $x\in X\setminus\{p^-,p^+\}$, hence $\Psi(T_i^\pm,x)\to p^\pm$, proving the assertion by contradiction.

The analogous assertion for half infinite Morse trajectories,
${\ev_-(\cM(X,X)) \;\cap\;  \ti U(p^-) \neq \emptyset}$
$\Leftrightarrow \cM(p^-,X)\neq \emptyset$ holds automatically since by definition $\cM(p^-,X)$ always contains a constant trajectory; and similarly for $\cM(X,p^+)$.
\end{rmk}

The gradient in these coordinates is $\nabla f ({x},{y}) = ({x} , - {y})$, so the negative gradient flow is
$$
\Psi_t({x},{y}) = (e^{-t}{x} , e^{t}{y}).
$$
In particular, the identification of the trajectory spaces $\cM(\tilde U(p), p)$ and $\cM(p, \tilde U(p))$ with the stable resp.\ unstable manifold in normal coordinates yields balls
\begin{align} \label{Bpm}
\ev_- : \; \cM(\tilde U(p), p) &\;\overset{\sim}{\to}\; W^+_q\cap\tilde U(p) \;\simeq\; B^{n-|p|}_{2\Delta}\times \{0\}\;=:\ti B^+_p , \\
\ev_+ : \; \cM(p, \tilde U(p)) &\;\overset{\sim}{\to}\; W^-_q\cap\tilde U(p)\;\simeq\;\;\, \{0\} \times B^{|p|}_{2\Delta}\;\;\,=:\ti B^-_p. \nonumber
\end{align}
From now on we will identify points in normal coordinates $(x,y)\in\ti B^-_p\times \ti B^+_p$ with their image $\phi_p(x,y)\in\ti U(p)\subset X$.
In particular, we use these coordinates to construct the global chart in Theorem~\ref{thm:global with ends} for trajectories near the critical point $p$. 

\begin{lem} \label{lem:tiphi}
The open set $\ti\cV(p) := (\ev_-\times \ev_+)^{-1}(\ti U(p)\times \ti U(p))  \subset \bM(X,X)$ supports a homeomorphism
$$
\ti\phi(p):= \ti\tau_p \times ({\rm pr}_{\ti{B}^+_p}\times {\rm pr}_{\ti{B}^-_p}) \circ (\ev_-\times\ev_+)  \; : \;  \ti\cV(p) \;\longrightarrow\; [0,1] \times  \cM(\tilde U(p), p) \times \cM(p, \tilde U(p))
$$
given by the evaluations \eqref{eval}, the projections in normal coordinates ${\rm pr}_{\ti{B}^\pm_p} : \ti{B}^+_p\times \ti{B}^-_p \to \ti{B}^\pm_p$, the identification \eqref{Bpm}, and the rescaling of the renormalized length \eqref{length},
\begin{equation} \label{titip}
\ti\tau_p \; : \;  \ti\cV(p) \;\longrightarrow\; [0,1] , \qquad \ul\g \mapsto  e^{- \ell(\g)/(1-\ell(\g))}
=\begin{cases}
e^{-L}  &; \ul{\g} = \bigl(\gamma:[0,L]\to X\bigr) ,\\
0 &; \text{otherwise}.
\end{cases}
\end{equation}
Moreover, $\ti\phi(p)$ satisfies the properties of a global chart in Theorem~\ref{thm:global with ends} as follows.
\begin{enumerate}
\item The restriction to the unbroken trajectories $\ti\cV(p)_0=\ti\cV(p)\cap \bM(X,X)_0$
 is a diffeomorphism
$\ti\phi(p)|_{\ti\cV(p)_0} : \ti\cV(p)_0 \to  (0,1]\times \cM(\tilde U(p), p) \times \cM(p, \tilde U(p))$.
\item
The restriction to the maximally broken trajectories $\ti\cV(p)_1=\ti\cV(p)\cap\bM(X,X)_1$ is the canonical bijection
$ \ti\cV(p)_1 \to \{0\} \times \cM(\ti U(p),p) \times  \cM(p,\ti U(p)), (\gamma_-,\gamma_+) \mapsto (0, \gamma_-, \gamma_+)$.
\setcounter{enumi}{3}
\item
The parameter $e^{-T}\in[0,1]$ encodes the length $T$ of the time interval $[0,T]$ on which the trajectory is defined. In particular, $e^{-T}=0$ corresponds to the trajectory breaking at $p$, and $e^{-T}=1$ corresponds to a trajectory of length $0$.
\end{enumerate}
Finally, the evaluation maps are smooth with respect to this chart, that is
$(\ev_-\times\ev_+)\circ\ti\phi(p)^{-1}$ maps smoothly to $X\times X$.
\end{lem}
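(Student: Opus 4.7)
My plan is to exploit the explicit negative gradient flow $\Psi_t(x,y) = (e^{-t}x, e^t y)$ in normal coordinates \eqref{normal 2} to write both $\ti\phi(p)$ and its inverse in closed form, and then read off every claimed property from these formulas.

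First I would parametrize $\ti\cV(p)$ and construct the inverse. Because $|x|$ decreases and $|y|$ increases monotonically along $\Psi_t$, any trajectory $\ul\g\in\ti\cV(p)_0$ is determined by its starting point $(x_0,y_0)\in\ti U(p)$ and length $L\in[0,\infty)$, has endpoint $(e^{-L}x_0,e^L y_0)\in\ti U(p)$, and automatically stays in $\ti U(p)$. Under the identifications \eqref{Bpm}, the chart $\ti\phi(p)$ then sends such a trajectory to $(e^{-L},(x_0,0),(0,e^L y_0))$, and a broken trajectory $(\g_-,\g_+)\in\ti\cV(p)_1$ with $\ev_-(\g_-)\simeq(x_-,0)$ and $\ev_+(\g_+)\simeq(0,y_+)$ to $(0,(x_-,0),(0,y_+))$. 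For the inverse, given data $(E,\g_-,\g_+)$ with $\ev_-(\g_-)\simeq(x^*,0)$ and $\ev_+(\g_+)\simeq(0,y^*)$, I would return the unbroken trajectory starting at $(x^*,Ey^*)$ of length $-\ln E$ when $E>0$, and the broken trajectory $(\g_-,\g_+)$ when $E=0$. Since $E\le 1$ and $|x^*|,|y^*|<2\Delta$, both the starting point and its image $\Psi_{-\ln E}(x^*,Ey^*)=(Ex^*,y^*)$ lie in $\ti U(p)$; a direct computation with the flow then confirms the two-sided inverse property.

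Next I would establish continuity and smoothness. Continuity of $\ti\phi(p)$ is immediate from continuity of $\ti\tau_p$ (built into its definition), continuity of $\ev_\pm$ (Lemma~\ref{lem:cont eval}), and linearity of the projections in normal coordinates. On $\{E>0\}$ the inverse is manifestly smooth since $(x^*,Ey^*)$ and $-\ln E$ depend smoothly on $(E,x^*,y^*)$; this, together with the global chart $\cM(X,X)\overset{\sim}{\to}[0,\infty)\times X$ from Lemma~\ref{lem:cont eval}, yields property~(i). The main obstacle will be continuity of $\ti\phi(p)^{-1}$ at $E=0$. For fixed $(x^*,y^*)$ this reduces to showing that the image $\{(e^{-t}x^*,e^{t+\ln E}y^*)\mid t\in[0,-\ln E]\}$ converges in Hausdorff distance as $E\to 0$ to $\{(sx^*,0)\mid s\in[0,1]\}\cup\{(0,uy^*)\mid u\in[0,1]\}$; splitting the parameter interval at $t=-\tfrac12\ln E$ puts the first half within $E^{1/2}|y^*|$ of the stable-manifold segment and the second half within $E^{1/2}|x^*|$ of the unstable-manifold segment, which gives the required estimate. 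Joint continuity in $(\g_-,\g_+)$ then follows because $\ev_\pm$ restricted to $\cM(\ti U(p),p)$ and $\cM(p,\ti U(p))$ are diffeomorphisms onto $\ti B^\pm_p$.

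Finally, property~(ii) is the $E=0$ case of the defining formula, property~(iv) is just the definition \eqref{titip}, and smoothness of $(\ev_-\times\ev_+)\circ\ti\phi(p)^{-1}$ follows because in normal coordinates it is the polynomial map $(E,x^*,y^*)\mapsto((x^*,Ey^*),(Ex^*,y^*))$, defined smoothly on all of $[0,1]\times\ti B^+_p\times\ti B^-_p$.
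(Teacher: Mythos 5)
Your approach is essentially the same as the paper's: both exploit the explicit flow $\Psi_t(x,y)=(e^{-t}x,e^ty)$ to write $\ti\phi(p)$ and its inverse in closed form, and both establish Hausdorff continuity of the inverse at $E=0$ by splitting the trajectory at the midpoint in time (your $t=-\tfrac12\ln E$ is the paper's choice $z'=\max\{z,\sqrt{\tau'}\}$ in the $z=e^{-t}$ parametrization). Property (i) via the chart $\cM(X,X)\simeq[0,\infty)\times X$ and the final smoothness computation $(E,x^*,y^*)\mapsto\bigl((x^*,Ey^*),(Ex^*,y^*)\bigr)$ are also the paper's arguments.

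There are, however, a few places where your sketch cuts corners that are worth flagging. First, the metric on $\bM(X,X)$ is $d_\bM = d_{\rm Hausdorff} + |\ell(\cdot)-\ell(\cdot)|$, and you only address the Hausdorff term; you also need to check that $\ell(\ul\g_{E,x^*,y^*})=\tfrac{-\ln E}{1-\ln E}\to 1=\ell(\ul\g_{0,x^*,y^*})$ as $E\to 0$, uniformly in $(x^*,y^*)$ (it is, since it does not depend on them). Second, your Hausdorff estimate as stated is only one-sided (distance from the unbroken trajectory to the limit); the reverse direction (distance from the two limiting segments to the unbroken trajectory) needs the same $\sqrt{E}$ splitting and should be spelled out, since Hausdorff distance is a sup of both. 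Third, the justification ``joint continuity follows because $\ev_\pm$ are diffeomorphisms'' is not really the right reason; what actually makes the argument work is that your estimate $d_{\rm H}(\ul\g_{E,x^*,y^*},\ul\g_{0,x^*,y^*})\le C\sqrt{E}$ has a constant $C$ depending only on $\Delta$, uniformly in $(x^*,y^*)$, together with the elementary bound $d_{\rm H}(\ul\g_{0,x'^*,y'^*},\ul\g_{0,x^*,y^*})\le|x'^*-x^*|+|y'^*-y^*|$. With those three points filled in, your plan matches the paper's proof.
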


\begin{rmk} \label{rmk:tiphi}
The inverse of the homeomorphism $\ti\phi(p)$ in Lemma~\ref{lem:tiphi},
$$
\ti\phi(p)^{-1} : \;  [0,1] \times  \cM(\tilde U(p), p) \times \cM(p, \tilde U(p))  \;\longrightarrow\; \ti\cV(p) , \qquad (\tau,x,y) \mapsto \ul\g_{\tau,x,y} ,
$$
is explicitly given in the normal coordinates by the unbroken flow lines for $\tau>0$,
\begin{equation} \label{gamma T}
\gamma_{\tau,x,y}: [0,T]\to\ti U(p), \; s \mapsto (e^{-s}{x},e^{s-T}{y}) \qquad\text{with}\;\; T:=-\ln \tau ,
\end{equation}
and the broken flow lines $\ul{\g}_{\tau,x,y}:=(\gamma_+,\gamma_-)$ for $\tau=0$ given by
\begin{equation} \label{gamma infty}
\gamma_+: [0,\infty)\to\ti U(p), \; s \mapsto (e^{-s}{x},0) ,\qquad
\gamma_-: (-\infty,0]\to\ti U(p), \; s \mapsto (0, e^s {x}) .
\end{equation}
\end{rmk}

\begin{proof}[Proof of Lemma~\ref{lem:tiphi} and Remark~\ref{rmk:tiphi}.]
Bijectivity of $\ti\phi(p)$, the canonical form (ii), and the formulas for $\ti\phi(p)^{-1}$ are seen by checking that \eqref{gamma T} and \eqref{gamma infty} uniquely characterize the trajectories of the flow $\Psi_t$ in $\ti U(p)$.
Indeed, these trajectories can break at most at $p$, hence are determined by an initial point $(x,y')$ and end point $(x',y)$. If they are connected by a flow of lenght $T$ then $y'=e^{-T}y$ and $x'=e^{-T}x$. If they are connected by a broken flow, then $y'=0$ and $x'=0$ corresponding to $\tau=0$. Continuity of $\ti\phi(p)$ follows from the continuity of the evaluation maps (see Lemma~\ref{lem:cont eval}), the renormalized length (by definition of the metric on $\bM(X,X)$), the projections in normal coordinates, and the diffeomorphism $[0,1)\ni\ell\mapsto e^{- \ell/(1-\ell)}\in(0,1]$, which extends continuously to $1\mapsto 0$.
So it remains to check (i) and the continuity of $\phi(p)^{-1}$.

The renormalized length $\ell(\ul{\gamma}_{\tau,{x},{y}})=\frac{-\ln\tau}{1-\ln\tau}$ is a continuous function of $\tau\in(0,1]$ which for $\tau\to 0$ converges to
$\lim_{\tau\to 0}\frac{-\ln\tau}{1-\ln\tau} =1=\ell(\ul{\gamma}_{0,{x},{y}})$.
Hence we obtain uniform continuity (independent of ${x},{y}$) with respect to the length term in the metric on $\bM(X,X)$.
To check continuity of the Hausdorff distance near a fixed  $(\tau,{x},{y})\in [0,1)\times \ti B^+_p  \times \ti B^-_p$ note that the image of the generalized trajectory is
$$
\im\ul{\gamma}_{\tau,{x},{y}} =
\bigl\{ \bigl(z \cdot x , \tfrac\tau z \cdot {y} \bigr) \st z\in[\tau,1] \bigr\} \cup
\bigl\{ \bigl(\tfrac\tau w \cdot {x} , w \cdot {y} \bigr) \st w\in[\tau,1] \bigr\} .
$$
(In case $\tau>0$ both sets are the same.)
For $\tau>0$ one easily obtains for $(\tau',{x}',{y}')\in [0,1) \times \ti B^+_p \times \ti B^-_p$ the estimate
$
d_H\bigl( \ul{\gamma}_{\tau',{x}',{y}'}, \ul{\gamma}_{\tau,{x},{y}} \bigr)
\leq  | {x}' - {x} | + | {y}' - {y} | + 2\Delta (1 + \tau^{-1}) |\tau' - \tau|
$.
For $\tau=0$ we obtain
$
d_H\bigl( \ul{\gamma}_{\tau',{x}',{y}'}, \ul{\gamma}_{0,{x},{y}} \bigr)
\leq  | {x}' - {x} | + | {y}' - {y} | + 4\Delta \sqrt{\tau'}
$.
Indeed, the distance to the point $(z\cdot {x} , 0 )$
(and similarly for $(0, w\cdot {y})$) for all $z\in[0,1]$ is
\begin{align*}
d_{\R^n} \bigl( (z\cdot {x} , 0 ) \,,\, \bigl\{ \bigl( z' \cdot {x}' , \tfrac{\tau'}{z'} \cdot {y}' \bigr) \st  z'\in[\tau',1] \bigr\}   \bigr)
&\leq z  | {x}' - {x} | + |{x}'| \cdot | z' - z | +  |{y}'| \cdot \tfrac{\tau'}{z'}  \\
&\leq  | {x}' - {x} | + 4 \Delta \sqrt{\tau'}
\end{align*}
by choosing $z'=\max\{z,\sqrt{\tau'}\}$ such that $0 \leq z'-z = \max\{ 0 , \sqrt{\tau'} - z \}\leq\sqrt{\tau'}$.
Conversely, the distance to the point $( z' \cdot {x}' , \tfrac{\tau'}{z'} \cdot {y}' )$ for $z'\in[\sqrt{\tau'},1]$ can be estimated by picking $z=z'$ as
$
d_{\R^n} \bigl( \bigl\{ (z\cdot{x} , 0 ) \st z\in[0,1] \bigr\} \,,\, \bigl( z' \cdot {x}' , \tfrac{\tau'}{z'} \cdot {y}' \bigr)  \bigr)
\leq  | {x}' - {x} | + 2 \Delta \sqrt{\tau'}
$,
and for all remaining $w'=\tau'/z'\in[\sqrt{\tau'},1]$ by picking $w=w'$ as
$
d_{\R^n} \bigl( \bigl\{ (0, w\cdot {y} , 0 ) \st w\in[0,1] \bigr\} \,,\, \bigl( \tfrac{\tau'}{w'} \cdot {x}' , w' \cdot  {y}' \bigr)  \bigr)
\leq  | {y}' - {y} | + 2 \Delta \sqrt{\tau'}
$.
This finishes the proof of continuity of $\phi(p)^{-1}$.

For (i) note that in the smooth coordinates $\bM(X,X)_0\simeq [0,\infty)\times X$ we have
$$
\ti\cV(p)_0\simeq \bigl\{ (T,z)\in [0,\infty)\times\ti U(p) \st \Psi_T(z)\in\ti U(p)  \bigr\} .
$$
The smooth structure for the trajectory spaces $\cM(\tilde U(p), p) \simeq \ti B^+_p$ and
$\cM(p, \tilde U(p)) \simeq \ti B^-_p$ is given by \eqref{Bpm}.
Now in these coordinates and with $\ti U(p)\simeq\ti B^-_p\times\ti B^+_p$ the map
$\ti\phi(p) : \bigl( T,(x,y) \bigr) \mapsto \bigl( x , e^{-T} , y \bigr)$ evidently is a diffeomorphism as claimed.
Finally, the evaluation map is given by the evidently smooth map
\begin{align*}
(\ev_-\times\ev_+)\circ\ti\phi(p)^{-1}: \; [0,1] \times  \cM(\tilde U(p), p) \times \cM(p, \tilde U(p)) &\;\longrightarrow \; U(p)\times U(p) \\
(\tau, x, y ) &\;\longmapsto\; \bigl( (x, \tau y) , (\tau x , y ) \bigr) .
\end{align*}
\end{proof}

Next, we introduce the half size neighbourhood of $p$, which is precompact in $\ti U(p)$,
$$
U(p):= \phi_p \bigl(  B^{n-|p|}_{\Delta}\times B^{|p|}_{\Delta} \bigr) .
$$
From the above characterization of Morse trajectories we can read off its entry and exit sets,
$$
\ti S^+_p := \{ |{x}|=\Delta\} = S^+_p \times B^-_p ,
\qquad
\ti S^-_p := \{ |{y}|=\Delta\} = B^+_p \times S^-_p ,
$$
where $S^+_p := \partial B^+_p$ and  $S^-_p := \partial B^-_p$ are spheres in the stable resp.\ unstable manifolds and we abbreviated
$$
B^+_p := B^{n-|p|}_\Delta \simeq W^+_p\cap U(p), \quad B^-_p :=B^{|p|}_\Delta \simeq W^-_p\cap U(p) .
$$
Indeed, $\ti S^+_p\cup \ti S^-_p$ is the boundary of the domain $U(p)\simeq B^+_p \times B^-_p$ and the intersection of any broken or unbroken flow line with $U(p)$ has its endpoints on $\ti S^+_p$ and $\ti S^-_p$.
With this we can introduce the local trajectory space near $p$ as the set of broken or unbroken trajectories that start and end on the entry and exit set,
$$
\bM_p :=  (\ev_-\times \ev_+)^{-1}(\ti{S}^-_p,\ti{S}^+_p) \subset \bM(X,X)
$$
with topology induced from $\bM(X,X)$. The following gives the local trajectory space $\bM_p$ the structure of a smooth manifold with boundary in which the evaluations $\ev_\pm$ are smooth.

\begin{lem} \label{lem:local traj}
The evaluations $(\ev_-\times \ev_+) : \bM_p \to \tilde{S}^+_p \times \tilde{S}^-_p$ composed with the projection
\begin{equation}\label{pr}
{\rm pr}_p: \ti{S}^+_p \times \ti{S}^-_p \to  [0,1) \times S^+_p \times S^-_p , \quad
(x,y',x',y) \mapsto \bigl(  \tfrac{|x'|+|y'|}{2\Delta}  ,x,y  \bigr)
\end{equation}
define a homeomorphism
\begin{equation} \label{Mp hom}
{\rm pr}_p\circ (\ev_-\times \ev_+) : \;
\bM_p \;\longrightarrow \;  [0,1) \times S^+_p \times S^-_p  .
\end{equation}
\end{lem}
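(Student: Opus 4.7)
The plan is to recognize the map ${\rm pr}_p\circ(\ev_-\times\ev_+)|_{\bM_p}$ as a restriction of the homeomorphism $\ti\phi(p)$ from Lemma~\ref{lem:tiphi}, after which the result is immediate. First I note that $\bM_p \subset \ti\cV(p)$: since $\ti S^\pm_p \subset \ti U(p)$, any $\ul\g \in \bM_p$ has both endpoints in $\ti U(p)$, hence lies in $\ti\cV(p) = (\ev_-\times\ev_+)^{-1}(\ti U(p)\times\ti U(p))$. So it suffices to determine the image $\ti\phi(p)(\bM_p)$ and check that the two maps agree on $\bM_p$.

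For the identification I use the explicit formulas \eqref{gamma T}, \eqref{gamma infty} from Remark~\ref{rmk:tiphi}: every $\ul\g = \ti\phi(p)^{-1}(\tau, x, y) \in \ti\cV(p)$ (with $(\tau,x,y)\in[0,1]\times\ti B^+_p\times\ti B^-_p$) has endpoints $\ev_-(\ul\g) = (x, \tau y)$ and $\ev_+(\ul\g) = (\tau x, y)$, a description valid for both the unbroken case $\tau>0$ and the broken case $\tau=0$ (where $\tau x = \tau y = 0$). The condition $\ul\g \in \bM_p$ then reads $|x| = \Delta$ and $|y| = \Delta$ together with $|\tau x|, |\tau y| < \Delta$, which is equivalent to $x \in S^+_p$, $y \in S^-_p$, and $\tau \in [0,1)$ (the strict inequality coming from the fact that $B^\pm_p$ are open balls, so $|\tau y| < \Delta$ forces $\tau < 1$). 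Thus $\ti\phi(p)$ restricts to a bijection
$$
\ti\phi(p)|_{\bM_p} : \; \bM_p \;\overset{\sim}{\longrightarrow}\; [0,1)\times S^+_p\times S^-_p .
$$

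It remains to verify this restriction equals ${\rm pr}_p\circ(\ev_-\times\ev_+)$. For $\ul\g\in\bM_p$ parametrized as above, the endpoints satisfy $|x'| = \tau|x| = \tau\Delta = \tau|y| = |y'|$, whence
$$
{\rm pr}_p\bigl((x,\tau y),(\tau x,y)\bigr) = \Bigl( \tfrac{\tau\Delta + \tau\Delta}{2\Delta}, x, y\Bigr) = (\tau,x,y) = \ti\phi(p)(\ul\g).
$$
Since $\ti\phi(p)$ is a homeomorphism on $\ti\cV(p)$ by Lemma~\ref{lem:tiphi}, its restriction to the subspace $\bM_p$ is a homeomorphism onto the image $[0,1)\times S^+_p\times S^-_p$, completing the proof.

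I do not anticipate any serious obstacle: the argument is just a bookkeeping reduction to Lemma~\ref{lem:tiphi}. The only subtle point is the strict bound $\tau < 1$, which is automatic from the convention that $B^\pm_p$ denote open balls; this is what makes $\bM_p$ map to the half-open interval $[0,1)$ rather than to $[0,1]$.
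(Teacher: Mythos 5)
Your proof is correct and is essentially the same as the paper's: both recognize $\mathrm{pr}_p\circ(\ev_-\times\ev_+)|_{\bM_p}$ as the restriction of the homeomorphism $\ti\phi(p)$ from Lemma~\ref{lem:tiphi}, compute the endpoints from Remark~\ref{rmk:tiphi} to see that $\mathrm{pr}_p$ recovers the length parameter, and verify surjectivity onto $[0,1)\times S^+_p\times S^-_p$ via the explicit inverse. Your version is slightly more detailed in spelling out the equivalence $\ul\g\in\bM_p \Leftrightarrow (|x|=|y|=\Delta,\ \tau<1)$, but the structure and the key reduction are identical.
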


\begin{proof}
The map ${\rm pr}_p\circ (\ev_-\times \ev_+)$ is the restriction of the homeomorphism $\phi(p)$ of Lemma~\ref{lem:tiphi} to $\bM_p\subset\ti\cV(p)$. Indeed, the endpoints of a trajectory $\ul{\g}$ of length $T<\infty$ are of the form $(x,y'=e^{-T}y,x'=e^{-T}x,y)$, hence the length parameter $ e^{- \ell(\ul{\g})/(1-\ell(\ul{\g}))}$ is given by $e^{-T}= \tfrac{|x'|+|y'|}{2\Delta}$.
Broken trajectories are of the form $(x,y'=0,x'=0,y)$, hence again the length parameter is given by $0=  \tfrac{|x'|+|y'|}{2\Delta}$.
Here it is important to note that $(\ev_-\times \ev_+)(\bM_p)\subset\ti S^+_p \times \ti S^-_p$ so that the projection map \eqref{pr} is only defined at $({x},{y}',{x}',{y})$ with $|{x}|=|{y}|=\Delta>0$, thus continuous.
Surjectivity onto $[0,1) \times S^+_p \times S^-_p$ follows from checking that the inverse map given by \eqref{gamma T} and \eqref{gamma infty} indeed provides trajectories in $\bM_p$, i.e.\ with endpoints on $\ti S^\pm_p$.
\end{proof}

\subsection{Restrictions to local trajectory spaces}

In the construction of the smooth corner structure for general Morse trajectory spaces we will use restriction maps from the spaces of trajectories passing near a critical point to the local trajectory space of that point.
For that purpose we introduce the following families of open neighbourhoods for $t\in(0,1]$,
\begin{align*}
\ti U_{t}(p) &:= \bigl\{ \phi_p(x,y) \,\bigl|\, |x|<(1+t)\Delta, |y|<(1+t)\Delta ,  |x|  |y| < \Delta^2 t \bigr\} \;\subset X , \\
U_{t}(p) &:= \ti U_t(p)\cap U_t(p) = \bigl\{ \phi_p(x,y) \,\bigl|\, |x|<\Delta, |y|<\Delta, |x|  |y| < \Delta^2 t \bigr\} \;\subset X .
\end{align*}
These neighbourhoods are precompactly nested $\ti U_t(p) \sqsubset \ti U_{t'}(p)$ for $t<t'$ (i.e.\ the compact closure of $\ti U_t(p)$ is contained in $\ti U_{t'}(p)$), and for $t\to 0$ converge to the union of stable and unstable manifold,
$\bigl\{ \phi_p(x,y)  \,\bigl|\, x=0 \;\text{or}\; y=0 \bigr\}=(W^-_p\cup W^+_p)\cap U(p)$. The nesting $ U_t(p) \subset  U_{t'}(p)$ and convergence also holds for $U_t(p)$, all of which are precompact in $\ti U(p)$, and with $U_1(p)=U(p)$.
We will keep identifying $\ti U(p)$ with $\ti B^-_p\times \ti B^+_p \subset \R^{n-|p|}\times\R^{|p|}$.

\begin{rmk} \label{rmk:local traj}
The entry and exit sets for $U_t(p)$ are the nested subsets
$$
\ti S^-_p(t) := S^-_p\times t B^+_p \subset \ti S^-_p , \qquad\qquad \ti S^+_p(t) := t B^-_p \times S^+_p \subset \ti S^+_p .
$$
The set of trajectories traversing $U_{t}(p)$ is
$\bM_{p,t} := (\ev_-\times \ev_+)^{-1}(\ti S^-_p (t), \ti S^+_p(t)) \subset \bM_p$.
The homeomorphism \eqref{Mp hom} then restricts to
$\bM_{p,t} \overset{\sim}{\to}  [0,t) \times S^+_p \times S^-_p$.
The global chart for the tuple $\ul{q}=(\ti U(p),p,\ti U(p))$ and $t>0$ in Theorem~\ref{thm:global with ends} will be defined as restriction $\phi(\ul{q}):=\ti\phi(p)|_{\cV_t(\ul{q})}$ to the open subset $\cV_t(\ul{q})\subset\ti\cV(p)$ given by those trajectories that intersect $\ti U_t(p)$.
Using Remark~\ref{rmk:tiphi} we may read off the image
\begin{align*}
\ti\phi(p)\bigl(\cV_t(\ul{q})\bigr)  &\subset \cM(\tilde U(q), q) \times  [0,1]\times \cM(q, \tilde U(q))    \\
&= \bigl\{ (\g_0,E,\g_1) \st   E |\ev_-(\g_0)| ,E |\ev_+(\g_1)| <(1+t)\Delta , E |\ev_-(\g_0)| |\ev_+(\g_1)| <t\Delta^2 \bigr\} .
\end{align*}
Indeed, the end points are of the form $\ev_-(\g_0)=(x,Ey)$, $\ev_+(\g_1)=(Ex,y)$, and since the product of norms of the coordinates in $\ti B^+_p$ and $\ti B^-_p$ is preserved by the flow, the condition $E |x| |y| <t\Delta^2$ is equivalent to the trajectory intersecting $\Psi_{\R}(\ti U_t(p))$.
The conditions $E |\ev_-(\g_0)|<(1+t)\Delta$ and $E |\ev_+(\g_1)| <(1+t)\Delta$ are equivalent to the trajectory not being entirely contained in  $\Psi_{\R}(\ti U_t(p))\setminus \ti U_t(p)$.
\end{rmk}

In order to construct restriction maps from spaces of Morse trajectories traversing $U(p)$ to the local trajectory space $\bM_p$ we will use evaluation at the entry and exit sets $\ti{S}^\pm_p$. These are transverse to $\nabla f$, hence are local slices for the flow such that the evaluation maps (for $\cU_\pm$ any open sets or critical points)
$$
\ev_{\ti{S}^\pm_p}: \;\;  \bM(\cU_-,\cU_+;\Psi_{\R_-}(\ti{S}^\pm_p),\Psi_{\R_+}(\ti{S}^\pm_p) ) \; \longrightarrow \; \ti{S}^\pm_p
$$
are well defined, see Definition~\ref{def intersecting}.
From these we can construct a restriction map
\begin{equation}\label{rest1}
 (\ev_-\times \ev_+)^{-1}\circ  (\ev_{\ti{S}^+_p}\times \ev_{\ti{S}^-_p}) :\;
\bM(\cU_-,\cU_+;\Psi_{\R_-}(\ti{S}^+_p),\Psi_{\R_+}(\ti{S}^-_p) ) \; \longrightarrow \; \bM_p ,
\end{equation}
which is well defined and continuous since it can be written as composition of the homeomorphism \eqref{Mp hom} with
${\rm pr}_p\circ (\ev_{\ti{S}^+_p}\times \ev_{\ti{S}^-_p}) :  \bM(\cU_-,\cU_+;\Psi_{\R_-}(\ti{S}^+_p),\Psi_{\R_+}(\ti{S}^-_p) ) \to [0,1) \times S^+_p \times S^-_p$.
Continuity of the latter map follows from Lemma~\ref{lem:cont eval} for the evaluation map and continuity of the projection ${\rm pr}_p$ defined in \eqref{pr} holds as in Lemma~\ref{lem:local traj}.
In particular, the latter map contains the (rescaled) transition time through $U(p)$, which we separately denote by
\begin{align}\label{trans}
\tau_p := & \; E_p\circ (\ev_{\ti{S}^+_p}\times \ev_{\ti{S}^-_p}) :  \bM(\cU_-,\cU_+;\Psi_{\R_-}(\ti{S}^+_p),\Psi_{\R_+}(\ti{S}^-_p) ) \longrightarrow [0,1) \\
\text{with}\;\;\; &  \;
E_p({x},{y}',{x}',{y}) :=\tfrac{|{y}'|+|{x}'|}{2\Delta}.  \nonumber
\end{align}
Note that the restriction of trajectories intersecting $U_t(p)$ then takes values in $[0,t) \times S^+_p \times S^-_p$ with actual transition time $-\ln \tau_p > -\ln t$ bounded below.

The above restriction maps will be used in the construction of charts for Morse trajectories starting and ending outside of $U(p)$.  The case of trajectories that start and end in $\ti U(p)$ was already dealt with in Lemma~\ref{lem:tiphi}. So it remains to construct restrictions to local trajectory spaces for trajectories with one end in $\ti U(p)$.
Let us give an outlook on the use of the restriction maps in order to justify the subsequent technical constructions.

The global charts for Morse trajectory spaces will be obtained from a fibered product of local trajectory spaces and spaces of flow lines between the exit and entry set $\ti S^-_{p}$ and $\ti S^+_{p'}$ of different critical points. The construction of tubular neighbourhoods of $\cM(p,p')$ in the latter will require a smooth extension of restriction maps to trajectories from $\ti S^-_p \subset \partial U(p)$ to $X\setminus U(p)$. Hence we will not restrict ourselves to trajectories intersecting $U(p)$.
However, evaluation at $\ti S^-_p$ is still important, so we will extend this definition to trajectories starting in $\Psi_{[0,\infty)}(\ti S^-_p)$ as the unique intersection point of the extended trajectory.
With this the natural transition time for trajectories from $\ti U(p)$ to $X\setminus U(p)$ is the time for which the trajectory is defined and contained in $\Psi_{\R_-}(U(p))$. For trajectories starting in $\ti U(p) \cap \Psi_{[0,\infty)}(U(p))=\Psi_{[0,\ln 2)}(\ti S^-_p)$ this leads to negative numbers, or in the exponential rescaling to factors $E\in[1,2)$ between the $y$-coordinates of initial point and evaluation to~$\ti S^-_p$.

So for trajectories with initial or end point in $\ti U(p)$ we consider the local trajectory spaces
\begin{align*}
\leftexp{-}{\phantom\cM}\hspace{-4.5mm}\bM_{p,t} &:=  (\ev_-\times \ev_+)^{-1}(\ti U(p), \partial \ti U(p) ) \subset \bM(X,X; \ti U_t(p)), \\
\leftexp{+}{\phantom\cM}\hspace{-4.5mm}\bM_{p,t} &:=  (\ev_-\times \ev_+)^{-1}(\partial \ti U(p), \ti U(p) ) \subset \bM(X,X; \ti U_t(p)) .
\end{align*}
The intersection condition $\im\ul\g\cap\ti U_1(p)\neq\emptyset$ implies that the exit resp.\ entry point of the trajectory lies in $\Psi_{\mp\ln 2}(\ti S^\pm_p)\subset \partial \ti U(p)$.
We may hence define extended evaluation maps at $\ti S^\pm_p$ on $\leftexp{\pm}{\phantom\cM}\hspace{-4.5mm}\bM_p$, and more generally
\begin{align}\label{ext eval}
\ev_{\ti{S}^-_p} &: \bM(\ti U(p),\cU_+;\Psi_{\R_+}(\ti{S}^-_p) ) \to \ti{S}^-_p ,
\qquad
\ul{\g} \mapsto \ti{S}^-_p \cap \Psi_\R(\im\ul\g) , \\
\ev_{\ti{S}^+_p} &: \bM(\cU_-,\ti U(p);\Psi_{\R_-}(\ti{S}^+_p) ) \to \ti{S}^+_p ,
\qquad
\ul{\g} \mapsto \ti{S}^+_p \cap \Psi_\R(\im\ul\g) .  \nonumber
\end{align}
We use these evaluations to give the local trajectory spaces a smooth structure as follows.

\begin{lem} \label{lem:local traj +-}
The extended evaluation maps \eqref{ext eval} are continuous, and smooth when restricted to $\cM(\cU_-,\cU_+)$.
The evaluations $\ev_- \times \ev_{\ti{S}^-_p}$ resp.\ $\ev_{\ti{S}^+_p} \times \ev_+$ composed with
\begin{align}\label{+-pr}
\leftexp{-}{\rm pr}_p \;:\; \ti U(p) \times \ti{S}^-_p \;\to\;  [0,2) \times \ti B^+_p \times S^-_p ,
\quad & \quad
\big((x,y'),(x',y)\bigr) \;\mapsto\; \bigl( \tfrac{|y'|}{\Delta} , x , y  \bigr) , \\
\leftexp{+}{\rm pr}_p \;:\; \ti{S}^+_p  \times \ti U(p) \;\to\;  [0,2) \times S^+_p \times \ti B^-_p ,
\quad & \quad
\big((x,y'),(x',y)\bigr)  \;\mapsto\; \bigl( \tfrac{|x'|}{\Delta} , x , y   \bigr)  \nonumber
\end{align}
define homeomorphisms
\begin{align*}
\leftexp{-}{\phantom\cM}\hspace{-4.5mm}\bM_{p,t} &\;\longrightarrow\;
\bigl\{ (E,x,y) \in [0,1+t) \times \ti B^+_p \times S^-_p  \,\big|\, E |x| < t\Delta \bigr\}, \\
\qquad
\leftexp{+}{\phantom\cM}\hspace{-4.5mm}\bM_{p,t} &\;\longrightarrow\;
\bigl\{ (E,x,y) \in [0,1+t) \times S^+_p \times \ti B^-_p \,\big|\, E |y| < t\Delta \bigr\}.
\end{align*}
\end{lem}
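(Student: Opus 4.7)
My approach is to work throughout in the normal coordinates $\phi_p$, in which the flow is the hyperbola $\Psi_s(x,y) = (e^{-s}x, e^s y)$ and both evaluations become explicit algebraic expressions in the initial point. The extended evaluation $\ev_{\ti S^-_p}$ is well-defined on $\bM(\ti U(p),\cU_+;\Psi_{\R_+}(\ti S^-_p))$ because the full flow line through any $(x,y')\in\ti U(p)$ with $y'\ne 0$ meets $\ti S^-_p = \{|y|=\Delta\}$ at the single point $\bigl(\tfrac{|y'|}{\Delta}x,\,\tfrac{\Delta}{|y'|}y'\bigr)$, while when $y'=0$ the trajectory breaks at $p$ (the only critical point in $\ti U(p)$) and the intersection is determined by the outgoing $W^-_p$ segment. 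Smoothness on the unbroken stratum follows at once from this formula, noting that $y'\ne 0$ automatically on $\cM(\cU_-,\cU_+)$ since otherwise the trajectory stays in $W^+_p\cup\{p\}$ and never crosses $\ti S^-_p$. Continuity across all strata is handled by Lemma~\ref{lem:cont eval} on the subset where the trajectory genuinely crosses $\ti S^-_p$ in forward time, by continuous pre-composition with the flow when the initial point already lies past $\ti S^-_p$, and by a Hausdorff-limit argument at broken trajectories identical in spirit to the one in Lemma~\ref{lem:tiphi}.

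For the homeomorphism claim about $\leftexp{-}{\phantom\cM}\hspace{-4.5mm}\bM_{p,t}$, I would construct an explicit inverse. Given $(E,x,y)$ in the asserted target set, set $y':=Ey$ (the zero vector when $E=0$) and define the trajectory starting at $(x,y')\in\ti U(p)$ that exits $\partial\ti U(p)$ on $\{|y|=2\Delta\}$: for $E>0$ the unbroken line $s\mapsto (e^{-s}x,e^sEy)$ on $[0,\ln(2/E)]$, and for $E=0$ the broken pair $(\g_0,\g_1)$ with $\g_0(s)=(e^{-s}x,0)$ on $[0,\infty)$ and $\g_1(s)=(0,2e^sy)$ on $(-\infty,0]$. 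Since the product $|x(s)||y(s)|=|x|E\Delta$ is preserved along the flow, the resulting trajectory intersects $\ti U_t(p)$ precisely when $E|x|<t\Delta$ and $E<1+t$, so the construction really lands in $\leftexp{-}{\phantom\cM}\hspace{-4.5mm}\bM_{p,t}$; bijectivity and smoothness of the inverse on $\{E>0\}$ follow immediately from the formulas.

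The only delicate point is continuity of the inverse across $E=0$, where an unbroken trajectory must Hausdorff-converge to a broken pair. For this I would mirror the Hausdorff estimate in the proof of Lemma~\ref{lem:tiphi}: parameterising $\im\g$ by $z\in[E,2]$ via $z=e^sE$ gives the curve $z\mapsto\bigl(\tfrac{E}{z}x,zy\bigr)$, while the target broken image decomposes as $\{(zx,0)\st z\in[0,1]\}\cup\{(0,2zy)\st z\in[0,1]\}$; comparing the two via the matching choices $z'=\max\{z,\sqrt{E}\}$ in one direction and $z'=z$ in the other yields a bound of the form $d_{\text{Hausdorff}}\leq C\bigl(|x-x'|+|y-y'|+\sqrt{E'}\bigr)$. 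This establishes continuity at $E=0$ and completes the homeomorphism statement. The proof of the $\leftexp{+}{\phantom\cM}\hspace{-4.5mm}\bM_{p,t}$ assertion is then entirely symmetric, obtained by interchanging the stable and unstable coordinates and reversing the flow direction.
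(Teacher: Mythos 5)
Your proof is correct, and on the homeomorphism claim it takes a route that is genuinely different from the paper's. The paper covers $\leftexp{-}{\phantom\cM}\hspace{-4.5mm}\bM_{p,t}$ by the two open sets $\ev_-^{-1}(\Psi_{\R_-}(\ti S^-_p))$ and $\ev_-^{-1}(\Psi_{(-\ln 2,\ln 2)}(\ti S^-_p))$: on the first (mapping onto $\{E<1\}$, which contains the broken stratum) the composed map is recognized as $\ti\phi(p)$ of Lemma~\ref{lem:tiphi} applied to the trajectory truncated at $\ti S^-_p$, so the regularity --- in particular the delicate Hausdorff continuity at broken trajectories --- is \emph{inherited} from that lemma rather than re-established; on the second (mapping onto $\{\tfrac12<E<2\}$) the map is an elementary coordinate change read off from the chart $\ev_-$, since those trajectories are determined by their initial points. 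You instead exhibit the inverse explicitly in normal coordinates, treating $E>0$ uniformly, and re-derive the Hausdorff estimate from scratch. Both are sound; yours is more self-contained but repeats the content of Lemma~\ref{lem:tiphi} rather than citing it. Your treatment of the first claim (continuity of the extended evaluation via $\rho_-\circ\ev_-$ and Lemma~\ref{lem:cont eval}) and the reduction of the $+$ case by symmetry match the paper.

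Two details in your Hausdorff argument deserve attention. First, the matching choice $z'=\max\{z,\sqrt{E}\}$ is carried over from the paper's parametrization $z=e^{-s}$ in Lemma~\ref{lem:tiphi}, but you parametrize by $z=e^sE$; with your curve $z\mapsto(\tfrac{E}{z}x,zy)$, $z\in[E,2]$, that choice makes the first coordinate $\tfrac{E'}{z'}x'$ far from $zx$ whenever $z$ is not small. The correct match for the point $(zx,0)$ in your parametrization is $z'=\min\{\sqrt{E'},\,E'/z\}$, which gives $\tfrac{E'}{z'}x'\approx zx$ and $z'|y'|\leq\sqrt{E'}|y'|$; the advertised bound $d_{\text{Hausdorff}}\leq C(|x-x'|+|y-y'|+\sqrt{E'})$ then goes through. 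Second, the metric on $\leftexp{-}{\phantom\cM}\hspace{-4.5mm}\bM_{p,t}\subset\bM(X,X)$ is $d_\bM$, which carries the renormalized-length term $|\ell(\ul\g)-\ell(\ul\g')|$ in addition to the Hausdorff distance; to conclude continuity of the inverse at $E=0$ you should also note $\ell\to 1$, which is immediate from $L=\ln(2/E)\to\infty$ but needs to be said.
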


\begin{proof}
The initial points $\ev_-(\ul{\g})$ of trajectories in $\bM(\ti U(p),\cU_+;\Psi_{\R_+}(\ti{S}^-_p)$ lie within $\ti U_1(p)$ by the intersection condition with $\Psi_{\R_+}(\ti{S}^-_p)=\{(x,y) \,|\, |x||y|\leq \Delta^2, \Delta \leq |y| \leq 2 \Delta \}$. The standard evaluation map $\ev_{\ti{S}^-_p}(\ul{\g})$ is well defined for $\ev_-(\ul{\g})\in \ti U_1(p) \setminus \Psi_{[0,\infty)}(\ti{S}^-_p)$ and has the claimed regularity by Lemma~\ref{lem:cont eval}.
So it suffices to establish the regularity of the extended evaluation on the open subset $\ev_-^{-1}(\ti U_1(p)\setminus W^+_p)$, where it can be expressed as composition $\ev_{\ti{S}^-_p}= \rho_- \circ \ev_-$  with the smooth map in normal coordinates
$$
\rho_- : \bigl\{ (x,y) \in \ti U(p) \,\big|\, y\neq 0 \bigr\} \to \ti{S}^-_p , \qquad
(x,y) \mapsto \bigl(\tfrac{|y|}{\Delta}|x|,\tfrac{\Delta}{|y|} y\bigr) .
$$
The extension of $\ev_{\ti{S}^+_p}$ has an analogous expression.
The regularity then follows from the fact that the endpoint evaluations $\ev_\pm$ are continuous resp.\ smooth on $\cM(\cU_-,\cU_+)$ by Lemma~\ref{lem:cont eval}.

As in the proof of Lemma~\ref{lem:local traj}, note that the map $\leftexp{-}{\rm pr}_p\circ (\ev_-\times \ev_{\ti S^-_p})$ restricted to $\ev_-^{-1}(\Psi_{\R_-}(\ti S^-_p))$ (i.e.\ trajectories that actually intersect $\ti S^-_p$) is a restriction of the homeomorphism $\phi(p)$ from Lemma~\ref{lem:tiphi}, mapping onto the subset $\{E<1\}$ of the claimed image.
The complement of $\ev_-^{-1}(\Psi_{\R_-}(\ti S^-_p))\subset \leftexp{-}{\phantom\cM}\hspace{-4.5mm}\bM_{p,t}$ are the trajectories that intersect $\ti U_t(p)\setminus U(p) =  \Psi_{[0,\ln (1+t))}(\ti{S}^-_p)\cap \{ |x||y| < t \Delta^2\}$ but not $\Psi_{\R_-}(\ti S^-_p)$. These are uniquely determined by their initial points $\{(x,y')\in\ti U(p) \,|\, \Delta \leq |y'|\leq(1+t)\Delta, |x||y'|<t\Delta^2 \}$. Their generalized evaluation at $\ti{S}^-_p$ is given as above by $(x',y)=\rho_-(x,y')$, and $\leftexp{-}{\rm pr}_p$ identifies these pairs of points with the subset $\{1\leq E < 1+t\}$ of the claimed image. This shows bijectivity of $\leftexp{-}{\rm pr}_p\circ (\ev_-\times \ev_{\ti S^-_p})$. Continuity and openness can be checked separately on the open sets $\ev_-^{-1}(\Psi_{\R_-}(\ti S^-_p))$ and $\ev_-^{-1}(\Psi_{(-\ln 2,\ln 2)}(\ti{S}^-_p))$, which cover the domain. On the first subset, regularity follows from the homeomorphism property of $\phi(p)$. On the latter, we may use the coordinate chart $\ev_-$ to express the map in local coordinates as the evident homeomorphism
\begin{align*}
\bigl\{(x,y')\in\ti U(p) \,\big|\, |y'|>\tfrac\Delta 2, |x||y'|<t\Delta^2 \bigr\} &\longmapsto \bigl\{ (E,x,y) \in (\half,2) \times \ti B^+_p \times S^-_p \,\big|\, E |x| <t\Delta \bigr\} , \\
(x,y') &\longmapsto \bigl( \tfrac{|y'|}{\Delta} , x , \tfrac{\Delta}{|y'|}x  \bigr) .
\end{align*}
This establishes the homeomorphism for $\leftexp{-}{\phantom\cM}\hspace{-4.5mm}\bM_{p,t}$; the proof for $\leftexp{+}{\phantom\cM}\hspace{-4.5mm}\bM_{p,t}$ is analogous.
\end{proof}

Now we obtain restriction maps to these local trajectory spaces (with $t=1$)
\begin{align}  \label{rest2}
(\ev_-\times \ev_+)^{-1}\circ  (\ev_-\times \ev_{\ti{S}^-_p}) :\;
\bM(\ti U(p),\cU_+;\Psi_{\R_+}(\ti{S}^-_p) ) &\longrightarrow \leftexp{-}{\phantom\cM}\hspace{-4.5mm}\bM_p , \\
(\ev_-\times \ev_+)^{-1}\circ  (\ev_{\ti{S}^+_p}\times \ev_+) :\;
\bM(\cU_-,\ti U(p);\Psi_{\R_-}(\ti{S}^+_p) ) &\longrightarrow \leftexp{+}{\phantom\cM}\hspace{-4.5mm}\bM_p , \nonumber
\end{align}
which are well defined and continuous since it they be written as composition of the homeomorphisms
of Lemma~\ref{lem:local traj +-} with $\leftexp{-}{\rm pr}_p\circ (\ev_- \times \ev_{\ti{S}^-_p})$ resp.\ $\leftexp{-}{\rm pr}_p\circ (\ev_{\ti{S}^+_p} \times \ev_+)$.
We separately denote the transition time, namely the rescaling of the time for which the trajectory is defined and contained in $\Psi_{\R_-}(U(p))$ resp.\  $\Psi_{\R_+}(U(p))$, by
\begin{align}  \nonumber
\leftexp{-}{\tau}_p :=& \leftexp{-}{E}_p \circ (\ev_- \times \ev_{\ti{S}^-_p}) : \;\;
\bM(\ti U(p),\cU_+;\Psi_{\R_+}(\ti{S}^-_p) ) \; \longrightarrow\;
[0,2) , \\
\leftexp{+}{\tau}_p := &\leftexp{+}{E}_p \circ (\ev_{\ti{S}^+_p} \times \ev_+) : \;\;
\bM(\cU_-,\ti U(p);\Psi_{\R_-}(\ti{S}^+_p)) ) \; \longrightarrow\;
[0,2) ,  \label{+-trans} \\
\text{with} \;\;\;\; & \leftexp{-}{E}_p({x},{y}',{x}',{y}) := \tfrac {|y'|}{\Delta}, \qquad
\leftexp{+}{E}_p({x},{y}',{x}',{y}) := \tfrac {|x'|}{\Delta}.  \nonumber
\end{align}
A natural extension of the local trajectory spaces of trajectories with one end in $\ti U(p)$ are the spaces of trajectories from $\Psi_{\R_-}(\ti S^+_p)\subset X\setminus\overline{U(p)}$ to $\ti S^-_p$ resp.\ from $\ti S^+_p$ to $\Psi_{\R_+}(\ti S^-_p)\subset X\setminus\overline{U(p)}$,
\begin{align*}
\leftexp{-}{\phantom\cM}\hspace{-4.5mm}\widetilde\cM_p  &:=
 (\ev_-\times \ev_+)^{-1}\bigl( \Psi_{\R_-}(\ti S^+_p) \times \ti S^-_p  \bigr) \subset \bM(X,X)  \\
\leftexp{+}{\phantom\cM}\hspace{-4.5mm}\widetilde\cM_p  &:=
(\ev_-\times \ev_+)^{-1}\bigl(  \ti S^+_p \times \Psi_{\R_+}(\ti S^-_p)  \bigr)
\subset \bM(X,X) .
\end{align*}
The global charts will also involve the restriction maps to these spaces,
\begin{align}  \label{rest3}
(\ev_-\times \ev_+)^{-1}\circ  (\ev_-\times \ev_{\ti{S}^-_p}) :\;
\bM(\Psi_{\R_-}(\ti{S}^+_p),\cU_+;\Psi_{\R_+}(\ti{S}^-_p)) &\longrightarrow \leftexp{-}{\phantom\cM}\hspace{-4.5mm}\widetilde\cM_p , \\
(\ev_-\times \ev_+)^{-1}\circ  (\ev_{\ti{S}^+_p}\times \ev_+) :\;
\bM(\cU_-,\Psi_{\R_+}(\ti{S}^-_p);\Psi_{\R_-}(\ti{S}^+_p)) &\longrightarrow \leftexp{+}{\phantom\cM}\hspace{-4.5mm}\widetilde\cM_p . \nonumber
\end{align}
However, instead of extending the charts for $\leftexp{\pm}{\phantom\cM}\hspace{-4.5mm}\bM_p$,
the natural charts for these trajectory spaces are given by combining the charts for $\bM_p$ with flow times $T^\pm$ outside of $U(p)$.
The restriction maps are then well defined and continuous since they are a composition of the following charts with $\bigl(T^-,\leftexp{-}{\rm pr}_p\circ (\ev_- \times \ev_{\ti{S}^-_p})\bigr)$ resp.\ $\bigl(T^+,\leftexp{+}{\rm pr}_p\circ (\ev_{\ti{S}^+_p} \times \ev_+)\bigr)$.

\begin{lem} \label{lem:local traj last}
The flow times given by solving $\ev_\pm(\ul{\g})\in\Psi_{T^\pm}(\ti S^\mp_p)$ define continuous maps
\begin{align*}
T^- : \bM(\Psi_{\R_-}(\ti S^+_p),\cU_+)  \to  \R_- , \qquad
T^+ : \bM(\cU_-, \Psi_{\R_+}(\ti S^-_p))  \to \R_+ .
 \end{align*}
Restricted to $\cM(X,\cU_+)$ resp.\ $\cM(\cU_-,X)$ the flow times are smooth.
Together with the maps ${\rm pr}_p\circ (\ev_{\ti{S}^+_p}\times \ev_{\ti{S}^-_p})$ they define homeomorphisms
\begin{align*}
\leftexp{-}{\phantom\cM}\hspace{-4.5mm}\widetilde\cM_p \;\longrightarrow \;  \R_-\times  [0,1) \times S^+_p \times S^-_p  , \qquad
\leftexp{+}{\phantom\cM}\hspace{-4.5mm}\widetilde\cM_p
 \;\longrightarrow \; \R_+\times  [0,1) \times S^+_p \times S^-_p  .
\end{align*}
The subspaces of trajectories $\leftexp{\pm}{\phantom\cM}\hspace{-4.5mm}\widetilde\cM_{p,t}$ intersecting $\ti U_t(p)$ have image $\R_+\times  [0,t) \times S^+_p \times S^-_p$.
\end{lem}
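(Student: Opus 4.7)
The plan is to reduce each claim to transversality of $\nabla f$ to $\ti S^\pm_p$, the regularity of endpoint evaluations from Lemma~\ref{lem:cont eval}, and the local homeomorphism for $\bM_p$ from Lemma~\ref{lem:local traj}. For continuity and smoothness of $T^-$, I will first show that the flow map $\Phi^-: \R_- \times \ti S^+_p \to X$, $(s,y)\mapsto\Psi_s(y)$ is a smooth diffeomorphism onto its open image $\Psi_{\R_-}(\ti S^+_p)$: it is a local diffeomorphism since $\nabla f$ is nonvanishing and transverse to $\ti S^+_p$, and it is injective because $f$ strictly decreases along any nonconstant flow line while $f|_{\ti S^+_p} = f(p) + \half\Delta^2 - \half|y|^2 \in [f(p), f(p)+\half\Delta^2]$ prevents a flow line from reintersecting $\ti S^+_p$ in positive time. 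Its smooth inverse projects to a smooth map $T^-:\Psi_{\R_-}(\ti S^+_p) \to \R_-$, and precomposing with $\ev_-$ (continuous on $\bM(\Psi_{\R_-}(\ti S^+_p),\cU_+)$ and smooth on $\cM(X,\cU_+)$ by Lemma~\ref{lem:cont eval}) yields the asserted regularity of $T^-$. The argument for $T^+$ is symmetric.

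For the homeomorphism $\leftexp{-}{\phantom\cM}\hspace{-4.5mm}\widetilde\cM_p \overset{\sim}{\longrightarrow} \R_- \times [0,1) \times S^+_p \times S^-_p$, I decompose each trajectory $\ul{\g}$ into an outer segment from $\ev_-(\ul{\g}) \in \Psi_{\R_-}(\ti S^+_p)$ to the entry point $\ev_{\ti S^+_p}(\ul{\g}) \in \ti S^+_p$, followed by a local segment from $\ev_{\ti S^+_p}(\ul{\g})$ through $U(p)$ to $\ev_+(\ul{\g}) \in \ti S^-_p$. The outer segment is encoded by the pair $(T^-,\ev_{\ti S^+_p}(\ul{\g})) \in \R_- \times \ti S^+_p$ by the first paragraph, while the local segment is a point of $\bM_p$, which Lemma~\ref{lem:local traj} identifies homeomorphically with $[0,1) \times S^+_p \times S^-_p$ via ${\rm pr}_p \circ (\ev_{\ti S^+_p} \times \ev_{\ti S^-_p})$. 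The entry point $\ev_{\ti S^+_p}(\ul{\g})=(x,y') \in S^+_p \times \ti B^-_p$ is recovered from the local data by $y'=\tau y$ via the explicit flow $\Psi_T(x,y')=(e^{-T}x,e^Ty')$, so the combined map $(T^-, {\rm pr}_p\circ (\ev_{\ti S^+_p} \times \ev_{\ti S^-_p}))$ is bijective with inverse sending $(T^-,\tau,x,y)$ to the trajectory starting at $\Psi_{T^-}(x,\tau y)$ and ending at $(\tau x, y)$ (unbroken for $\tau > 0$, breaking at $p$ for $\tau = 0$). Continuity of the inverse follows from smoothness of the flow together with the explicit formulas of Remark~\ref{rmk:tiphi}, while continuity of the forward map combines the first paragraph with Lemmas~\ref{lem:cont eval} and~\ref{lem:local traj}.

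Finally, the image restriction to $\leftexp{-}{\phantom\cM}\hspace{-4.5mm}\widetilde\cM_{p,t}$ follows from an invariance of the flow: the product $|x||y|$ is preserved along each integral curve $\Psi_s(x,y)=(e^{-s}x,e^sy)$, so it equals the constant $\tau\Delta^2$ along the entire trajectory. The local segment satisfies $|x|,|y|\leq \Delta$ and hence lies in $\ti U_t(p)$ iff $\tau < t$, while the outer segment has $|x||y|=\tau\Delta^2$ everywhere and so cannot enter $\ti U_t(p)$ when $\tau \geq t$. Thus the image is $\R_- \times [0,t) \times S^+_p \times S^-_p$ as claimed, and the case $\leftexp{+}{\phantom\cM}\hspace{-4.5mm}\widetilde\cM_p$ is symmetric using $T^+$ and $\ti S^-_p$. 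The main technical point I expect to work through is verifying continuity uniformly across broken trajectories, where the local transition time diverges; this will be absorbed into the continuity of $\ev_-$ from Lemma~\ref{lem:cont eval} combined with the smoothness of $T^-$ on the open subset $\Psi_{\R_-}(\ti S^+_p) \subset X$ from the first paragraph, neither of which is affected by downstream breaking at $p$.
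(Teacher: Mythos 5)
Your proof is correct and follows essentially the same strategy as the paper: decompose each trajectory into an outer segment encoded by $(T^\pm,\ev_{\ti S^\pm_p})$ and a local segment in $\bM_p$ handled by Lemma~\ref{lem:local traj}, then verify that the resulting product map is a homeomorphism. The only stylistic differences are that you establish regularity of $T^-$ by building the global flow diffeomorphism $\Phi^-$ and invoking $f$-monotonicity for injectivity, whereas the paper invokes the implicit function theorem for the local-slice equation $\Psi(t,z)\in\ti S^+_p$ (these are equivalent), and you make explicit the $|x||y|$-invariance argument for why the intersection condition with $\ti U_t(p)$ is carried entirely by the local segment, which the paper leaves implicit when quoting the restriction $\bM_{p,t}\overset{\sim}{\to}[0,t)\times S^+_p\times S^-_p$ from Remark~\ref{rmk:local traj}.
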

\begin{proof}
We may express $T^-$, and similarly $T^+$,  as composition of the evaluation $\ev_-$ and the map $\Psi_{\R_-}(\ti S^+_p) \to\R_- , z \mapsto t$ given by solving  $\Psi(t,z) \in \ti{S}^+_p$. The latter is well defined and smooth by the implicit function theorem since $\ti S^+_p$ is a local slice to the Morse flow.
The regularity of $\ev_-$ is as claimed by Lemma~\ref{lem:cont eval}.

Using the homeomorphism \eqref{Mp hom}, we may view the maps on $\leftexp{\pm}{\phantom\cM}\hspace{-4.5mm}\widetilde\cM_{p,t}$ as products of $T^\pm$ with the continuous restriction map to $\bM_{p,t}$. They are bijective since the trajectories in the domains are uniquely determined by the respective flow time and their behaviour in $\ti U(p)$.
To see that the inverses are continuous in the Hausdorff distance, we express the image of the trajectory associated to $(T^-,\tau,x,y)$ as $\im\ul\g_{\tau,x,y} \cup \Psi([T^-,0],\phi_p(x,\tau y))$, and similarly for the second map, and quote continuity of \eqref{Mp hom}, $T^\pm$ and the flow.
\end{proof}

Finally, we compare the charts for the local trajectory spaces $\leftexp{\pm}{\phantom\cM}\hspace{-4.5mm}\bM_p$ and $\leftexp{\pm}{\phantom\cM}\hspace{-4.5mm}\widetilde\cM_p$.
They differ only in the transition times, which we moreover compare with the rescaled length of time interval from Lemma~\ref{lem:tiphi} for trajectories in $\ti\cV(p)$ entirely contained in $\ti U(p)$.

\begin{lem} \label{lem:trans smooth}
The transition times $\tau_p$ and $\leftexp{\pm}{\tau}_p$ defined in \eqref{trans} and \eqref{+-trans} are continuous, and smooth when restricted to $\cM(\cU_-,\cU_+)$.
On the overlap of domains $\ev_-^{-1}\bigl(\Psi_{\R_-}(\ti{S}^+_p)\cap \ti U(p) \bigr)$ resp.\ $\ev_+^{-1}\bigl(\Psi_{\R_+}(\ti{S}^-_p)\cap \ti U(p) \bigr)$ they are related by
$$
\leftexp{-}{\tau}_p(\ul\g) = e^{T_-(\ul\g)}\cdot \tau_p(\ul\g)  = \tfrac{\Delta \cdot \tau_p(\ul\g) }{ |{\rm pr}_{\ti B^+_p} \ev_-(\ul\g)|}, \qquad
\leftexp{+}{\tau}_p(\ul\g) = e^{-T_+(\ul\g)}\cdot \tau_p(\ul\g)  = \tfrac{\Delta \cdot \tau_p(\ul\g) }{ |{\rm pr}_{\ti B^-_p} \ev_+(\ul\g)|} .
$$
The rescaled length $\ti\tau_p$ from \eqref{titip} is related to $\leftexp{\pm}{\tau}_p$ on the overlap of domains  $\ev_+^{-1}(\Psi_{\R_+}(\ti{S}^-_p))\subset\ti\cV(p)$ resp.\ $\ev_-^{-1}(\Psi_{\R_-}(\ti{S}^+_p) ) \subset \ti\cV(p)$ by
$$
\ti\tau_p(\ul\g) = \tfrac{\Delta \cdot\leftexp{-}{\tau}_p(\ul\g) }{ |{\rm pr}_{\ti B^-_p} \ev_+(\ul\g)|}, \qquad
\ti\tau_p(\ul\g) = \tfrac{\Delta \cdot \leftexp{+}{\tau}_p(\ul\g) }{ |{\rm pr}_{\ti B^+_p} \ev_-(\ul\g)|} .
$$
\end{lem}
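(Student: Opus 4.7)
The plan is to treat regularity and the explicit formulas separately. For continuity and smoothness, I would unpack each transition time as a composition: $\tau_p = E_p \circ (\ev_{\ti S^+_p}\times\ev_{\ti S^-_p})$, $\leftexp{-}{\tau}_p = \leftexp{-}{E}_p \circ (\ev_-\times\ev_{\ti S^-_p})$, and symmetrically for $\leftexp{+}{\tau}_p$. The evaluations $\ev_\pm$ and the extended $\ev_{\ti S^\pm_p}$ are continuous on their stated domains and smooth on the unbroken strata by Lemmas~\ref{lem:cont eval} and~\ref{lem:local traj +-}. The coordinate functions $E_p$ and $\leftexp{\pm}{E}_p$ are sums of Euclidean norms divided by $2\Delta$ (resp.\ a single norm divided by $\Delta$), hence continuous everywhere and smooth on the open subset where the relevant argument is nonzero; on unbroken trajectories that actually traverse $U(p)$ this subset is automatically reached, so smoothness transfers to the composition.

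For the formulas, I would work in normal coordinates $\ti U(p) \simeq \ti B^+_p \times \ti B^-_p$ with the explicit flow $\Psi_t(x, y) = (e^{-t}x, e^t y)$. Setting $(x_0, y_0) := \ev_-(\ul\g)$, the overlap condition $\ev_-(\ul\g) \in \Psi_{\R_-}(\ti S^+_p) \cap \ti U(p)$ reads $\Delta < |x_0| < 2\Delta$, and one reads off $T_-(\ul\g) = \ln(\Delta/|x_0|)$, giving $e^{T_-(\ul\g)} = \Delta/|x_0|$. For unbroken $\ul\g$ (necessarily with $y_0 \neq 0$ to reach $\ti S^-_p$ in finite forward time), direct evaluation of the flow yields
\[
\ev_{\ti S^+_p}(\ul\g) = \bigl(\Delta\, \tfrac{x_0}{|x_0|},\, \tfrac{|x_0|}{\Delta}\, y_0\bigr),
\qquad
\ev_{\ti S^-_p}(\ul\g) = \bigl(\tfrac{|y_0|}{\Delta}\, x_0,\, \Delta\, \tfrac{y_0}{|y_0|}\bigr),
\]
and substitution into $E_p$ resp.\ $\leftexp{-}{E}_p$ produces $\tau_p(\ul\g) = |x_0||y_0|/\Delta^2$ and $\leftexp{-}{\tau}_p(\ul\g) = |y_0|/\Delta$. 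Both identities $\leftexp{-}{\tau}_p = e^{T_-}\tau_p = \Delta\tau_p/|x_0|$ then follow at once. The $\leftexp{+}{\tau}_p$ formula is obtained by the symmetric calculation, and the $\ti\tau_p$ relations by substituting $\ev_+(\ul\g) = \Psi_L(x_0, y_0) = (e^{-L}x_0, e^L y_0)$ for unbroken $\ul\g \in \ti\cV(p)$ of length $L$, giving $|{\rm pr}_{\ti B^-_p}\ev_+(\ul\g)| = e^L|y_0|$ and hence $\Delta\leftexp{-}{\tau}_p/|{\rm pr}_{\ti B^-_p}\ev_+| = e^{-L} = \ti\tau_p(\ul\g)$.

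The step requiring most care is the broken case in the overlap, where several norms vanish simultaneously and one must check that the identities are well defined. For a broken trajectory $\ul\g = (\g_0, \g_1)$ breaking at $p$, the first leg lies on $W^+_p$, so $y_0 = 0$ and $\leftexp{-}{\tau}_p = 0$ by inspection; the full orbit $\Psi_\R(\im\g_1)$ is a single ray in $W^-_p$ meeting $\ti S^-_p$ transversally at one point, so the extended evaluation $\ev_{\ti S^-_p}(\ul\g) = (0, \Delta\, y_1/|y_1|)$ is well defined, and analogously $\ev_{\ti S^+_p}(\ul\g) = (\Delta\, x_0/|x_0|, 0)$, whence $\tau_p = 0$ as well. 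Both identities reduce to $0 = 0$, and the fractional form remains well defined because the denominator $|x_0| > \Delta$ is bounded below on the overlap. The $\ti\tau_p$ identities on broken trajectories in $\ti\cV(p)$ are handled identically, since $\ti\tau_p$ vanishes on all non-unbroken trajectories by its definition in \eqref{titip}.
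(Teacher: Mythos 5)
Your proof is correct and follows the same route as the paper's (which merely says the regularity follows from that of the evaluation maps and the $E$-functions, and that the formulas follow from the definitions and the explicit flow on $\ti U(p)$); you have just unpacked the coordinate computation, including the consistency check in the broken case, which the paper leaves implicit.
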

\begin{proof}
Both continuity and smoothness in the interior follow from the corresponding regularity of the evaluation maps, see Lemma~\ref{lem:cont eval}, and the maps $E_p, \leftexp{\pm}{E}_p$ which are smooth on the complement of $(x'=0,y'=0)$, corresponding to the broken trajectories.
The relations on the overlaps follow from the definitions and the explicit form of the flow on $\ti U(p)$.
\end{proof}

\subsection{Connecting trajectory spaces and fibered products} \label{sec:connections}

For pairs of critical points $p_-,p_+\in{\rm Crit}(f)$ with $\cM(p_-,p_+)\neq\emptyset$ we construct the connecting trajectory space
$$
\cM(\ti{S}^-_{p_-},\ti{S}^+_{p_+}) := (\ev_- \times \ev_+)^{-1}\bigl(\ti{S}^-_{p_-}\times \ti{S}^+_{p_+}\bigr)  \;\subset\; \cM(X,X)
$$
as space of unbroken flow lines between the exit set $\ti{S}^-_{p_-}$ and the entry set $\ti{S}^+_{p_+}$.
The embedding $\ev_- \times \ev_+$ identifies it with the graph of the flow,
\begin{align} \label{Gr}
\Gr^{p_-}_{p_+} &:= {\rm graph}(G^{p_-}_{p_+}) \subset \ti{S}^-_{p_-}\times \ti{S}^+_{p_+}  , \\
G^{p_-}_{p_+} &: \ti{S}^-_{p_-}\cap \Psi_{\R_-}(U(p_+))  \to  \ti{S}^+_{p_+} ,\quad  z\mapsto \ev_{\ti{S}^+_{p_+}}(\Psi(\cdot,z) ) . \nonumber
\end{align}
These are indeed graphs of smooth maps defined on open subsets of $\ti{S}^-_{p_-}$, since the entry sets $\ti{S}^+_{p_+}$ are local slices to the flow. This gives $\cM(\ti{S}^-_{p_-},\ti{S}^+_{p_+})$ the structure of a smooth manifold.
Moreover, we have a continuous restriction map to the connecting trajectory space
\begin{align}  \label{rest4}
\rho^{p_-}_{p_+}:=(\ev_-\times \ev_+)^{-1}\circ  (\ev_{\ti{S}^+_{p_-}}\times \ev_{\ti{S}^-_{p_+}}) :\;
\bM(\cU_-,\cU_+; U(p_-), U(p_+)) \;\longrightarrow\; \cM(\ti{S}^-_{p_-},\ti{S}^+_{p_+}) .
\end{align}
In the special cases $\cU_-=p_-$ resp.\ $\cU_+=p_+$ the same restriction map takes values in the
subspaces $\cM(S^-_{p_-},\ti{S}^+_{p_+}):=\ev_-^{-1}(S^-_{p_-})$ resp.\ $\cM(\ti S^-_{p_-},S^+_{p_+}):=\ev_+^{-1}(S^+_{p_+})$ of $\cM(\ti{S}^-_{p_-},\ti{S}^+_{p_+})$, which are identified by the evaluations $\ev_\pm$ with intersections of the unstable resp.\ stable manifold with the opposing entry resp.\ exit set,
\begin{align}  \label{rest5}
\bM(p_-,\cU_+; U(p_+)) \overset{\rho^{p_-}_{p_+}}{\longrightarrow} \cM(S^-_{p_-},\ti{S}^+_{p_+}) \;\overset{\ev_+}{\underset{\sim}{\longrightarrow}}\; \leftexp{-}\Gr^{p_-}_{p_+} := W^-_{p_-}\cap \ti S^+_{p_+}  , \\
\bM(\cU_-,p_+; U(p_-)) \overset{\rho^{p_-}_{p_+}}\longrightarrow \cM(\ti S^-_{p_-},S^+_{p_+}) \;\overset{\ev_-}{\underset{\sim}{\longrightarrow}}\;  \leftexp{+}\Gr^{p_-}_{p_+}
:= W^+_{p_+}\cap \ti S^-_{p_-}  . \nonumber
\end{align}

We can now give an outline of how the restriction maps \eqref{rest1}, \eqref{rest4}, \eqref{rest5} are employed to construct the smooth structure and global charts for the compactified spaces of Morse trajectories between critical points $\cU_\pm=q_\pm$.

For any critical point sequence $\ul{q}=(q_1,\ldots,q_k)\in{\rm Critseq}(f;q_-,q_+)$ the open set $\cV_t(\ul{q})$ of trajectories intersecting all $\ti U_t(q_i)$ supports restriction maps to the connecting trajectory spaces $\cM(S^-_{q_-},\ti{S}^+_{q_1})$, $\cM(\ti{S}^-_{q_i},\ti{S}^+_{q_{i+1}})$ for $i=1,\ldots,k-1$, and $\cM(\ti{S}^-_{q_k},S^+_{q_+})$ as well as the restriction maps \eqref{rest1} to the local trajectory spaces $\bM_{q_i,t}$ for $i=1,\ldots,k$. Now trajectories in $\cV_t(\ul{q})$ are exactly given by tuples of trajectories in all these spaces that fit together on the entry and exit sets. Thus we have identified $\cV_t(\ul{q})$ with the fibered product
\begin{align*}
\bigl( \cM(S^-_{q_-},\ti{S}^+_{q_1}) \times {\textstyle\prod_i} \cM(\ti{S}^-_{q_i},\ti{S}^+_{q_{i+1}}) \times \cM(\ti{S}^-_{q_k},S^+_{q_+}) \bigr) \leftsub{\rm Ev}{\times}_{\rm Ev'}
\left( \bM_{q_1,t} \times  \bM_{q_2,t} \times \ldots \times  \bM_{q_k,t} \right),
\end{align*}
where both products of evaluations ${\rm Ev}= \ev_+ \times \prod_{i=1}^{k-1} ( \ev_- \times \ev_+) \times \ev_-$ and ${\rm Ev'}=\prod_{i=1}^{k} ( \ev_- \times \ev_+)$ map to $\prod_{i=1}^k \ti{S}^+_{q_i} \times \ti{S}^-_{q_i}$.
Here the product of connecting trajectory spaces on the left carries a natural smooth structure without boundary, induced by evaluation at a local slice to the flow from the smooth structure of $X$.
The product of local trajectory spaces on the right was equipped above with a natural smooth structure with boundary and corners, induced by evaluations at local slices and a projection to unstable and stable sphere and a time parameter. Once we have proven transversality of the evaluation maps (reformulated in Remark~\ref{rmk:smooth eval}), this induces a smooth structure on $\cV_t(\ul{q})$, with the corner strata determined by the transition times in the local trajectory spaces.

\begin{rmk}
The smooth structure on the local trajectory spaces depends on the choice of a homeomorphism $\rho:(0,\infty)\cup\{\infty\} \overset{\sim}{\to} [0,1)$. In the polyfold setup of \cite{hwz2}, this is known as the choice of a  gluing profile. We fixed the exponential gluing profile $\rho_e(T)=e^{-T}$ that naturally arises from the evaluation maps as $\Delta^{-1}|{\rm pr}_{\ti B^\pm_q}(\ev_\mp(\ul\g)|$, and thus ensures smoothness of the evaluation maps $\ev_\pm:\bM_{q,t}\to X$. Any other choice of homeomorphism $\r$ would yield a diffeomorphic smooth structure on each $\bM_{q,t}$. The induced smooth structures on $\bM(\cU_-,\cU_+)$ may also be diffeomorphic, if the diffeomorphisms $\r_e^{-1}\circ\r$ on the gluing parameters can be extended to a homeomorphism of $\bM(\cU_-,\cU_+)$ with the help of the associative gluing maps.
However, the regularity of the evaluation map is given by the regularity of the function $\r_e\circ\r^{-1}:[0,1)\to[0,1)$, which differentiates between some of these smooth structures.
We are constructing a smooth structure that not only does not depend on any abstract choices, but also uses the geometrically natural choice of gluing profile.
\end{rmk}

In order to construct the global chart $\cV_t(\ul{q}) \to [0,t)^k\times \cM(q_-,q_1)\times\ldots\times \cM(q_k,q_+)$ we will read off the transition times from the local trajectory spaces and project each connecting trajectory space $\cM(\ti{S}^-_{q_i},\ti{S}^+_{q_{i+1}})$ to the corresponding Morse trajectories between the critical points $\cM(q_i,q_{i+1})$, which are embedded into the former by restrictions.
To make this precise we need to show that the local trajectories for fixed transition times $\ul\tau\in[0,t)^k$ intersect each fiber of the product of these projections transversely in a unique point.
For that purpose we will iteratively construct the projections as tubular neighbourhoods of the embedding
$$
\cM(q_-,q_1)\times {\textstyle \prod_{i=1}^{k-1}} \cM(q_i,q_{i+1})\times \cM(q_k,q_+) \;\hookrightarrow\;
 \leftexp{-}\Gr^{q_-}_{q_1} \times {\textstyle\prod_{i=1}^{k-1}} \Gr^{q_i}_{q_{i+1}} \times  \leftexp{+}\Gr^{q_k}_{q_+}
$$
given by $\ev_{\ti S^-_{q_1}} \times \prod_{i=1}^{k-1} \bigl(\ev_{\ti S^+_{q_i}} \times \ev_{\ti S^-_{q_{i+1}}}\bigr) \times \ev_{\ti S^+_{q_k}}$.
The construction of these tubular neighbourhoods will iteratively proceed by pulling back previously defined charts for $\cM(\ti U(p_-),\ti U(p_+))$ near broken trajectories to $\Gr^{p_-}_{p_+}$, where the charts induce tubular neighbourhood submersions, which then just need to be extended to a compact set. In fact, this is enforced by the associativity. Thus the construction of associative gluing maps for standard Morse trajectory spaces naturally uses Morse trajectory spaces with finite ends.

For the Morse trajectory spaces with finite ends, we will use a similar fibered product setup, making use of the restriction maps \eqref{rest2} and \eqref{rest3} and the following connecting trajectory spaces for pairs of one finite end condition $\cP_\mp=X$ and a critical point $p_\pm\in{\rm Crit}(f)$,
\begin{align*}
\cM(X,\ti{S}^+_{p_+}) :=\ev_+^{-1}\bigl(\ti{S}^+_{p_+}\bigr)  \;\subset\; \cM(X,X) , \qquad
\cM(\ti{S}^-_{p_-},X) := \ev_-^{-1}\bigl(\ti{S}^-_{p_-})  \;\subset\; \cM(X,X) .
\end{align*}
The evaluations $\ev_-$ resp.\ $\ev_+$ identify them with the set of initial points $\Psi_{\R_-}(\ti{S}^+_{p_+})$ resp.\ end points $\Psi_{\R_+}(\ti{S}^-_{p_-})$.
However, the initial conditions for our global charts will also allow for trajectories with initial resp.\ end point in $\ti U_1(p_\pm)$ that do not intersect the entry resp.\ exit set. For e.g.\ initial point in $\ti U_1(p_+)\setminus \ti B^-_{p_+}$ we can extend the trajectory backwards in time to obtain an intersection with $\ti{S}^+_{p_+}$; however this definition does not extend to initial points on the unstable manifold $\ti B^-_{p_+}$. As a consequence, we lack a complete identification with a space of connecting trajectories for the relevant sets of initial resp.\ end points
\begin{equation}\label{GrX}
\leftexp{-}{\Gr}^X_{p_+} := \Psi_{\R_-}(\ti U_1(p_+))  , \qquad
\leftexp{+}{\Gr}^{p_-}_X := \Psi_{\R_+}(\ti U_1(p_-)).
\end{equation}
We do however have continuous restriction maps to the connecting trajectory spaces
\begin{align}  \label{rest6}
\ev_-^{-1} \circ \ev_- : \;
\bM(X,\cU_+; U(p_+)) \supset \ev_-^{-1}\bigl( \Psi_{\R_-}(\ti{S}^+_{p_+}) \bigr)
&\;\longrightarrow\; \cM(X,\ti{S}^+_{p_+})  ,  \\
\ev_+^{-1} \circ \ev_+ : \;
\bM(\cU_-, X; U(p_-)) \supset \ev_+^{-1}\bigl( \Psi_{\R_+}(\ti{S}^-_{p_-}) \bigr)
&\;\longrightarrow\;  \cM(\ti{S}^-_{p_-},X) . \nonumber
\end{align}

\section{Global charts for Morse trajectory spaces}
\label{global}

This section constructs the global charts of Theorem~\ref{thm:global with ends}, following the outline in Section~\ref{sec:connections}, and thus providing associative gluing maps by Corollary~\ref{cor:asso}, and equipping the Morse trajectory spaces with a smooth corner structure, finishing the proof of Theorem~\ref{thm corner}.

\subsection{Domains and targets}

Recall that we restrict ourselves to the Morse trajectory spaces $\bM(\cU_-,\cU_+)$ with free endpoint(s) $\cU_\pm= X$ or limits at critical points $\cU_\pm=q_\pm\in{\rm Crit}(f)$.
We then need to construct global charts for all critical point sequences
\[
{\rm Critseq}(f,\cU_-,\cU_+) := \left\{ (q_1,\ldots, q_k)
\left|
\begin{array}{l}
 k\in\N_0 ;  q_1,\ldots, q_k \in {\rm Crit}(f) ;  \\
 \cM(\cU_-,q_1), \cM(q_1,q_2),\ldots, \cM(q_k,\cU_+)\neq\emptyset
 \end{array}
 \right.\right\}
\]
and end conditions $\cQ_0\subset\cU_-$, $\cQ_{k+1}\subset\cU_+$ as in \eqref{choice}.
Recall here that the end condition $\cQ_0$ is either $q_-$ in case $\cU_-=q_-$ or one of $X\setminus \overline{U(q_1)}$ or $\tilde U(q_1)$ in case $\cU_-=X$, and analogously $\cQ_{k+1}$ is either $q_+$ or one of $X\setminus \overline{U(q_k)}$ or $\tilde U(q_k)$.
For unified notation we will also denote the tuple of end conditions and critical point sequence by
$\ul{q}=(\cQ_0=q_0,q_1,\ldots, q_k,q_{k+1}=\cQ_{k+1})$ and write $q_0=\cQ_0$ resp.\ $q_{k+1}=\cQ_{k+1}$ in case these are critical points rather than open sets.
The domain of the global chart for $\ul{q}$ will be the set of all trajectories starting in $\cQ_0$, ending in $\cQ_{k+1}$, intersecting each of the neighbourhoods $\ti U_{t}(q_1), \ldots,\ti U_{t}(q_k)$, and not touching any other critical point.
More precisely, we define the domains for $t>0$ as
\[
\cV_t(\ul{q}) := \bigl\{ \ul{\gamma}\in \bM(\cU_-,\cU_+)  \,\big|\,
\ev_-(\ul{\g})\in\cQ_0, \ev_+(\ul{\g})\in\cQ_{k+1}, \im\ul{\gamma}\subset X^*, \im\ul{\gamma}\cap \ti U_{t}(q_i) \neq\emptyset \;\;\forall i
\bigr\}
\]
with $X^*:= (X\setminus {\rm Crit}(f)) \cup \{q_-,q_1,\ldots,q_k,q_+\}$, where we only add $q_\pm$ in case $\cU_\pm=q_\pm$.

\begin{rmk} \label{rmk:Vt}
The domains $\cV_t(\ul{q}) \subset \bM(\cU_-,\cU_+)$ are open subsets by Lemma~\ref{lem intersecting} since they are defined by open sets $\ti U_t(q_i)$ and $X^*$.
The inclusions $\cV_t(\ul{q})\subset \cV_{t'}(\ul{q})$ for $t<t'$ are precompact up to breaking, that is the closure of $\cV_t(\ul{q})_0 \subset \cM(\cU_-,\cU_+)$ is contained in $\cV_{t'}(\ul{q})_0$.
Indeed, this follows from the precompact inclusion $\ti U_t(p)\sqsubset \ti U_{t'}(p)$.

Moreover, by Remark~\ref{rmk:U small} the domains $\cV_t(\ul{q})$ for $t>0$ sufficiently small are nonempty iff the subspace of maximally broken trajectories is nonempty, i.e.\
$$
\cV_t(\ul{q})\neq \emptyset \qquad\Longleftrightarrow \qquad
\cM(\ul{q}) := \; \cV_t(\ul{q})_k =  \cM(\cQ_0,q_1)\times \cM(q_1,q_2) \ldots \times \cM(q_k,\cQ_{k+1}) \; \neq \emptyset.
$$
This also coincides with the definition of critical point sequences $\ul{q}\in{\rm Critseq}(f;\cU_-,\cU_+)$ unless $q_1$ is a local minimum or $q_{k+1}$ is a local maximum. In the latter case we have critical point sequences $(q_1,\ldots)\in{\rm Critseq}(f;X,\cU_+)$ resp.\ $(\ldots,q_k)\in{\rm Critseq}(f;\cU_-,X)$ and nonempty domains $\cV_t(\ti U(q_1),\ldots)$ resp.\ $\cV_t(\ldots,\ti U(q_k))$ (these contain e.g.\ broken trajectories starting at $q_1$ resp.\ ending at $q_k$, corresponding to $\cM(\ti U(q_1),q_1)\simeq\{q_1\}$ resp.\ $\cM(q_k,\ti U(q_k))\simeq\{q_k\}$), but the domains for $\cQ_0=X\setminus\overline{U(q_1)}$ resp.\ $\cQ_{k+1}=X\setminus\overline{U(q_k)}$ are empty, corresponding to $\cM(X\setminus\overline{U(q_1)},q_1)=\emptyset$ resp.\
$\cM(q_k,X\setminus\overline{U(q_k)})=\emptyset$.
\end{rmk}

We will prove Theorem~\ref{thm:global with ends} by constructing for every such tuple $\ul{q}$ a homeomorphism
\begin{equation}\label{eq:phi q}
\phi(\ul{q}) : \;
\cV_t(\ul{q}) \;\overset{\sim}{\longrightarrow}\;  {\textstyle\bigcup_{\ul\tau\in I_t(\ul{q})}} \{\ul{\tau}\} \times \cM_{t,\ul\tau}(\ul{q}) \;\subset\; [0,1+t)^k \times \cM(\ul{q})
\end{equation}
to the open subset given by
\[
\cM_{t,\ul{\tau}}(\ul{q}) := \left\{ (\g_0 ,\ldots,\g_k) \in \cM(\ul{q}) \left|
\begin{smallmatrix}
\tau_1 |\ev_-(\g_0)|  < t \Delta \;\; \text{in case}\;\; \cQ_0=\ti U(q_1) , \;\;\\
\tau_k |\ev_+(\g_k)| < t \Delta \;\; \text{in case}\;\; \cQ_{k+1}=\ti U(q_k)  \\
\end{smallmatrix}
\right.\right\}
\]
and
\[
I_t(\ul{q}) := \left.\begin{cases}
[0,1+t) &; \cQ_0=\ti U(q_1) \\
[0,t) &;  \text{otherwise}
\end{cases}\right\}
\times [0,1)^{k-2}\times
\left.\begin{cases}
[0,1+t) &; \cQ_{k+1}=\ti U(q_k) \\
[0,t) &;  \text{otherwise}
\end{cases}\right\}
\]
except in the special case $\ul{q}_1=(\ti U(q_1), q_1, \ti U(q_1))$, when $\phi(\ul{q}_1)$ will be defined as in Remark~\ref{rmk:local traj} with image in an open subset of $[0,1] \times \cM(\ul{q})$ given by
\begin{align*}
I_t(\ul{q}_1)&:=[0,1] , \\
\cM_{t,\tau_1}(\ul{q}_1) &:=
\bigl\{ (\g_0,\g_1) \st
\tau_1 |\ev_-(\g_0)| , \tau_1 |\ev_+(\g_1)| <(1+t)\Delta,
\tau_1 |\ev_-(\g_0)| |\ev_+(\g_1)| <t\Delta^2
\bigr\} .
\end{align*}
In case $k=0$ with $\ul{q}=(\cQ_0,\cQ_1)$ we interpret $\bigcup_{\ul\tau\in I_t(\ul{q})} \cM_{t,\ul\tau}(\ul{q}) = \cM(\cQ_0,\cQ_1)$.
Let us also summarize the further properties required by Theorem~\ref{thm:global with ends} of the homeomorphisms \eqref{eq:phi q}.

\begin{enumerate}
\item
The restriction $\phi(\ul{q})|_{\cV_t(\ul{q})_0}$ is a diffeomorphism
$\cV_t(\ul{q})_0  \longrightarrow \bigcup_{\ul\tau\in I_t(\ul{q})\cap (0,\infty)^k} \cM_{t,\ul\tau}(\ul{q})$.
\item
The restriction $\phi(\ul{q})|_{\cV_t(\ul{q})_k}$ is the canonical bijection
$\cV_t(\ul{q})_k  \to \{0\}^k \times \cM(\ul{q})$.
\item
Let $\ul{q},\ul{Q}$ be tuples such that
$\ul{Q}=(\cQ_0,\ldots, q_i,q'_1,\ldots,q'_\ell, q_{i+1}, \ldots, \cQ_{k+1})$ is obtained from $\ul{q}=(\cQ_0,\ldots, q_i, q_{i+1}, \ldots, \cQ_{k+1})$ by inserting a nontrivial critical point sequence $(q'_1,\ldots,q'_\ell)$.
Then we have
$\phi(\ul{Q})= \bigl({\rm Id}\times \phi(\ul{q}') \times {\rm Id} \bigr) \circ \phi(\ul{q})$ on $\cV_t(\ul{q}) \cap \cV_t(\ul{Q})$ with
$\ul{q}'=\bigl( \bigl\{ \begin{smallmatrix}  \cQ_0 &; i=0 \\ q_i &; i>1 \end{smallmatrix}\bigr\}, q'_1,\ldots,q'_\ell, \bigl\{ \begin{smallmatrix}  \cQ_{k+1} &; i=k\\ q_{i+1} &; i<k \end{smallmatrix}\bigr\})$.
\item
The real parameters, transition maps between different end conditions for $\cU_\pm=X$, and the form of charts for $\cQ_0=\ti U(q_1)$ resp.\ $\cQ_{k+1}=\ti U(q_1)$ are given explicitly.
\end{enumerate}

We will construct global charts $\phi(\ul{q})$ with these properties iteratively.
Before going into the general construction we take note of two special cases that are already constructed.

\subsection{Construction of global chart for $k=0$} \label{sec:k=0}

The open sets associated to the shortest critical point sequences with $k=0$,
\begin{align*}
\cV_t((\cQ_0,\cQ_1)) &= \bigl\{ \ul{\g} \in \bM(\cU_-,\cU_+) \st
\ev_-(\ul{\g})\in\cQ_0, \ev_+(\ul{\g})\in\cQ_1, \im(\ul{\g})\subset X^* \bigr\} \\
&= (\ev_-\times\ev_+)^{-1}(\cQ_0\times\cQ_1)\;\subset\; \cM(\cU_-,\cU_+),
\end{align*}
are the subsets of unbroken flow lines with the given end conditions, and by (ii) with $k=0$ these homeomorphisms are set to be the identities $\phi((\cQ_0,\cQ_1))={\rm Id}_{\cM(\cU_-,\cU_+)}|_{\cV_t((\cQ_0,\cQ_1))}$. This chart also clearly satisfies (i), will trivially fit into (iii), has no real parameters to which (iv) would apply, and the transition maps for different choices of $\cQ_0$ or $\cQ_1$ are the identity. In fact, there is no need to separate $\cU_-=X$ or $\cU_+=X$ into two domains in this case.

\subsection{Construction of global chart for $k=1$ with end conditions $\cQ_0=\ti U(q_1)=\cQ_2$}
\label{sec:k=1 special}

For the special tuples $\ul{q}_1=(\ti U(q_1),q_1,\ti U(q_1))$ with end conditions near the same critical point we constructed the charts
$\phi(\ul{q}_1) := \ti\phi(q_1)|_{\cV_t(\ul{q}_1)}$ for any $t>0$ in Lemma~\ref{lem:tiphi} and Remark~\ref{rmk:local traj}. For future reference,
$$
\phi(\ul{q}_1):= \ti\tau_{q_1} \times ({\rm pr}_{\ti B^+_{q_1}}\times {\rm pr}_{\ti B^-_{q_1}}) \circ (\ev_-\times\ev_+)  \; : \;  \cV(\ul{q}_1) \;\longrightarrow\;
{\textstyle\bigcup_{\tau_1\in [0,1]}} \{\tau_1\} \times \cM_{t,\tau_1}(\ul{q}_1)
$$
is given by a transition time $\tau(\ul{q}_1):=\ti\tau_{q_1}$, evaluation ${\rm Ev}(\ul{q}_1):=\ev_-\times\ev_+$, and projection $\pi(\ul{q}_1):={\rm pr}_{\ti B^+_{q_1}}\times {\rm pr}_{\ti B^-_{q_1}}$.
These charts are completely fixed by (iv), and by construction satisfy (i) and (ii).
Note that $\cV_t(\ul{q}_1)$ has nonempty intersection with another chart $\cV_t(\ul{q})$ only for $\ul{q}=(\cQ_0,q_1,\cQ_2)$ since the trajectories are contained in $\ti U(q_1)$, which is disjoint from the neighbourhood of any other critical point. Hence this chart appears in (iii) only in the trivial identity $\phi(\ul{q}_1)= \phi(\ul{q}_1) \circ {\rm Id}_{\cM(X,X)}$ on $\cM(X,X) \cap \cV_t(\ul{q}_1)$. The transition maps in (iv) will be established in the iterative construction.

\medskip

For all other end conditions and critical point sequences $\ul{q}$ the global charts on $\cV_t(\ul{q})$ will be constructed similarly as composition of transition times and evaluations, which we introduce next, and tubular neighbourhoods of $\cM(\ul{q})$ generalizing the projections from $\ti U(q)$ to $\ti B^+_q\simeq \cM(\ti U(q),q)$ and $\ti B^-_q\simeq \cM(q,\ti U(q))$, which will be constructed iteratively.

\subsection{Evaluations and transition times}
\label{sec:ev}
For any tuple $\ul{q}$ of a critical point sequence $(q_1,\ldots, q_k)\in{\rm Critseq}(f,\cU_-,\cU_+)$ and choices of end conditions $\cQ_0\subset \cU_-$, $\cQ_{k+1}\subset\cU_+$ from \eqref{choice} we define the evaluation map
$$
{\rm Ev}(\ul{q}):= \ev_{(\cU_-,q_1)} \times \ev_{\ti S^-_{q_1}}\times \ev_{\ti S^+_{q_2}}\ldots \; \ev_{\ti S^-_{q_{k-1}}}\times \ev_{\ti{S}^+_{q_k}} \times \ev_{(q_k,\cU_+)} ,
$$
\[
\ev_{(\cU_-,q_1)} := \begin{cases}
\ev_{\ti S^+_{q_1}} &; \cU_-=q_- , \\
\ev_- &; \cU_-=X, \\
\end{cases}
\qquad
\ev_{(q_k,\cU_+)} := \begin{cases}
\ev_{\ti S^-_{q_k}} &; \cU_+=q_+ , \\
\ev_+ &; \cU_+=X.
\end{cases}
\]
This generalizes the evaluation ${\rm Ev}(X,q_1,X)=\ev_-\times\ev_+$ from Section~\ref{sec:k=1 special}. However, due to the time parameter in $[0,1]$ this special case does not quite fit into the language of the rest of this section, where we build up to showing in Proposition~\ref{prp:Ev} that for
$\ul{q}$ not covered by Section~\ref{sec:k=0} or \ref{sec:k=1 special} this evaluation defines a homeomorphism to its image
$$
{\rm Ev}(\ul{q}) : \; \cV_t(\ul{q}) \;\longrightarrow\;
\ti{S}^+_{(\cU_-,q_1)}\times \ti{S}^-_{q_1}  \times \; \ti{S}^+_{q_2} \ldots \ti{S}^-_{q_{k-1}} \times
\ti{S}^+_{q_k} \times \ti{S}^-_{(q_k,\cU_+)} \; =: \; \ti S(\ul{q})
$$
in the target space given by the entry and exit sets, with the notation
\[
\ti{S}^+_{(\cU_-,q_1)} := \begin{cases}
\ti S^+_{q_1} &; \cU_-=q_- ,\\
X &; \cU_-=X ,
\end{cases}
\qquad
\ti{S}^-_{(q_k,\cU_+)}  := \begin{cases}
\ti S^-_{q_k} &; \cU_+=q_+, \\
X &; \cU_+=X .
\end{cases}
\]
Since the evaluations of $\cV_t(\ul{q})$ are connected by flow lines between each consecutive $\ti{S}^-_{q_{i}}$ and $\ti{S}^+_{q_{i+1}}$, and the initial resp.\ end evaluation is connected by a flow line to $\cU_-$ resp.\ $\cU_+$, the image of the evaluation map is contained in the submanifold
$$
\Gr(\ul{q}):= \leftexp{-}{\Gr}^{\cU_-}_{q_1} \times\Gr^{q_1}_{q_2} \times \ldots \times \Gr^{q_{k-1}}_{q_k} \times \leftexp{+}{\Gr}^{q_k}_{\cU_+} \;\subset\; \ti S(\ul{q}) .
$$
Here ${\Gr}^{p_-}_{p_+}$ are the graphs of the flow from \eqref{Gr}, homeomorphic to the connecting spaces of trajectories from exit set $\ti{S}^-_{p_-}$ to entry set $\ti{S}^+_{p_+}$.
For critical point end conditions, $\leftexp{\pm}{\Gr}^{p_-}_{p_+}$ are the restrictions from \eqref{rest5} to trajectories starting on the unstable sphere resp.\ ending on the stable sphere.
For finite end conditions, the behaviour of the trajectories before $\ti{S}^-_{q_1}$ resp.\ after $\ti{S}^+_{q_k}$ will be encoded in the local trajectory space for $(\cQ_0,q_1)$ resp.\ $(q_k,\cQ_{k+1})$, so we
merely use the spaces $\leftexp{-}{\Gr}^X_{p_+}$ resp.\ $\leftexp{+}{\Gr}^{p_-}_X$ of possible initial resp.\ end points. To summarize,
\[
\leftexp{-}{\Gr}^{\cU_-}_{q_1} = \begin{cases}
W^-_{q_-}\cap \ti S^+_{q_1} &; \cU_-=q_- , \\
\Psi_{\R_-}(\ti U_1(q_1))  &; \cU_-=X ,
\end{cases}
\qquad
\leftexp{+}{\Gr}^{q_k}_{\cU_+} = \begin{cases}
W^+_{q_+}\cap\ti S^-_{q_k} &; \cU_+=q_+  , \\
\Psi_{\R_+}(\ti U_1(q_k))  &;  \cU_+=X .
\end{cases}
\]
On the other hand, the evaluations of trajectories in $\cV_t(\ul{q})$ are also connected by trajectories in $\bM_{q_i}$ for $i=1,\ldots,k$, except for $i=1,k$ and $\cU_\pm =X$ when we need to use the local trajectory spaces $\leftexp{\pm}{\phantom\cM}\hspace{-4.5mm}\widetilde\cM_{q_i}$ resp.\
$\leftexp{\pm}{\phantom\cM}\hspace{-4.5mm}\overline\cM_{q_i}$ depending on the end conditions $\cQ_0,\cQ_{k+1}$.
Including the intersection conditions with $\ti U_t(q_i)$, we thus describe the open set $\cV_t(\ul{q})$ as fibered product of $\Gr(\ul{q})$ and the evaluation ${\rm Ev'}=\prod_{i=1}^{k} ( \ev_- \times \ev_+)$ from the local trajectory spaces,
$$
\cV_t(\ul{q}) \; \simeq \; \Gr(\ul{q}) \times_{{\rm Ev'}} \left(
\left\{\begin{smallmatrix}
\bM_{q_1} \\
\leftexp{-}{\phantom\cM}\hspace{-3.5mm}\overline\cM_{q_i} \\
\leftexp{-}{\phantom\cM}\hspace{-3.5mm}\widetilde\cM_{q_i}
\end{smallmatrix}\right\}
\times \bM_{q_2} \times \ldots \times  \bM_{q_{k-1}}\times
\left\{\begin{smallmatrix}
\bM_{q_k} \\
\leftexp{+}{\phantom\cM}\hspace{-3.5mm}\overline\cM_{q_k} \\
\leftexp{+}{\phantom\cM}\hspace{-3.5mm}\widetilde\cM_{q_k}
\end{smallmatrix}\right\}
\right) .
$$
From Lemmas~\ref{lem:local traj}, \ref{lem:local traj +-}, and  \ref{lem:local traj last} we know that the evaluations of the local trajectories are given by the smooth family of embeddings for transition times $\ul{\tau}=(\tau_1,\ldots,\tau_k)\in I_t(\ul{q})$
\begin{align*}
\iota_{\ul{q},\ul\tau} : \; S^+_{(\cQ_0,q_1)} \times S^-_{q_1} \times \ldots  S^+_{q_k} \times S^-_{(q_k,\cQ_{k+1})} &\;\longrightarrow\;
\ti{S}^+_{(\cU_-,q_1)}\times \ti{S}^-_{q_1}  \times  \ldots  \times
\ti{S}^+_{q_k} \times \ti{S}^-_{(q_k,\cU_+)}  \;=\; \ti S(\ul{q}) \\
\left( T_-, x_1 , y_1,  \ldots, x_k, y_k , T_+ \right) \qquad
&\;\longmapsto\;
{\scriptstyle \bigl( \Psi_{T_-}(x_1 , \tau_1 y_1 ) , (\tau_1 x_1 , y_1 ) , \ldots, (x_k , \tau_k y_k ) , \Psi_{T_+} ( \tau_k x_k ,y_k )\bigr)} .
\end{align*}
Here we still identify the coordinates $\ti B^+_{q_i}\times\ti B^-_{q_i}$ with their images in $\ti U(q_i)\subset X$, and $(T_-,x_1)$ resp.\ $(x_k,T_+)$ are coordinates on $W^+_{q_1}$ resp.\ $W^-_{q_k}$, taking values in
\[
S^+_{(\cQ_0,q_1)} := \begin{cases}
\{0\}\times S^+_{q_1} &; \cQ_0=q_- , \\
\R_-\times S^+_{q_1} &;  \cQ_0=X\setminus\overline{U(q_1)} , \\
\{0\}\times\ti B^+_{q_1} &; \cQ_0=\ti U(q_1),
\end{cases}
\quad
S^-_{(q_k,\cQ_{k+1})} := \begin{cases}
S^-_{q_k}\times\{0\} &; \cQ_{k+1}=q_+ , \\
S^-_{q_k} \times \R_+&; \cQ_{k+1}=X\setminus\overline{U(q_k)} , \\
\ti B^-_{q_k} \times\{0\}&; \cQ_{k+1}=\ti U(q_k) .
\end{cases}
\]
For a complete description of $\cV_t(\ul{q})$ it remains to note that the intersection condition with $\ti U_t(q_i)$ gives rise to a restriction of the domain of $\iota_{\ul\tau}$ in the case of end conditions near $q_1$ or $q_k$. With that, and abbreviating $\iota_{\ul\tau}=\iota_{\ul q,\ul\tau}$, the evaluations of local trajectories are given by
$$
{\rm Ev'}\left(
\left\{\begin{smallmatrix}
\bM_{q_1} \\
\leftexp{-}{\phantom\cM}\hspace{-3.5mm}\overline\cM_{q_i} \\
\leftexp{-}{\phantom\cM}\hspace{-3.5mm}\widetilde\cM_{q_i}
\end{smallmatrix}\right\}
\times \bM_{q_2} \times \ldots \times  \bM_{q_{k-1}}\times
\left\{\begin{smallmatrix}
\bM_{q_k} \\
\leftexp{+}{\phantom\cM}\hspace{-3.5mm}\overline\cM_{q_k} \\
\leftexp{+}{\phantom\cM}\hspace{-3.5mm}\widetilde\cM_{q_k}
\end{smallmatrix}\right\}
\right)
=
\bigcup_{\ul{\tau}\in I_t(\ul{q}) }  \im\iota_{\ul{\tau}}(\cD_{t,\ul{\tau}}(\ul{q}))
 \;\subset\; \ti{S}(\ul{q}),
$$
\[
\cD_{t,\ul{\tau}}(\ul{q}) := \left\{ (T_-, x_1 ,\ldots,y_k, T_+) \in S^+_{(\cQ_0,q_1)} \times  \ldots  \times S^-_{(q_k,\cQ_{k+1})} \left|
\begin{smallmatrix}
\tau_1 |x_1|  < t \Delta \;\; &\text{if}\;\; \cQ_0=\ti U(q_1) , \;\;\; \\
\tau_k |y_k| < t \Delta \;\; &\text{if}\;\; \cQ_{k+1}=\ti U(q_k)
\end{smallmatrix}
\right.\right\}.
\]
Here we do not deal with the cases $k=0$ or $\cQ_0=\ti U(q_1)=\cQ_2$, for which the global charts were constructed in the previous sections. For all other critical point sequences we have achieved a complete description of the image of the evaluation homeomorphism,
\begin{equation}\label{EvVq}
{\rm Ev}(\ul{q})(\cV_t(\ul{q})) \;= {\textstyle\bigcup_{\ul{\tau}\in I_t(\ul{q})} } \iota_{\ul{\tau}}(\cD_{t,\ul{\tau}}(\ul{q})) \cap \Gr(\ul{q}) .
\end{equation}
The transition times $\ul{\tau}\in I_t(\ul{q})$ implicit in \eqref{EvVq} can be read off explicitly by the map
$$
\tau(\ul{q}):= \tau_{(\cQ_0,q_1)}\times \tau_{q_2}\times\ldots\times \tau_{q_{k-1}}
\times \tau_{(q_k,\cQ_{k+1})}  : \cV_t(\ul{q})  \;\longrightarrow\;  [0,2)^k
$$
given by the transition times from \eqref{trans} and \eqref{+-trans},
\[
 \tau_{(\cQ_0,q_1)} :=
\begin{cases}
\leftexp{-}{\tau}_{q_1} \hspace{-3mm} &; \cQ_0=\ti U(q_1) , \\
\;\;\tau_{q_1} \hspace{-3mm}&; \text{otherwise} ,
\end{cases}
\qquad
\tau_{(q_k,\cQ_{k+1})} :=
\begin{cases}
\leftexp{+}{\tau}_{q_k} \hspace{-3mm}&; \cQ_{k+1}=\ti U(q_k),\\
\;\;\tau_{q_k} \hspace{-3mm}&; \text{otherwise} .
\end{cases}
\]
Note that this does not yield a definition of $\tau(\ul{q}_1)$ in the special case $\ul{q}_1=(\ti U(q_1),q_1,\ti U(q_1))$ of Section~\ref{sec:k=1 special}. In that case we denote by
$\tau(\ul{q}_1):=\ti\tau_{q_1}$  the rescaled time for which the trajectory is defined.
Similar to the construction of the global chart in that special case, we can show in general that the transition times and evaluations provides a map that satisfies most properties of a global chart, except that it maps to a neighbourhood of the intended target.
In particular the following establishes the homeomorphism property of ${\rm Ev}(\ul{q})$.

\begin{prp} \label{prp:Ev}
For any $t>0$, critical point sequence $(q_1,\ldots, q_k)\in{\rm Critseq}(f,\cU_-,\cU_+)$, and choice
of end conditions $\cQ_0\subset \cU_-$, $\cQ_{k+1}\subset\cU_+$ from \eqref{choice} that are not covered by Section~\ref{sec:k=0} or \ref{sec:k=1 special}, the map
$$
\tau(\ul{q}) \times {\rm Ev}(\ul{q})  : \;
\cV_t(\ul{q}) \;\longrightarrow\;
{\textstyle \bigcup_{\ul{\tau}\in I_t(\ul{q})}}  \{\ul{\tau}\} \times \bigl(\iota_{\ul{\tau}}(\cD_{t,\ul{\tau}}(\ul{q})) \cap \Gr(\ul{q})\bigr)
 \;\subset\; [0,2)^k \times {\rm Gr}(\ul{q})
$$
is a homeomorphism and satisfies the following.
\begin{enumerate}
\item
Restricted to the unbroken trajectories, $\bigl(\tau(\ul{q}) \times {\rm Ev}(\ul{q}) \bigr)\big|_{\cV_t(\ul{q})_0}$ is a diffeomorphism
$$
\cV_t(\ul{q})_0  \;\longrightarrow\;
{\textstyle\bigcup_{\ul{\tau}\in I_t(\ul{q})\cap (0,\infty)^k}} \{\ul{\tau}\} \times \bigl(\iota_{\ul{\tau}}(\cD_{t,\ul{\tau}}(\ul{q})) \cap \Gr(\ul{q})\bigr)
\;\subset\; (0,2)^k \times  {\rm Gr}(\ul{q})  .
$$
\item
Restricted to the maximally broken trajectories, $\bigl(\tau(\ul{q}) \times {\rm Ev}(\ul{q}) \bigr)\big|_{\cV_t(\ul{q})_k}$ is the bijection $\cV_t(\ul{q})_k = \cM(\ul{q}) \overset{\sim}{\to} \{0\}^k \times (\im\iota_{\ul{0}} \cap  {\rm Gr}(\ul{q}))$ given by evaluating $(\gamma_0,\ldots,\gamma_k)$ to
$$
 \bigl(\, \ul{0} \,; \ev_{(\cU_-,q_1)}(\gamma_0),  \ev_{\ti{S}^-_{q_1}}(\gamma_1),  \ev_{\ti{S}^+_{q_2}}(\gamma_1), \ldots,
  \ev_{\ti{S}^+_{q_k}}(\gamma_{k-1}),  \ev_{(q_k,\cU_+)}(\gamma_k)
 \bigr) .
$$
\item
Let $\ul{Q}=(\ldots, q_i,q'_1,\ldots,q'_\ell, q_{i+1}, \ldots)$ be obtained from $\ul{q}=(\cQ_0,\ldots, q_i,q_{i+1}, \ldots,\cQ_{k+1})$ by inserting a nontrivial critical point sequence  $(q'_1,\ldots,q'_\ell)$.
Then we have
$$
\bigl(\tau(\ul{q}) \times {\rm Ev}(\ul{q}) \bigr)\big|_{\cV_t(\ul{q}) \cap \cV_t(\ul{Q})}
= F_{\ul{q},\ul{Q}} \circ \bigl(\tau(\ul{Q}) \times {\rm Ev}(\ul{Q}) \bigr)\big|_{\cV_t(\ul{q}) \cap \cV_t(\ul{Q})}
$$
with the forgetful map $F_{\ul{q},\ul{Q}} : I_t(\ul{Q})\times\Gr(\ul{Q}) \to I_t(\ul{q}) \times \ti{S}(\ul{q})$.
\item
The transition times $\tau(\ul{q})$ are given explicitly as in Theorem~\ref{thm:global with ends}.
For nontrivial critical point sequences $(q_1,\ldots,q_k)\in{\rm Critseq}(f,\cU_-,\cU_+)$ and switching end conditions from $\cQ_0=\ti U(q_1)$ to $\cQ_0=X\setminus\overline{U(q_1)}$ resp.\ from $\cQ_{k+1}=\ti U(q_k)$ to $\cQ_{k+1}=X\setminus\overline{U(q_k)}$ the homeomorphisms $\tau\times{\rm Ev}$
have overlap of domains $\ev_-^{-1}(\Psi_{\R_-}(\ti S^+_{q_1}) \cap \ti U(q_1))$ resp.\ $\ev_+^{-1}(\Psi_{\R_+}(\ti S^-_{q_k}) \cap \ti U(q_k))$ and are related by
\begin{align*}
(E_1,\ldots\phantom{,E_k}; z^+_1,\ldots\phantom{,z^-_k}) &\mapsto \bigl( |{\rm pr}_{W^+_{q_1}}(z^+_1)|\Delta^{-1} E_1 ,\ldots \phantom{, |{\rm pr}_{W^-_{q_k}}(z^-_k)|\Delta^{-1} E_k} ; z^+_1,\ldots \phantom{, z^-_k} \bigr) , \\
(\phantom{E_1,}\ldots,E_k;\phantom{z^+_1,} \ldots, z^-_k) &\mapsto \bigl( \phantom{|{\rm pr}_{W^+_{q_1}}(z^+_1)|\Delta^{-1} E_1 ,}\ldots, |{\rm pr}_{W^-_{q_k}}(z^-_k)|\Delta^{-1} E_k ; \phantom{z^+_1,} \ldots, z^-_k \bigr) .
\end{align*}
This last part includes the special case $\cQ_0=\cQ_2=\ti U(q_1)$ with $\tau(\ul{q})$ from
\eqref{titip}.
\end{enumerate}
\end{prp}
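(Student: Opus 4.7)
The plan is to express $\tau(\ul{q})\times{\rm Ev}(\ul{q})$ as the natural map arising from the fibered product description of $\cV_t(\ul{q})$ obtained at the end of Section~\ref{sec:ev}, and then to deduce bijectivity, the homeomorphism property, and (i)--(iv) from the corresponding properties of the individual restriction maps. Specifically, for each $i=1,\ldots,k$ the restriction to the local trajectory space $\bM_{q_i,t}$ (or the variants $\leftexp{\pm}{\phantom\cM}\hspace{-4.5mm}\overline\cM_{q_i}, \leftexp{\pm}{\phantom\cM}\hspace{-4.5mm}\widetilde\cM_{q_i}$ at the ends, depending on $\cQ_0,\cQ_{k+1}$) provided by \eqref{rest1}, \eqref{rest2}, \eqref{rest3} gives both the transition time $\tau_i$ and the components of ${\rm Ev}(\ul{q})$ lying in $\ti S^\pm_{q_i}$; and for each connecting segment the restriction \eqref{rest4}--\eqref{rest6} lands in $\Gr^{q_i}_{q_{i+1}}$ resp.\ $\leftexp{\pm}\Gr$. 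Packaging these together shows that $\tau(\ul{q})\times{\rm Ev}(\ul{q})$ equals the fibered product map along ${\rm Ev}'$, and its image is precisely \eqref{EvVq}.

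Next I would establish bijectivity: any generalized trajectory $\ul\gamma\in\cV_t(\ul{q})$ is uniquely determined by the data $(\tau_i(\ul\gamma),\ev_{\ti S^\pm_{q_i}}(\ul\gamma),\ev_{(\cU_-,q_1)}(\ul\gamma),\ev_{(q_k,\cU_+)}(\ul\gamma))$. Inside each $\ti U(q_i)$ the explicit flow $\Psi_s(x,y)=(e^{-s}x,e^s y)$ reconstructs the local trajectory from its transition time and spherical projections, exactly as in Lemma~\ref{lem:tiphi} and Remark~\ref{rmk:tiphi}; between consecutive neighbourhoods the trajectory is the unique flow line from the exit evaluation to the next entry evaluation, with Remark~\ref{rmk:U small} ensuring no other critical point is hit. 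Forward continuity of $\tau(\ul{q})\times{\rm Ev}(\ul{q})$ is immediate from Lemmas~\ref{lem:cont eval}, \ref{lem:local traj +-}, \ref{lem:local traj last} and \ref{lem:trans smooth}; continuity of the inverse on the interior $\ul\tau\in(0,2)^k$ follows from continuity of the flow together with the fact that the individual local trajectory charts are homeomorphisms onto their images.

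For (i)--(iv): Assertion (i) follows from smoothness of all evaluations at local slices on the open strata (Lemma~\ref{lem:cont eval}), smoothness of $\tau_{q_i}, \leftexp{\pm}\tau_{q_i}$ on $\cM(\cU_-,\cU_+)$ (Lemma~\ref{lem:trans smooth}), and the diffeomorphism statements in Lemmas~\ref{lem:local traj}, \ref{lem:local traj +-}, \ref{lem:local traj last}. Assertion (ii) is a direct computation: at a maximally broken $\ul\gamma=(\gamma_0,\ldots,\gamma_k)\in\cM(\ul{q})$ each segment $\gamma_j$ connects critical points, so all transition times vanish, and the extended evaluations \eqref{ext eval} reduce to evaluating $\gamma_{j-1}$ and $\gamma_j$ at their limits, yielding the stated formula. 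Assertion (iii) follows by direct inspection: inserting the critical points $q'_1,\ldots,q'_\ell$ between $q_i$ and $q_{i+1}$ refines the data for the single segment between $\ti S^-_{q_i}$ and $\ti S^+_{q_{i+1}}$ into finer data across the intermediate neighbourhoods, and $F_{\ul{q},\ul{Q}}$ simply forgets that refinement. Assertion (iv) is immediate from definitions \eqref{trans}, \eqref{+-trans}, together with the conversion formulas of Lemma~\ref{lem:trans smooth} between $\tau_{q_1}$ and $\leftexp{-}\tau_{q_1}$ (resp.\ $\tau_{q_k}$ and $\leftexp{+}\tau_{q_k}$) on their overlap.

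The main obstacle is continuity of the inverse at boundary points where some $\tau_i=0$, i.e.\ at broken trajectories. Here I would argue locally near each critical point $q_i$ exactly as in the proof of Lemma~\ref{lem:tiphi}, using the linear form of the flow in normal coordinates to obtain explicit Hausdorff-distance estimates between the unbroken trajectory associated to $(\tau_i^n, x_i^n, y_i^n)$ with $\tau_i^n\to 0$ and the broken limit, and then to glue these local estimates with the uniform continuity of $\Psi_s$ on the complement of critical neighbourhoods between consecutive $q_i$'s. The end-condition cases $\cQ_0=\ti U(q_1)$ and $\cQ_{k+1}=\ti U(q_k)$, where transition parameters may lie in $[1,1+t)$ (corresponding to initial/final points in $\ti U(q_1)\setminus U(q_1)$), are already handled by the homeomorphism statement in Lemma~\ref{lem:local traj +-} and therefore require no additional work beyond recording that the domain restriction $\cD_{t,\ul\tau}(\ul q)$ matches the intersection condition with $\ti U_t(q_1)$ resp.\ $\ti U_t(q_k)$.
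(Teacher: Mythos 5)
Your proposal is correct and follows essentially the same route as the paper's proof. Both arguments establish injectivity from the decomposition of a trajectory into its pieces near each $q_i$ and the connecting flow segments, derive forward continuity and interior smoothness from Lemmas~\ref{lem:cont eval} and \ref{lem:trans smooth}, observe that properties (ii)--(iv) hold by direct inspection of the definitions, and then devote the real work to continuity of the inverse at broken trajectories, which is done exactly as you suggest: local Hausdorff estimates near each $q_i$ in the linear normal coordinates (via the local homeomorphisms $R_{q_i}^{-1}$, proved by the Lemma~\ref{lem:tiphi} argument) combined with continuity of the flow on bounded time intervals for the connecting segments and near the infinite ends. The only cosmetic difference is that the paper introduces the auxiliary map $R(\ul{q})$ and writes out the image of the inverse explicitly as a union of pieces, carefully fixing flow times $T_i'\leq 2T_i$ and a cutoff $T_-$ before quoting flow continuity, whereas you subsume these details under ``uniform continuity of $\Psi_s$ on the complement of critical neighbourhoods''; this is the right idea but would need the same bookkeeping, since the parametrized flow times are unbounded as $\tau_i\to 0$ and one must pin down where the local estimates take over.
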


\begin{proof}
We will give the proof for $t=1$, then the general case follows by restriction to $\cV_t(\ul{q})\subset\cV_1(\ul{q})$.
The product of the evaluation maps ${\rm Ev}(\ul{q})= \ev_{(\cU_-,q_1)} \times \ev_{\ti S^-_{q_1}}\times\ldots \times \ev_{\ti{S}^+_{q_k}} \times \ev_{(q_k,\cU_+)}$ is injective since the value of
$\ev_{\ti S^+_{q_i}}\times \ev_{\ti{S}^-_{q_i}}$ (resp.\ $\ev_-\times \ev_{\ti{S}^-_{q_1}}$ in some cases of $i=1$, resp.\ $\ev_{\ti S^+_{q_k}}\times \ev_+$ in some cases of $i=k$) determines the behaviour near the critical point $q_i$, a generalized trajectory in $\cV_t(\ul{q})$ does not break at any other critical point, and the behaviour near all critical points, together with initial and end point, determine the entire trajectory.
Moreover, ${\rm Ev}(\ul{q})$ is a product of continuous maps by Lemma~\ref{lem:cont eval}.
In fact, when restricted to $\cV_t(\ul{q})_0$, then ${\rm Ev}(\ul{q})$ is a product of smooth embeddings, again by Lemma~\ref{lem:cont eval}.
The transition times are continous by Lemma~\ref{lem:trans smooth} and smooth when restricted to $\cV_t(\ul{q})_0$. This shows that $\tau\times{\rm Ev}$ is a continuous injection and $(\tau\times{\rm Ev})|_{\cV_t(\ul{q})_0}$ is an embedding into $(0,2)^k \times  {\rm Gr}(\ul{q})$. This proves (i) up to determining the image $(\tau\times{\rm Ev})(\cV_t(\ul{q}))$, since then the unbroken trajectories in $\cV_t(\ul{q})_0$ are exactly those with no breaking, i.e.\ with rescaled transition times in $(0,2)$.

The characterization of the image was given in Section~\ref{sec:ev}, based on the fact that the trajectories in $\cV_t(\ul{q})$ can be uniquely described by their behaviour near each critical point $q_1,\ldots,q_k$, including the initial or end point in case $\cU_-=X$ or $\cU_+=X$. On the other hand, a tuple of local trajectories near $q_1,\ldots,q_k$ fits together to a trajectory in $\cV_t(\ul{q})$ if and only if they satisfy the matching conditions encoded in $\Gr(\ul{q})$.

Properties (ii), (iii), and (iv) follow directly from the definition of the maps, so it remains to prove continuity of $(\tau(\ul{q}) \times {\rm Ev}(\ul{q}))^{-1}$. For that purpose it suffices to show that the map
$$
R(\ul{q}) := \bigl( {\rm Id}_{[0,2)^k} \times (\iota_{\ul\tau}^{-1})_{\ul\tau\in [0,2)^k} \bigr) \circ (\tau(\ul{q}) \times {\rm Ev}(\ul{q}))  :
\cV_t(\ul{q}) \to
{\textstyle\bigcup_{\ul{\tau}\in [0,2)^k}}  \{\ul{\tau}\} \times \bigl(\cD_{t,\ul{\tau}}(\ul{q}) \cap \iota_{\ul\tau}^{-1}(\Gr(\ul{q}))\bigr)
$$
has a continuous inverse. We will do this explicitly for the case of trajectories between critical points $\cU_\pm=q_\pm$. The case of finite end conditions $\cU_-=X$ or $\cU_+=X$ is completely analogous,
after replacing the spheres $S^+_{q_1}$ resp.\ $S^-_{q_k}$ with either a ball $\ti B^+_{q_1}$ resp.\ $\ti B^-_{q_k}$ or adding a flow time parameter in $\R_-$ resp.\ $\R_+$.
To prove continuity for $\cU_\pm=q_\pm$ first recall from Lemma~\ref{lem:local traj} that we have continuous local inverse maps
$$
R_{q_i}^{-1} : \; [0,1) \times S^+_{q_i} \times S^-_{q_i}\to\bM(X,X), \quad
(\tau_i,x_i,y_i) \mapsto \ul{\gamma}_{\tau_i,x_i,y_i} .
$$
Their images lie in the neighbourhoods $\overline{U(q_i)}$ and the matching conditions of $\iota_{\ul\tau}^{-1}(\Gr(\ul{q}))$ can be rephrased as
$\ev_-(\ul{\gamma}_{\tau_1,x_1,y_1} ) \in W^-_{q_-}$, $\ev_+(\ul{\gamma}_{\tau_k,x_k,y_k})\in W^+_{q_+}$, and
$$
\ev_-(\ul{\gamma}_{\tau_i,x_i,y_i}) \in \Psi_{\R_+}\bigl(\ev_+(\ul{\gamma}_{\tau_{i-1},x_{i-1},y_{i-1}})\bigr) \quad \forall i=2,\ldots,k .
$$
Hence the image of the full trajectory $\ul{\gamma}_{\ul{(\tau,x,y)}}:=R(\ul{q})^{-1}((\tau_i,x_i,y_i)_{i=1,\ldots,k})$ is given by the local trajectories and Morse flow lines between,
$$
\im \ul{\gamma}_{\ul{(\tau,x,y)}} =
{\textstyle\bigcup_{i=1}^{k}} \im \ul{\gamma}_{\tau_i,x_i,y_i}
\;\cup\; {\textstyle\bigcup_{i=1}^{k}} \Psi_{\R_+}(\tau_i x_i,y_i)
\;\cup\;
\Psi_{\R_-}(x_1,\tau_1 y_1) .
$$
In fact, for $i=2,\ldots, k$ we can replace $\Psi_{\R_+}(\tau_i x_i,y_i)$ by the finite flow line
$\Psi_{[0,T_i]}(\tau_i x_i,y_i)$, where $T_i>0$ is determined by
$\Psi_{T_i}(\tau_i x_i,y_i)= (x_{i-1},\tau_{i-1}y_{i-1})$.
We can now fix a neighbourhood $U\subset \bM_{q_1,t}\times \ldots \bM_{q_k,t}$ of $\ul{(\tau,x,y)}$ such that for every $\ul{(\tau',x',y')} = (\tau'_i,x'_i,y'_i)_{i=1,\ldots,k}\in U$ the corresponding flow times $T'_i>0$ satisfy $T'_i\leq 2 T_i$.
With that we can express the new image as similar union
$$
\im \ul{\gamma}_{\ul{(\tau',x',y')}} =
{\textstyle\bigcup_{i=1}^{k}} \im \ul{\gamma}_{\tau'_i,x'_i,y'_i}
\cup {\textstyle\bigcup_{i=2}^{k}} \Psi_{[0,2T_i]}(\tau'_i x'_i,y'_i)
\cup \Psi_{\R_-}(x'_1,\tau'_1y'_1)
\cup \Psi_{\R_+}(\tau'_k x'_k,y'_k) .
$$
Now, given $\ep>0$, we need to choose the neighbourhood $U$ so small that the Hausdorff distance between the images of trajectories is small,
$d_{\rm H}(\im \ul{\gamma}_{\ul{(\tau',x',y')}}, \im \ul{\gamma}_{\ul{(\tau,x,y)}} ) \leq \ep$.
(Note that adding finitely many critical points for the closure of the image will not change the Hausdorff distance.)
By the additivity property $d_{\rm H}(A_1\cup A_2 , B_1 \cup B_2 ) \leq \max\{ d_{\rm H}(A_1 , B_1 ) , d_{\rm H}( A_2 , B_2 ) \}$ it suffices to check that the corresponding local trajectories and flow lines are nearby.
Firstly, from the continuity of $R_{q_i}^{-1}$ we have
$d_{\rm H}(\im \ul{\gamma}_{\tau'_i,x'_i,y'_i}, \im \ul{\gamma}_{\tau_i,x_i,y_i})\leq \ep$
for sufficiently small $U$.
Secondly, continuity of the Morse flow $\Psi$ provides
$d_{\rm H}(\Psi_{[0,2T_i]}(\tau'_i x'_i,y'_i), \Psi_{[0,2T_i]}(\tau_i x_i,y_i) )\leq \ep$.
Finally, for the convergence to $q_-$ we can fix $T_- >0$ and choose $U$ such that
$\Psi_{-T}(x'_1,\tau'_1 y'_1)\in B_\ep(q_-)$ for all $\ul{(\tau',x',y')} \in U$ and $T\geq T_-$.
Then we obtain
$$
d_{\rm H}(\Psi_{\R_-}(x'_1,\tau'_1y'_1), \Psi_{\R_-}(x_1,\tau_1y_1) )
\leq
\max\bigl\{ \ep , d_{\rm H}(\Psi_{[-T_-,0]}(x'_1,\tau'_1y'_1), \Psi_{[-T_-,0]}(x_1,\tau_1y_1) ) \bigr\},
$$
which by continuity of the flow $\Psi$ will be bounded by $\ep$ for small $U$.
A similar argument ensures
$d_{\rm H}(\Psi_{\R_+}(\tau'_k x'_k,y'_k), \Psi_{\R_+}(\tau_k x_k,y_k) )
\leq \ep $ and finishes the proof.
\end{proof}

\subsection{Construction of general global chart} \label{sec:chart outline}

To obtain a smooth structure for $\cV_t(\ul{q})$ from Proposition~\ref{prp:Ev} note that for $\ul{\tau}=\ul{0}$ the embedding $\iota_{\ul{0}}$ intersects $\Gr(\ul{q})$ transversely. Indeed, $\im\iota_{\ul{0}}$ is the product of stable and unstable spheres $\im\iota_{\ul{0}} = S^+_{q_1}\times S^-_{q_1} \times \ldots \times S^+_{q_k} \times S^-_{q_k}$, with $S^+_{q_1}$ replaced by $W^+(q_1)$ in case $\cU_-=X$ and $S^-_{q_k}$ replaced by $W^-(q_k)$ in case $\cU_+=X$. It intersects the submanifold $\Gr(\ul{q})\subset\ti S(\ul{q})$ transversely by the Morse-Smale condition,
\begin{equation}\label{eq:0int}
\im\iota_{\ul{0}} \cap \Gr(\ul{q}) \;=\;
S^+_{q_1} \pitchfork  W^-_{q_-}  \;\times \left( {\textstyle\prod_{i=1}^{k-1}} \bigl( S^-_{q_i} \times S^+_{q_{i+1}}\bigr) \pitchfork \Gr^{q_i}_{q_{i+1}}   \right) \times\; S^-_{q_k}\pitchfork  W^+_{q_+} ,
\end{equation}
where in the case of finite ends $S^+_{q_1} \pitchfork  W^-_{q_-}$ resp.\ $S^-_{q_k} \pitchfork W^+_{q_+}$ is replaced by the trivial intersection $W^+_{q_1} \pitchfork \Psi_{\R_-}(\ti U_1(q_1))$ resp.\ $W^-_{q_k}\pitchfork \Psi_{\R_+}(\ti U_1(q_k))$.
In case $\cU_\pm=q_\pm$ the first resp.\ last factor is simply an intersection of stable and unstable manifold within $\ti S^+_{q_1}$ resp.\ $\ti S^-_{q_k}$.
In each of the middle factors the intersection is with the graph of the map $G^{q_i}_{q_{i+1}}$ which encodes the flow from $\ti S^-_{q_i}$ to $\ti S^+_{q_{i+1}}$, and hence transversality follows from the transverse intersection of the unstable manifold
$G^{q_i}_{q_{i+1}}(S^-_{q_i}) = W^-_{q_i}\cap \ti S^+_{q_{i+1}}$ with the stable manifold $S^+_{q_{i+1}}$ in $\ti S^+_{q_{i+1}}$.

\begin{rmk} \label{rmk:smooth eval}
The transversality $\im\iota_{\ul{\tau}} \pitchfork \Gr(\ul{q})$ for $\ul{\tau}=\ul{0}$ does not simply extend to small $\ul{\tau}\neq\ul{0}$ since $\Gr(\ul{q})$, and sometimes also the domain of $\iota_{\ul\tau}$, is noncompact.
We will however prove as part of the construction of the global charts that the smooth map $\iota:\bigl(\ul{\tau}, T_-, x_1 ,\ldots,y_k, T_+ \bigr)\mapsto\iota_{\ul{\tau}}( T_-, x_1 ,\ldots,y_k, T_+)$ is transverse to $\Gr(\ul{q})$, as a map from the manifold with corners $C:=\bigcup_{\ul{\tau}\in I_t(\ul{q})} \cD_{t,\ul{\tau}}(\ul{q}) \subset [0,2)^k \times S^+_{(\cQ_0,q_1)} \times  \ldots  \times S^-_{(q_k,\cQ_{k+1})}$ to $\ti S(\ul{q})$ in the following sense:
At every intersection point $c\in\iota^{-1}(\Gr(\ul{q}))$ the image of the ``interior tangent space'' $T_c^{\rm int}C$ under the differential $d_c\iota$ contains a complement of $T_{\iota(c)}\Gr(\ul{q})$. Here $T_c^{\rm int}C$ consists of those tangent vectors in $T_cC$ that are represented by paths $(-\ep,\ep)\to C$ tangent to the boundary $\partial C$.
Indeed, $d_c\iota(T_c^{\rm int}C)$ at $c=(\ul{\tau},\ldots)$ contains the image of $d\iota_{\ul{\tau}}$ on $T( S^+_{(\cQ_0,q_1)}\times \ldots   S^-_{(q_k,\cQ_{k+1})})$, so transversality follows from \eqref{req2} below.

This transversality with corners then induces a smooth structure on ${\rm Ev}(\ul{q})(\cV_t(\ul{q}))$, with the corner strata determined by the coordinates in $[0,2)^k$; see e.g.\ \cite{nielsen}.
Now the smooth structure on $\cV_t(\ul{q})$ will be defined by pullback with the homeomorphism ${\rm Ev}(\ul{q})$, so that ${\rm Ev}(\ul{q})$ is smooth by definition.
This directly implies smoothness of the evaluation maps \eqref{eval} at the endpoints since they are part of ${\rm Ev}(\ul{q})$.
\end{rmk}

\begin{rmk} \label{rmk:smooth eval 2}
At this point we can also deduce smoothness of the evaluation maps $\ev_H$ at the hypersurfaces of type \eqref{eval H}. In the interior $\cM(\cU_-,\cU_+)$ this was proven in Lemma~\ref{lem:cont eval}. For the global charts covering the boundary note that $\cV_t(\ul{q})$ intersects the domain of definition of $\ev_{\ti S^\pm_p}$ only when $p\in\ul{q}$ is part of the critical point sequence. Hence $\ev_{\ti S^\pm_p}$ is part of ${\rm Ev}(\ul{q})$, except for $\ev_{\ti S^+_{p=q_1}}$ in case $\cU_-=X$ or $\ev_{\ti S^-_{p=q_{k+1}}}$ in case $\cU_+=X$.
In the latter cases, the domain of the evaluations within the chart is
$\ev_-^{-1}(\Psi_{\R_-}(\ti S^+_p))\subset\cV_t(\ul{q})$ resp.\
$\ev_+^{-1}(\Psi_{\R_+}(\ti S^-_p))\subset\cV_t(\ul{q})$,
and the evaluations $\ev_-$ resp.\ $\ev_+$ are part of ${\rm Ev}(\ul{q})$, hence smooth by definition.
In this chart $\ev_{\ti S^+_{p=q_1}}$ is smooth since it is given by composing $\ev_-$ with the map $\Psi_{\R_-}(\ti S^+_p)\to \ti S^+_p, z \mapsto \Psi_{\R_+}(z)\cap\ti S^+_p$ which is smooth by Lemma~\ref{lem:cont eval}.
Similarly $\ev_{\ti S^-_{p=q_{k+1}}}$ is smooth since it is the composition of $\ev_+$ with the smooth map $z \mapsto \Psi_{\R_-}(z)\cap\ti S^-_p$.

For a general hypersurface $H\subset X$ transverse to the flow consider a trajectory near the boundary $\ul\g\in\cV_t(\ul{q})$ that also lies in the domain of $H$, i.e.\ $\im\ul\g\cap\Psi_{\R_\pm}(H)\neq\emptyset$. Its intersection point $\ev_H(\ul\g)$ with $H$ flows in finite time to the next entry set $\ti S^+_{q_j}$, unless it lies within $\ti U(q_j)$ or near the endpoint of $\ul\g$, in which case it flows in finite time backwards to the previous exit set $\ti S^-_{q_j}$. Now $\ev_H$ is smooth in a neighbourhood of $\ul\g\in\bM(\cU_-,\cU_+)$ since it can be expressed as composition of $\ev_{\ti S^\pm_{q_j}}$ with a smooth map from a neighbourhood of $\ev_{\ti S^\pm_{q_j}}(\ul\g)\in\ti S^\pm_{q_j}$ to a neighbourhood of $\ev_H(\ul\g)\in H$, given by the finite (backward) flow from $\ti S^\pm_{q_j}$ to $H$.
\end{rmk}

Next, recall that the evaluation maps ${\rm Ev}(\ul{q})$ identify the maximally broken trajectories in $\cV_t(\ul{q})_k=\cM(\ul{q})$ with the intersection $\im\iota_{\ul{0}}\cap \Gr(\ul{q}) $,
$$
{\rm Ev}(\ul{q}) \bigl( \cM(\ul{q}) \bigr) \;=\; \im\iota_{\ul{0}}\cap \Gr(\ul{q})  .
$$
In fact, this is an embedding by Lemma~\ref{lem:cont eval}.
In the case of finite ends, the evaluations moreover intertwine the restricted domains,
$$
{\rm Ev}(\ul{q}) \bigl( \cM_{t,\ul{\tau}}(\ul{q}) \bigr) \;=\;
\iota_{\ul{0}}(\cD_{t,\ul{\tau}}(\ul{q}))\cap \Gr(\ul{q})
\qquad \forall \ul{\tau}\in I_t(\ul{q}) .
$$
The construction of the global chart \eqref{eq:phi q} now requires identifications of $\iota_{\ul{\tau}}( \cD_{t,\ul{\tau}}(\ul{q}))\cap \Gr(\ul{q})$ with $\iota_{\ul{0}}(\cD_{t,\ul{\tau}}(\ul{q}))\cap \Gr(\ul{q})$ varying continuously with $\ul{\tau}\in I_t(\ul q)$.
We will achieve this by constructing a generalized tubular neighbourhood of the embedding of maximally broken trajectories ${\rm Ev}(\ul{q}) : \cM(\ul{q})\hookrightarrow\Gr(\ul{q})$, that is a surjective submersion
$$
\pi(\ul{q}) : \Gr(\ul{q}) \supset \cN(\ul{q}) \to \cM(\ul{q})
$$
of a neighbourhood $\cN(\ul{q})\subset \Gr(\ul{q})$ of $\im\iota_{\ul{0}}\cap \Gr(\ul{q}) $, which restricts to the diffeomorphism $\pi(\ul{q})|_{\im\iota_{\ul{0}}\cap \Gr(\ul{q}) }={\rm Ev}(\ul{q})^{-1} :  \im\iota_{\ul{0}}\cap \Gr(\ul{q})  \to \cM(\ul{q})$.
From this we will define the global chart as composition with the transition times and evaluation maps
\begin{align} \label{def phi}
\phi(\ul{q}) &:=  \tau(\ul{q})\times \bigl(\pi(\ul{q})\circ {\rm Ev}(\ul{q})  \bigr) : \;
\cV_t(\ul{q}) \;\to\;  [0,2)^k \times \cM(\ul{q}) .
\end{align}
Equivalently, this can be expressed as composition of a homeomorphism with the projection $\pi(\ul{q})$ restricted to domains varying with $\ul{\tau}\in I_t(\ul{q})$,
$$
\cV_t(\ul{q}) \; \xrightarrow{\tau(\ul{q}) \times  {\rm Ev}(\ul{q})} \;{\textstyle\bigcup_{\ul{\tau}\in I_t(\ul{q})}}  \{\ul{\tau}\} \times \bigl(\iota_{\ul{\tau}}(\cD_{t,\ul{\tau}}(\ul{q})) \cap \Gr(\ul{q}) \bigr) \; \xrightarrow{{\rm Id}_{[0,2)^k} \times  \pi(\ul{q})}\;{\textstyle\bigcup_{\ul{\tau}\in I_t(\ul{q})}}  \{\ul{\tau}\} \times  \cM_{t,\ul\tau}(\ul{q}) .
$$
In order for $\phi(\ul{q})$ to be a well defined map, we need to construct the tubular neighbourhoods and choose $t>0$ sufficiently small to ensure that
\begin{equation}\label{req1}
\iota_{\ul{\tau}}(\cD_{t,\ul{\tau}}(\ul{q})) \cap \Gr(\ul{q}) \subset \cN(\ul{q})
\qquad\forall \;  \ul{\tau} \in I_t(\ul{q}) .
\end{equation}
On the maximally broken trajectories $\cV_t(\ul{q})_k$, this map automatically has the required form by Proposition~\ref{prp:Ev}~(ii) and $\pi(\ul{q})|_{\im\iota_{\ul{0}}\cap \Gr(\ul{q}) }={\rm Ev}(\ul{q})^{-1}$.
Moreover, our definition of tubular neighbourhood ensures that each fiber $\pi(\ul{q})^{-1}(\ul{\g})$
is a smooth manifold and intersects $\iota_{\ul{0}}(\cD_{t,\ul{0}}(\ul{q}))$ uniquely and transversely in $\ul{\g}$.
In order for $\phi(\ul{q})$ to be a homeomorphism (and diffeomorphism in the interior) with the given image we need $\pi(\ul{q})$ to also induce diffeomorphisms
$\iota_{\ul{\tau}}( \cD_{t,\ul\tau}(\ul{q}))\cap \Gr(\ul{q}) \overset{\sim}{\to} \cM_{t,\ul\tau}(\ul{q})$ for $\ul\tau\neq 0$.
This can be ensured by the fiber intersections for each $\ul{\tau} \in I_t(\ul{q})$ being transverse at single points over $\cM_{t,\ul{\tau}}(\ul{q})$ and empty over the complement,
\begin{equation}\label{req2}
\ti S(\ul{q}) \;\supset\; \iota_{\ul{\tau}}(\cD_{t,\ul{\tau}}(\ul{q})) \pitchfork \pi(\ul{q})^{-1}({\ul{\g}})
= \begin{cases}
1\;\text{point} &; \ul\g\in\cM_{t,\ul{\tau}}(\ul{q}) , \\
\emptyset &; \ul\g\not\in\cM_{t,\ul{\tau}}(\ul{q}) .
\end{cases}
\end{equation}
This will also imply the transversality $\im\iota_{\ul{\tau}}\pitchfork\Gr(\ul{q})\subset \ti S(\ul{q})$ claimed in Remark~\ref{rmk:smooth eval} since $\pi(\ul{q})^{-1}({\ul{\g}})\subset\Gr(\ul{q})$.
Note also that in case $\cU_\pm=q_\pm$, when the domain of $\iota_{\ul{\tau}}$ is independent of $\ul\tau$ and compact, $\im\iota_{\ul{\tau}} \cap \pi(\ul{q})^{-1}({\ul{\g}})$ remains a single transverse intersection point for sufficiently small $|\ul{\tau}|$ and $\ul{\g}$ in a compact subset of $\cM(\ul{q})$.
In the iterative construction of the tubular neighbourhoods $\pi(\ul{q})$ the fibers over the complement of a compact subset will in fact be determined and automatically satisfy \eqref{req2} by the previous constructions.

\subsection{Tubular neighbourhoods of subspaces of maximally broken trajectories}
\label{sec:tub}

We will use the following generalized notion of tubular neighbourhoods of embeddings.

\begin{dfn} \label{dfn:tub}
Let $e: M \hookrightarrow G$ be an embedding of smooth manifolds.
Then a tubular neighbourhood of $e$ is a smooth surjective submersion $\pi:N\to M$ of an open neighbourhood $N\subset G$ of $e(M)$, which restricts to $\pi|_{e(M)}=e^{-1}$.
\end{dfn}

\begin{rmk} \label{rmk:tub}
Let $\pi:N\to M$ be a tubular neighbourhood of $e: M \hookrightarrow G$.
Then, by the implicit function theorem, for every $n\in N$ there is a diffeomorphism $V \times F \overset{\sim}{\to} U$ to a neighbourhood of $n$ that pulls back $\pi$ to the trivial fiber bundle over a neighbourhood $V\subset M$ of $\pi(n)$.
If $n=e(m)$ then one can make the pullback of $e: V\to U$ is a constant section.

If $M$ or $N$ are noncompact, then we may not deduce a global fiber bundle structure, but this local structure is sufficient for our purposes. In particular, each fiber $\pi^{-1}(m)$ is a smooth manifold and intersects $e(M)$ uniquely and transversely in $e(m)$.
\end{rmk}

The tubular neighbourhood $\pi(\ul{q}):\cN(\ul{q})\to\cM(\ul{q})$ of ${\rm Ev}(\ul{q}): \cM(\ul{q})\hookrightarrow \Gr(\ul{q})$ will be constructed as product
$\leftexp{-}{\pi}^{\cU_-}_{q_1}\times\pi^{q_1}_{q_2} \times \ldots\times \pi^{q_{k-1}}_{q_k}\times \leftexp{+}{\pi}_{\cU_+}^{q_k}$ of tubular neighbourhoods of the evaluation factors in ${\rm Ev}(\ul{q})$.
In each of these factors we will construct the tubular neighbourhoods by iteration over the following breaking numbers.

\begin{dfn}
For each pair $(\cP_-,\cP_+)$ of end conditions $\cP_\pm=p_\pm\in{\rm Crit}(f)$ or $\cP_\pm = X$ with $\cM(\cP_-,\cP_+)\neq\emptyset$ we define
$$
b(\cP_-,\cP_+):= \max\bigl\{   m \,\big| \, \exists p_1,\ldots, p_m\in{\rm Crit}(f) : \cM(\cP_-,p_1),\cM(p_1,p_2),\ldots,\cM(p_m,\cP_+)\neq \emptyset \bigr\}
$$
as maximal number of breakings of a trajectory from $\cP_-$ to $\cP_+$.
Moreover, for any tuple $\ul{q}=(\cQ_0,q_1,\ldots,q_k,\cQ_{k+1})$ of a critical point sequence and end conditions $\cQ_0\subset\cU_-$, $\cQ_{k+1}\subset\cU_+$ with $\cM(\ul{q})\neq\emptyset$ we denote by
$$
b(\ul{q}):=\max\{ b(\cU_-,q_1), b(q_1,q_2), \ldots,b(q_{k-1},q_k), b(q_k,\cU_+)\}
$$
the maximal breaking number between consecutive entries of $\ul{q}$.
\end{dfn}

To see that the breaking number is well defined recall that we defined ${\cM(p,p)=\emptyset}$. Note moreover that necessarily $\max f(\cP_-) \geq f(p_1)>\ldots>f(p_m)\geq \min f(\cP_+)$, so all breaking numbers are bounded above by the number of critical points of $f$.
We can hence use a finite iteration over $b=0,\ldots,\#{\rm Crit}(f)$ with a decreasing sequence $1\geq t_0 > t_1 > t_2 > \ldots >0$ to construct tubular neighbourhoods as follows.

\begin{itemize}
\item
For each pair $\cP_-=p_-,\cP_+=p_+\in{\rm Crit}(f)$ with $b(p_-,p_+)=b$ we will construct tubular neighbourhoods
\begin{equation}\label{tub nhbhds1}
\pi^{p_-}_{p_+} : \;\; \Gr^{p_-}_{p_+}(t)  \to \cM(p_-,p_+)
\qquad\text{of}\qquad
(\ev_{\ti S^-_{p_-}}\times \ev_{\ti S^+_{p_+}}):\cM(p_-,p_+)\hookrightarrow \Gr^{p_-}_{p_+}
\end{equation}
for $0<t\leq t_b$ by restriction of the construction for $t=t_b$ to
\begin{align*}
\Gr^{p_-}_{p_+}(t)&:=\Gr^{p_-}_{p_+} \cap \bigl( \Psi_\R(U_t(p_-)) \times \Psi_\R(U_t(p_+)) \bigr) \;\subset\;
\Gr^{p_-}_{p_+} .
\end{align*}
\item
For each pair $p_-,p_+\in{\rm Crit}(f)$ with $b(p_-,p_+)=b$ we then obtain tubular neighbourhoods
\begin{align*}
\leftexp{-}{\pi}^{p_-}_{p_+}  :  \;\; \leftexp{-}{\Gr}^{p_-}_{p_+}(t) \to \cM(p_-,p_+)
\qquad\text{of}\qquad
\ev_{\ti S^+_{p_+}}:\cM(p_-,p_+)\hookrightarrow \leftexp{-}{\Gr}^{p_-}_{p_+} = W^-_{p_-}\cap \ti{S}^+_{p_+}  \\
\leftexp{+}{\pi}_{p_+}^{p_-}:   \;\; \leftexp{+}{\Gr}^{p_-}_{p_+}(t)   \to \cM(p_-,p_+)
\qquad\text{of}\qquad
\ev_{\ti S^-_{p_-}}:\cM(p_-,p_+)\hookrightarrow \leftexp{+}{\Gr}^{p_-}_{p_+} = W^+_{p_+}\cap \ti{S}^-_{p_-}
\end{align*}
for $0<t\leq t_b$ on the domains
\begin{align*}
\leftexp{-}{\Gr}^{p_-}_{p_+}(t) := \leftexp{-}{\Gr}^{p_-}_{p_+}\cap \Psi_\R(U_t(p_+)), \qquad
\leftexp{+}{\Gr}^{p_-}_{p_+}(t) := \leftexp{+}{\Gr}^{p_-}_{p_+} \cap \Psi_\R(U_t(p_-))
\end{align*}
by pullback of $\pi^{p_-}_{p_+}$ under the embeddings to ${\Gr}^{p_-}_{p_+} = {\rm graph}(G^{p_-}_{p_+})$
$$
\bigl( (G^{p_-}_{p_+})^{-1}\times{\rm Id}_{\ti{S}^+_{p_+}} \bigr) :
\leftexp{-}{\Gr}^{p_-}_{p_+} \hookrightarrow {\Gr}^{p_-}_{p_+}, \qquad
\bigl( {\rm Id}_{\ti{S}^-_{p_-}} \times G^{p_-}_{p_+} \bigr) :
\leftexp{+}{\Gr}^{p_-}_{p_+} \hookrightarrow {\Gr}^{p_-}_{p_+} .
$$
These embeddings pull $\ev_{\ti S^-_{p_-}}\times \ev_{\ti S^+_{p_+}}$ back to $\ev_{\ti S^+_{p_+}}$ resp.\  $\ev_{\ti S^-_{p_-}}$, hence pullback of $\pi^{p_-}_{p_+}$ induces tubular neighbourhoods.
\item
For each $\cP_+=p_+\in{\rm Crit}(f)$ and $\cP_-=X$ with $b(X,p_+)=b$ we will construct tubular neighbourhoods
\begin{equation}
\leftexp{-}{\pi}^X_{p_+}  :  \;\; \leftexp{-}{\Gr}^X_{p_+}(t) \to \cM(X,p_+)
 \qquad\text{of}\qquad
 \ev_-:\cM(X,p_+)\hookrightarrow \leftexp{-}{\Gr}^X_{p_+}   = X  \label{tub nbhds2}
\end{equation}
for $0<t\leq t_b$ by restriction of the construction for $t=t_b$ to
\[
\leftexp{-}{\Gr}^X_{p_+}(t) := \Psi_{\R_-}(\ti U_t(p_+)) \;\;\subset\;\leftexp{-}{\Gr}^X_{p_+} .
\]
\item
For each $\cP_-=p_-\in{\rm Crit}(f)$ and $\cP_+=X$ with $b(p_-,X)=b$
we will construct tubular neighbourhoods
\begin{equation}
\leftexp{+}{\pi}_X^{p_-}:   \;\; \leftexp{+}{\Gr}^{p_-}_X(t)   \to \cM(p_-,X)
\qquad\text{of}\qquad
\ev_+:\cM(p_-,X)\hookrightarrow \leftexp{+}{\Gr}^{p_-}_X = X
\label{tub nbhds3}
\end{equation}
for $0<t\leq t_b$ by restriction of the construction for $t=t_b$ to
\[
\leftexp{+}{\Gr}^{p_-}_X(t) := \Psi_{\R_+}(\ti U_t(p_-)) \;\;\subset\;\leftexp{+}{\Gr}^{p_-}_X .
\]
\item
From the tubular neighbourhoods for $b(\cP_-,\cP_+)\leq b$ and $t\leq t_b$ we then obtain tubular neighbourhoods of ${\rm Ev}(\ul{q}):\cM(\ul{q})\to\Gr(\ul{q})$, given by
\begin{align*}
\pi(\ul{q}) := &\; \leftexp{-}{\pi}_{q_1}^{\cU_-}\times\pi^{q_1}_{q_2} \times \ldots \times \pi^{q_{k-1}}_{q_k} \times \leftexp{+}{\pi}^{q_k}_{\cU_+}
\;:\;  \cN_{t}(\ul{q}) \to \cM(\ul{q}) , \\
\cN_{t}(\ul{q})  := &\; \leftexp{-}{\Gr}^{\cU_-}_{q_1}(t) \times \Gr^{q_1}_{q_2}(t) \times \ldots \times \Gr^{q_{k-1}}_{q_k}(t) \times \leftexp{+}{\Gr}^{q_k}_{\cU_+}(t)   \;\subset\; \Gr(\ul{q}) , \nonumber
\end{align*}
for all tuples of critical point sequence and end conditions $\ul{q}=(\cQ_0,q_1,\ldots,q_k,\cQ_{k+1})$ with $b(\ul{q})\leq b$, not covered by Section~\ref{sec:k=0} or \ref{sec:k=1 special}.
These automatically satisfy \eqref{req1} for all $0<t\leq t_b$ since $I_t(\ul{q})$ is defined such that $\im\iota_{\ul{\tau}}\cap \Gr(\ul{q}) \subset \cN_t(\ul{q})$ for $\ul{\tau}\in I_t(\ul{q})$.
We will moreover make the construction and choice of $t_b>0$ such that the intersection properties of the fibers \eqref{req2} are satisfied for all $0<t\leq t_b$.
\item
From each tubular neighbourhood for $b(\ul{q})\leq b$ we then obtain a well defined map
$$
\phi(\ul{q}) := \bigl( {\rm Id}_{[0,2)^k} \times \pi(\ul{q}) \bigr) \circ  \bigl(\tau(\ul{q})\times{\rm Ev}(\ul{q})  \bigr) : \;
\cV_{t_b}(\ul{q}) \;\to\;  [0,2)^k \times \cM(\ul{q})
$$
as in \eqref{def phi}, and may restrict it to $\cV_t(\ul{q})$ for $t<t_b$.
\end{itemize}

\begin{rmk} \label{rmk:Grt converges}
In each case the open subsets $\leftexp{(\pm)}\Gr^{\cP_-}_{\cP_+}(t)\subset \leftexp{(\pm)}\Gr^{\cP_-}_{\cP_+}$ converge in the Hausdorff distance as $t\to 0$ to the image of $\cM(\cP_-,\cP_+)$ under the respective evaluation.
Indeed, in the identification $\Gr^{p_-}_{p_+}\simeq (\ev_- \times \ev_+)^{-1}\bigl(\ti{S}^-_{p_-}\times \ti{S}^+_{p_+}\bigr)  \subset \cM(X,X)$ from Section~\ref{sec:connections} we see that for any sequence $(\ev_- \times \ev_+)(\g^\nu)\in \Gr^{p_-}_{p_+}(2^{-\nu})$ there will be a convergent subsequence $\g^\nu\to\ul\g\in\bM(X,X)$ with $\ev_\pm(\ul\g)\in S^\pm_{p_\pm}$.
For this subsequence $(\ev_- \times \ev_+)(\g^\nu)$ converges to a point in $(\ev_- \times \ev_+)\bigl(  (\ev_- \times \ev_+)^{-1}\bigl(S^-_{p_-}\times S^+_{p_+}\bigr)  \subset \bM(X,X) \bigr) =
(\ev_{\ti S^-_{p_-}} \times \ev_{\ti S^+_{p_+}})(\bM(p_-,p_+))$, which is contained in the closure of $(\ev_{\ti S^-_{p_-}} \times \ev_{\ti S^+_{p_+}})(\cM(p_-,p_+))$.
On the other hand, this latter set is contained in $\Gr^{p_-}_{p_+}(t)$ for all $t>0$, which proves Hausdorff convergence of  $\Gr^{p_-}_{p_+}(t)$ to $(\ev_{\ti S^-_{p_-}} \times \ev_{\ti S^+_{p_+}})(\cM(p_-,p_+))$. The other cases are analogous.
\end{rmk}

In order for this construction of $\phi(\ul{q})$ to provide the global charts of Theorem~\ref{thm:global with ends}, we need to impose further conditions on the tubular neighbourhoods, taking the properties of $\tau(\ul{q})\times{\rm Ev}(\ul{q})$ given by Proposition~\ref{prp:Ev} into account. In unravelling the associativity (iii) note that the insertion of a nontrivial $\ul{q}'$ implies $b(\ul{q}')<b(\ul{q})$ and $b(\ul{Q})\leq b(\ul{q})$, so the compatibility can be phrased as condition on the factors of $\pi(\ul{q})$.

\begin{lem} \label{lem:pi}
Let the special global charts in Sections~\ref{sec:k=0} and \ref{sec:k=1 special} be fixed, and for some $b\geq 1$ suppose that the above construction of $\phi(\ul{q})$ for $b(\ul{q})\leq b-1$ satisfies Theorem~\ref{thm:global with ends} for $0<t\leq t_{b-1}$.
Then the following conditions on $\leftexp{(\pm)}\pi^{\cP_-}_{\cP_+}$ for $b(\cP_-,\cP_+)=b$ ensure that the induced maps $\phi(\ul{q})$ satisfy Theorem~\ref{thm:global with ends} up to breaking number $b$ for $0<t\leq t_b$.
\begin{enumerate}
\item
The induced maps $\pi(\ul{q})$ for any critical point sequence and end conditions with $b(\ul{q})=b$ satisfy transversality to the fibers \eqref{req2}, which we may simplify to
\[
\im\iota_{\ul{\tau}} \pitchfork \pi(\ul{q})^{-1}({\ul{\g}})
= 1\;\text{point} \qquad  \forall \; \ul\tau\in I'_{t_b}(\ul{q}) , \ul\g\in\cM(\ul{q})
\]
with $I'_t(\ul{q}):= \left\{\begin{smallmatrix}
[0,\frac 12 t] &; \cQ_0=\ti U(q_1) \\
[0,t) &;  \text{otherwise}
\end{smallmatrix}\right\}
\times [0,t)^{k-2}\times
\left\{\begin{smallmatrix}
[0,\frac 12 t] &; \cQ_{k+1}=\ti U(q_k) \\
[0,t) &;  \text{otherwise}
\end{smallmatrix}\right\} .
$
\item
(The canonical form on the maximally broken trajectories is automatically satisfied.)
\item
For any nontrivial critical point sequence $\ul{q}'$ with end conditions associated to $(\cP_-,\cP_+)$ with $b(\cP_-,\cP_+)=b$ and the associated $\ul{\ti q}'$ with open end conditions of the form
\begin{align*}
& \ul{q}'=(p_-,q'_1,\ldots,q'_\ell,p_+) , \qquad \quad
\ul{\ti q}'=\bigl(\ti U(p_-),p_-,q_1', \ldots, q_\ell', p_+,\ti U(p_+)\bigr) ,\\
\text{resp.}\quad &
 \ul{q}'=(\cQ'_0,q'_1,\ldots,q'_\ell,p_+) , \qquad\hspace{3.2mm}
\ul{\ti q}'= \bigl(\cQ_0',q_1', \ldots, q_\ell', p_+,\ti U(p_+)\bigr) , \\
\text{resp.}\quad &
\ul{q}'=  (p_-,q'_1,\ldots,q'_\ell,\cQ'_{\ell+1}) , \qquad
\ul{\ti q}'= \bigl(p_-,\ti U(p_-), q_1', \ldots, q_\ell',\cQ'_{\ell+1}\bigr)
\end{align*}
the submersions are given on the domains of trajectories intersecting all $\ti U_{t_b}(q_i')$ by
\begin{align*}
 \pi^{p_-}_{p_+}\circ \bigl({\rm ev}_-\times {\rm ev}_+\bigr)
\big|_{\cM(\ti{S}^-_{p_-},\ti{S}^+_{p_+}) \cap \cV_{t_b}(\ul{\ti q'})_0}
&= \phi(\ul{q}') \circ {\rm Pr}_{\ul{q}'} \circ \phi(\ul{\ti q}') , \\
\text{resp.}\qquad \qquad
\leftexp{-}\pi^X_{p_+} \circ \ev_- \big|_{\cM(X,\ti{S}^+_{p_+}) \cap \cV_{t_b}(\ul{\ti q'})_0}
\;\;\;&= \phi(\ul{q}') \circ {\rm Pr}_{\ul{q}'} \circ \phi(\ul{\ti q}')  , \\
\text{resp.}\qquad \qquad
\leftexp{+}\pi^{p_-}_X  \circ  \ev_+ \big|_{\cM(\ti{S}^-_{p_-},X) \cap \cV_{t_b}(\ul{\ti q'})_0}
\;\;\;&= \phi(\ul{q}') \circ {\rm Pr}_{\ul{q}'} \circ \phi(\ul{\ti q}')
\end{align*}
with the canonical projections ${\rm Pr}_{\ul{q}'}: I_t(\ul{\ti q}')\times\cM(\ul{\ti q}') \to [0,1)^\ell \times \cM(\ul{q}')$.
\item
For $b(X,p_+)=b$ resp.\ $b(p_-,X)=b$ the submersions near critical points are given explicitly via \eqref{Bpm} by
\begin{align}\label{pi near crit}
\leftexp{-}\pi^{X}_{p_+}|_{\ti U_{t_b}(p_+)}=\ev_-^{-1}\circ {\rm pr}_{\ti B^+_{p_+}},
\qquad
\bigl(\leftexp{-}\pi^{X}_{p_+}\bigr)^{-1}\bigl(\cM(\ti U_{t_b}(p_+),p_+)\bigr) \subset \ti U_{t_b}(p_+),
\\
\leftexp{+}\pi_{X}^{p_-}|_{\ti U_{t_b}(p_-)}=\ev_+^{-1}\circ {\rm pr}_{\ti B^-_{p_-}} ,
\qquad
\bigl(\leftexp{+}\pi_{X}^{p_-}\bigr)^{-1}\bigl(\cM(p_-,\ti U_{t_b}(p_-))\bigr) \subset \ti U_{t_b}(p_-).
 \nonumber
\end{align}
(The explicit transition times and relation between charts for different end conditions are automatically satisfied.)
\end{enumerate}
\end{lem}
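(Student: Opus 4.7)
The plan is to trace each conclusion of Theorem~\ref{thm:global with ends} through the factorization
$\phi(\ul{q}) = ({\rm Id}\times \pi(\ul{q}))\circ(\tau(\ul{q})\times{\rm Ev}(\ul{q}))$, with
Proposition~\ref{prp:Ev} handling the first factor and the four assumed conditions on $\leftexp{(\pm)}\pi^{\cP_-}_{\cP_+}$, together with the inductive hypothesis for breaking numbers below $b$, handling the second. By construction $\pi(\ul{q})$ is a product of tubular neighbourhood factors along the edges of $\ul{q}$; all but possibly one of these factors have breaking number $<b$ and are already known by induction.

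First I would show that $\phi(\ul{q})$ is a homeomorphism with the claimed image and restricts to a diffeomorphism on the interior. Since $\tau(\ul{q})\times{\rm Ev}(\ul{q})$ is already a homeomorphism by Proposition~\ref{prp:Ev}, this reduces to showing that $\pi(\ul{q})$ restricts to a homeomorphism $\iota_{\ul\tau}(\cD_{t,\ul\tau}(\ul{q}))\cap \Gr(\ul{q}) \overset{\sim}{\to} \cM_{t,\ul\tau}(\ul{q})$ for each $\ul\tau\in I_t(\ul{q})$, which follows fiberwise from the transversality \eqref{req2} via the implicit function theorem (cf.\ Remark~\ref{rmk:tub}). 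The reduction from the assumed condition~(i), which only asserts the single-point intersection on $I'_{t_b}(\ul{q})$ for $\ul\g\in \cM(\ul{q})$, to the full content of \eqref{req2} uses two observations: the empty intersection case $\ul\g\notin\cM_{t,\ul\tau}(\ul{q})$ is automatic since the constraint $\tau_1|x_1|<t\Delta$ defining $\cD_{t,\ul\tau}(\ul{q})$ is incompatible with $\tau_1|\ev_-(\gamma_0)|\geq t\Delta$ (and analogously at $q_k$), and the large-$\tau$ range $I_{t_b}(\ul{q})\setminus I'_{t_b}(\ul{q})$, which only arises when $\cQ_0=\ti U(q_1)$ or $\cQ_{k+1}=\ti U(q_k)$, is pulled back to the small-$\tau$ range for the end condition $X\setminus\overline{U(q_1)}$ via the explicit rescaling of Proposition~\ref{prp:Ev}(iv). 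The canonical form on the maximally broken stratum (Theorem~\ref{thm:global with ends}(ii)) is immediate from Proposition~\ref{prp:Ev}(ii) and the defining property $\pi(\ul{q})|_{{\rm Ev}(\ul{q})(\cM(\ul{q}))}={\rm Ev}(\ul{q})^{-1}$, while the transition times and the reparametrization between different end conditions (iv) are exactly Proposition~\ref{prp:Ev}(iv) combined with condition~(iv) of the lemma for the near-critical-point formulas.

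The main obstacle is the associativity statement (iii) of Theorem~\ref{thm:global with ends}. Given $\ul{Q}$ obtained from $\ul{q}$ by inserting a nontrivial sequence $\ul{q}'$ between two consecutive entries $q_i,q_{i+1}$ of $\ul{q}$, I would split $\pi(\ul{Q})$ as the product of its unchanged outer factors and the inner product of factors along $\ul{q}'$, and compare with the single factor $\leftexp{(\pm)}\pi^{q_i}_{q_{i+1}}$ of $\pi(\ul{q})$ corresponding to the edge being subdivided. The assumed condition~(iii) of the lemma rewrites this single factor, restricted to the subset of connecting trajectories intersecting all $\ti U_{t_b}(q'_j)$, as $\phi(\ul{q}')\circ {\rm Pr}_{\ul{q}'}\circ \phi(\ul{\ti q}')$, where $\ul{\ti q}'$ augments $\ul{q}'$ by $\ti U$ end conditions at $q_i,q_{i+1}$; this produces exactly the factor $\phi(\ul{q}')$ demanded by the commutative triangle, up to the canonical projection that matches the interior strata. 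Combined with the multiplicativity of $\tau(\ul{Q})\times{\rm Ev}(\ul{Q})$ across the insertion points --- a direct consequence of the product structure of evaluations and transition times over edges of the sequence --- this yields $\phi(\ul{Q}) = ({\rm Id}\times \phi(\ul{q}')\times {\rm Id})\circ \phi(\ul{q})$ on the overlap $\cV_{t_b}(\ul{q})\cap \cV_{t_b}(\ul{Q})$. The delicate bookkeeping lies in matching end-condition conventions across the three case distinctions (insertion in the middle, adjacent to $\cQ_0$, or adjacent to $\cQ_{k+1}$) and in handling the special $k=1$, $\cQ_0=\cQ_2=\ti U(q_1)$ chart of Section~\ref{sec:k=1 special} as the degenerate base case when the inserted sequence consists of a single critical point sitting inside a $\ti U$ neighbourhood at both ends.
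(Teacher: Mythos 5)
Your overall plan matches the paper's: factor $\phi(\ul{q}) = ({\rm Id}\times\pi(\ul{q}))\circ(\tau(\ul{q})\times{\rm Ev}(\ul{q}))$, invoke Proposition~\ref{prp:Ev} for the left factor, reduce associativity (iii) to a condition on the single tubular-neighbourhood factor along the subdivided edge, and dispose of (ii) and (iv) directly. Your emptiness argument for $\ul\g\notin\cM_{t,\ul\tau}(\ul{q})$ is also correct (condition~(iv) of the lemma forces the first slot of any point in $\iota_{\ul\tau}^{-1}(\pi(\ul{q})^{-1}(\ul\g))$ to equal $\ev_-(\g_0)$, so the inequality $\tau_1|x_1|<t\Delta$ directly excludes such $\ul\g$).

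However, your reduction of condition~(i) on the large parameter range $I_{t_b}(\ul{q})\setminus I'_{t_b}(\ul{q})$ contains a genuine gap. You propose to ``pull back to the small-$\tau$ range for the end condition $X\setminus\overline{U(q_1)}$ via the explicit rescaling of Proposition~\ref{prp:Ev}(iv).'' But the transition map between the $\cQ_0=\ti U(q_1)$ chart and the $\cQ_0=X\setminus\overline{U(q_1)}$ chart only exists on their overlap $\ev_-^{-1}\bigl(\Psi_{\R_-}(\ti S^+_{q_1})\cap\ti U(q_1)\bigr)$, i.e., for trajectories with initial point strictly between $\partial U(q_1)$ and $\partial\ti U(q_1)$. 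A maximally broken $\ul\g\in\cM_{t,\ul\tau}(\ul{q})$ with $\g_0\in\cM(\ti U(q_1),q_1)$ can have $\ev_-(\g_0)\in\ti B^+_{q_1}$ arbitrarily close to $q_1$; such $\ul\g$ has no representation in the $X\setminus\overline{U(q_1)}$ chart, and its fiber $\pi(\ul{q})^{-1}(\ul\g)$ lives in a region of $\ti S(\ul{q})$ disjoint from the image of the $\iota_{\ul\tau}$ for the $X\setminus\overline{U(q_1)}$ end condition. The correct argument works entirely within the $\ti U(q_1)$ chart: one introduces the coordinate rescaling $R_{t,\tau_1,\tau_k}=\bigl(\tfrac{t}{2\tau_1}{\rm Id}_{\ti B^+_{q_1}}\times\tfrac{2\tau_1}{t}{\rm Id}_{\ti B^-_{q_1}}\bigr)\times{\rm Id}\times\cdots\times\bigl(\tfrac{2\tau_k}{t}{\rm Id}_{\ti B^+_{q_k}}\times\tfrac{t}{2\tau_k}{\rm Id}_{\ti B^-_{q_k}}\bigr)$, shows via condition~(iv) that it maps $\iota_{\ul\tau}(\cD_{t,\ul\tau}(\ul{q}))$ onto $\im\iota_{(\frac t2,\tau_2,\ldots,\tau_{k-1},\frac t2)}$ and simultaneously maps the fiber over $\ul\g$ to the fiber over a rescaled $(\g'_0,\g_1,\ldots,\g_{k-1},\g'_k)$ with $\ev_-(\g'_0)=\tfrac{2\tau_1}{t}\ev_-(\g_0)$ (well defined precisely because $\ul\g\in\cM_{t,\ul\tau}(\ul{q})$), and then invokes the single-point transverse intersection at $\tau_1=\tau_k=t/2$. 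Without this linear-rescaling step, your reduction does not cover all of $\cM_{t,\ul\tau}(\ul{q})$.
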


\begin{proof}
To understand the simplification in (i) we begin by noting that $\cM_{t,\ul{\tau}}(\ul{q})=\cM(\ul{q})$ and $\cD_{t,\ul{\tau}}(\ul{q})=S^+_{\cQ_0,q_1}\times\ldots\times S^-_{q_k,\cQ_{k+1}}$ unless $\tau_1 > \tfrac t2$ in case $\cQ_0=\ti U(q_1)$ or $\tau_k > \tfrac t2$ in case $\cQ_{k+1}=\ti U(q_k)$. In the latter cases for $t\leq t_b$ we will show that the unique transverse intersection follows from the intersection property for $\tau_1=\frac t2$ or $\tau_k=\frac t2$.
We will do this in the case $\cQ_0=\ti U(q_1)$, $\cQ_{k+1}=\ti U(q_k)$, and $\tau_1,\tau_k > \tfrac t2$. The arguments for each end will clearly be separate so that this also covers the case of just one end condition near a critical point.
In the chosen case for $\ul\tau\in (\frac t2,1+t)\times[0,t)^{k-2}\times (\frac t2,1+t)$ we have
$\cD_{t,\ul{\tau}}(\ul{q})=\frac t{2\tau_1}\ti B^+_{q_1} \times S^-_{q_1}\ldots  S^+_{q_k} \times \frac t{2\tau_k}\ti B^-_{q_k}$
and by pullback to $\ti B^+_{q_1} \times \ldots  \times \ti B^-_{q_k}$ obtain
\begin{align*}
\iota_{\ul\tau}\bigl(\cD_{t,\ul{\tau}}(\ul{q})\bigr) & = R_{t,\tau_1,\tau_k} \bigl( \im\iota_{(\frac t2,\tau_2,\ldots,\tau_{k-1},\frac t2)}\bigr) \\
\text{with}\quad
R_{t,\tau_1,\tau_k} &= \left(
\bigl(\tfrac t{2\tau_1}{\rm Id}_{\ti B^+_{q_1}}\times \tfrac {2\tau_1}{t}{\rm Id}_{\ti B^-_{q_1}}\bigr)
\times {\rm Id}_{S^-_{q_1}}\ldots {\rm Id}_{S^+_{q_k}}\times
\bigl(\tfrac {2\tau_k}t{\rm Id}_{\ti B^+_{q_k}}\times \tfrac t{2\tau_k}{\rm Id}_{\ti B^-_{q_k}}\bigr)
\right) .
\end{align*}
On the other hand, by (iv) the fiber over any $\ul\g=(\g_0,\ldots,\g_k)\in \cM_{t,\ul{\tau}}(\ul{q})$ is
\begin{align*}
\pi(\ul q)^{-1}(\ul\g) &=  \left( \bigl(\ev_-(\g_0) \times \ti B^-_{q_1} \bigr), \ldots,
\bigl( \ti B^+_{q_k}\times \ev_+(\g_k) \bigr) \right)  \cap \cN_t(\ul{q}) \\
&= \bigl(R_{t,\tau_1,\tau_k} \bigl( \pi(\ul q)^{-1}(\g'_0,\g_1 \ldots \g_{k-1},\g'_k)\bigr) \bigr)\cap \cN_t(\ul{q})
\end{align*}
with $\g'_0=\ev_-^{-1}(\tfrac{2\tau_1}t \ev_-(\g_0))\in\cM(\ti U(q_1),q_1)$ and
$\g'_k=\ev_+^{-1}(\tfrac{2\tau_k}t\ev_+(\g_k))\in\cM(q_k,\ti U(q_k))$.
Hence transversality and uniqueness of $\iota_{\ul{\tau}}\bigl(\cD_{t,\ul{\tau}}(\ul{q})\bigr) \cap \pi(\ul{q})^{-1}({\ul{\g}})$ follows by linear transformation with $R_{t,\tau_1,\tau_k}$ from transversality and uniqueness of $\im\iota_{(\frac t2,\tau_2,\ldots,\tau_{k-1},\frac t2)} \cap \pi(\ul{q})^{-1}(\g_0',\ldots,\g_k')$.

Furthermore, by definition of $\iota_{\ul\tau}$ and condition (iv) any point in $\iota_{\ul{\tau}}^{-1}\bigl(\pi(\ul{q})^{-1}(\g_0,\ldots,\g_k)\bigr)$ is of the form $\bigl(0,\ev_-(\g_0), \ast \ldots \ast \bigr)$ in case $\cQ_0=\ti U(q_1)$, and of the form $\bigl( \ast \ldots \ast  , \ev_+(\g_k), 0 \bigr)$
in case $\cQ_{k+1}=\ti U(q_k)$.
Hence $\iota_{\ul{\tau}}(\cD_{t_b,\ul{\tau}}(\ul{q})) \cap \pi^{-1}(\ul{q})(\ul{\g})=\emptyset$ is automatic for $\ul\g\notin\cM_{t,\ul{\tau}}(\ul{q})$.
Moreover, for $\ul\tau\in[0,t)^k$ we have $\im\iota_{\ul\tau}\cap\Gr(\ul{q})\subset\cN_{t}(\ul{q})$ by definition, so the $t$-dependence of the domain of $\pi(\ul{q})$ is immaterial. In particular, the intersection property for $t_b$ implies the analogous property for all $0<t<t_b$.

Now the intersection conditions in (i) together with the characterization of the image
$(\tau(\ul{q}) \times {\rm Ev}(\ul{q}))(\cV_{t_b}(\ul{q}))=\bigcup_{\ul{\tau}\in I_{t_b}(\ul{q})}  \{\ul{\tau}\} \times \bigl(\iota_{\ul{\tau}}(\cD_{t_b,\ul{\tau}}(\ul{q})) \cap \Gr(\ul{q})\bigr)$ evidently imply that $\phi(\ul{q})=({\rm Id}_{[0,2)^\ell}\times \pi(\ul{q}))\circ (\tau(\ul{q}) \times {\rm Ev}(\ul{q}))$ is injective with the claimed image. Moreover, it is continuous and smooth on $\cV_{t_b}(\ul{q})_0$ by the regularity of its factors. So it remains to show that
$({\rm Id}_{\R^\ell}\times{\rm d}\pi(\ul{q}))$
restricts to an isomorphism from
${\rm T}(\tau(\ul{q}) \times {\rm Ev}(\ul{q})(\cV_{t_b}(\ul{q})))
=\R^\ell\times \bigl(\im{\rm d}\iota_{\ul{\tau}}\cap {\rm T}\Gr(\ul{q})\bigr)$
to $\R^\ell\times{\rm T}\ti\cM(\ul{q})$.
The latter follows from the transversality ${\rm T}\ti S(\ul{q}) = \ker\rd\pi(\ul{q}) +  \im{\rm d}\iota_{\ul{\tau}}$  between the fibers of $\pi(\ul{q})$ and the embeddings $\iota_{\ul{\tau}}$ since ${\rm d}\pi(\ul{q})$ is surjective and $\ker{\rm d}\pi(\ul{q})\subset {\rm T}\Gr(\ul{q})$.

Property (ii) of Theorem~\ref{thm:global with ends} follows from Proposition~\ref{prp:Ev}~(ii) and the defining property $\pi(\ul{q})|_{{\rm Ev}(\ul{q})(\cM(\ul{q}))} ={\rm Ev}(\ul{q})^{-1}$ of tubular neighbourhoods.
For property (iv) of Theorem~\ref{thm:global with ends}, the explicit form of the charts for trajectories starting or ending near critical points is equivalent to \eqref{pi near crit}.  The transition times and relation between different end conditions are determined by Proposition~\ref{prp:Ev}~(iv). Note here that the tubular neighbourhood $\pi(\ul{q})$ is the same for both choices of end conditions in $\cU_\pm =X$, and in the relevant factors is given by \eqref{pi near crit}.

To check that condition (iii) implies the compatibility of charts in Theorem~\ref{thm:global with ends}
we begin by noting that the compatibility is trivially satisfied for $k=0$ when $\phi(\ul{q})={\rm Id}$.
In the notation of the theorem the breaking numbers are related by $b(\ul{q}) \geq b(q_j,q_{j+1}) \geq b(\ul{q}')+\ell$ and $b(\ul{Q})\leq b(\ul{q}),b(\ul{q})'$, so in order to check the compatibility up to breaking number $b$ it suffices to consider the case $b(\ul{q})=b > b(\ul{q}')$ and $k\geq 1$.
Now the complement $\cV_t(\ul{Q})\setminus \cV_t(\ul{q})$ consists of those trajectories that break at one or several points of $\ul{q}'$, so the overlap $\cV_t(\ul{q}) \cap \cV_t(\ul{Q})$ consists of all those trajectories $\ul{\g}\in \cV_t(\ul{Q})$ that do not break between $q_j$ and $q_{j+1}$.
Here in case $j=0$ resp.\ $j=k$ we have to replace $q_j$ by $\cQ'_0$ resp.\ $q_{j+1}$ by $\cQ'_{k+1}$, and will consider these cases separately later.
For $0<j<k$ and $\ul\g\in\cV_t(\ul{q}) \cap \cV_t(\ul{Q})$ the transition times through $U(q_i')$ (which are positive corresponding to no breaking) $\tau(\ul{q}')(\ul\g)=\tau(\ul{q}')(\g)$ and evaluations ${\rm Ev}(\ul{q}')(\ul\g)={\rm Ev}(\ul{q}')(\g)$ near $q'_i$ are determined by the restriction $\g:=\rho^{q_j}_{q_{j+1}}(\ul\g)\in\cM(\ti S^-_{q_j},\ti S^+_{q_{j+1}})$ corresponding to
$$
( z^-_\g , z^+_\g ):=(\ev_-(\g),\ev_+(\g))=(\ev_{\ti S^-_{q_j}}(\ul\g),\ev_{\ti S^+_{q_{j+1}}}(\ul\g))
\in \Gr^{q_j}_{q_{j+1}}.
$$
With this notation the compatibility condition (iii) of Theorem~\ref{thm:global with ends} in case $0<j<k$ becomes the following tuple of conditions on the tubular neighbourhood $\pi^{q_j}_{q_{j+1}}$
for all $\g\in\cM(\ti S^-_{q_j},\ti S^+_{q_{j+1}})$ with $\im\g\cap U_t(q'_i)\neq\emptyset$ for $i=0,\ldots,\ell+1$.
\begin{align*}
\pi^{q'_i}_{q'_{i+1}} \bigl(\bigl({\rm ev}_{\ti{S}^-_{q'_i}}\times {\rm ev}_{\ti{S}^+_{q'_{i+1}}}\bigr) (\g)\bigr)
&=
\begin{cases}
\leftexp{-}\pi^{q_j}_{q'_{1}}\bigl( {\rm ev}_{\ti{S}^+_{q'_1}} \bigl( \pi^{q_j}_{q_{j+1}}(z^-_\g,z^+_\g) \bigr)\bigr)
&; i=0 , \\
\pi^{q'_i}_{q'_{i+1}}\bigl( \bigl(  {\rm ev}_{\ti{S}^+_{q'_i}} \times  {\rm ev}_{\ti{S}^+_{q'_{i+1}}}  \bigr)\bigl( \pi^{q_j}_{q_{j+1}} (z^-_\g,z^+_\g)\bigr) \bigr)
&; 0<i<\ell, \\
\leftexp{+}{\pi}^{q'_\ell}_{q_{j+1}} \bigl( {\rm ev}_{\ti{S}^-_{q'_\ell}} \bigl( \pi^{q_j}_{q_{j+1}}(z^-_\g,z^+_\g)\bigr)\bigr)
&; i=\ell ,
\end{cases} \\
\tau_{q'_i} (\g) &= \tau_{q'_i} \bigl( \pi^{q_j}_{q_{j+1}}(z^-_\g,z^+_\g) \bigr) \qquad\qquad\qquad\qquad\qquad \forall\;  1 \leq i\leq \ell .
\end{align*}
In case $j=0$, $\cU_-=q_-=q_0=q_0'$ resp.\ $j=k$, $\cU_+=q_+=q_{k+1}=q'_{\ell+1}$ the compatibility can analogously be rewritten as conditions on $\leftexp{-}\pi^{q_0}_{q_1}(z^+)$ resp.\ $\leftexp{+}\pi^{q_k}_{q_{k+1}}(z^-)$ on the right hand side for all $z^+\in \leftexp{-}{\Gr}^{q_0}_{q_1}$ resp.\ $z^-\in \leftexp{+}{\Gr}_{q_{k+1}}^{q_k}$ with $\Psi_\R(z^\pm)\cap U_t(q'_i)\neq\emptyset$ and the corresponding trajectory $\g=\Psi(\cdot,z^\pm)$.
By the pullback definition of  $\leftexp{\pm}\pi^{q_j}_{q_{j+1}}$, these are equivalent to requirements on $\pi^{q_0}_{q_1}(z_\g^-,z_\g^+)$ for $\g\in\cM(S^-_{q_0},\ti S^+_{q_1})$ as above resp.\ for $\pi^{q_k}_{q_{k+1}}(z_\g^-,z_\g^+)$ for $\g\in\cM(\ti S^-_{q_k},S^+_{q_{k+1}})$ as above.
On the left hand side, the conditions involve $\leftexp{-}{\pi}^{q'_0}_{q'_1}\circ {\rm ev}_{\ti S^-_{q'_1}}$ for $i=0$ resp.\  $\leftexp{+}{\pi}^{q'_\ell}_{q'_{\ell+1}} \circ {\rm ev}_{\ti S^+_{q'_\ell}}$ for $i=\ell$, however these equal $\pi^{q'_0}_{q'_1} \bigl({\rm ev}_{\ti{S}^-_{q'_0}}\times {\rm ev}_{\ti{S}^+_{q'_1}}\bigr)$ resp.\ $\pi^{q'_0}_{q'_1} \bigl({\rm ev}_{\ti{S}^-_{q'_0}}\times {\rm ev}_{\ti{S}^+_{q'_1}}\bigr)$
by the pullback definition of $\leftexp{\pm}\pi^{q'_i}_{q'_{i+1}}$. Hence the requirements in these cases are of the same form as those for $0<j<k$.

Next, we compare these requirements to the definition of the chart for $b(\ul{q}')<b$,
$$
\phi(\ul{q}') : \;
\cM(q_j,q_{j+1}) \supset \cV_t(\ul{q}')_0 \;\overset{\sim}{\longrightarrow}\;  (0,t)^\ell \times \cM(\ul{q}') , \quad
\eta \mapsto  (\tau'_1,\ldots,\tau'_\ell,\g'_0,\ldots,\g'_\ell) ,
$$
which is given by the transition times near $q'_1,\ldots,q'_\ell$ and projection to the trajectories,
\begin{align*}
\g'_i  = \left. \begin{cases}
\leftexp{-}\pi^{q_j}_{q'_{1}}\bigl( {\rm ev}_{\ti{S}^+_{q'_1}} (\e) \bigr)
&; i=0 \\
\pi^{q'_i}_{q'_{i+1}}  \bigl(  {\rm ev}_{\ti{S}^+_{q'_i}} \times  {\rm ev}_{\ti{S}^+_{q'_{i+1}}}  \bigr) (\e)
&; i=1,\ldots \ell-1 \\
\leftexp{+}{\pi}^{q'_\ell}_{q_{j+1}} \bigl( {\rm ev}_{\ti{S}^-_{q'_\ell}} (\e)\bigr)
&; i=\ell
\end{cases}
\right\},
\qquad
\tau'_i =  \tau_{q'_i}(\e)  \qquad \forall\; 1\leq i\leq \ell .
\end{align*}
Comparing this definition of $\phi(\ul{q}')$ with the above requirements, we see that property (iii) of Theorem~\ref{thm:global with ends} for $0<j<k$ is equivalent to the requirement for all
$\g\in\cM(\ti S^-_{q_j},\ti S^+_{q_{j+1}})\cap\cV_t(\ul{\ti q}')_0$
\begin{equation}\label{reqqq}
\phi(\ul{q}') \bigl( \pi^{q_j}_{q_{j+1}} (z^-_\g,z^+_\g) \bigr)
= \Bigl( \bigl( \tau_{q'_i}(\g) \bigr)_{i=1,\ldots,\ell}  \,,\,
\bigl(
\pi^{q'_i}_{q'_{i+1}} \bigl({\rm ev}_{\ti{S}^-_{q'_i}}\times {\rm ev}_{\ti{S}^+_{q'_{i+1}}}\bigr) (\g) \bigr)_{i=0,\ldots,\ell}
 \Bigr) .
\end{equation}
The right hand side can be expressed as composition ${\rm Pr}_{\ul{q}'} \circ \phi(\ul{\ti q}')$ of the chart for the critical point sequence $\ul{\ti q}'=\bigl(\ti U(q_j),q_j=q'_0,q_1', \ldots, q_\ell', q'_{\ell+1}=q_{j+1},\ti U(q_{j+1})\bigr)$ with the projection
$$
{\rm Pr}_{\ul{q}'} : [0,2) \times [0,1)^\ell \times [0,2) \times \cM(\ti U(q_{j}),q_{j}) \times \cM(\ul{q}') \times \cM(q_{j+1},\ti U(q_{j+1})) \to [0,1)^\ell  \times \cM(\ul{q}').
$$
Since $\phi(\ul{q}')$ is invertible and $\ev_-\times \ev_+ : \g \to (z^-_\g,z^+_\g)$ identifies the domains $\cM(\ti S^-_{q_j},\ti S^+_{q_{j+1}})\to\Gr^{q_j}_{q_{j+1}}$, this makes the requirement $\pi^{q_j}_{q_{j+1}} \circ \bigl( \ev_-\times \ev_+\bigr)=\phi(\ul{q}')^{-1}\circ {\rm Pr}_{\ul{q}'} \circ \phi(\ul{\ti q}')$, as claimed.

In case $j=0$, $\cU_-=X$ resp.\ $j=k$, $\cU_+=X$ the compatibility can be rewritten as above into conditions on $\leftexp{-}{\pi}^X_{q_1}(z_\g^-)$ resp.\ $\leftexp{+}\pi^{q_k}_X(z_\g^+)$ instead of  $\pi^{q_j}_{q_{j+1}}(z_\g^-,z_\g^+)$ on the right hand side, with the further replacements $\bigl( q_0 \leadsto X,  {\rm ev}_{\ti{S}^+_{q'_1}} \leadsto \ev_-,  \tau_{q'_1} \leadsto \tau_{(\cQ'_0,q'_1)} \bigr)$ resp.\  $\bigl( q_{k+1} \leadsto X,  {\rm ev}_{\ti{S}^-_{q'_\ell}} \leadsto \ev_+,  \tau_{q'_\ell} \leadsto \tau_{(q'_\ell,\cQ'_{k+1})} \bigr)$, and with a modification of the left hand side to $\leftexp{-}{\pi}^X_{q'_1}\circ {\rm ev}_-$ and $\tau_{(\cQ'_0,q'_1)}$ for $i=j=0$ resp.\ to $\leftexp{+}{\pi}^{q'_\ell}_X \circ {\rm ev}_+$ and $\tau_{(q'_\ell,\cQ'_{k+1})}$ for $i=\ell$, $j=k$. In these cases the requirements are for all restricted trajectories $\g\in\cM(X,\ti S^+_{q_1})$ resp.\ $\g\in\cM(\ti S^-_{q_k},X)$ with $\im\g\cap U_t(q'_i)\neq\emptyset$ for $i=1,\ldots,\ell$ and $\ev_-(\g)\in\cQ_0'$ resp.\ $\ev_+(\g)\in\cQ_{k+1}'$, and the corresponding $z_\g^\pm=\ev_\pm(\g)$.
Spelling this out for $j=0$, the requirements are
\begin{align*}
\leftexp{-}{\pi}^X_{q'_1} \bigl( {\rm ev}_- (\g) \bigr)
&=
\leftexp{-}\pi^X_{q'_{1}}\bigl( {\rm ev}_{\ti{S}^+_{q'_1}} \bigl( \leftexp{-}{\pi}^X_{q_1} (z_\g^-) \bigr)\bigr) , \\
\pi^{q'_i}_{q'_{i+1}} \bigl({\rm ev}_{\ti{S}^-_{q'_i}}\times {\rm ev}_{\ti{S}^+_{q'_{i+1}}}\bigr) (\g)
&=
\begin{cases}
\pi^{q'_i}_{q'_{i+1}} \bigl(  {\rm ev}_{\ti{S}^+_{q'_i}} \times  {\rm ev}_{\ti{S}^+_{q'_{i+1}}}  \bigr)\bigl( \leftexp{-}{\pi}^X_{q_1} (z_\g^-)\bigr)
&; 0<i<\ell, \\
\leftexp{+}{\pi}^{q'_\ell}_{q_1} \bigl( {\rm ev}_{\ti{S}^-_{q'_\ell}} \bigl( \leftexp{-}{\pi}^X_{q_1} (z_\g^-)\bigr)\bigr)
&; i=\ell ,
\end{cases} \\
\tau_{(\cQ'_0,q'_1)} (\g) &=  \tau_{(\cQ'_0,q'_1)}\bigl(  \leftexp{-}{\pi}^X_{q_1} (z_\g^-) \bigr) , \\
\tau_{q'_i} (\g) &= \tau_{q'_i} \bigl(  \leftexp{-}{\pi}^X_{q_1} (z_\g^-) \bigr) \qquad\qquad\qquad\qquad\qquad\forall\; 2 \leq i \leq \ell .
\end{align*}
We again compare this with the chart $\phi(\ul{q}')$, which now depends on the choice of end condition $\cQ'_0\subset X$ resp.\ $\cQ'_{k+1}\subset X$ via the modification $\g'_0 = \leftexp{-}\pi^X_{q'_{1}}\bigl( {\rm ev}_- (\g) \bigr)$ and $\tau'_1=\tau_{(\cQ'_0,q'_1)}(\g)$ resp.\ $\g'_\ell = \leftexp{+}{\pi}^{q'_\ell}_X \bigl( {\rm ev}_+ (\g)\bigr)$ and $\tau'_\ell = \tau_{(q'_\ell,\cQ'_{k+1})}(\g)$. Spelling this out for $j=0$, the chart is
$$
\phi(\ul{q}') : \;
\cM(\cQ'_0,q_1) \supset \cV_t(\ul{q}')_0 \;\hookrightarrow\;  (0,2)^\ell \times \cM(\ul{q}') , \quad
\g \mapsto  (\tau'_1,\ldots,\tau'_\ell,\g'_0,\ldots,\g'_\ell) ,
$$
\begin{align*}
\g'_i  = \left.\begin{cases}
\leftexp{-}\pi^X_{q'_{1}}\bigl( {\rm ev}_-(\g) \bigr)
&; i=0 \\
\pi^{q'_i}_{q'_{i+1}}  \bigl(  {\rm ev}_{\ti{S}^+_{q'_i}} \times  {\rm ev}_{\ti{S}^+_{q'_{i+1}}}  \bigr) (\g)
&; i=1,\ldots \ell-1 \\
\leftexp{+}{\pi}^{q'_\ell}_{q_1} \bigl( {\rm ev}_{\ti{S}^-_{q'_\ell}} (\g)\bigr)
&; i=\ell
\end{cases} \right\},\qquad
\tau'_i = \left.\begin{cases}
\tau_{(\cQ'_0,q'_1)}(\g) &, i=0 \\
 \tau_{q'_i}(\g)  &, 2\leq i \leq \ell
 \end{cases}\right\}.
\end{align*}
This shows that property (iii) of Theorem~\ref{thm:global with ends} for $j=0$, $\cU_-=X$ resp.\ $j=k$, $\cU_+=X$ is equivalent to a requirement of the same form as \eqref{reqqq} for $\phi(\ul{q}') \bigl(\leftexp{-}\pi^X_{q_1}(z_\g^-)\bigr)$ resp.\ $\phi(\ul{q}') \bigl(\leftexp{+}\pi^{q_k}_X(z_\g^+)\bigr)$
and all $\g\in \cM(X,\ti{S}^+_{p_+}) \cap \cV_{t_b}(\ul{\ti q'})_0$ resp.\ $\g\in \cM(\ti{S}^-_{p_-},X) \cap \cV_{t_b}(\ul{\ti q'})_0$, just with the first resp.\ last trajectory and transition time on the right hand side replaced by $\leftexp{-}\pi^X_{q'_1} \bigl({\rm ev}_- (\g)\bigr)$ and $\tau_{(\cQ'_0,q'_1)}(\g)$
resp.\ $\leftexp{+}\pi_X^{q'_\ell} \bigl({\rm ev}_+\bigr) (\g)$ and $\tau_{(q'_\ell,\cQ'_{k+1})}(\g)$.
We may again express the right hand side as composition ${\rm Pr}_{\ul{q}'} \circ \phi(\ul{\ti q}')$ of the chart for the associated critical point sequence $\ul{\ti q}'$ with the canonical projection ${\rm Pr}_{\ul{q}'}: I_t(\ul{\ti q}')\times\cM(\ul{\ti q}') \to [0,1)^\ell \times \cM(\ul{q}')$.
In case $j=0$ that is $\ul{\ti q}' = \bigl(\cQ_0',q_1', \ldots, q_\ell', q'_{\ell+1}=q_1,\ti U(q_1)\bigr)$ satisfying $b(\ul{\ti q}')=b(\ul q')$, and the projection is
$$
{\rm Pr}_{\ul{q}'} : [0,2)^\ell \times [0,2) \times \cM(\ul{q}') \times \cM(q_1,\ti U(q_1)) \to [0,2)^\ell  \times \cM(\ul{q}').
$$
Since $\phi(\ul{q}')$ is a homeomorphism and $\ev_- : \g \to z^-_\g$ identifies the domains $\cM(X,\ti S^+_{q_1})\to\leftexp{-}{\Gr}^X_{q_1}(1)$, this makes the requirement $\leftexp{-}{\pi}^X_{q_1} \circ \ev_-=\phi(\ul{q}')^{-1}\circ {\rm Pr}_{\ul{q}'} \circ \phi(\ul{\ti q}')$.
Similarly, $\ev_+ : \g \to z^+_\g$ identifies the domains $\cM(\ti S^-_{q_k},X)\to\leftexp{+}{\Gr}_X^{q_k}(1)$, which makes the requirement $\leftexp{+}{\pi}_X^{q_k} \circ \ev_+=\phi(\ul{q}')^{-1}\circ {\rm Pr}_{\ul{q}'} \circ \phi(\ul{\ti q}')$.
This finishes the proof for property (iii) of Theorem~\ref{thm:global with ends}.
\end{proof}

\subsection{Construction for breaking number $\mathbf{b=0}$:}
In this section we construct tubular neighbourhoods $\leftexp{\pm}{\pi}^{\cP_-}_{\cP_+}$ for $b(\cP_-,\cP_+)=0$ as specified in Section~\ref{sec:tub} and find $t_0>0$ such that the induced maps $\phi(\ul{q})$ satisfy Theorem~\ref{thm:global with ends} for $b(\ul{q})=0$ and $0<t\leq t_{0}$.

For pairs of critical points $p_-,p_+\in{\rm Crit}(f)$ with $\cM(p_-,p_+)\neq\emptyset$
the breaking number is $b(p_-,p_+)=0$ iff there exist no broken Morse trajectories from $p_-$ to $p_+$, which is equivalent to the space of unbroken Morse trajectories $\cM(p_-,p_+)$ being compact.
This trajectory space is embedded in the connecting trajectory space by
$$
(\ev_{\ti S^-_{p_-}}\times \ev_{\ti S^+_{p_+}}):\cM(p_-,p_+) \hookrightarrow \Gr^{p_-}_{p_+} .
$$
Its image, $M\subset \Gr^{p_-}_{p_+}$ has a standard tubular neighbourhood diffeomorphism (given by the exponential map for some metric) $\exp : {\rm N}M \supset B \overset{\sim}{\to} W \subset \Gr^{p_-}_{p_+}$ from a neighbourhood $B$ of the zero section in the normal bundle ${\rm N}M\subset{\rm T}\Gr^{p_-}_{p_+}|_M$ to a neighbourhood $W$ of $M$.
Since $M$ is compact and $\Gr^{p_-}_{p_+}(t) \to M$ in the Hausdorff distance as $t\to 0$, we find $t_{p_-,p_+}\in (0,1]$ such that $\Gr^{p_-}_{p_+}(t_{p_-,p_+}) \subset U$. Then with the projection $\Pi_M:NM\to M$ the map
$$
\pi^{p_-}_{p_+}:=(\ev_{\ti S^-_{p_-}}\times \ev_{\ti S^+_{p_+}})^{-1} \circ \Pi_M \circ \exp^{-1} :\;
\Gr^{p_-}_{p_+}(t_{p_-,p_+}) \to \cM(p_-,p_+)
$$
evidently defines a tubular neighbourhood of $\ev_{\ti S^-_{p_-}}\times \ev_{\ti S^+_{p_+}}$ in the sense of Definition~\ref{dfn:tub}.

Next, we have $b(X,p_+)=0$ iff $p_+$ is a maximum, and $b(p_-,X)=0$ iff $p_-$ is a minimum. In those cases the connecting trajectory spaces are
\[
\leftexp{-}{\Gr}^{X}_{p_+}(1)  =  \Psi_{\R_-}(\ti U_1(p_+)) = \ti U_1(p_+) , \qquad
\leftexp{+}{\Gr}^{p_-}_{X}(1) = \Psi_{\R_+}(\ti U_1(p_-)) = \ti U_1(p_-) ,
\]
and we are dealing with the embeddings of the trivial Morse trajectory spaces
\begin{align*}
\ev_- & : \cM(\ti U({p_+}),{p_+})=\{ \g_{p_+}\equiv p_+ :\phantom{-}[0,\infty)\to X \}
\;\overset{\sim}{\longrightarrow}\; \ti B^+_{p_+}=\{p_+\} \subset \ti U_1(p_+) ,  \\
\ev_+ & : \cM(\ti U({p_-}),{p_-})=\{ \g_{p_-}\equiv p_- :(-\infty,0]\to X \}
\;\overset{\sim}{\longrightarrow}\; \ti B^-_{p_-}=\{p_-\} \subset \ti U_1(p_-) .
\end{align*}
We define their tubular neighbourhoods according to \eqref{pi near crit} by
\begin{align*}
\leftexp{-}\pi^{X}_{p_+}\equiv \g_{p_+} :\; \leftexp{-}{\Gr}^{X}_{p_+}(1) \to \cM(\ti U({p_+}),{p_+}) ,
\qquad
\leftexp{+}\pi_{X}^{p_-}\equiv \g_{p_-} :\; \leftexp{+}{\Gr}^{p_-}_{X}(1) \to \cM({p_-},\ti U({p_-})) .
\end{align*}
This constructs all tubular neighbourhoods for breaking number $b=0$ as listed in Section~\ref{sec:tub} with $t'_0 :=\min\{t_{p_-,p_+} \,|\, \cM(p_-,p_+)\neq\emptyset \} \in (0,1]$.

In order for the induced maps $\phi(\ul{q}):\cV_t(\ul{q}) \to [0,2)^k\times \cM(\ul{q})$ for $b(\ul{q})=0$ to satisfy Theorem~\ref{thm:global with ends} it suffices to check the conditions of Lemma~\ref{lem:pi}. Here condition (iii) is trivially satisfied since $b(\ul{q})=0$ does not allow for the insertion of a nontrivial critical point sequence. Condition (iv) holds evidently since $\leftexp{\pm}\pi$ were only defined on $\ti U_1({p_\pm})$.
Finally, the following lemma will provide $t_0\in(0,t_0']$ such that (i) holds.
Note from above that $b(\ul{q})=0$ only for critical point end conditions $\cU_\pm=q_\pm$ or finite end conditions $\cQ_0=\ti U(q_1)$ with $q_1$ a maximum, resp.\ $\cQ_{k+1}=\ti U(q_k)$ with $q_k$ a minimum. Moreover,  $b(\ul{q})=0$ implies compactness of the subset of maximally broken trajectories $\cM(\ul{q})$.

\begin{lem} \label{lem:trans}
Let $S$ be a manifold, $G\subset S$ a submanifold, and $\iota: [0,1)^n \times Z  \to S, (\ul\tau,z)\mapsto \iota_{\ul\tau}(z)$ a smooth family of embedddings $\iota_{\ul\tau}:Z\hookrightarrow S$ such that $\im \iota_{0}\pitchfork G\subset S$ transversely.
Let $e: M \hookrightarrow S$ be an embeddding to $e(M)=\im\iota_0\cap G$, and let $\pi:G\to M$ be a tubular neighbourhood of $e$.
Suppose moreover that $\iota$ is uniformly continuous with respect to the Euclidean distance on $[0,1)^n$ and some metrics on $Z,S$ (compatible with the given topologies).

Then for every compact open subset $K\subset M$ there exists $t>0$ and a neighbourhood $N\subset
G$ of $e(M)$ such that
\begin{equation} \label{claim}
 \im\iota_{\ul\tau} \pitchfork \bigl( \pi^{-1}(m) \cap N \bigr) = 1 \;\text{point} \qquad \forall \; m\in K,\; \ul\tau \in[0,t)^n .
\end{equation}
If $Z$ is compact then this holds with $N=G$.
\end{lem}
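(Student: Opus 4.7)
The plan is to reduce the claim to the implicit function theorem applied at each point of $K$ and then globalize using compactness of $K$ together with the uniform continuity of $\iota$.

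First, I will establish the pointwise transversality $\im\iota_0 \pitchfork \pi^{-1}(m_0)$ at $e(m_0)$ for each $m_0 \in M$. Since $\pi^{-1}(m_0)\subset G$, the inclusion $\im\iota_0\cap\pi^{-1}(m_0) \subset \im\iota_0\cap G = e(M)$ combined with $\pi|_{e(M)}=e^{-1}$ shows that this intersection is the single point $e(m_0)$. For the linearized transversality, $\im\iota_0 \pitchfork G$ forces $T_{e(m_0)}\im\iota_0 \cap T_{e(m_0)}G$ to have dimension $\dim M$; since it contains $T_{e(m_0)}e(M)$ by the tubular-neighbourhood property, the two coincide. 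Combined with the splitting $T_{e(m_0)}G = \ker d_{e(m_0)}\pi \oplus T_{e(m_0)}e(M)$ and a dimension count this yields $T_{e(m_0)}\im\iota_0 \cap T_{e(m_0)}\pi^{-1}(m_0) = \{0\}$ and $T_{e(m_0)}S = T_{e(m_0)}\im\iota_0 \oplus T_{e(m_0)}\pi^{-1}(m_0)$.

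Second, I will set up the local implicit function theorem. Using Remark~\ref{rmk:tub} I trivialize $\pi$ on an open $U_{m_0} \subset G$ around $e(m_0)$ as the projection $V_{m_0}\times F \to V_{m_0}$ with $e$ a constant section, then extend via an ambient tubular neighbourhood of $G$ in $S$ to a chart $V_{m_0}\times F \times E$ with $G$ corresponding to $\{E=0\}$. The map $\Psi_{m_0}(\ul\tau,z) := (\pi_V \times \pi_E)(\iota_{\ul\tau}(z))$ then has $\partial_z \Psi_{m_0}(0,z_{m_0}) : T_{z_{m_0}}Z \overset{\sim}{\to} T_{m_0}M \oplus E$ an isomorphism, where $z_{m_0}:=\iota_0^{-1}(e(m_0))$, by the previous step. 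The implicit function theorem produces $t_{m_0}>0$, a neighbourhood $A_{m_0}\subset Z$ of $z_{m_0}$, and (after shrinking $V_{m_0}$) a unique smooth solution $z = \zeta_{m_0}(\ul\tau,m) \in A_{m_0}$ of $\Psi_{m_0}(\ul\tau,z)=(m,0)$ for $(\ul\tau,m)\in[0,t_{m_0})^n\times V_{m_0}$, and the resulting intersection of $\im\iota_{\ul\tau}$ with $\pi^{-1}(m)$ is transverse.

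Third, I will globalize. Cover $K$ by finitely many $V_{m_1},\ldots,V_{m_k}$, set $t:=\min_j t_{m_j}$, and pick a neighbourhood $N \subset G$ of $e(K)$ small enough that $\iota_{\ul\tau}^{-1}(N\cap\pi^{-1}(V_{m_j}\cap K)) \subset A_{m_j}$ for every $j$ and every $\ul\tau \in [0,t)^n$. The existence of such $N$ follows from uniform continuity of $\iota$ together with compactness of $\{z_m : m \in K\}$: as $N$ shrinks towards $e(K)$ in $G$, its preimages under $\iota_{\ul\tau}$ are confined to arbitrarily small neighbourhoods of $\{z_m\}$ uniformly for small $\ul\tau$. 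Existence in \eqref{claim} is then the IFT solution, while uniqueness inside $\pi^{-1}(m)\cap N$ holds because any candidate $z$ lies in some $A_{m_j}$ and coincides there with $\zeta_{m_j}(\ul\tau,m)$. For the $Z$-compact case, the tubular property forces $\iota_0^{-1}(G) = \{z_m : m \in M\} \subset \bigcup_j A_{m_j}$, so the compact set $Z\setminus\bigcup_j A_{m_j}$ has image under $\iota_0$ disjoint from $G$; uniform continuity together with local closedness of $G$ in $S$ then keeps $\iota_{\ul\tau}(Z\setminus\bigcup_j A_{m_j})$ out of $G$ for small $\ul\tau$, so no stray intersections occur and $N=G$ is admissible.

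The main obstacle is the uniqueness assertion: the implicit function theorem only provides uniqueness inside each local patch $A_{m_j}\subset Z$, and excluding stray intersections of $\im\iota_{\ul\tau}$ with $\pi^{-1}(m)\cap N$ elsewhere in $G$ forces a careful coordination among the shrinking of $N$, the choice of $t$, and the uniform continuity hypothesis on $\iota$.
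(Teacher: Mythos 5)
Your proposal follows essentially the same strategy as the paper: establish fiberwise transversality of $\im\iota_0$ with $\pi^{-1}(m)$, apply the implicit function theorem at each $e(m)$, and globalize over the compact $K$ using uniform continuity of $\iota$. Steps 1 and 2 are sound.

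The globalization step 3 — which you correctly flag as the crux — is where your justification is incomplete. You assert that, as $N$ shrinks towards $e(K)$, the preimages $\iota_{\ul\tau}^{-1}(N\cap\pi^{-1}(\cdot))$ are confined near $\{z_m\}$ ``by uniform continuity of $\iota$ together with compactness of $\{z_m: m\in K\}$.'' These two hypotheses alone do not give the confinement: they would still permit $z_\nu$ to escape to infinity in $Z$ while $\iota_0(z_\nu)$ accumulates to a point of $e(K)$ from \emph{outside} $\im\iota_0$, and then a small $\ul\tau$-perturbation could push $\iota_{\ul\tau}(z_\nu)$ into $N\cap G$, creating a stray intersection. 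What rules this out is the topological embedding property of $\iota_0$ (continuity of $\iota_0^{-1}:\im\iota_0\to Z$): if $\iota_{\ul\tau_\nu}(z_\nu)\to p\in e(K)$ with $\ul\tau_\nu\to 0$, then $\iota_0(z_\nu)\to p\in\im\iota_0$ by uniform continuity, hence $z_\nu\to\iota_0^{-1}(p)=z_{m_\infty}$; combined with $m_\nu=\pi(\iota_{\ul\tau_\nu}(z_\nu))\to m_\infty$, continuity of $\pi$ on $G$, and the fact that $K$ is compact \emph{and} open (so $\overline K=K$), this forces $z_\nu$ into the correct IFT patch $A_{m_j}$ for large $\nu$. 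You should also pad $N$ to a neighbourhood of all of $e(M)$ by adding $\pi^{-1}(M\setminus K)$ (harmless, since $\pi^{-1}(m)$ for $m\in K$ is disjoint from it), and shrink the cover $V_{m_j}$ to a precompact refinement so that $m_\infty\in\overline{V'_{m_j}}\subset V_{m_j}$ guarantees $z_{m_\infty}\in A_{m_j}$.

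The paper's own proof avoids the abstract embedding argument by constructing, for each $k\in K$, explicit nested product-chart coordinates $\phi:U_0\times B_0\times C_0\to S$ adapted simultaneously to the trivialization of $\pi$ and to the transverse intersection $\im\iota_0\pitchfork G$; it then takes $N_k=\pi^{-1}(M\setminus\overline U_1)\cup\phi(U_0,B_1,0)$ so that $\pi^{-1}(m)\cap N_k$ is literally a single local fiber piece $\phi(m,B_1,0)$, and uses a concrete distance $\delta$ between nested chart sets together with uniform continuity of $\iota$ to push $\iota_{\ul\tau}(Z\setminus Z_1)$ off $\phi(U_2,B_1,C_2)$. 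Both routes work; yours is terser but requires the embedding property to be invoked explicitly, and as written the ``uniform continuity $+$ compactness'' justification is not a proof.
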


Here all manifolds are smooth, finite dimensional, and without boundary; the difficulty lies in allowing noncompactness, which will be needed in the iteration step.
In the present case just $G:=\cN_{t'_0}(\ul{q})\subset {\rm Gr}(\ul{q})$ is noncompact. The base space $K=M:=\cM(\ul{q})$ is compact and in case of finite end conditions $\cQ_0=\ti U(q_1)$ resp.\ $\cQ_{k+1}=\ti U(q_k)$ only contains trajectories $\ul\g$ with $|\ev_-(\ul\g)|=0$ resp.\ $|\ev_+(\ul\g)|=0$, hence $\cM_{t,\ul{\tau}}(\ul{q}) =\cM(\ul{q})$.
Similarly, the embeddings $\iota_{\ul\tau}:=\iota_{\ul{q},\ul\tau}$ to $S:=\ti S(\ul{q})$ have compact domains, in case $\cQ_0=\ti U(q_1)$ resp.\ $\cQ_{k+1}=\ti U(q_k)$ given by $\ti B^+_{q_1}=\{0\}$ resp.\ $\ti B^-_{q_k}=\{0\}$. In the latter cases note that $\iota_{\ul\tau}$ is well defined for $\ul\tau\in[0,1)^k$, so we will obtain the intersection property for transition times in $[0,t)^k$, which contains $I'_t(\ul{q})$.
This finishes the construction in case $b=0$ with $t_0:=\min\{t_0',t\}$.

\begin{proof}[Proof of Lemma~\ref{lem:trans}]
To begin note that the transversality $\im \iota_{0}\pitchfork G=e(M)$ together with the submersion property of $\pi:G\to M$ implies fiber-wise transversality
$$
 \im\iota_{0} \pitchfork \pi^{-1}(m) = e(m)  \qquad \forall \; m\in M .
$$
To show that, after a restriction, this intersection property persists for small $\ul{\tau}\neq 0$, we crucially need compactness of $K$. With that it suffices, given any $k\in K$, to find $t_k>0$ and a neighbourhood $N_k\subset G$ of $e(M)$ such that \eqref{claim} holds on a neighbourhood $U_k$ of $k$.

By assumption, $z_k=\iota_{0}^{-1}(\pi^{-1}(k))$ is a unique point, and $\rd_{z_k}(\pi\circ\iota_0) :\rT_{z_k}Z \to \rd_{k} M$ is an isomorphism.
The implicit function theorem for
$F: \bigl([0,1)^n \times M\bigr) \times Z \to M\times M, \; (\ul\tau, m ; z) \mapsto  (\pi(\iota_{\ul\tau}(z)),m)$ with $(0,k;z_k) \mapsto \Delta_M$ then provides open neighbourhoods $[0,t)^n\times U\subset [0,1)^n \times M$ of $(0,k)$ and $V\subset Z$ of $z_k$ such that $F(\ul\tau, m ; \cdot)\in\Delta_M$ has unique solutions in $V$ for all $(\ul\tau,m)\in U$. That is, $\iota_{\ul\tau}(V) \cap \pi^{-1}(m)$ is a unique point for all $|\ul\tau|<t$ and $m\in U$.
By restricting $F$ to precompact neighbourhoods of $z_k$ we can ensure that $V$ is precompact. Then $\rd_z(\pi\circ\iota_{\ul\tau})\to \rd_z(\pi\circ\iota_0)$ converges uniformly in $z$ as $\ul\tau\to 0$, and hence is surjective for small $|\ul\tau|$. So by choosing $t>0$ smaller we additionally achieve transversality,
\begin{equation}\label{pitch}
\iota_{\ul\tau}(V) \pitchfork \pi^{-1}(m) = 1 \;\text{point}  \qquad \forall\; |\ul\tau|<t, m\in U .
\end{equation}
It remains to trade the restriction to $V\subset Z$ for a restriction to $N\subset G$.
For that purpose we work with open neighbourhoods throughout and write $U'\sqsubset U$ for $U'$ being precompact in $U$ (i.e.\ its closure in $U$ is compact, which yields a positive distance between $U'$ and the complement of $U$).
We can combine a local trivialization of $\pi$ from Remark~\ref{rmk:tub} with the transversality $\im\iota_0\pitchfork G = e(M)$ to find a neighbourhood $U_0\sqsubset U$ of $k$, open balls $B_0\subset\R^{\dim G - \dim M}$ and $C_0\subset\R^{\dim S - \dim G}$, and a diffeomorphism $\phi:U_0 \times B_0 \times C_0 \overset{\sim}{\to} S_0\sqsubset S$ to a neighbourhood of $\iota_0(z_k)=e(k)=\phi(k,0,0)$ such that
$$
\im\iota_0 \cap S_0 =\phi( U_0 ,0, C_0 ), \qquad
G \cap S_0 =\phi( U_0 , B_0 , 0 ), \quad
\phi^*\pi = {\rm pr}_{U_0} ,\quad
\phi^*e = {\rm Id}_{U_0} \times 0 \times 0 .
$$
Now by \eqref{pitch} we have $\iota_0^{-1}(\phi(\overline{U}_1,0,0))\subset\iota_0^{-1}(\pi^{-1}(U_0))\subset V$ for any choice of neighbourhood $U_1\sqsubset U_0$ of $k$.
Since $\iota_0$ is an embedding we then find a neighbourhood $C_1\sqsubset C_0$ of $0$ such that $Z_1:=\iota_0^{-1}(\phi(U_1,0, C_1)))\subset V$ while
$$
\iota_0(Z\setminus Z_1) = \im\iota_0 \setminus \phi(U_1, 0 , C_1)
\subset S\setminus \phi(U_1, B_0, C_1) .
$$
Next, we apply the implicit function theorem again to $F|_{[0,1)^n \times U_1 \times Z_1}$ to find $t'>0$, $V_1\subset Z_1$, and $U_2\sqsubset U_1$ such that $\iota_{\ul\tau}(V_1) \pitchfork \pi^{-1}(m)$ is a unique point for all  $|\ul\tau|<t'$ and $m\in U_2$. Since \eqref{pitch} also holds on $U_2\subset U$ and $V_1\subset Z_1\subset V$, we obtain
\begin{equation}\label{Z1}
\iota_{\ul\tau}(Z_1) \pitchfork \pi^{-1}(m) = 1 \;\text{point}  \qquad \forall\; |\ul\tau|<t, m\in U_2 .
\end{equation}
We pick further neighbourhoods $B_1\sqsubset B_0$ and $C_2\sqsubset C_1$ of $0$ to obtain a precompact neighbourhood $S_2:=\phi(U_2 , B_1 , C_2) \sqsubset \phi(U_1, B_0 , C_1)=:S_1$ of $\iota_0(z_k)$ with $\delta:=d_S(S_2, S\setminus S_1)>0$.
Now uniform continuity provides $t_\delta>0$ such that for all $|\ul\tau|<t_\delta$
\begin{align*}
&\iota_{\ul\tau}(Z\setminus Z_1)\subset B_\delta\bigl(\iota_0(Z\setminus Z_1) \bigr) \subset B_\delta(S\setminus S_1) \subset S\setminus \phi(U_2 ,B_1 ,C_2) \\
\text{and}\quad &
\iota_{\ul\tau}(Z_1) \cap \pi^{-1}(U_2) \subset B_\delta(\iota_0(Z_1)) \cap \pi^{-1}(U_2) = B_\delta(\phi(U_1,0, C_1)) \cap \pi^{-1}(U_2).
\end{align*}
Finally, for sufficiently small $\delta'\in(0,\delta]$ we obtain for all $|\ul\tau|<t_{\delta'}=:t_k$
$$
\iota_{\ul\tau}(Z_1) \cap \pi^{-1}(U_2) \subset  \phi(U_0,B_1, C_0) \cap \pi^{-1}(U_2)
\subset \phi(U_2, B_1 ,0) .
$$
Now $N_k:= \pi^{-1}(M\setminus \overline{U}_1) \cup \phi(U_0, B_1 ,0)\subset G$ is a neighbourhood of $e(M\setminus \overline{U}_1) \cup e(U_0)=e(M)$ and for all $m\in U_2$ and $|\ul\tau|<t'$ we have
$$
\im\iota_{\ul\tau}\cap \pi^{-1}(m) \cap N_k = \im\iota_{\ul\tau}\cap\phi(k,B_1,0)
=\iota_{\ul\tau}(Z_1) \cap \phi(m_0,B_1,0) = \iota_{\ul\tau}(Z_1) \cap \pi^{-1}(m) .
$$
Thus \eqref{claim} on $U_k:=U_2$  follows from \eqref{Z1}. Finally, after finding a finite open cover $K\subset\bigcup U_{k_i}$, the lemma holds with $t:=\min t_{k_i}$ and $N:=\bigcap N_{k_i}$.
If $Z$ is compact then we can moreover choose $t>0$ sufficiently small such that $\im\iota_{\ul\tau}\cap G\subset N$ for all $|\ul\tau|<t$, and hence
$\im\iota_{\ul\tau} \cap \bigl( \pi^{-1}(m) \cap N \bigr) = \im\iota_{\ul\tau} \cap \pi^{-1}(m)$.
\end{proof}

\subsection{Construction for $b\geq 1$ based on construction for $b-1$:}

Let the special global charts in Sections~\ref{sec:k=0} and \ref{sec:k=1 special} be fixed, and for some $b\geq 1$ suppose that we have given a construction of $\phi(\ul{q})=({\rm Id}\times \pi(\ul{q}))\circ(\tau(\ul{q})\times{\rm Ev}(\ul{q}))$ for $b(\ul{q})\leq b-1$ as specified in Section~\ref{sec:tub}, and
satisfying Theorem~\ref{thm:global with ends} for $0<t\leq t_{b-1}$.
Then the goal of this iteration step is to construct tubular neighbourhoods $\leftexp{(\pm)}{\pi}^{\cP_-}_{\cP_+}$ for $b(\cP_-,\cP_+)=b$ as specified in Section~\ref{sec:tub}, and find $t_b>0$ such that the induced maps $\phi(\ul{q})$ satisfy Theorem~\ref{thm:global with ends} for $b(\ul{q})\leq b$ and $0<t\leq t_{b}$. By Lemma~\ref{lem:pi} it suffices to satisfy conditions (i), (iii), (iv). Hence we start from the formulas
$$
\leftexp{\{\genfrac{}{}{0pt}{3}{}{\pm}\}}{\hat\pi}^{\cP_-}_{\cP_+} \circ
\left\{ \begin{smallmatrix} \ev_-\times\ev_+ \\ \ev_\pm \end{smallmatrix} \right\} \big|_{\cV_{t_b}(\ul{\ti q}')_0}
:=\phi(\ul{q}')^{-1}\circ {\rm Pr}_{\ul{q}'} \circ \phi(\ul{\ti q}')
, \qquad
\leftexp{\pm}\pi^{\cP_-}_{\cP_+}|_{\ti U_1(p_+)} :=\ev_\pm^{-1}\circ {\rm pr}_{\ti B^\mp_{p_\mp}}
$$
for nontrivial critical point sequences $\ul{q}'=(\cP_-\supset\cQ_0',q'_1,\ldots,q'_\ell,\cQ'_{\ell+1}\subset\cP_+)$ and the associated
$\ul{\ti q}'= \left(
\left\{ \begin{smallmatrix}  \ti U(p_-), p_-  &; \cP_-=p_- \\ \cQ_0' &; \cP_-=X \end{smallmatrix} \right\}
,q_1', \ldots, q_\ell',
\left\{ \begin{smallmatrix} p_+,\ti U(p_+) &; \cP_+=p_+ \\ \cQ_{\ell+1}' &; \cP_+=X \end{smallmatrix} \right\}
\right)$
to define maps
\begin{align*}
\hat\pi^{p_-}_{p_+} : \; \,\;\qquad\qquad {\textstyle \bigcup_{\ul{q}'}} {\Gr}^{p_-}_{p_+}(t_{b-1},\ul{q}') &\;\longrightarrow\;  {\textstyle \bigcup_{\ul{q}'}} \cV_{t_{b-1}}(\ul{q}')_0 \qquad\;\; \;\; \subset\; \cM(p_-,p_+)  , \\
\leftexp{-}{\hat\pi}^X_{p_+}  : \; \ti U_1(p_+)\cup {\textstyle \bigcup_{\ul{q}'}} \leftexp{-}{\Gr}^X_{p_+}(t_{b-1},\ul{q}') &\;\longrightarrow\;  \ev_-^{-1}(\ti B^+_{p_+})\cup{\textstyle \bigcup_{\ul{q}'}} \cV_{t_{b-1}}(\ul{q}')_0 \; \subset\; \cM(X,p_+)  , \\
\leftexp{+}{\hat\pi}^{p_-}_X  : \; \ti U_1(p_-)\cup {\textstyle \bigcup_{\ul{q}'}} \leftexp{+}{\Gr}^{p_-}_X(t_{b-1},\ul{q}') &\;\longrightarrow\;   \ev_+^{-1}(\ti B^-_{p_-})\cup{\textstyle \bigcup_{\ul{q}'}} \cV_{t_{b-1}}(\ul{q}')_0 \; \subset\; \cM(p_-,X)
\end{align*}
with the union over critical point sequences as above, and on the domains
\begin{align*}
\Gr^{p_-}_{p_+}(t_b,\ul{q'}) &:= \bigl({\rm ev}_{\ti{S}^-_{p_-}}\times {\rm ev}_{\ti{S}^+_{p_+}}\bigr)\bigl( \cM(\ti{S}^-_{p_-},\ti{S}^+_{p_+}) \cap \cV_{t_b}(\ul{\ti q'})_0 \bigr) , \\
\leftexp{-}\Gr^X_{p_+}(t_b,\ul{q'}) &:= \ev_-(\cM(X,\ti{S}^+_{p_+}) \cap \cV_{t_b}(\ul{\ti q'})_0) , \\
\leftexp{+}\Gr^{p_-}_X(t_b,\ul{q'}) &:= \ev_+(\cM(\ti{S}^-_{p_-},X) \cap\cV_{t_b}(\ul{\ti q'})_0) .
\end{align*}
If we define $\leftexp{(\pm)}{\pi}^{\cP_-}_{\cP_+}$ by extension of
$\leftexp{(\pm)}{\hat\pi}^{\cP_-}_{\cP_+}|_{\ti U_{t_b}(p_\pm)\cup {\textstyle \bigcup_{\ul{q}'}} \leftexp{(\pm)}{\Gr}^{\cP_-}_{\cP_+}(t_{b},\ul{q}')}$ to $\leftexp{(\pm)}{\Gr}^{\cP_-}_{\cP_+}(t_b)$, then (iii) and the first part of (iv) are automatically satisfied. In fact, the following lemma shows that this definition is consistent with all conditions on the tubular neighbourhoods.

\begin{lem}\label{lem:pihat}
For each $b(\cP_-,\cP_+)=b$ the maps $\hat\pi^{p_-}_{p_+}$, $\leftexp{+}{\hat\pi}^{p_-}_X$, $\leftexp{+}{\hat\pi}^{p_-}_X$ are well defined  tubular neighbourhoods of $\ev_{\ti S^-_{p_-}}\times \ev_{\ti S^+_{p_+}}$, $\ev_-$, resp.\ $\ev_+$ restricted to the above subdomains of $\cM(\cP_-,\cP_+)$, and satisfy the preimage condition in Lemma~\ref{lem:pi}~(iv).
For $0<t<t_{b-1}$ they restrict to maps ${\textstyle \bigcup_{\ul{q}'}} \leftexp{(\pm)}{\Gr}^{\cP_-}_{\cP_+}(t,\ul{q}') \to {\textstyle \bigcup_{\ul{q}'}} \cV_t(\ul{q}')_0$.
Moreover, the product
$\hat\pi(\ul{q}) = \leftexp{-}{\hat\pi}^{\cU_-}_{q_1} \times {\hat\pi}^{q_1}_{q_2}\ldots  \times  \leftexp{+}{\hat\pi}_{\cU_+}^{q_k}$ for any $b(\ul{q})=b$ satisfies the intersection condition in Lemma~\ref{lem:pi}~(i) for $\ul\tau\in I'_{t_{b-1}}(\ul{q})$ and $\ul{\g}\in  \cM(\ul{q})\cap \bigcup_{\genfrac{}{}{0pt}{3}{\ul{Q}\supset\ul{q}}{b(\ul{Q})<b}}  \cV_{t_{b-1}}(\ul{Q})\subset \bM(\cU_-,\cU_+)$.
\end{lem}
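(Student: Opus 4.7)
The plan is to verify each claim by unpacking the piecewise definitions of $\hat\pi$ and reducing to properties of the lower-breaking-number charts $\phi(\ul{q}')$ and $\phi(\ul{\ti q}')$ supplied by the inductive hypothesis, using associativity as the main organizational tool. We shrink $t_b\in(0,t_{b-1}]$ once at the end so that all finitely many constraints below hold uniformly.

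\emph{Well-definedness.} Given two nontrivial sequences $\ul{q}',\ul{q}''\in{\rm Critseq}(f,\cP_-,\cP_+)$, the overlap of their assigned pieces consists of trajectories $\g\in\cM(\ti{S}^-_{p_-},\ti{S}^+_{p_+})$ passing through every critical point of both. Let $\ul{Q}$ be their flow-ordered union; since $b(\ul{Q})\leq b-|\ul{Q}|\leq b-1$, the charts $\phi(\ul Q)$ and $\phi(\ul{\ti Q})$ are provided by induction. Applying the generalized associativity of Remark~\ref{rmk:gen comp} to the inclusions $\ul{\ti q}'\subset\ul{\ti Q}$ and $\ul{\ti q}''\subset\ul{\ti Q}$, projecting away the inserted factors and inverting the coarser charts shows that both formulas equal $\phi(\ul{Q})^{-1}\circ{\rm Pr}_{\ul Q}\circ\phi(\ul{\ti Q})(\g)$, hence agree. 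Compatibility with the near-critical-point formula $\ev_\pm^{-1}\circ{\rm pr}$ on $\ti U_1(p_\pm)$ is vacuous: a trajectory with $\ev_-\in\ti U_1(p_+)$ flowing forward to $\ti{S}^+_{p_+}$ stays inside $\ti U(p_+)$ and visits no other critical point (since $\ti U(p_+)$ contains no other critical point by our choice of $\Delta$), so the overlap with every nontrivial-$\ul{q}'$ piece is empty.

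\emph{Tubular neighbourhood, preimage, and restriction properties.} Smoothness, surjectivity onto the required image, and the submersion property of each defining piece are immediate, since $\phi(\ul{q}'),\phi(\ul{\ti q}')$ are diffeomorphisms on the top stratum by Theorem~\ref{thm:global with ends}~(i) and ${\rm Pr}_{\ul{q}'}$ is a coordinate projection, while $\ev_\pm^{-1}\circ{\rm pr}$ is a trivial fibration by \eqref{Bpm}. That $\hat\pi$ restricts to the inverse of the embedding on the image: for $\g_0\in\cV_{t_{b-1}}(\ul{q}')_0\cap\cM(p_-,p_+)$, the restriction $\g\in\cM(\ti{S}^-_{p_-},\ti{S}^+_{p_+})$ satisfies $\phi(\ul{q}')^{-1}\circ{\rm Pr}_{\ul{q}'}\circ\phi(\ul{\ti q}')(\g)=\g_0$ by combining the canonical form (ii) with associativity. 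The preimage condition of Lemma~\ref{lem:pi}~(iv) reduces to the same geometric observation: a trajectory in $\cM(\ti U_{t_b}(p_+),p_+)$ is confined to $\ti U(p_+)$ and converges to $p_+$ without visiting another critical point, so its $\hat\pi$-preimage lies entirely in the $\ti U_1(p_+)$ piece and is computed by the explicit formula, with the stated containment enforced by a sufficiently small choice of $t_b$. The restriction for $0<t<t_{b-1}$ follows directly because $\phi(\ul{q}')$ and $\phi(\ul{\ti q}')$ restrict compatibly by induction.

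\emph{Intersection condition --- the main obstacle.} For the product $\hat\pi(\ul{q})$ evaluated at $\ul\g\in\cM(\ul{q})\cap\cV_{t_{b-1}}(\ul{Q})$ with $\ul{Q}\supset\ul{q}$ and $b(\ul{Q})\leq b-1$, the goal is to show that $\im\iota_{\ul\tau}\pitchfork\hat\pi(\ul{q})^{-1}(\ul{\g})$ is a single transverse point for all $\ul\tau\in I'_{t_{b-1}}(\ul{q})$. Since $\ul{\g}$ lies in $\cV_{t_{b-1}}(\ul{Q})_k$, the inductively constructed $\phi(\ul{Q})$ identifies it with a point whose $\ul{q}$-transition times vanish while those at the inserted critical points of $\ul{Q}\setminus\ul{q}$ are positive. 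Using associativity to factor $\hat\pi(\ul{q})$ through $\pi(\ul{Q})$, the fiber $\hat\pi(\ul{q})^{-1}(\ul{\g})$ is realized as the image, under the canonical projection forgetting the inserted factors, of $\bigcup_{\ul\tau'}\pi(\ul{Q})^{-1}(\ul{\g}^{\ul\tau'})$ where $\ul\tau'$ ranges over transition parameters at the inserted critical points and $\ul{\g}^{\ul\tau'}\in\cM(\ul{Q})$ denotes the corresponding lift. The embedding $\iota_{\ul\tau}^{\ul{q}}$ similarly factors through the embedding $\iota^{\ul{Q}}_{(\ul\tau,\ul\tau')}$, so the desired unique transverse intersection follows from its inductive analogue for $\pi(\ul{Q})$ at $(\ul\tau,\ul\tau')\in I'_{t_{b-1}}(\ul{Q})$, provided $t_b$ is chosen small enough to keep the parameters within the inductive domain.
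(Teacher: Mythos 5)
Your high-level plan matches the paper's: express $\hat\pi$ as a composition of lower-level charts via associativity, check the tubular neighbourhood axioms piecewise, and reduce the intersection condition to the inductive hypothesis for the finer sequence $\ul{Q}$.  However, there are two genuine gaps.

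\emph{Well-definedness for different end conditions.}  Your flow-ordered-union argument handles two nontrivial sequences $\ul{q}'\neq\ul{q}''$ that differ in their critical points, and the disjointness observation correctly rules out overlap between the $\ti U_1(p_\pm)$ piece and any nontrivial piece.  But for $\cU_-=X$ (resp.\ $\cU_+=X$) the definition also allows $\cQ_0'=\ti U(q'_1)$ and $\cQ_0''=X\setminus\overline{U(q'_1)}$ with the \emph{same} critical points $q'_1,\ldots,q'_\ell$.  Then the overlap of domains is $\ev_-^{-1}\bigl(\ti U(q'_1)\setminus\overline{U(q'_1)}\bigr)\neq\emptyset$, and the union $\ul{Q}=\ul{q}'\cup\ul{q}''$ is no finer than either, so the associativity-through-$\ul{Q}$ argument gives nothing.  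Consistency here requires the explicit comparison of the two charts via the linear reparametrization $E\mapsto E|\ev_-(\g)|/\Delta$ from Theorem~\ref{thm:global with ends}~(iv), which is a separate computation the paper carries out and which your proof omits.

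\emph{Intersection condition: the fiber description and the transversality under projection.}  Your claim that $\hat\pi(\ul{q})^{-1}(\ul{\g})$ is the projection of a \emph{union} over $\ul\tau'$ of fibers of $\pi(\ul Q)$ is incorrect: given $\ul{\g}\in\cM(\ul q)\cap\cV_{t_{b-1}}(\ul Q)$, the transition times $\ul\tau^j$ at the inserted critical points and the broken projections $\ul\eta^j\in\cM(\ul{q}^j)$ are both \emph{fixed} by $\phi(\ul{q}^j)(\g_j)=(\ul\tau^j,\ul\eta^j)$, so the fiber is a single slice
\[
\hat\pi(\ul q)^{-1}(\ul\g)=\Pi^{\ul Q}_{\ul q}\Bigl(\pi(\ul Q)^{-1}(\ul\eta_{\ul\g})\cap\bigl(\iota(\ul{q}^0,\ul\tau^0)\times\widetilde S_{q_1}\times\cdots\times\iota(\ul{q}^k,\ul\tau^k)\bigr)\Bigr),
\]
not a union.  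Taking a union over $\ul\tau'$ would produce a strictly larger set, and the unique intersection with $\im\iota_{\ul{q},\ul\s}$ would then fail.  More seriously, even with the correct slice, transversality of the projected intersection in $\ti S(\ul q)$ is not an automatic consequence of transversality of $\pi(\ul Q)^{-1}(\ul\eta_{\ul\g})\pitchfork\im\iota_{\ul{Q},\ul T_{\ul\g,\ul\s}}$ in $\ti S(\ul Q)$: one must show that $\ker\rd\pi(\ul Q)$ already contains complements of the $\rT\iota(\ul{q}^j,\ul\tau^j)$ in the forgotten middle factors, so that applying $\rd\Pi^{\ul Q}_{\ul q}$ does not destroy the direct sum.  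This tangent-space argument is the technical heart of the lemma, and simply invoking ``the inductive analogue for $\pi(\ul Q)$'' skips it.
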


\begin{proof}
We begin by noting that the nontrivial critical point sequences have breaking number $b(\ul{\ti q}')=b(\ul q')<b(\cP_-,\cP_+)=b$, hence by the iteration hypothesis we can work with the charts $\phi(\ul{q}')$ and $\phi(\ul{\ti q}')$, satisfying the properties of Theorem~\ref{thm:global with ends}.

In order to see that $\hat\pi^{p_-}_{p_+}$ is well defined we have to check consistency of the definitions at a fixed $(z^-,z^+)\in\Gr^{p_-}_{p_+}(t_b)$ for different critical point sequences $\ul{q}'=(p_-,\ldots,p_+)$.
Note that $\ul{Q}^{z_\pm} = \{p\in{\rm Crit}(f) \,|\, \Psi_\R(z^\pm) \cap \ti U_t(p) \neq \emptyset\}$ defines a critical point sequence in ${\rm Critseq}(f;p_-,p_+)$ such that $(z^-,z^+)\in\Gr^{p_-}_{p_+}(t,\ul{Q}^{z_\pm})$. In fact, it is maximal in the sense that if $(z^-,z^+)\in\Gr^{p_-}_{p_+}(t,\ul{q}')$ then $\ul{q}'\subset\ul{Q}^{z_\pm}$. In this situation we actually have $(z^-,z^+)\in\Gr^{p_-}_{p_+}(t,\ul{Q}')$ for each intermediate critical point sequence $\ul{Q}'=(p_-,\ldots,p_+)$ with $\ul{q}'\subset\ul{Q}'\subset\ul{Q}^{z_\pm}$.
Now arguing step by step, it suffices to check the identity
$$
\phi(\ul{Q}')^{-1} \circ {\rm Pr}_{\ul{Q}'} \circ \phi(\ul{\ti Q'})
= \phi(\ul{q}')^{-1} \circ {\rm Pr}_{\ul{q}'} \circ \phi(\ul{\ti q}')
\qquad\text{at}\; \g_{z^\pm}:=(\ev_-\times\ev_+)^{-1}(z^-,z^+)\in\cM(\ti S^-_{p_-},\ti S^+_{p_+})
$$
for pairs $\ul{q}'\subset\ul{Q}'$ where $\ul{Q}'$ is obtained from $\ul{q}'$ by inserting a critical point sequence $\ul{q}''=(q'_i=q''_0,\ldots,q''_{k+1}=q'_{i+1})$ at a unique $i$.
In each step the breaking numbers $b(\ul{Q}')=b(\ul{\ti Q'})\leq b(\ul{q}')=b(\ul{\ti q}')<b$ are strictly less than $b(p_-,p_+)=b$, so the identity above follows, after applying $\phi(\ul{q}')$ to both sides, from the associativity relations
$\phi(\ul{Q}')= \bigl({\rm Id}\times \phi(\ul{q}'') \times {\rm Id} \bigr) \circ \phi(\ul{q}')$ and
$\phi(\ul{\ti Q'})= \bigl({\rm Id}\times \phi(\ul{q}'') \times {\rm Id} \bigr) \circ \phi(\ul{\ti q}')$.
That is, we have at $\g_{z^\pm}$
\begin{align*}
\phi(\ul{q}') \circ \phi(\ul{Q}')^{-1} \circ {\rm Pr}_{\ul{Q}'} \circ \phi(\ul{\ti Q'})
=
\bigl({\rm Id}\times \phi(\ul{q}'')^{-1} \times {\rm Id} \bigr) \circ {\rm Pr}_{\ul{Q}'} \circ \bigl({\rm Id}\times \phi(\ul{q}'') \times {\rm Id} \bigr) \circ \phi(\ul{\ti q}')
=
{\rm Pr}_{\ul{q}'} \circ \phi(\ul{\ti q}')
\end{align*}
since ${\rm Pr}_{\ul{q}'}$ and ${\rm Pr}_{\ul{Q}'}$ merely project out the first two and last two factors in
$$
\ti B^+_{p_-} \times [0,2) \times \cM(p_-,q'_1) \times [0,1) \times \ldots \cM(q'_i,q'_{i+1}) \ldots [0,1)\times \cM(q'_\ell,p_+) \times [0,2) \times \ti B^-_{p_+}  ,
$$
$$
\ti B^+_{p_-} \times [0,2) \times \cM(p_-,q'_1) \times [0,1) \times \ldots
 [0,1)^k \times \cM(\ul{q}'') \ldots [0,1)\times \cM(q'_\ell,p_+) \times [0,2) \times \ti B^-_{p_+} ,
$$
while $\phi(\ul{q}'')^{-1} \circ \phi(\ul{q}'') = {\rm Id}_{ \cM(q'_i,q'_{i+1})}$ cancels out on a factor not involved in the projections. Thus we have proven consistency of the definition
$\hat\pi^{p_-}_{p_+} := \phi(\ul{q}')^{-1} \circ {\rm Pr}_{\ul{q}'} \circ \phi(\ul{\ti q}') \circ (\ev_-\times\ev_+)^{-1}$.

Next, by the explicit construction of transition times and tubular neighbourhoods near critical points in \eqref{pi near crit} we have for all
$\g\in \cM(\ti S^-_{p_-},\ti S^+_{p_+})\cap\cV_t(\ul{\ti q}')_0$
$$
\phi(\ul{\ti q}')(\g) = \Bigl( 1, \tau(\ul{q}')(\g), 1 ; {\rm pr}_{\ti B^+_{p_-}}(\ev_-(\g)) ,\resizebox{.35\hsize}{!}
{$\Bigl(\pi^{q'_i}_{q'_{i+1}} \bigl({\rm ev}_{\ti{S}^-_{q'_i}}(\g), {\rm ev}_{\ti{S}^+_{q'_{i+1}}}(\g)\bigr)\Bigr)_{i=0,\ldots,\ell}$ }, {\rm pr}_{\ti B^-_{p_+}}(\ev_+(\g) \Bigr)
$$
and conversely
$\phi(\ul{\ti q}')^{-1} \bigl( 1 ,(0,t)^\ell , 1 ; \ast , \ast , \ast \bigr) \subset \cM(\ti S^-_{p_-},\ti S^+_{p_+})\cap\cV_t(\ul{\ti q}')_0$.
Hence the chart restricts to a diffeomorphism
$$
\phi(\ul{\ti q}') : \;
\cM(\ti S^-_{p_-},\ti S^+_{p_+})\cap\cV_t(\ul{\ti q}')_0 \;\to\;
\{1\} \times (0,t)^\ell \times \{1\} \times \tfrac t2 \ti B^+_{p_-} \times \cM(\ul{q}') \times \tfrac t2\ti B^-_{p_+} .
$$
Since $\phi(\ul q)$ also restricts to a diffeomorphism to $(0,t)^\ell  \times \cM(\ul{q}')$, this already shows that
$
\phi(\ul{q}')^{-1} \circ {\rm Pr}_{\ul{q}'} \circ \phi(\ul{\ti q}') : \;
\cM(\ti{S}^-_{p_-},\ti{S}^+_{p_+}) \cap \cV_{t_b}(\ul{\ti q'})_0
\;\to\; \cV_{t_b}(\ul{q'})_0 \subset
\cM(p_- ,p_+)
$
is a smooth submersion.
In fact, it is a tubular neighbourhood of the restriction
$$
\rho^{p_-}_{p_+}
=\bigl(\ev_-\times\ev_+\bigr)^{-1}\circ\bigl(\ev_{\ti{S}^-_{p_-}}\times\ev_{\ti{S}^+_{p_+}}\bigr)
: \; \cM(p_- ,p_+) \supset\ \cV_{t_b}(\ul{q'})_0 \hookrightarrow \cM(\ti{S}^-_{p_-}, \ti{S}^+_{p_+}).
$$
To see this it remains to check ${\rm Pr}_{\ul{q}'} \circ \phi(\ul{\ti q}')\circ \rho^{p_-}_{p_+} = \phi(\ul{q}')$, which by the above expression for $\phi(\ul{\ti q}')$ reduces to identifying the factors $\leftexp{-}{\pi}^{p_-}_{q'_1}\circ\ev_{\ti S^+_{q'_1}}$ and $\leftexp{+}\pi^{q'_\ell}_{p_+}\circ \ev_{\ti S^-_{q'_\ell}}$ of $\phi(\ul{q}')$ with $\pi^{q'_i}_{q'_{i+1}} \circ \bigl({\rm ev}_{\ti{S}^-_{q'_i}}\times {\rm ev}_{\ti{S}^+_{q'_{i+1}}}\bigr) \circ \rho^{p_-}_{p_+}$ for $i=0$ and $i=\ell$.
Here the effect of the restriction is $\bigl({\rm ev}_{\ti{S}^-_{p_-}}\times {\rm ev}_{\ti{S}^+_{q'_1}} \bigr) \circ \rho^{p_-}_{p_+} = {\rm ev}_-\times {\rm ev}_{\ti{S}^+_{q'_1}}$ resp.\
$\bigl({\rm ev}_{\ti{S}^-_{q'_\ell}}\times {\rm ev}_{\ti{S}^+_{p_+}} \bigr)\circ \rho^{p_-}_{p_+}
={\rm ev}_{\ti{S}^-_{q'_\ell}}\times {\rm ev}_+$, so the required identities follow from the pullback definitions
$\leftexp{-}\pi^{p_-}_{q'_1}(z^+) =\pi^{p_-}_{q'_1}(z^-,z^+)$
resp.\
$\leftexp{+}\pi^{q'_\ell}_{p_+}(z^-)=\pi^{q'_\ell}_{p_+}(z^-,z^+)$.
Since $\hat\pi^{p_-}_{p_+}$ is defined from this tubular neighbourhood of $\rho^{p_-}_{p_+}$ by pullback with the diffeomorphisms $\bigl(\ev_-\times \ev_+\bigr)^{-1} : \Gr^{p_-}_{p_+} \to \cM(\ti S^-_{p_-},\ti S^+_{p_+})$, it indeed is a tubular neighbourhood of $\ev_{\ti S^-_{p_-}}\times \ev_{\ti S^+_{p_+}}$.

In the definition of $\leftexp{-}{\hat\pi}^{X}_{p_+}$ we similarly use the explicit construction of transition times and tubular neighbourhoods near $p_+$ to see that for all $\g\in \cM(X,\ti S^+_{p_+})\cap\cV_t(\ul{\ti q}')_0$
$$
\phi(\ul{\ti q}')(\g) = \bigl( \tau(\ul{q}')(\g), 1 ;
\resizebox{.5\hsize}{!}
{$\leftexp{-}\pi^X_{q'_1}\bigl(\ev_-(\g)\bigr) ,
\bigl(\pi^{q'_i}_{q'_{i+1}} \bigl({\rm ev}_{\ti{S}^-_{q'_i}}(\g), {\rm ev}_{\ti{S}^+_{q'_{i+1}}}(\g)\bigr)\bigr)_{i=1,\ldots,\ell}$ }, {\rm pr}_{\ti B^-_{p_+}}(\ev_+(\g) \bigr)
$$
and conversely
$\phi(\ul{\ti q}')^{-1} \bigl( I_t(\ul q') , 1 ; \ast , \ast , \ast \bigr) \subset \cM(X,\ti S^+_{p_+})\cap\cV_t(\ul{\ti q}')_0$.
Hence the chart restricts to a diffeomorphism
$$
\phi(\ul{\ti q}') : \;
\cM(X,\ti S^+_{p_+})\cap\cV_t(\ul{\ti q}')_0 \;\longrightarrow\;
{\textstyle\bigcup_{\ul\tau\in I_t(\ul q') \cap (0,\infty)^\ell}} \{\ul\tau\}\times \{1\} \times \cM_{t,\ul\tau}(\ul{q}') \times \tfrac t2\ti B^-_{p_+} .
$$
Since $\phi(\ul q)$ also restricts to a diffeomorphism to $\bigcup_{\ul\tau\in I_t(\ul q') \cap (0,\infty)^\ell} \{\ul\tau\} \times \cM_{t,\ul\tau}(\ul{q}')$, this proves that
$
\phi(\ul{q}')^{-1} \circ {\rm Pr}_{\ul{q}'} \circ \phi(\ul{\ti q}') : \;
\cM(X,\ti{S}^+_{p_+}) \cap \cV_{t_b}(\ul{\ti q'})_0
\;\longrightarrow\; \cV_{t_b}(\ul{q'})_0 \subset
\cM(X, p_+)
$
is a smooth submersion. In fact, the same identities as before prove that it is a tubular neighbourhood of the restriction
$\ev_-^{-1}\circ \ev_- : \;
\cM(X,p_+) \;\supset \; \cV_{t_b}(\ul{q'})_0 \;\hookrightarrow\; \cM(X, \ti{S}^+_{p_+}) $ .
Assuming for now that $\leftexp{-}{\hat\pi}^X_{p_+}$ is well defined on $\bigcup_{\ul{q}'}\leftexp{-}{\Gr}^X_{p_+}(t_{b-1},\ul{q}')$, it is the pullback of this tubular neighbourhood by the diffeomorphism $\ev_-^{-1}: \leftexp{-}{\Gr}^X_{p_+}(1) \to \cM(X, \ti{S}^+_{p_+})$, and hence a tubular neighbourhood of $\ev_-$.
We may extend this by $\leftexp{-}{\hat\pi}^{X}_{p_+}|_{\ti U_1(p_+)}:={\rm pr}_{\ti B^+_{p_+}}$ in the identification $\ev_-:\cM(X,p_+) \overset{\sim}{\to} \ti B^+_{p_+}$ , where the domains $\leftexp{-}{\Gr}^X_{p_+}(t_{b-1},\ul{q}')$ do not intersect $\ti U_1(p_+)$ since they are subsets of $\Psi_{\R_-}(\ti U_{t_b}(q'_1))$ for $f(q'_1)>f(p_+)$. In particular this separation of domains ensures condition (iv), that is
 $\bigl(\leftexp{-}{\hat\pi}^{X}_{p_+}\bigr)^{-1}(\ti B^+_{p_+}) = \ti U_1(p_+)$.
The analogous construction of $\leftexp{+}{\hat\pi}_{X}^{p_-}$ provides a tubular neighbourhood of
$
\ev_+:\; \cM(p_-,X)  \;\supset\; \ti B^-_{p_-}\cup \bigcup_{\ul{q}'} \cV_{t_{b-1}}(\ul{q}')_0 \; \to \;
 \leftexp{+}{\Gr}^{p_-}_X(t_{b-1},\ul{q}')
$.

Finally, we check consistency of definitions for $\leftexp{-}\pi^{X}_{p_+}$ (and analogously for $\leftexp{+}\pi_{X}^{p_-}$) at $z^-\in\leftexp{-}\Gr^X_{p_+}(t)$ for different critical point sequences $\ul{q}',\ul{q}''$.
If these have the same type of end conditions $\cQ_0'=X\setminus\overline{U(q'_1)}$,  $\cQ_0''=X\setminus\overline{U(q''_1)}$ then the same argument as above applies. It remains to check consistency for the same critical points but different end conditions. For $\ul{q}'=(\ti U(q'_1),q'_1,\ldots,q'_\ell,p_+)$, $\ul{q}''=(X\setminus \overline{U(q'_1)},q'_1,\ldots,q'_\ell,p_+)$ we have
\begin{align*}
\phi(\ul{q}') \circ \phi(\ul{q}'')^{-1} \circ {\rm Pr}_{\ul{q}''} \circ \phi(\ul{\ti q''})
\;=\;
\bigl( R_t ^{-1}\times {\rm Id} \bigr) \circ {\rm Pr}_{\ul{q}''} \circ \bigl( R_t \times {\rm Id} \bigr) \circ \phi(\ul{\ti q}')
\;= \;
{\rm Pr}_{\ul{q}'} \circ \phi(\ul{\ti q}')
\end{align*}
at $\g\in\cV_t(\ul{q}')_0\cap\cV_t(\ul{q}'')_0\subset \ev_-^{-1}(\ti U_t(q'_1)\setminus \overline{U(q'_1)})$
since ${\rm Pr}_{\ul{q}''}={\rm Pr}_{\ul{q}'}$ both project out the last two factors in
$
[0,2) \times  \ti B^+_{q'_1} \times [0,1) \times \ldots  \cM(q'_\ell,p_+) \times [0,2) \times \ti B^-_{p_+}
$,
and $R_t : [0,2) \times \ti B^+_{q'_1} \supset \{ (E,x) \,|\, E|x| < t\Delta\} \to [0,t) \times \ti B^+_{q'_1}$ is a rescaling on the first two factors.

With all properties of $\hat\pi^{p_-}_{p_+}$, $\leftexp{+}{\hat\pi}^{p_-}_X$, $\leftexp{+}{\hat\pi}^{p_-}_X$ established, let us start analyzing the fibers.
For $\g\in\cM(p_-,p_+)$ and $\ul{q'}=(p_-=q'_0,q'_1,\ldots, q'_{\ell+1}=p_+)$ let us denote $\phi(\ul{q}')(\g)=(\ul\tau^\g,\ul\eta^\g) \in (0,2)^\ell \times \cM(\ul{q}')$, then it is easiest to read off the fiber in the formulation \eqref{reqqq} as
\begin{align*}
\bigl(\hat\pi^{p_-}_{p_+}\bigr)^{-1}(\g)
&= (\ev_-\times\ev_+) \left\{
\delta\in\cM(\ti S^-_{p_-},\ti S^+_{p_+}) \left|\;
\begin{aligned}
&\ul\tau^\g = \bigl( \tau_{q'_i}(\delta) \bigr)_{i=1,\ldots,\ell}   \\
&\ul\eta^\g = \bigl(\pi^{q'_i}_{q'_{i+1}} \bigl({\rm ev}_{\ti{S}^-_{q'_i}}\times {\rm ev}_{\ti{S}^+_{q'_{i+1}}}\bigr) (\delta) \bigr)_{i=0,\ldots,\ell}
\end{aligned}
\right.\right\} \\
&= \Pi_{\ul{q}'} \left(
 \left(\bigl(\pi^{q'_i}_{q'_{i+1}}\bigr)_{i=0,\ldots,\ell}\right)^{-1}(\ul\eta^\g) \cap
\left( \ti S^-_{p_-} \times {\textstyle\prod_{i=1}^{\ell}}
\im\iota_{q_i,\tau^\g_i} \times \ti S^+_{p_+} \right)
\right),
\end{align*}
where $\Pi_{\ul{q}'}: \prod_{i=0}^{\ell} \Gr^{q'_i}_{q'_{i+1}} \to \ti S^-_{p_-} \times \ti S^+_{p_+}$ is the projection to the outside factors, and
$$
\iota_{q,\tau} :\; S^-_{q} \times S^+_{q} \;\to\; \ti S^-_{q} \times \ti S^+_{q} ,\qquad
(x,y)\mapsto \bigl((x,\tau y), (\tau x, y) \bigr)
$$
for $\tau\in[0,1)$ are the slices of fixed transition time of the embedding $(\ev_-\times\ev_+):\bM_q\to  \ti S^-_{q} \times \ti S^+_{q}$ of the local trajectory space in the coordinates \eqref{Mp hom}.
The fibers $\bigl(\leftexp{-}{\hat\pi}^{\cP_-}_{p_+}\bigr)^{-1}(\g)=\ev_-\bigl\{\ldots\bigr\}$ and $\bigl(\leftexp{+}{\hat\pi}^{p_-}_{\cP_+}\bigr)^{-1}(\g)=\ev_+\bigl\{\ldots\bigr\}$ have analogous expressions involving special terms $\leftexp{-}\pi^{\cP_-}_{q'_1}\circ\ev_{\cP_-,q'_1}$ resp.\ $\leftexp{+}\pi_{\cP_+}^{q'_\ell}\circ\ev_{q'_\ell,\cP_+}$ and in case $\cP_-=X$ resp.\ $\cP_+=X$ embeddings
$\R_-\times S^-_{q} \times S^+_{q} \to X \times \ti S^+_{q}$ or $\ti B^-_{q} \times S^+_{q} \to X \times \ti S^+_{q}$, resp.\  $S^-_{q} \times S^+_{q} \times \R_+ \to \ti S^-_{q} \times X$ or $S^-_{q} \times \ti B^+_{q} \to \ti S^-_{q} \times X$ encoding the local trajectory spaces with ends in $X\setminus\overline{U(q)}$ or $\ti U(q)$ as in the definition of $\iota_{\ul{q},\ul\tau}$ in Section~\ref{sec:ev}.

Now for a critical point sequence $\ul{q}=(q_-=q_0,q_1,\ldots, q_{k+1}=q_+)$ with $b(\ul{q})=b$ let us view $\cM(\ul{q})=\cM(q_-,q_1)\times\ldots\times\cM(q_k,q_+)\subset\bM(q_-,q_+)$ as stratum of a compactified Morse trajectory space. Then the product $\hat\pi(\ul{q}) := \leftexp{-}{\hat\pi}^{q_-}_{q_1} \times {\hat\pi}^{q_1}_{q_2}\ldots  \times  \leftexp{+}{\hat\pi}_{q_+}^{q_k}$ defines a tubular neighbourhood (defined on the product of domains)
$$
\hat\pi(\ul{q}) : \;
\Gr(\ul{q}) \;\supset\; {\rm dom}\,\leftexp{-}{\hat\pi}^{q_-}_{q_1} \times \ldots  \times{\rm dom}\,\leftexp{+}{\hat\pi}_{q_+}^{q_k} \;\longrightarrow\;
\cM(\ul{q}) \cap
{  \textstyle\bigcup_{\genfrac{}{}{0pt}{3}{\ul{Q}\supset\ul{q}}{b(\ul{Q})<b}}   }  \cV_{t_{b-1}}(\ul{Q})
$$
of $\bigl({\rm ev}_{\ti{S}^-_{q'_i}}\times {\rm ev}_{\ti{S}^+_{q'_{i+1}}}\bigr)_{i=0,\ldots,\ell}: \cM(\ul{q})\to \Gr(\ul{q})$, restricted to the union of domains for critical point sequences $\ul{Q}\in{\rm Critseq}(f;q_-,q_+)$ with smaller breaking number $b(\ul{Q})<b$ and containing $\ul{q}$.
More precisely, we can write any such $\ul{Q}=\ul{q}\cup\bigcup_{j=0}^k \ul{q}^{j}$ as union of
$\ul{q}$ with (potentially trivial) critical point sequences $\ul{q}^{j}\in{\rm Critseq}(f;q_j,q_{j+1})$.
From the above we can then read off the fiber over $\ul{\g}=(\g_0,\ldots,\g_k)\in \cV_{t_{b-1}}(\ul{Q})\cap\cM(\ul{q})$ with $\phi(\ul{q}^{j})(\g_j)=(\ul\tau^j,\ul\eta^j)$ as
\begin{align*}
\hat\pi(\ul q)^{-1}(\ul\g)
&= \Pi^{\ul{Q}}_{\ul{q}} \left(  \pi({\ul{Q}})^{-1}(\ul\eta^0,\ldots,\ul\eta^k) \cap
\left( \iota(\ul{q}^{0},\ul\tau^0) \times \widetilde S_{q_1} \times \iota(\ul{q}^{1},\ul\tau^1) \ldots
\times \widetilde S_{q_k} \times \iota(\ul{q}^{k},\ul\tau^k)
\right)
\right)
\end{align*}
with the natural projection $\Pi^{\ul{Q}}_{\ul{q}}:\ti S(\ul{Q}) \to \ti S(\ul{q})$ and the shorthands
$\widetilde S_q:= \ti S^-_{q_-} \times \ti S^+_{q_+}$, and $\iota\bigl((q_0,q_1,\ldots,q_\ell,q_{\ell+1}),(\tau_1,\ldots,\tau_\ell)\bigr)=\im (\iota_{q_1,\tau_1}\times\ldots\times\iota_{q_\ell,\tau_\ell})$.
Let us denote $\ul\eta_{\ul\g}:=(\ul\eta^0,\ldots,\ul\eta^k)\in\cM(\ul{Q})$ and $\ul{T}_{\ul{\g},\ul\s}:=(\ul\tau^0,\s_1,\ul\tau^1, \ldots, \s_k,\ul\tau^k)$ for any $\ul{\s}\in[0,1)^k$, then the image of the embedding of all local trajectory spaces for $\ul{Q}$, as introduced in Section~\ref{sec:ev}, is
$$
\im\iota_{\ul{Q},\ul{T}_{\ul{\g},\ul\s}} \;=\;
\iota(\ul{q}^{0},\ul\tau^0) \times \im\iota_{q_1,\s_1} \times \iota(\ul{q}^{1},\ul\tau^1) \ldots
\times \im\iota_{q_k,\s_k} \times \iota(\ul{q}^{k},\ul\tau^k) .
$$
Comparing this with $\iota_{\ul{q},\ul{\s}}= \iota_{q_1,\s_1} \times  \ldots \times \im\iota_{q_k,\s_k}$ we obtain for every $\ul{\g}\in\cV_{t_{b-1}}(\ul{Q})\cap\cM(\ul{q})$ and $\ul{\s}\in[0,t_{b-1})^k$
$$
\hat\pi(\ul q)^{-1}(\ul\g) \cap \im\iota_{\ul{q},\ul{\s}}
\;=\; \Pi^{\ul{Q}}_{\ul{q}} \Bigl(  \pi({\ul{Q}})^{-1}(\ul\eta_{\ul\g}) \cap \im\iota_{\ul{Q},\ul{T}_{\ul{\g},\ul\s}}
\Bigr) .
$$
This is a unique point by the intersection property \eqref{req2} for the fibers of $\pi({\ul{Q}})$ with ${b({\ul{Q}})<b}$.
Moreover, this intersection is transverse since from
$ \pi({\ul{Q}})^{-1}(\ul\eta_{\ul\g}) \pitchfork \im\iota_{\ul{Q},\ul{T}_{\ul{\g},\ul\s}}$
we obtain
\begin{align*}
\rT \ti S(\ul{Q})
&= \ker \rd\pi(\ul Q)
\oplus
\bigl(
\rT \iota(\ul{q}^{0},\ul\tau^0) \times \;\{0\}\; \times \rT\iota(\ul{q}^{1},\ul\tau^1) \times \ldots \times \;\{0\}\; \times \rT\iota(\ul{q}^{k},\ul\tau^k)
\bigr) \\
&\qquad\qquad\quad\;\; \oplus
\bigl(
\;\;\;\;\; \{0\} \; \times \im\rd\iota_{q_1,\s_1} \times \;\{0\}\; \times  \ldots \times \im\rd\iota_{q_k,\s_k} \times\; \{0\} \;\;\;\;\;\,
\bigr) \\
&=
\Bigl( \ker \rd\pi(\ul Q) \cap
\bigl(
\rT \iota(\ul{q}^{0},\ul\tau^0) \times \rT\widetilde S_{q_1} \times \rT\iota(\ul{q}^{1},\ul\tau^1) \times \ldots \times \rT\widetilde S_{q_k} \times \rT\iota(\ul{q}^{k},\ul\tau^k)
\bigr)
\Bigr) \\
&\quad
+\ker\rd\Pi^{\ul{Q}}_{\ul{q}}
+
\bigl(
\;\;\;\;\; \{0\} \; \times \im\rd\iota_{q_1,\s_1} \times \;\{0\}\; \times  \ldots \times \im\rd\iota_{q_k,\s_k} \times\; \{0\} \;\;\;\;\;\,
\bigr).
\end{align*}
Here the direct sum implies
$\rT \iota(\ul{q}^{0},\ul\tau^0)^C \times \;\{0\}\; \times \ldots \times \;\{0\}\; \times \rT\iota(\ul{q}^{k},\ul\tau^k)^C \subset \ker \rd\pi(\ul Q)$
for some complements of $\rT \iota(\ul{q}^{0},\ul\tau^0)$.
Projection by $\rd\Pi^{\ul{Q}}_{\ul{q}}$ then yields the claim,
\begin{align*}
\rT \ti S(\ul{q}) &=
\rd\Pi^{\ul{Q}}_{\ul{q}} \bigl( \ker \rd\pi(\ul Q) \cap
\bigl(
\rT \iota(\ul{q}^{0},\ul\tau^0) \times \rT\widetilde S_{q_1} \times \ldots\rT\iota(\ul{q}^{k},\ul\tau^k) \bigr) \bigr)
+  \im\rd\iota_{q_1,\s_1} \times \ldots  \im\rd\iota_{q_k,\s_k}
 \\
&= \rT\bigl(\hat\pi(\ul q)^{-1}(\ul\g)\bigr) + \rT\bigl(\im\iota_{\ul{q},\ul{\s}}\bigr) .
\end{align*}
For general end conditions $\ul{q}=(\cU_-\supset\cQ_0,q_1,\ldots, \cQ_{k+1}\subset\cU_+)$ with $b(\ul{q})=b$ the same arguments show that the fibers of the product $\hat\pi(\ul{q}) = \leftexp{-}{\hat\pi}^{\cU_-}_{q_1} \times {\hat\pi}^{q_1}_{q_2}\ldots  \times  \leftexp{+}{\hat\pi}_{\cU_+}^{q_k}$ satisfy the intersection condition for any $\ul{\g}\in\cM(\ul{q})\cap\bigcup_{\ul{Q}}\cV_{t_{b-1}}(\ul{Q})$ and
$\ul{\s}\in I'_{t_{b-1}}(\ul{q})$, as claimed.
\end{proof}

We will use the following lemma to extend each $\leftexp{(\pm)}{\hat\pi}^{\cP_-}_{\cP_+}$ to a full tubular neighbourhood of the evaluation embedding $\cM(\cP_-,\cP_+)\hookrightarrow \leftexp{(\pm)}\Gr^{\cP_-}_{\cP_+}$.

\begin{lem} \label{lem:ext}
Let $e: M \hookrightarrow G$ be an embeddding between smooth manifolds, $V\subset M$ an open subset such that $M\setminus V$ is compact, and suppose that $\hat\pi:\hat G \to V$ is a tubular neighbourhood of $e|_V$ defined on an open neighbourhood $\hat G\subset G$ of $e(V)$.
Then for any open subset $V'\subset M$ such that $\overline{V'} \subset V$ and $\overline{\hat\pi^{-1}(V')}\cap e(M\setminus V) =\emptyset$ there exists a tubular neighbourhood $\pi:G \supset N \to M$ of $e$ such that $\pi^{-1}(V') = \hat\pi^{-1}(V')\subset N$ and
$\pi|_{\hat\pi^{-1}(V')}=\hat\pi$.
\end{lem}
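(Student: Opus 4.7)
The plan is to build $\pi$ by deforming a baseline tubular neighbourhood of $e$ so that it coincides with $\hat\pi$ on a region containing $\hat\pi^{-1}(V')$.

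First, using the hypothesis $\overline{\hat\pi^{-1}(V')}\cap e(M\setminus V)=\emptyset$, I would choose an open $V'\sqsubset V''\sqsubset V$ of $M$ and an open neighbourhood $W\subset \hat G$ of $\overline{\hat\pi^{-1}(V')}$ whose closure in $G$ is disjoint from $e(M\setminus V'')$. Then I would produce a global tubular neighbourhood $\pi_0:N_0\to M$ of $e$, e.g.\ by equipping $G$ with a Riemannian metric and taking the exponential map of the normal bundle of $e(M)$ on a neighbourhood of the zero section on which it is a diffeomorphism.

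On an open subset $W_0\subset W\cap N_0$ containing $e(\overline{V'})$, both $\hat\pi$ and $\pi_0$ are tubular neighbourhoods of $e$ restricted to $\pi_0(W_0)$. By the tubular neighbourhood uniqueness theorem, after possibly shrinking $W_0$, there is a smooth fibre-preserving diffeomorphism $\Phi:W_0\to\Phi(W_0)\subset N_0$ with $\pi_0\circ\Phi=\hat\pi|_{W_0}$ and $\Phi|_{e(M)\cap W_0}={\rm Id}$, arising as the time-$1$ map of a smooth time-dependent $\pi_0$-vertical vector field $X_t$ on $W_0$ vanishing along $e(M)\cap W_0$. I would then pick $\chi\in\cC^\infty(G,[0,1])$ that equals $1$ on a neighbourhood of $\overline{\hat\pi^{-1}(V')}$ with ${\rm supp}\,\chi\sqsubset W_0$, extend $\chi X_t$ by zero to a smooth field on $N_0$, let $\tilde\Phi$ be its time-$1$ flow (well-defined on a neighbourhood $N$ of $e(M)$ since the field has compact support inside $W_0$), and set $\pi:=\pi_0\circ\tilde\Phi$.

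Verification is then routine: $\pi$ is a smooth submersion as the composition of such; $\pi|_{e(M)}=e^{-1}$ since $\chi X_t$ vanishes on $e(M)$, so $\tilde\Phi|_{e(M)}={\rm Id}$; $\pi$ coincides with $\hat\pi$ on the open set $\{\chi\equiv 1\}$, which contains $\hat\pi^{-1}(V')$; and after further shrinking $N$ close enough to $e(M)$ so that every $x\in N$ with $\pi(x)\in V'$ lies in the $\hat\pi$-preimage of a neighbourhood of $V'$ contained in $\{\chi\equiv 1\}$, one obtains $\pi^{-1}(V')=\hat\pi^{-1}(V')$. The main obstacle is the construction of the straightening isotopy $\Phi$ with generating field vanishing on $e(M)$; this is the content of the tubular neighbourhood uniqueness theorem and is handled by working in a common vertical chart for $\pi_0$ in which one may form the linear interpolation $\pi_t:=(1-t)\pi_0+t\hat\pi$ and produce $X_t$ as the unique $\pi_0$-vertical solution of $\rd\pi_t\cdot X_t=-\partial_t\pi_t$, which vanishes on $e(M)$ because $\partial_t\pi_t|_{e(M)}=0$ there.
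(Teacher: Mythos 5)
Your overall plan — produce a baseline tubular neighbourhood $\pi_0$, straighten it to $\hat\pi$ by a Moser-type isotopy, and cut off the generating field — is close in spirit to what the paper does, but as written the argument does not close, and the difficulties are not cosmetic.

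The central problem is the domain. The lemma demands $N\supset\hat\pi^{-1}(V')$ with $\pi|_{\hat\pi^{-1}(V')}=\hat\pi$. But $\hat\pi^{-1}(V')$ is an arbitrary open subset of the given $\hat G$ and need not lie inside any small tube around $e(M)$. Your $\pi=\pi_0\circ\tilde\Phi$ is only defined where $\pi_0$ is, i.e.\ on the $\ep$-disk bundle $N_0$, so it simply does not cover $\hat\pi^{-1}(V')$; and your final step of ``further shrinking $N$'' goes in exactly the wrong direction for the required inclusion $\hat\pi^{-1}(V')\subset N$. Any fix must adjoin $\hat\pi^{-1}(V')$ to the domain and set $\pi:=\hat\pi$ there — at which point you are reproducing the paper's explicitly piecewise definition.

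Second, the cutoff cannot have the properties you ask of it. You need $\chi\equiv 1$ on a neighbourhood of $\overline{\hat\pi^{-1}(V')}$ and $\supp\chi\sqsubset W_0$ so the flow of $\chi X_t$ is globally defined. But $V'$ is not required to be relatively compact in $M$ (and in the application it covers all the lower strata of a noncompact $\cM(\cP_-,\cP_+)$), so $\overline{\hat\pi^{-1}(V')}$ can be noncompact; then no cutoff of the prescribed form exists. The remedy is to localize the modification to a region that is precompact in $G$; the paper does this by interpolating only over the compact annulus $\overline{V_1}\setminus V_5$ in the base, and even there avoids a flow by interpolating $\pi_0$ and $\hat\pi$ directly via the exponential map $S_\psi$ on $M$ and then defining $\pi$ by cases.

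Finally, the vector field you describe is internally inconsistent. If $X_t$ is $\pi_0$-vertical, $\rd\pi_0\cdot X_t=0$, then $\pi_0\circ\Phi_t$ is constant in $t$, so $\pi_0\circ\Phi_1=\pi_0\neq\hat\pi$; and at $t=0$ the Moser equation $\rd\pi_t\cdot X_t=-\partial_t\pi_t$ reads $\rd\pi_0\cdot X_0=-(\hat\pi-\pi_0)$, which is nonzero off $e(M)$, contradicting $\pi_0$-verticality. A correct Moser argument for tubular neighbourhood submersions would take $X_t$, say, metrically orthogonal to $\ker\rd\pi_t$, and would yield $\hat\pi\circ\Phi_1=\pi_0$, so the diffeomorphism you want is $\Phi_1^{-1}$, not $\Phi_1$. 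These can be repaired, but only in conjunction with the localization above, which in effect brings you back to the paper's proof.
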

\begin{proof}
Since $M$ is a metric space and $M\setminus V$ is compact we may enlarge $V'$ such that $M\setminus V'$ is compact. Then we
find open sets ${V=V_0\supset V_1\supset V_2 \supset V_3 \supset V_4 =V' \supset V_5}$ such that $M\setminus V_i$ is compact and $\overline{V_{i+1}} \subset V_i$.
Next we choose a metric on $G$ such that $\ker\rd_{e(m)}\hat\pi \perp \rT_{e(m)}e(M)$.
By the compactness of $M\setminus V_5$ the exponential map then induces a diffeomorphism $\exp : D_\ep \overset{\sim}{\to}  N_\ep$ from a sufficiently small disk bundle in the normal bundle $D_\ep:=\{Z\in T_{e(m)} G \,|\, m\in {M\setminus V_5}, Z \perp \rT_{e(m)}e(M) , |Z|<\ep \}\subset \rT G|_{e(M\setminus V_5)}$ to a neighbourhood $N_\ep \subset G$ of $e(M\setminus V_5)$.
The projection to the zero section in $D_\ep\simeq N_\ep$ composed with $e^{-1}$ then provides a surjective submersion $\pi_0:N_\ep \to M\setminus V_5$ such that
\begin{equation}  \label{pis}
\pi_0|_{e(V\setminus V_5)}=\hat\pi|_{e(V\setminus V_5)}, \qquad
\rd\pi_0|_{e(V\setminus V_5)}=\rd\hat\pi|_{e(V\setminus V_5)} .
\end{equation}
In fact, these are equal to $e^{-1}$ and the orthogonal projection to $\rT e(M)$.
Next, for $U\subset M\setminus V_5$ we will write abbreviate $D_\ep|_{U}:=D_\ep \cap \rT G|_{e(U)}$.
With this notation we may choose $\ep>0$ sufficiently small such that
$$
\exp(D_\ep|_{\overline{V_1}\setminus V_5}) \subset \hat G  , \qquad
\exp(D_\ep|_{M\setminus V_2}) \subset G \setminus \hat\pi^{-1}(V').
$$
Indeed, the first inclusion can be achieved since $\hat G$ is a neighbourhood of the compact set $e(\overline{V_1}\setminus V_5)$. For the second inclusion we use the assumption 
$\overline{\hat\pi^{-1}(V')}\cap e(M\setminus V) =\emptyset$ and add that $\overline{\hat\pi^{-1}(V')}\cap e(V\setminus \overline{V_3}) =\emptyset$ since $\hat\pi^{-1}(V\setminus \overline{V_3})$ is an open neighbourhood of $e(V\setminus \overline{V_3})$ disjoint from $\hat\pi^{-1}(V')$.
So the compact set $e(M\setminus V_2) \subset e(M\setminus \overline{V_3})$ is disjoint from the closed set $\overline{\hat\pi^{-1}(V')}$, and hence $\inf_{p\in M\setminus V_2} d(e(p), \hat\pi^{-1}(V'))=:\delta >0$. Hence we can choose $\ep>0$ so that $\exp(D_\ep|_{M\setminus V_2})$ is disjoint from $\hat\pi^{-1}(V')$.

Now choose a smooth cutoff function $\psi:M\setminus\overline{V_5} \to [0,1]$ such that $\psi|_{M\setminus V_1}\equiv 0$ and $\psi|_{V_2\setminus\overline{V_5}}\equiv 1$. We need to extend the linear interpolation in a local chart to a smooth construction on $V_1 \setminus\overline{V_5}$. For that purpose we equip $M$ with a metric, and for $\delta>0$ smaller than the minimal injectivity radius on the compact set $\overline{V_1}\setminus V_5$ define
\begin{align*}
S_\psi : \bigl\{ (p, q) \in M\times M \,\big|\, p\in V_1\setminus \overline{V_5}, d(p,q)<\delta \bigr\} \;\to \; M , \qquad
(p,q) \;\mapsto\; \exp_p\bigl(\psi(p)\exp_p^{-1}(q)\bigr) .
\end{align*}
Then for sufficiently small $\ep>0$ we obtain an extended tubular neighbourhood
\[
\pi := \left.\begin{cases}
\; \hat \pi &\text{on}\; \hat\pi^{-1}(V') \\
S_\psi\circ(\pi_0\times\hat\pi) &\text{on}\; \exp(D_\ep|_{V_1\setminus \overline{V_5}}) \\
\pi_0 &\text{on}\; \exp(D_\ep|_{M\setminus V_1})
\end{cases} \right\} \; : \;\;  \hat\pi^{-1}(V')\cup \exp(D_\ep|_{M\setminus \overline{V_5}})=: N \;\to\; M  .
\]
Here by \eqref{pis} and the compactness of $\overline{V_1}\setminus V_5\subset V\setminus V_5$ we may choose $\ep>0$ so that
$(\pi_0\times\hat\pi)|_{\exp(D_\ep|_{V_1\setminus \overline{V_5}})}$ takes values in the domain of $S_\psi$.
To check that this map is well defined it remains to check the overlap of the different domains.
On $\hat\pi^{-1}(V') \cap \exp(D_\ep|_{M\setminus \overline{V_5}})\subset \exp(D_\ep|_{V_2\setminus \overline{V_5}})$ we have $\psi\circ\pi_0\equiv 1$ and hence $\pi=\hat\pi$. Moreover, $S_\psi\circ(\pi_0\times\hat\pi)$ extends smoothly to $\exp(D_\ep|_{M\setminus V_1})$ since
$\psi\circ\pi_0\equiv 0$ on $\exp(D_\ep|_{\overline{V_1}\setminus V_1})$.
Hence $\pi$ is a smooth map. It is defined on $\hat\pi^{-1}(V')\cup \exp(D_\ep|_{M\setminus \overline{V_5}})$, which is a neighbourhood of $e(V')\cup e(M\setminus \overline{V_5}) = e(M)$, and on the latter clearly restricts to $e^{-1}$.
Towards ensuring that $\pi$ is a submersion, note that for any $p\in \overline{V_1}\setminus V_5$ we have by \eqref{pis}
$$
\rd_{e(p)}\pi =
\rd_{(p,p)} S_\psi \circ \bigl( \rd_{e(p)}\pi_0 \times \rd_{e(p)}\hat\pi \bigr) = \rd_{e(p)} \pi_0 .
$$
Since $\overline{V_1}\setminus V_5$ is compact, we then find $\ep>0$ such that $\rd\pi |_{\exp(D_\ep|_{V_1\setminus \overline{V_5}}) } $ continues to be a submersion.
Finally, we obtain $\pi^{-1}(V') = \hat\pi^{-1}(V')$ and  $\pi|_{\hat\pi^{-1}(V')}=\hat\pi$ if $\pi(\exp(D_\ep|_{M\setminus V_2})\bigr) \subset M\setminus V'$. This holds for sufficiently small $\ep>0$ since
$\pi|_{e(M\setminus V_2)}=e^{-1}$ maps the compact set $e(M\setminus V_2)$ to $M\setminus V_2\sqsubset M\setminus V'$ and $\pi$ is uniformly continuous on compact sets.
\end{proof}

We apply this lemma to $\hat\pi:=\leftexp{(\pm)}{\hat\pi}^{\cP_-}_{\cP_+}$ and the evaluation embedding of $M:=\cM(\cP_-,\cP_+)$ into $G:=\leftexp{(\pm)}\Gr^{\cP_-}_{\cP_+}$.
Then $V:={\textstyle \bigcup_{\ul{q}'}} \cV_{t_{b-1}}(\ul{q}')_0 \subset \cM(\cP_-,\cP_+)$ has a compact complement since it covers $\partial\bM(\cP_-,\cP_+)$, so $\cM(\cP_-,\cP_+)\setminus {\textstyle \bigcup_{\ul{q}'}} \cV_{t}(\ul{q}')_0 = \bM(\cP_-,\cP_+)\setminus {\textstyle \bigcup_{\ul{q}'}} \cV_{t}(\ul{q}')$ is the complement of an open set in a compact space. For $\cP_-=X$ or $\cP_+=X$ we add $\ev_-^{-1}(\ti B^+_{p_+})$ resp.\ $\ev_+^{-1}(\ti B^-_{p_-})$ to the open set $V$.
We can then use $V':= {\textstyle \bigcup_{\ul{q}'}} \cV_{\frac 12 t_{b-1}}(\ul{q}')_0\subset\cM(\cP_-,\cP_+)$, and in case $\cP_-=X$ or $\cP_+=X$ add $\ev_\mp^{-1}(\ti B^\pm_{p_\pm}\cap \ti U_{\frac 12 t_{b-1}}(p_\pm))$. Its closure is contained in $V$ since the closure of $\cV_{\frac 12 t_{b-1}}(\ul{q}')_0\subset\cM(\cP_-,\cP_+)$ is contained in $\cV_{t_{b-1}}(\ul{q}')_0$ by Remark~\ref{rmk:Vt}, and also $\ti U_{\frac 12 t_{b-1}}(p_\pm) \sqsubset \ti U_1(p_\pm)$.
Hence Lemma~\ref{lem:ext} yields tubular neighbourhoods $\pi=:\leftexp{(\pm)}{\pi}^{\cP_-}_{\cP_+}: N \to \cM(\cP_-,\cP_+)$ defined on neighbourhoods $N\subset \leftexp{(\pm)}\Gr^{\cP_-}_{\cP_+}$ of $e(\cM(\cP_-,\cP_+))$ that contain
$\hat\pi^{-1}(V')=\ti U_{\frac 12 t_{b-1}}(p^\pm) \cup {\textstyle \bigcup_{\ul{q}'}} \leftexp{(\pm)}{\Gr}^{\cP_-}_{\cP_+}(\frac 12 t_{b-1},\ul{q}')$.
Now taking $0<t'_b \leq \frac 12 t_{b-1}$ sufficiently small we can ensure that $\leftexp{(\pm)}{\Gr}^{\cP_-}_{\cP_+}(t'_b)\subset N$ since for $t\to 0$ as in Remark~\ref{rmk:Grt converges} we have
$\leftexp{(\pm)}{\Gr}^{\cP_-}_{\cP_+}(t)\setminus \bigcup_{\ul{q}'} \leftexp{(\pm)}{\Gr}^{\cP_-}_{\cP_+}(\frac 12 t_{b-1},\ul{q}') \to e\bigl( \cM(\cP_-,\cP_+)\setminus \bigcup_{\ul{q}'} \cV_{\frac 12 t_{b-1}}(\ul{q}')_0\bigr)$, which is a compact subset of $N$.
Hence we obtain tubular neighbourhoods
\begin{align*}
\pi^{p_-}_{p_+} : \,\;\Gr^{p_-}_{p_+}(t'_b)  \to \cM(p_-,p_+)
&\qquad\text{of}\qquad
(\ev_{\ti S^-_{p_-}}\times \ev_{\ti S^+_{p_+}}):\cM(p_-,p_+)\hookrightarrow \Gr^{p_-}_{p_+}, \\
\leftexp{-}{\pi}^X_{p_+}  :  \leftexp{-}{\Gr}^X_{p_+}(t'_b) \to \cM(X,p_+)
&\qquad\text{of}\qquad
\;\,\quad\qquad\qquad\ev_-:\cM(X,p_+)\hookrightarrow \leftexp{-}{\Gr}^X_{p_+} , \\
\leftexp{+}{\pi}_X^{p_-}  :  \leftexp{+}{\Gr}_X^{p_-}(t'_b) \to \cM(p_-,X)
&\qquad\text{of}\qquad
\;\,\quad\qquad\qquad\ev_+:\cM(p_-,X)\hookrightarrow \leftexp{+}{\Gr}_X^{p_-}.
\end{align*}
for each $p_-,p_+\in{\rm Crit}(f)$ with $b(p_-,p_+)=b$ resp.\ $b(X,p_+)=b$ resp.\ $b(p_-,X)=b$.
By construction these satisfy conditions (iii) and (iv) of Lemma~\ref{lem:pi}, so it remains to choose $0<t_b\leq t'_b$ such that the intersection condition (i) is met. For that purpose, as specified in Section~\ref{sec:tub}, we define $\leftexp{\pm}{\pi}^{p_-}_{p_+}$ by pullback from $\pi^{p_-}_{p_+}$, and then obtain well defined tubular neighbourhoods $\pi(\ul{q}):\cN_{t'_b}(\ul{q})\to \cM(\ul{q})$ for all $b(\ul{q})\leq b$. Their fibers over $\cV_{t'_b}(\ul{Q})$ for $\ul{Q}\supset\ul{q}$ with $b(\ul{Q})<b$ by construction are identical to the fibers of $\hat\pi(\ul{q})$ as defined in Lemma~\ref{lem:pihat}. Hence the intersection condition $\im\iota_{\ul{\tau}} \pitchfork \pi(\ul{q})^{-1}({\ul{\g}}) = 1\text{pt}$ already holds for $\ul\tau\in I_{t'_b}(\ul{q})$ and $\ul{\g}\in\cM(\ul{q})\cap \bigcup_{\genfrac{}{}{0pt}{3}{\ul{Q}\supset\ul{q}}{b(\ul{Q})<b}}  \cV_{t'_b}(\ul{Q})$.
For the remaining fibers we may apply Lemma~\ref{lem:trans} to the submanifold $G:=\cN_{t'_b}(\ul{q})\subset \ti S(\ul{q})=: S$ and the tubular neighbourhood $\pi:=\pi(\ul{q})$.
As before, $K:=\cM(\ul{q})\setminus \bigcup_{\genfrac{}{}{0pt}{3}{\ul{Q}\supset\ul{q}}{b(\ul{Q})<b}}  \cV_{t'_b}(\ul{Q})$ is a compact subset of $M:=\cM(\ul{q})$.
The embeddings $\iota_{\ul\tau}$ to $S$ are well defined, with $\im\iota_{\ul\tau}\cap\Gr(\ul{q})\subset\cN_{t'_b}(\ul{q})$ for $\ul\tau\in[0,t'_b)^k$, so Lemma~\ref{lem:trans} provides $0<t''_b\leq t'_b$ and a neighbourhood $N\subset G$ of $e(M)$ such that the intersection property
$\im\iota_{\ul{\tau}} \pitchfork \bigl(\pi(\ul{q})^{-1}({\ul{\g}})\cap N \bigr) = 1\text{pt}$ holds for transition times in $[0,t''_b)^k$ and the fibers within $N$ over $\ul\g\in K$.
Finally taking $0<t_b \leq \frac 12 t''_b$ sufficiently small we can ensure that $\cN_{t_b}(\ul{q})\cap\pi^{-1}(K)\subset N$ since, as in Remark~\ref{rmk:Grt converges}, $\cN_t(\ul{q})\cap \pi^{-1}(K)= \cN_t(\ul{q})\setminus {\textstyle\bigcup_{\scriptscriptstyle \ul{Q}}}  \pi^{-1}\bigl(\cV_{t'_b}(\ul{Q}) \bigr)$ for $t\to 0$ converges in the Hausdorff distance to $e\bigl( \cM(\ul{q})\setminus {\textstyle \bigcup_{\scriptscriptstyle\ul{Q}}} \cV_{t_{b-1}}(\ul{Q})\bigr)$, which is a compact subset of $N$.
Now for $\ul\tau\in I'_{t_b}(\ul{q})\cap I_{t'_b}(\ul{q})\subset[0,t)^k$ and $\ul\g\in\cM(\ul{q})=(M\setminus K) \cup K$ we have the full intersection condition $\im\iota_{\ul{\tau}} \pitchfork \pi(\ul{q})^{-1}({\ul{\g}}) = 1\text{pt}$, establishing condition (i) of Lemma~\ref{lem:pi}. Finally, this lemma implies that the maps $\phi(\ul{q})$ constructed from the tubular neighbourhoods for $b(\ul{q})\leq b$ satisfy all properties of global charts claimed in Theorem~\ref{thm:global with ends}.

Moreover, we proved the fiberwise transversality \eqref{req2}, hence Remarks~\ref{rmk:smooth eval} and ~\ref{rmk:smooth eval 2} imply smoothness of the evaluation maps with respect to these charts.

\bibliographystyle{alpha}

\end{document}